\documentclass[11pt]{article}
\usepackage{DV}

\title{Asymptotic expansion for transport maps \\ between laws of multimatrix models}
\author[1]{David Jekel}
\author[2]{Evangelos A.\ Nikitopoulos}
\author[3]{F\'elix Parraud}

\affil[1]{Department of Mathematical Sciences, University of Copenhagen\protect\\
Universitetsparken 5, 2100 København (Denmark)\protect\\
{Email: \tt \href{mailto:daj@math.ku.dk}{daj@math.ku.dk}}\vspace{2mm}}
\affil[2]{Department of Mathematics, University of Michigan\protect\\
\noindent 530 Church Street, Ann Arbor, MI 48109-1043 (USA)\protect\\
{Email: {\tt \href{mailto:enikitop@umich.edu}{enikitop@umich.edu}}}\vspace{2mm}}
\affil[3]{Department of Mathematics and Statistics, Queen's University\protect\\
\noindent 48 University Avenue, Kingston, ON K7L 3N6 (Canada)\protect\\
{Email: {\tt \href{mailto:enikitop@umich.edu}{felix.parraud@gmail.com}}}}
\date{\vspace{-5ex}}

\begin{document}

\maketitle

\begin{abstract}
We study the large-$N$ behavior of random matrix tuples $Y^N = (Y_1^N,\dots,Y_d^N)$ with joint density proportional to $e^{-N^2 V}$ for some convex function $V$ in non-commuting variables satisfying certain bounds on its second derivative.  We give an asymptotic expansion in powers of $1/N^2$ of the trace of noncommutative smooth functions of $Y^N$.  We also give an asymptotic expansion for a family of maps $T^N$ that transport the law of a tuple of independent GUE random matrices to the law of $Y^N$ and, as a consequence, show strong convergence for the multimatrix models $Y^N$.  Our proof is based on an asymptotic expansion for the heat semigroup associated to the measure, which is expressed in terms of smooth functions of a matrix Brownian motion $(S^{N}_t)_{t \geq 0}$.  We introduce spaces of noncommutative smooth functions that unify and generalize the cases of polynomials and single-variable smooth functions and allow the systematic application of asymptotic expansion techniques to multimatrix models with convex interaction.
\end{abstract}

\newpage

\tableofcontents

\clearpage

\section{Introduction}

Write $\M_N(\C)$ for the set of $N \times N$ complex matrices and $\M_N(\C)_{\sa}$ for the real subspace of Hermitian matrices.
A key class of models in random matrix theory, called multimatrix models, are tuples of several random matrices $Y^N = (Y_1^N,\dots,Y_d^N)$ in $\M_N(\mathbb{C})_{\sa}^d$ with (joint) law
\[
d\mu_{V}^N(x) = \frac{1}{Z_V^N} e^{-N^2 \tr_N(V(x))}\,dx,
\]
where $dx$ is the Lebesgue measure on $\M_N(\C)_{\sa}^d$, $V(x)$ is some ``noncommutative function'' of $d$ variables (often a noncommutative polynomial) called the potential of the model, $\tr_N = N^{-1}\Tr_N$ is the normalized trace on $\M_N(\mathbb{C})$, and $Z_V^N$ is a normalizing constant.  These are analogs of Gibbs laws in the statistical mechanical approach to random matrix theory described in the single-matrix case in \cite{BdMPS}.  Besides their inherent interest in random matrix theory, these multimatrix models are a tool for studying the matrix integral $\int_{\M_N(\mathbb{C})_{\sa}^d} e^{-N^2 \tr_N(V(x))}\,dx$ using the relation
\[
\frac{d}{d\beta} \left[ -\frac{1}{N^2} \log \int e^{-\beta N^2 \tr_N(V)} \right] = \frac{\int \tr_N(V) e^{-\beta N^2 \tr_N(V)}}{\int e^{-\beta N^2 \tr_N(V)}},
\]
as well as understanding the large deviations theory of several independent GUE random matrices \cite{BCG2003}.  Moreover, the asymptotic expansion of $\mathbb{E} \tr_N[f(Y^{N})]$ in powers of $N$ relates to the combinatorial enumeration of maps, a fact that was applied in the context of topology by Harer and Zagier, in \cite{harer1986euler}, to compute the Euler characteristic of the moduli space of curves. We also refer to the survey \cite{zvonkin1997matrix} by  Zvonkin for an introduction to this topic. In physics,  the seminal works of t'Hooft \cite{t1974magnetic} and Br\'ezin, Parisi, Itzykson and Zuber \cite{brezin1978planar} related multimatrix models to the enumeration of maps of any genus, hence providing a purely analytical tool to solve these hard combinatorial problems. The idea is that one can view the free energy of multimatrix models of dimension $N$ as a formal power series in $N^{-1}$ whose coefficients are generating functions of maps on a surface of a given genus.

During the last two decades, the study of multimatrix models has been quite active.
In \cite{GMS2006} and \cite{guionnet2007second}, Guionnet and Maurel-Segala studied the first- and then second-order of the asymptotic expansion before giving a full expansion in \cite{maurel2006high}. More recently, in \cite{guionnet2022matrix}, they also studied the case of multimatrix models whose law is far from the quadratic potential. In \cite{parraud2025free}, the third author and Schnelli gave a new formula and an expansion to any order for any self-adjoint potential close to the quadratic one. Moreover, the unitary equivalents of these matrix models also have a long history starting with the Harish-Chandra--Itzykson--Zuber model, \cite{itzykson1980planar,matytsin1994large,zinn2003some}, which has since then been extended to more general potentials, \cite{collins2009asymptotics,guionnet2015asymptotics,figalli2016universality,buc2024topological}.

In particular, for suitable $V$ such that $\tr_N(V)$ is convex, the large-$N$ limit
\[
\mu_V(f(Y)) = \lim_{N \to \infty} \mathbb{E} \tr_N[f(Y^N)]
\]
exists for noncommutative polynomials $f$;
see, for example, \cite{GMS2006,GS2009,JekelEntropy,parraud2025free}. The resulting functional $\mu_V$ on noncommutative polynomials is known as a \emph{free Gibbs law}.  Free Gibbs laws have many analogous properties to classical Gibbs measures:
\begin{enumerate}
    \item They satisfy a noncommutative integration-by-parts relation (known as a Schwinger--Dyson equation):
    \[
    \mu_V(\mathcal{D}_j V(Y) f(Y)) = \mu_V \otimes \mu_V(\partial_j f(Y)),
    \]
    where $\partial_j$ and $\mathcal{D}_j$ are Voiculescu's free difference quotient and cyclic gradient, respectively.  See \cite{GMS2006}.
    \item They are stationary measures for a free Langevin stochastic differential equation:
    \[
    dY_t = dS_t - \frac{1}{2} \mathcal{D} V(Y_t)\,dt,
    \]
    where $S_t$ is a $d$-variable free Brownian motion and $\mathcal{D} V = (\mathcal{D}_1 V,\dots,\mathcal{D}_d V)$ is the free analog of $\nabla \tr_N(V)$ on matrices.  See \cite{BS2001,GS2009,Dabrowski2010}.
    \item They are characterized as maximizers of Voiculescu's free entropy in the microstate framework.  See \cite[\S 5]{GS2009}, \cite[\S 7--8.1]{JLS2022}.
\end{enumerate}
The analogous problems for non-convex $V$ are largely open, although this problem was indeed studied in \cite{guionnet2022matrix} and is related to the difficult problem of completing the large deviations principle for several GUE matrices in \cite{BCG2003}.

This paper will broaden the techniques from \cite{parraud2023asymptotic,parraud2023asymptotic2,parraud2025free,Jekel2025asymptotic} and extend the results of asymptotic expansions to a more general ensemble of functions (see Theorem \ref{3lessopti}). We then use this result to systematically study multimatrix models associated to convex potentials $V$.  We will study both the expectation of noncommutative functions in the large-$N$ limit as well as the asymptotic expansion of transport functions that push forward $\mu_{V}^N$ to the normalized Gaussian measure, studied earlier in \cite{JLS2022}. It is worth noting that the dimension-independence hypothesis on $V$ is purely to keep the already-heavy notational burden to a manageable level;
one could certainly track the dependence of the error terms on $V$ in our proofs if necessary.

The first theorem we mention here is an asymptotic expansion of $\mathbb{E} \tr_N[f(Y^{N})]$ in powers of $1/N^2$ for noncommutative polynomials $f$ with somewhat sharp error bounds, which works for $V$ and $f$ in a general class of noncommutative smooth functions.  Allowing $f$ to be smooth and not merely polynomial is crucial for applications to strong convergence, i.e., convergence of operator norms, such as Theorem \ref{cor2} below.  Indeed, if $f$ is a self-adjoint noncommutative polynomial, then to rule out outlying eigenvalues for $f(Y^{N})$, one needs to control $\E \tr_N [\varphi (f(Y^{N}))]$ up to order $1/N^2$ when $\varphi$ is a smooth function on $\R$.  The utility of the asymptotic expansion in \cite{parraud2024spectrum}, as opposed to the combinatorial expressions in terms of maps, is the analytic control it provides.
The terms in the asymptotic expansion are expressed in terms of derivatives of the input function.
Thus, the terms and error bounds in the formula make sense to apply to smooth functions to begin with and thereby facilitate effective bounds on the largest eigenvalue.

The aim of this work, however, is much more general than to study a single-variable smooth function composed with a noncommutative polynomial.  Indeed, such a composition should be seen as an element of a general class of noncommutative smooth functions that includes both single-variable smooth functions and multivariable noncommutative polynomials, and is closed under composition.  By introducing a general framework, we can thus avoid the ad hoc computations that arise from assuming the particular structure of $\varphi \circ f$.  Thus far, several classes of noncommutative smooth functions have been introduced in the literature for such purposes;
see \cite{DGS2021} and \cite{JLS2022}.  However, the seminorms on the derivatives from \cite{JLS2022} are not a priori strong enough for our purposes.  Indeed, the asymptotic expansion formula for a noncommutative polynomial requires looking at its fourth free difference quotient, which lives in a fourfold tensor product, and then applying the twisted multiplication $a \otimes b \otimes c \otimes d \mapsto badc$.  The multiplication operation is not bounded with respect to any $\mathrm{C}^*$-tensor norm, for instance, nor for the norms in \cite{JLS2022} that view the derivatives as multilinear maps and hence are more similar to injective tensor norms.  Our approach here is thus more similar to \cite{DGS2021}, but we use Banach-space projective tensor norms rather than symmetrized Haagerup tensor norms.

The asymptotic expansion for the expectation is stated as Theorem \ref{cor1} below; see further below for an explanation of the space $\cC^k(X_1,\dots,X_d)$ of noncommutative smooth functions of $d$ variables.  In fact, we will deduce Theorem \ref{cor1} from a stronger result, Theorem \ref{mainthm} below, which gives an asymptotic expansion for the transport maps constructed as in \cite{DGS2021,JLS2022}.  The assumption on the potential $V$ below is that $V(X) = \frac{1}{2} \sum_{i=1}^d X_i^2 + W(X)$, where $W$ is a ($4k+4$)-regular noncommutative smooth function of smoothness order $16k+23$;
loosely speaking, the ($4k+4$)-regularity hypothesis means that the second derivatives of $W$ satisfy a bound of order $1/k$.

\begin{theorem}[Asymptotic expansion of expectations]\label{cor1}
Let $V \in\cC^{16k+23}(X_1,\dots,X_d)$ be a $(4k+4)$-regular potential (Definition \ref{definition:kregular}), $f\in\cC^{4k+4}(X_1,\dots,X_d)$, and $Y^N$ be a multimatrix model with potential $V$ (Definition \ref{def:randompot}).
There exist functions $f_i\in\cC^{k-i+1}(X_1,\dots,X_d)$ ($i = 0,\ldots,k$) such that if $x$ a $d$-tuple of free semicircular variables, $K > 0$, and $N \in \N$ satisfies $N>2K$, then
$$ \left| \E\left[\frac{1}{N}\Tr\left( f(Y^N) \right) \right] - \tau\big( f_0(x) \big) - \frac{\tau\big(f_1(x)\big)}{N^2} - \dots - \frac{\tau\big(f_k(x)\big)}{N^{2k}} \right| \leq \frac{C_{V,k,K} \norm{f}_{\cL^k,K}}{N^{2k+2}},$$
where $\norm{f}_{\cL^k,K} \coloneqq \sup \big\{\norm{f}_{\cC^k,K} e^{-KR^2} : R > 0 \big\}$ (Definition \ref{def.NCsmoothbasic}).
\end{theorem}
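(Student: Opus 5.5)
The plan is to deduce Theorem \ref{cor1} from the transport-map expansion announced as Theorem \ref{mainthm}. That theorem provides maps $T^N$ pushing the law of a $d$-tuple $X^N$ of independent GUE matrices forward to the law of $Y^N$, together with an expansion $T^N = T_0 + N^{-2}T_1 + \dots + N^{-2k}T_k + R^N$. The $T_i$ are noncommutative smooth functions, and the remainder $R^N$ has $\cC^{m}$-seminorms on balls of radius $K$ of order $N^{-2k-2}$. Since $Y^N \overset{d}{=} T^N(X^N)$, we can write
\[
\E \tr_N[f(Y^N)] = \E\tr_N[(f\circ T^N)(X^N)].
\]
The statement then reduces to two ingredients: a Taylor expansion of $f\circ T^N$ around $f\circ T_0$ in the small parameter $N^{-2}$, and the GUE asymptotic expansion (Theorem \ref{3lessopti}) applied to each resulting noncommutative smooth function.

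\textbf{Expanding the composition.} Set $\delta^N = T^N - T_0$. I will Taylor expand $f(T_0+\delta^N)$ to order $k$ using the noncommutative derivatives of $f$, which live in projective tensor products. Each term $\partial^j f(T_0)\#(\delta^N,\dots,\delta^N)$ is then expanded multilinearly in the $T_i$, and the terms are collected by powers of $N^{-2}$. This yields functions $g_0 = f\circ T_0, g_1,\dots,g_k$ such that
\[
f\circ T^N = \sum_{i=0}^k N^{-2i} g_i + E^N .
\]
Here $g_i$ involves derivatives of $f$ up to order $i$. The error $E^N$ is controlled by the $\cC^{k+1}$ norm of $f$ on a ball of radius depending on $\sup\|T^N\|$ over a ball of radius $K$, times $N^{-2k-2}$. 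We need closure of $\cC^m$ under composition and multiplication with norm bounds; I take these as established earlier for the spaces $\cC^k$. Because the norm of $f$ grows at most like $e^{KR^2}$, the $\cL^k$ norm absorbs the enlargement of radius, after possibly adjusting $K$.

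\textbf{Expanding the GUE traces.} Next I apply Theorem \ref{3lessopti} to each $g_i$ with $k-i$ further orders:
\[
\E\tr_N[g_i(X^N)] = \sum_{j=0}^{k-i} N^{-2j}\tau(g_{i,j}(x)) + O(N^{-2(k-i)-2}).
\]
The coefficients are $g_{i,j}$ (built from fourth derivatives of $g_i$ with twisted multiplications), and the error is controlled by the $\cC^{4(k-i)+4}$ norm of $g_i$. Setting $f_\ell = \sum_{i+j=\ell} g_{i,j}$ gives the claimed expansion. The smoothness bookkeeping works as follows: each GUE order consumes four derivatives, and each transport order consumes one derivative of $f$. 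This is why $f\in\cC^{4k+4}$ suffices, why $V$ needs $16k+23$ derivatives (to make $T_i$ smooth enough), and why $f_i\in\cC^{k-i+1}$. The remaining contribution is $\E\tr_N[E^N(X^N)]$. On the event $\|X^N\|\le K$, with $N>2K$ (or some large-norm threshold), it is bounded by the estimate above. On the complement, I will use Gaussian concentration of $\|X^N\|$ together with the growth bound $e^{KR^2}$ on $f$ and $T^N$; this gives a bound of order $e^{-cN}$, which is negligible.

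\textbf{Main obstacle.} The hardest part is the uniform control of the remainder $E^N$ and of the tail region. The $T^N$ are only controlled on balls, and their norms must be bounded with at most linear growth so that $f$ evaluated at $T^N(X^N)$ remains integrable against the Gaussian weight. I would first establish Lipschitz bounds on $T^N$ (uniform in $N$), which should follow from regularity of $V$ via the heat-semigroup construction. I would then choose the radius so that $\|T^N(X)\|\le C(1+\|X\|)$, which keeps $e^{KC^2R^2}$ integrable against the GUE tail.
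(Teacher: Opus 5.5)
Your proposal follows the paper's proof essentially step for step. The paper also starts from $\E\tr_N[f(Y^N)]=\E\tr_N[(f\circ T^N)(X^N)]$, using Theorem \ref{mainthm} applied with $4k+4$ in place of $k$; this substitution is where the $(4k+4)$-regularity and $V\in\cC^{16k+23}=\cC^{4(4k+4)+7}$ come from. It then Taylor-expands $f\circ T^N$ around $f\circ T^0$ via Corollaries \ref{cor:taylorineq} and \ref{cor:multicompfin}, controls integrability through Proposition \ref{sodncks}, and iterates Theorem \ref{3lessopti}.

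There are two corrections. First, what you call the main obstacle is already settled by Theorem \ref{mainthm}. Its bounds are global in $H$: the remainder is controlled only in operator norm (not in $\cC^m$ seminorms on balls), but $\|T^0-\mathrm{id}\|_{\cC^k,\infty}$ and the $\|T^m\|$ are bounded, so $\sup_H\|T^N(H)-H\|\le C$. The paper therefore bounds the remainder pointwise by $CN^{-2k-2}$ times a norm of $f$ on the ball of radius $\max_i\|X_i^N\|+C$, and integrates with Proposition \ref{sodncks}, with no new Lipschitz estimate or event splitting. Your alternative of a slope-$C$ linear growth bound with an ``adjusted $K$'' would actually lose the stated threshold $N>2K$.

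Second, to write the coefficients as $\tau(f_\ell(x))$ with $f_\ell$ a function of a single $d$-tuple, the paper also applies Proposition \ref{prop:condexp}. This integrates out the extra semicircular families produced by iterating Theorem \ref{3lessopti}, and it is this conditional expectation step (not just four derivatives per GUE order) that accounts for the reduced smoothness $f_\ell\in\cC^{k-\ell+1}$.
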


Our asymptotic expansion for $\mathbb{E} \tr_N[f(Y^N)]$ significantly extends past work in terms of the smoothness assumptions on both $f$ and $V$.  Many works on the asymptotic expansion often assumed that $V$ is polynomial or analytic \cite{GMS2006,maurel2006high,parraud2025free}.  Others such as \cite{figalli2016universality,JekelEntropy,JekelExpectation,JLS2022} addressed more general $V$ but did not give a full asymptotic expansion.  The main limitation in our work is that we need to assume a bound of order $1/k$ on the second derivatives of $V(X) - \frac12\sum_{i=1}^d X_i^2$ in order to get the asymptotic expansion up to order $k$.  Based on \cite{GS2009,JekelExpectation,DGS2021,JLS2022}, one might expect that all the results hold provided that the second derivative minus the identity is bounded by a constant less than $1$ (independent of $k$), but this seems to be out of reach with our method.

One of the key ingredients for our approach is that the measure $\mu_V^N$ is the stationary solution for the matrix Langevin SDE
\begin{equation}
    \label{skdvjnslkdvn}
    dY_t^{N} = dS_t^{N} - \frac{1}{2} \nabla [\tr_N(V)](Y_t^{N})\,dt,
\end{equation}
and also gives the long-time behavior when the initial condition is $0$, for instance; this allows us ultimately to study the heat semigroup associated to $\mu_V^N$ and construct transport of measure as we discuss further below.  The solution $(Y_t^{N})_{t \geq 0}$ is a function of the Brownian motion $(S_t^{N})_{t \geq 0}$.  We aim to show that $X_t^{N}$ is in fact a (dimension-independent) noncommutative \emph{smooth} function applied to the Brownian motion $(S_t^{N})_{t \geq 0}$, i.e., a noncommutative smooth function of an infinite family of GUE variables, to which we can apply the asymptotic expansion of \cite{parraud2023asymptotic}.  The bulk of the technical work in this paper is to make sense of this notion of smooth functions of the Brownian motion and show that the solution is indeed such a smooth function. To do so, we unfortunately rely on the simple structure of the SDE \eqref{skdvjnslkdvn}. In particular, our method does not allow for a more complicated diffusion term at the moment.

We remark that another possible approach would be to generalize the proof of \cite{parraud2023asymptotic} by using the integration by parts formula for $\mu_V^N$ in place of the Gaussian integration by parts.  In this case, the coefficients in the expansion would be expressed in terms of expectations with respect to the free Gibbs law $\mu_V$. One would then need to evaluate the expectation of these terms under the free Gibbs law, but our approach here avoids this by expressing the terms as functions of an infinite family of GUE matrices to begin with. Yet another approach would be to view our multimatrix model as a perturbation of the law of a $d$-tuple of independent GUE random matrices and then work with the usual Gaussian integration by parts; this idea is used in \cite{parraud2025free} but is limited to analytic functions and potentials.  

Our spaces of noncommutative smooth functions are defined in \S \ref{sec.NCfunc} as follows.  Note that although the statement of Theorem \ref{cor1} only references smooth functions in finitely many variables, the proof uses infinitely many variables to describe the Brownian motion.  Hence, it is convenient to introduce variables $x_e$ indexed by a Hilbert space with orthonormal basis $\mathcal{B}$ as in \cite{Jekel2025asymptotic}, since this formalism fits well with the classical and free Gaussian functors.  The vector formalism is also partly inspired by Malliavin calculus, but the seminorms are comparatively much stronger than the norms on Cameron--Martin space.  Like in \cite{DGS2021,JLS2022}, our $\cC^k(\mathcal{B})$ space is defined as a completion of the space of \emph{noncommutative trace polynomials} (expressions such as $x_{e_1} x_{e_2} \tr(x_{e_3} x_{e_2}) + x_{e_2} x_{e_3} \tr(x_{e_1} x_{e_3}) + x_{e_1}$), which have a long history in free probability (see e.g., \cite{Cebron2013,DHK2013}).  We consider the free difference quotient operation $\partial$ acting on the noncommutative polynomial part (the terms outside the traces) and mapping into a tensor product, and also the derivative $\tilde{\partial}$ which differentiates each trace terms $\tr(f)$ and replaces it with $\otimes \mathcal{D} f$ where $\mathcal{D}$ is Voiculescu's cyclic derivative.  With each of the derivatives we keep track of which variable has been differentiated by tensoring on the corresponding vector in the Hilbert space as in \cite{Jekel2025asymptotic}.  For each $k$th-order derivative of $f$ and for $R > 0$, we define a seminorm as the supremum of the Banach space projective tensor norm of the derivative evaluated on tuples $x$ in a tracial $\mathrm{C}^*$-algebra satisfying $\sup_{e \in \mathcal{B}} \norm{x_e} / c_e \leq R$ where $c_e$ are chosen positive weights.

One of the most important features of these noncommutative smooth function spaces is that they are closed under all the operations that we need to use.  They are closed under composition and satisfy a chain rule (Theorem \ref{chainrule}).  They are closed under various permuted multiplication maps that combine different tensorands (Proposition \ref{proppourvoila}).  They are closed under plugging in free semicirculars into some of the variables and then taking the conditional expectation onto the algebra generated by the other variables (Proposition \ref{prop:condexp}); this is where it is essential that we include trace polynomials and not only noncommutative polynomials.  We can also prove an asymptotic expansion for noncommutative smooth functions of GUE matrices (Theorem \ref{3lessopti}) by a direct generalization of the techniques from \cite{parraud2023asymptotic}.

These properties are crucial for the construction of transport in our main result (Theorem \ref{mainthm}), which gives an asymptotic expansion for certain smooth functions $T^N$ which express $\mu_V^N$ as a pushforward of the GUE measure; in fact, we will deduce Theorem \ref{cor1} from Theorem \ref{mainthm}.  The construction of transport functions has played a significant role in random matrices and free probability.  Notably, \cite{figalli2016universality,bekerman2015transport} used approximate transport functions to study universality phenomena for the eigenvalues, and Guionnet and Shlyakhtenko \cite{GS2014} used transport to establish that the von Neumann algebras associated to free Gibbs laws are isomorphic to the von Neumann algebra of free semicirculars, a result which was subsequently generalized in \cite{Nelson2015a,Nelson2015b,DGS2021,JekelExpectation,JLS2022}.  We follow the same construction of transport maps as in \cite{DGS2021,JLS2022}.  The idea is to fix a path of convex potentials $V_t$ that interpolates between the quadratic potential at time $0$ and the desired potential at time $1$.  We then construct a family of maps $T_{s,t}^{(N)}$ by solving
\[
\partial_s T_{s,t}^{N} = \nabla (\Psi_{V_s}^{N} \tr_N(\dot{V}_s)) \circ T_{s,t}^{N},
\]
where $\Psi_{V_t}^N$ is the pseudo-inverse of the Laplacian operator $L_{V_t}^N$ associated to the $\mu_{V_t}^N$.  It is a standard technique from transport equations that $T_{s,t}^{(N)}$ will push $\mu_{V_t}^{N}$ forward to $\mu_{V_s}^{N}$, and in fact, this construction of transport is infinitesimally optimal.

The gradient of the pseudo-inverse $\nabla \Psi_{V_t}^{N}F$ for some function $F$ is in turn expressed as
\[
\nabla (\Psi_{V_t}^N F) = \int_0^\infty \nabla \left[e^{sL_{V_t}^{N}}F\right]\,ds 
\]
where $e^{sL_{V_t}^{N}}H$ is the heat semigroup given by
\[
e^{sL_{V_t}^{N}}F(x) = \mathbb{E}_{X_0 = x}[F(Y_s)],
\]
where $Y_t^N$ is the solution to the Langevin equation associated to $\mu_{V_t}^N$.  Hence, after proving that $Y_t^{N}$ is a noncommutative smooth function of the Brownian motion, and then showing appropriate tail bounds on the heat semigroup for large time, we can express $\nabla (\Psi_{V_t}^{N} F)$ as a noncommutative smooth function and obtain an asymptotic expansion. We refer to \S\ref{sec.transport} for more details.

We therefore obtain the following result giving asymptotic expansions for smooth transport to and from a GUE tuple.

\begin{theorem}
\label{mainthm}
    Let $V\in\cC^{4k+7}(X_1,\dots,X_d)$ be a $k$-regular potential (see Definition \ref{definition:kregular}), and $Y^N$ a multimatrix model with potential $V$ (see Definition \ref{def:randompot}). There exists a transport map $T^N$ between the law of $Y^N$ and the one of a $d$-tuple of independent GUE random matrices $X^N$, i.e., $T^N$ such that for all measurable functions $f$,
    $$ \E\left[ f(Y^N) \right] = \E\left[ f\circ T^N(X^N) \right].$$
    
    \noindent Moreover, there exists $T^0 \in (\cC^k(X_1,\dots,X_d))^d$, as well as $T^m\in (\cC^{k+1-m}(X_1,\dots,X_d))^d$ for $m\in [1,k]$, such that, for all $N\geq 1$,
    \begin{equation}
        \label{eq:main2}
        \sup_{H\in\M_N(\C)_{sa}^d}\ \norm{T^N(H) - \sum_{i=0}^k \frac{T^i(H)}{N^{2i}} } \leq \frac{C_{V,k}}{N^{2k+2}},
    \end{equation}
    for some constant $C_{V,k}$. Besides, there exists another constant $C_{V,k}$ such that
    \begin{align}
        \label{eqkvjnso2}
        \norm{T^0 - \id }_{\cC^k,\infty}\leq C_{V,k}, \quad\quad \norm{T^m}_{\cC^{k+1-m},\infty}\leq C_{V,k},\quad 1\leq m\leq k.
    \end{align}
\end{theorem}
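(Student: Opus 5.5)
We construct $T^N$ by the interpolation/transport scheme described in the introduction, following \cite{DGS2021,JLS2022}. Set $V_t(X) = \frac12\sum_i X_i^2 + tW(X)$ for $t\in[0,1]$, where $V = \frac12\sum_i X_i^2 + W$. Each $V_t$ is again $k$-regular, uniformly in $t$, since regularity is a bound on the second derivatives of $W$ and this only improves when $W$ is scaled by $t\le 1$. Define the vector field
\[
F_t^N \coloneqq \nabla\big(\Psi_{V_t}^N \tr_N(\dot V_t)\big) = \int_0^\infty \nabla\big[e^{sL_{V_t}^N}\tr_N(W)\big]\,ds,
\]
and let $T_{s,t}^N$ solve $\partial_s T_{s,t}^N = F_s^N\circ T_{s,t}^N$ with $T_{t,t}^N=\id$. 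We take $T^N \coloneqq T_{1,0}^N$. The standard continuity-equation argument shows that $T_{s,t}^N$ pushes $\mu_{V_t}^N$ forward to $\mu_{V_s}^N$. Since $\mu_{V_0}^N$ is the GUE law, this gives the pushforward identity.

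\textbf{Smooth representation of $F_t^N$.} The heat semigroup is $e^{sL_{V_t}^N}G(x)=\E[G(Y_s^{x})]$, where $Y^{x}$ solves the Langevin SDE \eqref{skdvjnslkdvn} started at $x$. By a Picard iteration in the spaces $\cC^k(\mathcal{B})$, using the chain rule (Theorem \ref{chainrule}), we show that $Y_s^{x}=\Phi_s(x,S^N)$. Here $\Phi_s$ is a dimension-independent noncommutative smooth function of $x$ and of the Brownian increments, viewed as variables indexed by a Hilbert space of paths. Its $\cC^k$ seminorms in $x$ decay like $e^{-cs}$, with $c>0$ coming from the convexity of $V_t$.

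The $1/k$-type regularity bound on $\partial^2 W$ is what makes this iteration contract in the strong projective-tensor seminorms. Each derivative of order $j\le k$ picks up products of $\partial^2 W$ terms, and these must sum with a margin. I expect this to be the \textbf{main obstacle}: exponential decay must be established simultaneously for all $k$ derivatives in the projective norm, and the twisted multiplications interfere with naive Gronwall bounds.

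Given this representation, $\nabla e^{sL^N}\tr_N(W)(x)$ is the expectation over the GUE-type Brownian variables of a smooth function. Applying the asymptotic expansion for smooth functions of GUE matrices (Theorem \ref{3lessopti}) in these variables gives
\[
\nabla e^{sL^N}\tr_N(W)=\sum_{i\le k} N^{-2i} g_{s,i} + O\big(N^{-2k-2}e^{-cs}\big).
\]
Each coefficient $g_{s,i}$ is obtained from derivatives of $\Phi_s$ by permuted multiplications (Proposition \ref{proppourvoila}) followed by substituting free semicirculars and conditioning onto the $x$-variables (Proposition \ref{prop:condexp}). Hence $g_{s,i}\in\cC^{k+1-i}$, with norms decaying like $e^{-cs}$; each order of the expansion costs derivatives, which is where the $4k+7$ smoothness of $V$ is consumed. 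Integrating in $s$ yields $F_t^N=\sum_i N^{-2i}F_t^{i}+O(N^{-2k-2})$ uniformly in $x$ and $t$, with $\|F_t^0\|_{\cC^k,\infty}$ and $\|F_t^i\|_{\cC^{k+1-i},\infty}$ bounded. The sup-norm bound is uniform over all of $\M_N(\C)_{sa}^d$ because of the contractivity of the flow.

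\textbf{Expanding the flow.} The final step is to expand the flow $T^N$ itself. Write $T_{s,0}^N=\sum_i N^{-2i}T_s^i+R_s^N$, substitute into the ODE, and Taylor expand $F_s^{j}\circ(\cdot)$ around $T_s^0$ using the chain rule. Matching powers of $N^{-2}$ gives a triangular system of linear ODEs for the $T_s^i$: $T^0$ solves the limiting flow, and $T^m$ is driven by lower-order terms. Each equation is solvable in $\cC^{k+1-m}$ because the $\cC^k$ bounds on $F^0_s$ control the linearization; in particular $\|T^0-\id\|_{\cC^k,\infty}\le\int_0^1\|F_s^0\|\,ds$ composed via the chain rule. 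Finally, Gronwall applied to the ODE for $R_s^N$, using the Lipschitz bound on $F_s^N$, gives $\sup_H\|R_1^N(H)\|\le C_{V,k}N^{-2k-2}$, which is \eqref{eq:main2}, and the bounds \eqref{eqkvjnso2} follow from the same estimates.
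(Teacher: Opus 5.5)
Your architecture is the paper's. You interpolate $V_t=\frac12\sum_i X_i^2+tW$, realize the Langevin solution $\bX_s$ as an element of $\cC^{4k+5}(\cB)$ via Theorem~\ref{thm.keySDE}, and expand the vector field with Theorem~\ref{3lessopti} and Proposition~\ref{prop:condexp}. You then solve the triangular system for the $T^m$ and close with Gr\"onwall and a Lipschitz bound on the vector field.

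The gap is in the step you defer, and the form in which you state it would not suffice. Theorem~\ref{3lessopti} consumes four derivatives per order in \emph{all} variables of $\cB$, including the GUE/Brownian path variables. So to get a remainder $O(N^{-2k-2}e^{-cs})$ and decaying coefficients, you need the full seminorms $\|\bD_p(W\circ\bX_s)\|_{\cC^{4k+4},R}$, uniformly in $R$, to be integrable in $s$. Decay of the $x$-seminorms of $\Phi_s$ does not give this. Moreover, the derivatives of $\Phi_s$ in Brownian directions do not decay at all. Already for $W=0$, the Brownian part of $\partial\bX_s$ is $h_s\otimes 1\otimes 1$ with $h_s(u)=e^{-(s-u)/2}\1_{[0,s]}(u)$, and $\|h_s\|_c\geq\|h_s\|_{L^2}\to 1$. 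With the projective-norm Gr\"onwall argument, the paper only gets the growth bound $(1+s)^{2j}e^{j\kappa s/2}$ for order-$j$ derivatives in arbitrary directions (Equation~\eqref{eq:fastdecay2}).

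The missing idea is that the integrand is an $x$-gradient. By the chain rule, every term of every derivative of $\bD_p(W\circ\bX_s)$ contains a factor $\langle e_p|\,\widetilde{\partial}^{(j_n)}\circ\cdots\circ\partial^{(j_0)}\bX_{s,l}$ whose first slot is an $x$-direction. Such factors obey the bound $(1+s)^{2j}e^{-(1-j\kappa)s/2}$ of Equation~\eqref{eq:fastdecay}: the rate $(1-\kappa)/2$ from the $x$-slot must beat the $\kappa/2$ lost to each further order of differentiation, including Brownian directions. Orders up to $4k+5$ occur, and this is exactly where $\kappa<1/(4k+5)$ is used. So the decay rate is not a fixed $c>0$ supplied by convexity.

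Your localization of the $1/k$ hypothesis is also off. For derivatives taken only in $x$-directions, the Gr\"onwall induction already closes under $\kappa<1$, because the source terms are products of decaying factors. The hypothesis is needed precisely because Theorem~\ref{3lessopti} differentiates in the Brownian variables.

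With Theorem~\ref{thm:fastdecay} in place of your claimed estimate, the rest of your outline goes through as in the paper. Uniformity in $H$ then comes from all these bounds holding with $R=\infty$, since $\kappa$ and $\|W\|_{j,\infty}$ are suprema over $R$.
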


Previous works on free transport already showed that their finite-dimensional transport maps will approximate the corresponding free transport maps in various settings.  Guionnet and Shlyakhtenko \cite[Theorem 4.7]{GS2014} showed that their free monotone transport maps approximate the finite-dimensional optimal transport maps, but only in an $L^2$ sense.  In \cite{JekelExpectation}, the transport maps were constructed using the evolution of the density along the heat flow, and the error between the finite-dimensional transport maps and the limiting transport map was measured in normalized Hilbert--Schmidt norm uniformly on operator norm balls, and the hypotheses on the finite-dimensional approximations for $V$ also used the same form of asymptotic approximation.  In \cite[\S 8.2]{JLS2022}, the notion of asymptotic approximation was upgraded to look at the error in operator norm uniformly on each operator norm ball.  Although this is not explicitly stated there, it would not be hard to deduce from the arguments in \cite[\S 8.3-8.4]{JLS2022} that the resulting finite-dimensional transport maps will approximate the free transport map in operator norm on each ball.  However, now in Theorem \ref{mainthm}, we have much stronger control over the finite-dimensional transport maps, for potentials satisfying correspondingly stronger assumptions.

As a consequence, we also obtain strong convergence in distribution for the multimatrix models from convex potentials, that is, convergence of the operator norms of smooth functions in the matrix tuple $Y^N$.  This extends the result of \cite[\S 6]{GS2009} which proved strong convergence for polynomial $V$ satisfying certain convexity assumptions by approximating $Y^N$ and $y$ by functions of the GUE/semicircular Brownian motion in the Langevin equation, and then applying Haagerup and Thorbj{\o}rnsen's result for the GUE/semicircular case \cite{haagerup2005new}.  We argue by transferring the strong convergence from the GUE model to the other model using the transport maps from Theorem \ref{mainthm}.  We remark that for this purpose, it would be sufficient to prove by any method that the finite-dimensional transport maps are close to the free transport maps in operator norm on balls.  In particular, to deduce strong convergence from Theorem \ref{mainthm}, we only need the zero-order term and first-order error bound in the asymptotic expansion of $T^N$.  We also note that the first proof of strong convergence for Haar unitary matrices by Collins and Male \cite{ColMal2014unitary} used a similar idea, showing that the Haar unitary matrix could be coupled closely with a continuous function applied to a GUE matrix.

\begin{theorem}\label{cor2}
Given $V\in\cC^{7}(X_1,\dots,X_d)$ a $0$-regular potential (see Definition \ref{definition:kregular}), and $f\in\cC^{0}(X_1,\dots,X_d)$. Let $Y^N$ be a multimatrix model with potential $V$ (see Definition \ref{def:randompot}). Then, almost surely, 
$$ \lim_{N\to\infty} \norm{f(Y^N)} = \norm{f\circ T^0(x)}, $$
where $x$ is a $d$-tuple of free semicircular variables, and $T^0$ is the map from Theorem \ref{mainthm}.
\end{theorem}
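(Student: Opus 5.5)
Here is the plan: carry strong convergence over from GUE matrices to $Y^N$ through the transport maps of Theorem \ref{mainthm} with $k=0$. That theorem gives, for each $N$, a map $T^N$ with $T^N(X^N) \overset{d}{=} Y^N$, where $X^N$ is a $d$-tuple of independent GUE matrices. It also gives a limit $T^0 \in (\cC^0(X_1,\dots,X_d))^d$ with
\[
\sup_{H\in\M_N(\C)_{sa}^d} \norm{T^N(H) - T^0(H)} \leq \frac{C_{V,0}}{N^2}.
\]
The statement is about the almost sure behaviour of $\norm{f(Y^N)}$ for each $N$. That behaviour depends only on the law of each $Y^N$, provided the matrices for different $N$ are independent (or at least the Borel--Cantelli estimates used below apply). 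So I may realize $Y^N$ as $T^N(X^N)$ on a common probability space carrying independent GUE tuples $X^N$.

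\textbf{Step 1: strong convergence for the GUE side.} By Haagerup--Thorbj{\o}rnsen \cite{haagerup2005new}, almost surely $\norm{p(X^N)} \to \norm{p(x)}$ for every noncommutative polynomial $p$, and $\norm{X^N_i} \to 2$. Fix such an outcome and choose $R>2$, so that $\norm{X_i^N} \leq R$ for all large $N$. The function $g \coloneqq f\circ T^0$ lies in $\cC^0$ by the chain rule (Theorem \ref{chainrule}). Elements of $\cC^0$ are limits of trace polynomials in the $\norm{\cdot}_{\cC^0,R}$ seminorm, i.e.\ uniformly on the operator-norm ball of radius $R$ in every tracial $\mathrm{C}^*$-algebra. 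It therefore suffices to prove $\norm{q(X^N)} \to \norm{q(x)}$ for trace polynomials $q$. For such $q$, the scalar coefficients $\tr_N(m(X^N))$ converge almost surely to $\tau(m(x))$ (weak convergence). So $q(X^N)$ is a noncommutative polynomial whose coefficients converge, and strong convergence of polynomials applies. A $3\varepsilon$-argument then gives $\norm{g(X^N)} \to \norm{g(x)}$ almost surely.

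\textbf{Step 2: replacing $T^0$ by $T^N$.} Since $\norm{T^N(X^N) - T^0(X^N)} \leq C/N^2$ uniformly, both $T^N(X^N)$ and $T^0(X^N)$ lie in a fixed ball of radius $R'$ for large $N$. This uses that $T^0$ is bounded on balls, being a $\cC^0$ function. It remains to know that $f$ is uniformly continuous in operator norm on $R'$-balls, uniformly over all tracial $\mathrm{C}^*$-algebras. For trace polynomials this holds with a modulus that depends only on $R'$ and the polynomial: differences telescope, and $|\tr(a)-\tr(b)|\leq\norm{a-b}$. Uniform approximation on the $R'$-ball then passes the property to $f\in\cC^0$. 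Hence
\[
\norm{f(Y^N)} = \norm{f(T^N(X^N))} = \norm{g(X^N)} + o(1) \to \norm{f\circ T^0(x)}.
\]

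\textbf{Main obstacle.} The delicate point is the uniform equicontinuity of $\cC^0$ functions, together with the density of trace polynomials used in Step 1. Both should follow directly from $\cC^0(X_1,\dots,X_d)$ being the completion of trace polynomials under the seminorms $\norm{\cdot}_{\cC^0,R}$. I would first verify the Lipschitz-type estimate on balls for trace polynomials, then pass to the limit. Handling traces in Step 1 also requires almost sure convergence of $\tr_N(m(X^N))$. This follows from concentration for GUE matrices, and it is needed simultaneously for the countably many monomials $m$, which is fine by countability.
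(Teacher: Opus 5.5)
Your proposal is correct and follows essentially the same route as the paper: realize $Y^N$ as $T^N(X^N)$, use the uniform bound $\sup_H \norm{T^N(H)-T^0(H)} \leq C/N^2$ to replace $T^N$ by $T^0$, approximate $f\circ T^0$ by a trace polynomial on a ball that eventually contains the $X^N$, and reduce to GUE strong convergence plus almost sure convergence of the traces. Your uniform-continuity argument for $f$ on operator-norm balls, uniform in $N$, fills in a step the paper leaves implicit when it passes from $\norm{f\circ T_1^N(X^N)}$ to $\norm{f\circ T^0(X^N)}$. Your coupling remark makes explicit an identification the paper makes tacitly.
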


Convergence of operator norms of polynomials in random matrices was introduced in the seminal work of Haagerup and Thorbj{\o}rnsen on GUE matrices \cite{haagerup2005new}, and subsequently generalized for many other matrix ensembles \cite{Schultz2005,ColMal2014unitary}. Recent research has made substantial progress in studying more and more diverse models of random matrices, motivated in particular by Hayes' result that strong convergence for tensors of independent GUE matrices implies the Peterson--Thom conjecture on the structure of free group von Neumann algebras \cite{HayesPT}. While the original approach of Haagerup and Thorbj{\o}rnsen relied on carefully studying the solutions of the Schwinger--Dyson Equations satisfied by the model of random matrices, several new approaches were introduced in the last few years. 
\begin{itemize}
    \item First, the non-backtracking random walk approach, \cite{bordenave2019eigenvalues,bordenave2023norm} allowed the authors to control the norm of any polynomial in independent random matrices whose law where the Haar measure on compact matrix groups, by combining careful estimate on the law of the entries of our random matrix, often provided by the Weingarten calculus \cite{collins2022weingarten}, with the concept of non-backtracking operators.

    \item Another approach introduced in \cite{collins2022operator}, further refined in \cite{parraud2022operator,parraud2023asymptotic,parraud2023asymptotic2,parraud2024spectrum,banna2025strong,parraud2025free}, consisted in interpolating our random matrices with free operators, with the help of the free stochastic calculus. This theory of stochastic calculus, developed by Biane and Speicher in \cite{biane1998stochastic}, replaces independent random variables by free operators, and has turned out to be very useful to compute matrix integrals. This is the approach that was chosen in this paper.

    \item At the same time, a similar interpolation was developed in \cite{bandeira2023matrix}.  The authors studied general Gaussian matrices, expressed as a linear combination of deterministic matrices times classical Gaussian coefficients.  They then interpolated between matrices obtained by replacing each coefficient with a Gaussian diagonal matrix and a Gaussian Wigner matrix respectively, and computed the derivative of moments and resolvents of these operators. These results were extended to other models in \cite{brailovskaya2024universality,bandeira2024matrix} and applied to several matrices in \cite{JLNP2025strong}.

    \item Finally, one can directly interpolate along the dimension of our random matrices. The so-called polynomial method was first applied to random matrices in \cite{CGVTvT2026new} and further developed in \cite{CGVvH2024strong2,magee2026strong} for example, yields especially sharp results for any family of random matrices whose expected traces of polynomials in them, i.e., $\E[\tr_N(P(Y^N))]$, is a rational function evaluated in $N^{-1}$.
\end{itemize}

We also refer to \cite{van2025strong} for a survey of the recent developments on strong convergence.  We emphasize that many of the other techniques listed above would not directly apply to the multimatrix models in Theorem \ref{cor2} because they rely on more algebraic structure.  For instance, the non-backtracking random walk approach is better suited to unitary random matrix models and requires a good understanding of the asymptotic behavior of the law of the coefficients of our random matrices, which is simply not available for our model.  The polynomial method relies on $\mathbb{E} \tr_N(P(Y^N))$ being an analytic function in $1/N$, which we will certainly not be able to guarantee when $V$ is merely a $\cC^k$ function, and is even unclear if $V$ is polynomial, as was the case in \cite{parraud2025free}.  Our approach ultimately relies on applying the interpolation method to the GUE Brownian motion and free Brownian motion.  But within the context of interpolation techniques, we also emphasize that it does not seem feasible to use computations with only noncommutative polynomials, resolvents, the linearization method, and the like, in order to understand the behavior of smooth functions.  The multimatrix $Y^N$ for instance are not obtained in any obvious algebraic way from GUE matrices or even general Gaussian matrices.


Finally, it is natural in light of the strong convergence literature to ask to what extent the theorems and methods in our work can be upgraded to handle tensoring with deterministic matrices.  This might require incorporating operator-space like structure into our spaces of smooth functions.  We leave this problem as an interesting direction for future research.

The rest of the paper is organized as follows:
\begin{itemize}[itemsep=0pt]
    \item \S \ref{3deffree} recalls some standard notation and facts from free probability and random matrix theory.
    \item \S \ref{sec.NCfunc} defines spaces of non-commutative smooth functions and describe the setting that we will need to study transport maps.
    \item \S \ref{sec.asymptoticexpansion} extends the asymptotic expansion results from \cite{parraud2023asymptotic} to these spaces.
    \item \S \ref{sec.SDE} analyzes the Langevin SDE using an integral-equation framework.
    \item \S \ref{sec.transport} completes the construction of transport maps and proves Theorem \ref{mainthm}.
    \item \S \ref{sec.otherproofs} proves Theorems \ref{cor1} and \ref{cor2}.
\end{itemize}

\subsection*{Funding}

DJ was partially supported by the Danish Independent Research Fund, grant 1026-00371B; and an EU Horizon Marie Sk{\l}odowska Curie Action, FREEINFOGEOM, grant id: 101209517.
EAN was partially supported by NSF grant DGE 2038238.

\subsection*{Acknowledgments}

Early motivation for this work included discussions of DJ with Alice Guionnet and Yoann Dabrowski in Lyon in March 2020, supported by a Simons Visiting Professor grant from Mathematisches Forschungsinstitut Oberwolfach before the workshop ``Real Algebraic Geometry with a View Toward Hyperbolic Programming and Free Probability.''  Our collaboration began during the visit of FP to UC San Diego in May 2023, supported by the Linda Peetre Memorial Fund, and continued during the ``Workshop on Operator Algebras and Applications: Free Probability'' at the Fields Institute in November of 2023.  We thank Todd Kemp for his mentorship during the San Diego visit and Bruce Driver for the use of his office.

\section{Standard definitions}
\label{3deffree}

\subsection{Free probability}

We start by recalling some definitions from free probability.  See also \cite{VDN1992,AGZ2009,MS2017} for more background.

\begin{definition} ~
	\label{3freeprob}
	\begin{enumerate}[font=\normalfont,label=(\roman*)]
		\item A $\mathrm{W}^*$-probability space $(\cA,\tau) $ is a unital von Neumann algebra $\cA$ (with operator norm $\norm{\cdot}$) endowed with a faithful, normal, tracial state $\tau \colon \cA \to \C$, i.e., a $\sigma$-WOT--continuous linear map $\tau \colon \cA \to \C$ satisfying $\tau(1_{\cA})=1$, $\tau(ab) = \tau(ba)$ for all $a,b \in \cA$, and $\tau(a^*a)\geq 0$ for all $a\in \cA$, with equality if and only if $a=0$.
        An element of $\cA$ is called a (noncommutative) random variable.
		\item Unital $\ast$-subalgebras $\cA_1,\dots,\cA_n$ of $\cA$ are said to be freely independent, or free for short, if for all $k \in \N$ and all $a_1\in\cA_{j_1},\ldots,a_k \in \cA_{j_k}$ such that $j_1\neq j_2, j_2\neq j_3,\ldots,j_{k-1}\neq j_k$,
		\begin{equation}
			\label{osijs}
			\tau\Big( (a_1-\tau(a_1))(a_2-\tau(a_2))\dots (a_k-\tau(a_k)) \Big) = 0.
		\end{equation}
		Families of noncommutative random variables are said to be free if the $\ast$-subalgebras they generate are free. Note, in particular, that if $X$ and $Y$ are free, then
		\begin{equation}
			\label{oidscjosc}
			\tau(XY)=\tau(X)\tau(Y).
		\end{equation}

        \item Write $\C \la \cB \ra$ for the free unital $\C$-algebra on the set $\cB$, i.e., the set of noncommutative (complex) polynomials in the indeterminates $X = (X_e)_{e \in \cB}$.
        Two families $x=(x_e)_{e \in \cB} \in \cA^{\cB}$ and $y=(y_e)_{e \in \cB} \in \cA^{\cB}$ of noncommutative random variables are said to have the same noncommutative law or distribution if for all $P \in \C\la \cB \ra$,
        $$ \tau(P(x)) = \tau(P(y)).$$
		
		\item A family $x = (x_e)_{e \in \cB}$ of noncommutative random variables is called 
		a free semicircular system when the noncommutative random variables are free, 
		$x_e$ is self-adjoint (i.e., $x_e=x_e^*$) for all $e \in \cB$, and for all $k \in \N$ and all $e \in \cB$,
        $$
		\tau( x_e^k) = \begin{cases}
		    c_{\frac{k}{2}} & \mbox{if } k \text{ is even}, \\
			0 & \text{otherwise,}
		\end{cases}
		$$
		where $c_n$ is the $n^{\text{th}}$ Catalan number for all $n \in \N$.
        See \cite[Proposition 7.15]{nica_speicher} for a construction of such a system.
	\end{enumerate}
\end{definition}

We also introduce some general notation.

\begin{notation}~
	\label{3tra}
	\begin{enumerate}[font=\normalfont,label=(\roman*)]
		\item $e_1,\ldots,e_N$ is the standard basis of $\C^N$.
		\item $\Tr_N \colon A \mapsto \sum_{u=1}^N e_u^* A e_u $ is the non-normalized trace on $\M_N(\C)$, and $\tr_N \colon A \mapsto \frac1N \Tr_N(A)$ is the normalized trace on $\M_N(\C)$.
		\item $E_{r,s}=e_r^*e_s$ is the matrix with $(i,j)$ entry zero if $(i,j) \neq (r,s)$ and one if $(i,j) = (r,s)$.
	\end{enumerate}
\end{notation}

Let $(x^i_{r,s})_{1\leq i\leq d, 1\leq r\leq s\leq N}\cup (y^i_{r,s})_{1\leq i\leq d, 1\leq r< s\leq N} $ be a free semicircular system, fix $\cA_N=\M_N(\cA)$, and define $x_i^N\in\cA_N$ by
$$
\sqrt{N}\ (x_i^N)_{r,s} = \left\{
\begin{array}{ll}
	\frac{x_{r,s}^i+\ii y_{r,s}^i}{\sqrt{2}} & \mbox{if } r<s, \\
	x^i_{r,s} & \mbox{if } r=s, \\
	\frac{x_{s,r}^i-\ii y_{s,r}^i}{\sqrt{2}} & \mbox{if } r>s.
\end{array}
\right.
$$
Endowing $\cA_N$ with the involution ${(a_{i,j})_{1\leq i,j\leq N}}^* = (a_{j,i}^*)_{1\leq i,j\leq N}$ and the trace $\tau_N : A \mapsto\tau\left(\tr_N(A)\right)$, one has the following result.

\begin{proposition}
	\label{dojvdsomv}
	With the trace and the involution defined as above, $\cA_N$ is a $\mathrm{W}^*$-probability space. Moreover, the family $x^N=(x_1^N,\ldots,x_d^N)$ is a free semicircular system, and it is free from the $\ast$-subalgebra $\M_N(\C) \subseteq \cA_N$.
\end{proposition}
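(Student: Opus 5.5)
The statement has three parts: $\cA_N$ is a $\mathrm{W}^*$-probability space; $x^N$ is a free semicircular system; and $x^N$ is free from $\M_N(\C)$. I would prove them in that order.

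\textbf{Step 1: $\cA_N$ is a $\mathrm{W}^*$-probability space.} I identify $\cA_N=\M_N(\cA)$ with $\M_N(\C)\otimes\cA$, which is a von Neumann algebra acting on $\C^N\otimes H$, where $H$ is a Hilbert space on which $\cA$ acts normally. Then $\tau_N=\tr_N\otimes\tau$.
\begin{itemize}
\item Normality and unitality: $\tau_N$ is a finite sum of normal functionals $A\mapsto\tau(A_{ii})$, and $\tau_N(1)=1$.
\item Traciality: $\tau_N(AB)=\frac1N\sum_{i,j}\tau(A_{ij}B_{ji})=\frac1N\sum_{i,j}\tau(B_{ji}A_{ij})=\tau_N(BA)$.
\item Positivity and faithfulness: $\tau_N(A^*A)=\frac1N\sum_{i,j}\tau(A_{ji}^*A_{ji})\ge0$. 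Equality forces every $A_{ji}=0$, by faithfulness of $\tau$.
\end{itemize}

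\textbf{Step 2: mixed moments via operator-valued freeness.} The key tool is freeness with amalgamation over $\M_N(\C)$. The entries of the matrices $\sqrt N x_i^N$ are linear combinations of the semicircular system, and are therefore centered jointly circular/semicircular elements with covariance
\[
\tau\big((\sqrt N x_i^N)_{rs}(\sqrt N x_j^N)_{uv}\big)=\delta_{ij}\delta_{rv}\delta_{su}.
\]
So the tuple $(x_i^N)$ is an $\M_N(\C)$-valued semicircular family. Its covariance map is $\eta_{ij}(b)=E[x_i^N b\, x_j^N]$, where $E=\mathrm{id}\otimes\tau$ is the conditional expectation onto $\M_N(\C)$. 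The covariance computation gives $\eta_{ij}(b)=\delta_{ij}\tr_N(b)1$.

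Because $\eta$ takes scalar values $\tr_N(b)1$, the standard criterion (Nica--Shlyakhtenko--Speicher) shows that the family is free from $\M_N(\C)$ with respect to $\tau_N$, and that its scalar distribution is a free semicircular system.

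If I want to avoid quoting this theorem, I would argue concretely, which I expect to be the main obstacle. The plan is to compute $\tau_N(b_0x_{i_1}^Nb_1\cdots x_{i_k}^Nb_k)$ for $b_l\in\M_N(\C)$ in three moves.
\begin{enumerate}
\item Expand into matrix entries and apply the free Wick formula for the semicircular system. Only noncrossing pairings survive.
\item Use the covariance $\delta_{ij}\delta_{rv}\delta_{su}/N$ to turn each noncrossing pairing $\pi$ into an index contraction. This contraction yields $\prod\delta_{i\text{'s}}$ times a product of $\tr_N$ of words in the $b$'s, one for each block of the Kreweras complement $K(\pi)$, with the total power of $N$ equal to exactly $1$ because $|\pi|+|K(\pi)|=k+1$.
\item Compare with the free moment-cumulant formula. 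For a free semicircular family $s$ free from $\M_N(\C)$, the formula gives $\tau(b_0s_{i_1}\cdots b_k)=\sum_{\pi\in NC_2}\kappa_\pi[s]\,\tau_{K(\pi)}[b]$. This is the same expression, since the free cumulants of $s$ are $\delta_{ij}$ on pairs.
\end{enumerate}
Matching all mixed moments yields both the semicircular law of $x^N$ (take all $b_l=1$) and its freeness from $\M_N(\C)$. The only delicate point is bookkeeping the index contractions through the Kreweras complement. I would check it by induction on $k$: remove an interval pair $x_ib\,x_i$, which contracts to $\tr_N(b)\,1$ by the $\eta$ computation, and recurse.
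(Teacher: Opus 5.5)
Your proof is correct, and it differs from the paper mainly in that you actually prove the statement. The paper only cites \cite{parraud2025free} (Proposition 2.3) for the $\mathrm{C}^*$-version and says the $\mathrm{W}^*$-upgrade is a simple extra argument.

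Your Step 1 is exactly that extra argument. Normality comes from the compressions $A\mapsto A_{ii}$, and faithfulness from $\tau_N(A^*A)=\frac1N\sum_{i,j}\tau(A_{ji}^*A_{ji})$. Freeness and semicircularity are statements about moments, so nothing else changes between the two settings.

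Your Step 2 replaces the citation with a real argument. It also isolates the mechanism: the $\M_N(\C)$-valued covariance $\eta_{ij}(b)=\delta_{ij}\tr_N(b)1$ is scalar-valued. That is exactly why $x^N$ is free from all of $\M_N(\C)$, not merely free with amalgamation over it. The paper's route buys brevity; yours makes the result self-contained.

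Of your two variants, write out the concrete Wick--Kreweras computation. In the first variant, the claim that $(x_i^N)$ is an $\M_N(\C)$-valued semicircular family is itself a standard theorem that needs citing; it is not an immediate consequence of computing $\eta$. The entrywise computation proves it along the way and needs no operator-valued machinery. Your interval-pair induction, where $x_i b\, x_i$ contracts to $\tr_N(b)1$, is a clean way to organise the bookkeeping.

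When you write it, state the $N$-count precisely rather than as ``total power equal to $1$''. After merging $b_k b_0$ by traciality, the factor $N^{-1-k/2}$ comes from $\tr_N$ and the $k/2$ covariances. It is cancelled by $N^{|K(\pi)|}=N^{k/2+1}$ from the free index sums. So each noncrossing pairing contributes $\prod_{V\in K(\pi)}\tr_N(\cdot)$ with net factor $N^0$. This matches the moment--cumulant formula for a free semicircular system free from $\M_N(\C)$.
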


See \cite[Proposition 2.3]{parraud2025free} in the $\mathcal{C}^*$ case;
the $\mathrm{W}^*$ case is a simple extra argument.
In the sequel, we will drop the $N$ and simply write $x_i$ instead $x_i^N$.
Thanks to the previous proposition, the trace of a noncommutative polynomial in $x^N$ does not depend on $N$.
In particular, thanks to Equation \eqref{oidscjosc}, if $y^N$ is defined just like $x^N$ but with a family of semicircular variables free from those used to build $x^N$, then $x^N$ and $y^N$ have the same noncommutative distribution, and thanks to Equation \eqref{oidscjosc}, with $Z^N$ a family of matrices,

\begin{align}
\label{sgwlefnnf}
    \tau_N\left( P(x^N,Z^N) E_{a,b} \right) \tau_N\left( P(x^N,Z^N) E_{c,d} \right) &= \frac{1}{N^2} \tau\left( e_b^* P(x^N,Z^N) e_a \right) \tau\left( e_d^* P(x^N,Z^N) e_c \right) \\
    &= \frac{1}{N} \tau_N\left( e_b^* P(x^N,Z^N) e_a e_d^* P(y^N,Z^N) e_c \right) \nonumber \\
    &= \frac{1}{N} \tau_N\left( P(x^N,Z^N) E_{a,d} P(y^N,Z^N) E_{c,b} \right). \nonumber
\end{align}

\subsection{Random matrix theory}

In this subsection, we introduce the random matrix ensembles of interest for this paper and state a few useful properties about them. We begin with the definition of a Gaussian Unitary Ensemble (GUE) random matrix, which will be used in much of the rest of the paper.

\begin{definition}
	\label{3GUEdef}
	A GUE random matrix $X^N$ of size $N$ is an $N \times N$ Hermitian matrix whose coefficients are random variables with the following laws:
	\begin{enumerate}[font=\normalfont,label=(\roman*)]
		\item For $1\leq i\leq N$, the random variables $\sqrt{N} X^N_{i,i}$ are independent, centered Gaussian random variables of 
		variance $1$.
		\item For $1\leq i<j\leq N$, the random variables $\sqrt{2N} \cRe{X^N_{i,j}}$ and $\sqrt{2N} \cIm{X^N_{i,j}}$ are independent, 
		centered Gaussian random variables of variance $1$, independent of  $\big(X^N_{1,1},\ldots,X^N_{N,N}\big)$.
	\end{enumerate}
\end{definition}

The aim of this paper is to study more general random matrix ensembles.
Thus, we need to introduce some additional definitions.
Beware:
Some of the objects below, e.g., the space $\cC^k(X_1,\dots,X_d)$ of noncommutative smooth functions (see Definition \ref{def.NCsmoothbasic}), are introduced later.

\begin{definition}
    $W\in\cC^1(X_1,\dots,X_d)$ is said to be self-adjoint if for any $\sigma$-WOT--separable $\mathrm{W}^*$-probability space $(\cA,\tau)$ and all self-adjoint elements $x_1,\dots,x_d\in \cA$, $W(x)$ is also self-adjoint.
\end{definition}

We later introduce a number $\norm{W}_{1,\infty}$ measuring the ``size of the first derivative'' of a function $W \in \cC^1(X_1,\ldots,X_d)$.
Thanks to the discussion after Definition \ref{def.CkLip}, there is some $C < \infty$ such that if $\norm{W}_{1,\infty} < \infty$ and $H = (H_1,\ldots,H_d) \in \MnC_{\sa}^d$ is a $d$-tuple of $N \times N$ Hermitian matrices, then
$$ \norm{W(H) - W(0)} \leq C \norm{W}_{1,\infty} \norm{H}, $$
where $\norm{H} = \max\left\{\norm{H_i} : i=1,\ldots,d\right\}$.
Consequently, for such a $W$ that is also self-adjoint, if $X^N$ is a $d$-tuple of GUE random matrices, there exists a constant $Z \in (0,\infty)$ such that
\begin{align}
\label{slkdnvspm}
    \int_{\M_N(\C)_{\sa}^d} e^{-N \left( \frac{1}{2} \sum_{i=1}^d \Tr(H_i^2) + \Tr(W(H))\right)} dH &= Z\ \E\left[ e^{-N \Tr(W(X^N))} \right] \\ \nonumber
    &\leq Z e^{-N \Tr(W(0))} \E\left[ e^{C \norm{W}_{1,\infty} N^2 \norm{X^N}} \right], \\ \nonumber
    &= Z e^{-N \Tr(W(0))} \int_0^{\infty} \P\left( \norm{X^N} \geq \frac{\ln(t)}{C \norm{W}_{1,\infty} N^2} \right) dt,\numberthis \label{eq.layercake}
\end{align}
where $dH$ denotes integration against the Lebesgue measure on $\M_N(\C)_{\sa}^d$.
Now, thanks to, for example, \cite[Lemma 2.3.3]{AGZ2009} and the fact that $\sup\left\{\E\big[\norm{Y^N}\big] : N \in \N\right\} < \infty$ if $Y^N$ is a GUE random matrix (see \cite[Lemma 3.3.2]{AGZ2009}), there exists a constant $L$ such that for any GUE random matrix $X^N$ of size $N$ and any $\delta\geq 0$,
\begin{equation}
    \label{lvndlsvosndn}
    \P\left( \norm{X^N} \geq L+\delta \right) \leq 2 e^{- \frac{\delta^2 N}{2}}.
\end{equation}
Consequently, the quantity in \eqref{eq.layercake} is finite and thus so is the left-hand side of \eqref{slkdnvspm}.
Thus, we may define the following random matrix ensemble.

\begin{definition}
    \label{def:randompot}
    Let $W\in\cC^1(X_1,\dots,X_d)$ be self-adjoint and such that $\norm{W}_{1,\infty}$ is finite. We define the potential $V(X) = \frac{1}{2} \sum_{i=1}^d X_i^2 + W(X)$ and the probability measure $\mu_V^N$ on $\M_N(\C)_{\sa}^d$ by
    $$ d\mu_V^N(x) = \frac{e^{-N \left( \tr_N(V(x))\right)}}{Z_V^N} dx, $$
    where $dx$ is the Lebesgue measure on $\M_N(\C)_{\sa}^d$ and $Z_V^N$ is the (finite) number on the left-hand side of \eqref{slkdnvspm}. A $d$-tuple of random matrices with joint law $\mu_V^N$ will be referred to as a \textbf{multimatrix model with potential $V$}.
\end{definition}

Note that a multimatrix model with $W=0$ is just a $d$-tuple of independent GUE random matrices  Finally, in order to study multimatrix models, we will need an important notion of regularity for the potential $V$. 

\begin{definition}
\label{definition:kregular}
    We say $V$ is a $k$-regular potential if $V\in\cC^{4k+7}(X_1,\dots,X_d)$ is self-adjoint and $W\coloneqq V-\frac{1}{2}\sum_{i=1}^d X_i^2$ is such that
    $$\kappa \coloneqq \max_{1\leq p\leq d}\ \left\{ \sum_{i=1}^d\left( \norm{\partial_i\bD_pW}_{0,\infty} + \norm{\widetilde{\partial}_i \bD_p W }_{0,\infty} \right)\right\} < \frac{1}{4k+5} \; \text{ and } \; \max_{1\leq j\leq 4k+7} \norm{W}_{j,\infty} <\infty, $$ 
    where the $\partial_i$ and $\tilde{\partial}_i$ operations are defined in the next section, $\bD_p \coloneqq \cD_p + \id\otimes\tr \circ \widetilde{\partial}_p$ is defined with the help of the operators introduced in Proposition \ref{proppourvoila}, and $\norm{\cdot}_{0,\infty}$ is given by Definition \ref{def.norms-of-derivatives}.
\end{definition}

\section{Noncommutative smooth functions and derivatives} \label{sec.NCfunc}

For the duration of this section, fix a complex Hilbert space $H$, an orthonormal basis $\cB$ of $H$, a family $c = (c_e)_{e \in \cB} \in [1,\infty)^{\cB}$ of weights, and the set $\W$ of isomorphism classes of $\sigma$-WOT--separable $\mathrm{W}^*$-probability spaces.

\subsection{Preliminary definitions and an important example}

Recall that $\C \la \cB \ra$ is the set of noncommutative polynomials in the indeterminates $X = (X_e)_{e \in \cB}$.

\begin{notation}\label{nota.Hc,seminormAR}
Let $m \in \N$, $\ell \in \N_0 = \N \cup \{0\}$, $R > 0$, and $(\cA,\tau)$ be a $\mathrm{W}^*$-probability space.
\begin{enumerate}[label=(\roman*),font=\normalfont]
    \item Write
    \[
    \norm{h}_c \coloneqq \sum_{e \in \cB}\left|\ip{e,h}\right|c_e \in [0,\infty] \qquad (h \in H),
    \]
    and $H_c \coloneqq \{h \in H : \norm{h}_c < \infty\}$.
    Note that $H_c$ is a Banach space with the norm $\norm{\cdot}_c$.\label{item.Hc}
    \item Write
    \[
    \norm{x}_{\cA,c} \coloneqq \sup_{e \in \cB}\frac{\norm{x_e}}{c_e} \in [0,\infty] \qquad \left(x = (x_e)_{e \in \cB} \in \cA^{\cB}\right),
    \]
    and $\cA_c^{\cB} \coloneqq \big\{ x\in \cA^{\cB} : \norm{x}_{\cA,c} < \infty\big\}$.
    Note that $\cA_c^{\cB}$ is a Banach space with the norm $\norm{\cdot}_{\cA,c}$.
    Also, write $\cA_{c,\sa}^{\cB} \coloneqq \big\{x \in \cA_c^{\cB} : x_e \in \cA_{\sa}$ for all $e \in \cB\big\}$, where $\cA_{\sa} = \{a \in \cA : a^*=a\}$.\label{item.AcX}
    
    \item Write $\cA_{c,\sa}^{\cB,d}$ for the sets of all $(x_1,\ldots,x_d) \in (\cA_{c,\sa}^{\cB})^d$ such that for all $i,j=1,\ldots,n$, $x_i$ and $x_j$ have the same noncommutative law.
    Endow $\cA_{c,\sa}^{\cB,d}$ with the norm
    \[
    \norm{x}_{\cA,c,d} \coloneqq \max_{1 \leq i \leq d}\norm{x_i}_{\cA,c}.
    \]
    
    \item Write $\cA^{\ell,m} \coloneqq H_c^{\potimes \ell}\potimes \cA^{\potimes m}$, where $\potimes$ is the Banach space projective tensor product.
    For a function $f \colon \cA_{c,\sa}^{\cB,d} \to \cA^{\ell,m}$ and $R>0$, write
    \[
    \norm{f}_{\cA,c,R} \coloneqq \sup \left\{\norm{f(x)}_{\cA^{\ell,m}} : x \in \cA_{\sa,c}^{\cB,d}, \; \norm{x}_{\cA,c,d} \leq R\right\} \in [0,\infty],
    \]
    where $\norm{\cdot}_{\cA^{\ell,m}}$ is the Banach space projective tensor norm on $\cA^{\ell,m}$.
    Note that we are suppressing $\ell$ and $m$ in the notation $\norm{\cdot}_{\cA,c,R}$.
    We shall often use the notation $\norm{\cdot}$ instead of $\norm{\cdot}_{\cA^{\ell,m}}$ when the context is clear. \label{item.seminormAcR}

    \item Write $\partial \colon H_c^{\otimes \ell} \otimes \C\la \cB \ra^{\otimes m} \to H_c^{\otimes (\ell+1)} \otimes \C\la \cB \ra^{\otimes( m+1)}$ for the unique linear map that sends the pure tensor $h_1 \otimes \cdots \otimes h_{\ell} \otimes P_1 \otimes \cdots \otimes P_{m}$ to
    \[
    h_1 \otimes \cdots \otimes h_{\ell} \otimes \sum_{e \in \cB} e \otimes \partial_e P_1 \otimes P_2 \otimes \cdots \otimes P_{m},
    \]
    where $\partial_e \colon \C\la \cB \ra \to \C \la \cB \ra \otimes \C \la \cB \ra$ is the free difference quotient or noncommutative derivative with respect to the indeterminate $X_e$, i.e., the linear map from $\C \la \cB \ra$ to $\C \la \cB \ra \otimes \C \la \cB \ra$ such that if $M\in \C \la \cB \ra$ is a monomial, then
    $$ \partial_e M = \sum_{M=AX_eB} A\otimes B. $$
    Also, define the \emph{cyclic derivative} $\cD\colon \C\la \cB \ra \to H_c \otimes \C\la \cB \ra$ by $\cD \coloneqq m\circ\partial$, where $m \colon H_c \otimes \C\la\cB\ra \otimes \C\la\cB \ra \to H_c \otimes \C\la\cB\ra$ is the unique linear map sending the pure tensor $h \otimes A \otimes B$ to $h\otimes BA$.
    \label{item.nablaXi}

    \item For $y \in \cA_c^{\cB}$, write
    \[
    \cA^{1,2}  \ni T \mapsto T\sh y \in \cA^{0,1} = \cA
    \]
    for the unique bounded linear map sending the pure tensor $h \otimes a_1 \otimes a_2$~to
    \[
    \sum_{e \in \cB}\la e,h\ra \,a_1y_ea_2.
    \]
    Similarly, write
    \[
    \cA^{1,2}  \ni T \mapsto T\sh (y\otimes 1) \in \cA^{0,2} = \cA \potimes \cA
    \]
    for the unique bounded linear map sending the pure tensor $h \otimes a_1 \otimes a_2$~to
    \[
    \sum_{e \in \cB}\la e,h\ra \,a_1y_e\otimes a_2.
    \]
    \label{sharpdefi}
    
\end{enumerate}
\end{notation}

\begin{remark}\label{rem.sharpwelldef}
The element $\sum_{e \in \cB}\la e,h\ra \,a_1y_ea_2 = a_1\big(\sum_{e \in \cB}\la e,h\ra \,y_e\big)a_2$ above is well defined because
\[
\sum_{e \in \cB} \|\la e,h\ra \,y_e\| \leq \norm{y}_{\cA,c} \sum_{e \in \cB} \left|\la e,h\ra\right| c_e = \norm{y}_{\cA,c}\norm{h}_c < \infty
\]
by definition of $\norm{\cdot}_{\cA,c}$ and $\norm{\cdot}_c$.
Also, it follows from this estimate that the operator norm of $T \mapsto T \sh y$ is at most $\norm{y}_{\cA,c}$.
Similar comments apply to the operation $T \mapsto T \sh (y \otimes 1)$.
\end{remark}

One of the most important choices of $H$ and $\cB$ in this paper is the $L^2$ space $H = L^2(\R_+)$ of $\R_+ = [0,\infty)$ with respect to the Lebesgue measure and the Haar basis $\cB$, defined below, along with the weights of interest to us.
These particular choices will enable us to study the (asymptotic) behavior of certain matrix- and operator-valued stochastic differential equations.

\begin{definition}
\label{def:HaarBasis}
Define
\begin{equation*}
	\psi(x) \coloneqq \begin{cases}
		1 & \text{if } 0\leq x< \frac12,\\
		-1 & \text{if } \frac12\leq x< 1, \\
		0 & \text{otherwise.}
	\end{cases}
\end{equation*}
Then, for $j \in \N_0$ and $k =0,\ldots,2^j-1$, set
$$\psi_{j,k}(x) \coloneqq 2^\frac{j}{2} \psi(2^jx-k) \qquad (x \geq 0).$$
Next, let $\{\phi_i : i \in \N\}$ be the set of functions in $L^2([0,1])$ given by $\phi_1 \coloneqq \1_{[0,1]}$ and $\phi_i \coloneqq \psi_{j,k}$ whenever $i=2^j +k+1$ for $j \in \N_0$ and $k=0,\ldots,2^j-1$.\footnote{$\{\phi_i : i \in \N\}$ is an orthonormal basis of $L^2([0,1])$, usually referred to as the \emph{Haar basis}.}
Finally, for each $i,n \in \N$, define
\begin{align*}
    e_i^n(x) & \coloneqq \phi_i(x-(n-1)) \qquad (x \geq 0),  \\
    c_{e_i^n} & \coloneqq \sqrt{1+\ln\left(i\right) + \ln\left(n\right)}.
\end{align*}
We call $\cB \coloneqq \{e_i^n : i,n\in \N\}$ the Haar basis of $H \coloneqq L^2(\R_+)$ and $c = (c_e)_{e \in \cB}$ the Brownian weights on $\cB$.
The set $\cB$ is, in fact, an orthonormal basis of $H = L^2(\R_+)$.
\end{definition}

Later, we shall explain how to view Brownian motion as a family of random variables indexed by the family $(\1_{[0,t]})_{t\in\R_+}$ of elements of $H = L^2(\R_+)$.
Due to the importance of $H_c \subseteq H$ in our operations and spaces, it will be crucial to our development to show that $\1_{[0,t]}$ belongs to $H_c$.

\begin{proposition}\label{prop.indicatorHaarweightnorm}
Let $H$, $\cB$, and $c = (c_e)_{e \in \cB}$ be as in Definition \ref{def:HaarBasis}.
For all $s,t \geq 0$, $\1_{[s,t]}\in H_c$.
Furthermore, for every $\alpha \in (0,1/2)$, there exists a constant $C_{\alpha} < \infty$ such that for all $t\geq s\geq 0$,
    $$ \norm{\1_{[s,t]}}_c \leq \sqrt{1+\ln\left(t+1\right)}\ C_{\alpha} \max\left\{t-s,(t-s)^{\alpha}\right\}.$$
\end{proposition}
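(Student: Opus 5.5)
Write $\1_{[s,t]} = \sum_{n} \1_{[s,t]\cap[n-1,n]}$, splitting the interval into its pieces on the unit intervals $[n-1,n]$. Only the indices $n$ with $[n-1,n]\cap[s,t]$ of positive length contribute, i.e.\ $s< n$ and $n-1< t$, so $n \leq \lceil t\rceil \leq t+1$. For each such $n$ we have $\la e_i^n, \1_{[s,t]}\ra = \la \phi_i, \1_{[a_n,b_n]}\ra$ with $[a_n,b_n] = ([s,t]\cap[n-1,n]) - (n-1)\subseteq[0,1]$. The plan is therefore to bound, for a subinterval $[a,b]\subseteq[0,1]$ of length $\ell=b-a$, the quantity
\[
\sum_{i} \left|\la \phi_i,\1_{[a,b]}\ra\right| \sqrt{1+\ln i+\ln n},
\]
and then sum over $n$.

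For the Haar coefficients I use two facts. First, $\la\phi_1,\1_{[a,b]}\ra=\ell$. Second, $\la\psi_{j,k},\1_{[a,b]}\ra$ vanishes unless the support $[k2^{-j},(k+1)2^{-j})$ of $\psi_{j,k}$ contains $a$ or $b$; it also vanishes if the support lies entirely inside $[a,b]$, since $\psi$ has mean zero. Hence at each level $j$ at most two $k$ contribute. Each coefficient satisfies
\[
\left|\la\psi_{j,k},\1_{[a,b]}\ra\right| \leq 2^{j/2}\min\{\ell,2^{-j}\}.
\]
For these terms, $i = 2^j+k+1\le 2^{j+1}$, so $\ln i \le (j+1)\ln 2$. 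Using $\sqrt{x+y}\le\sqrt x+\sqrt y$, the sum is at most
\[
\sqrt{1+\ln n}\Big(\ell+4\sum_{j\ge0}\sqrt{1+(j+1)}\,2^{j/2}\min\{\ell,2^{-j}\}\Big).
\]

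Next I split the sum at $2^{-j}\approx\ell$. For $2^{j}\le 1/\ell$ the terms are $\ell\, 2^{j/2}\sqrt{j+2}$, which sums to $\lesssim \ell^{1/2}\sqrt{\log(2+1/\ell)}$. For $2^j>1/\ell$ the terms are $2^{-j/2}\sqrt{j+2}$, giving the same order. Since $\sqrt{\log(2+1/\ell)}\,\ell^{1/2}\le C_\alpha \ell^\alpha$ for $\ell\le1$ and any $\alpha<1/2$, each piece is bounded by $C_\alpha\sqrt{1+\ln n}\,\ell_n^{\alpha}$, where $\ell_n=b_n-a_n$. This logarithmic loss is the only delicate step, and it is exactly why $\alpha<1/2$ is required.

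Finally I sum over $n$. If $t-s\le 1$, then at most two $n$ contribute, each with $\ell_n\le t-s$, so the total is $\le 2C_\alpha\sqrt{1+\ln(t+1)}\,(t-s)^\alpha$, using $n\le t+1$. If $t-s>1$, at most $t-s+2\le 3(t-s)$ values of $n$ contribute, each with $\ell_n\le1$. This gives $\le 3C_\alpha\sqrt{1+\ln(t+1)}\,(t-s)$. Enlarging $C_\alpha$ yields the stated bound with $\max\{t-s,(t-s)^\alpha\}$, and in particular $\1_{[s,t]}\in H_c$.
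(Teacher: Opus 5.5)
Your proposal is correct and follows essentially the paper's argument: localize to unit intervals, observe that at each dyadic level only the (at most two) Haar functions whose support contains an endpoint contribute, bound each coefficient by $2^{j/2}\min\{\ell,2^{-j}\}$, sum over levels, and then sum over unit intervals. The only cosmetic difference is that the paper skips your split at $2^{-j}\approx\ell$. It uses $\min\{2^{-j},\ell\}\le 2^{-j(1-\alpha)}\ell^{\alpha}$ to get the convergent series $\sum_j \sqrt{j}\,2^{-j(1/2-\alpha)}$ directly, and it handles the full unit intervals through $e_1^n$ rather than reapplying the per-interval lemma.
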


\begin{proof}
    First, assume that $\lfloor t\rfloor = \lfloor s\rfloor$.
    In this case, if $\phi \coloneqq \1_{[s,t]}$, $n\neq \lfloor s \rfloor$, and $i \in \N$, then $\langle \phi, e_i^n\rangle =0$.
    Furthermore, if $j \in \N_0$ and $k=0,\ldots,2^j-1$, then
    \begin{itemize}
        \item either $s$ or $t$ belongs to $\left[\frac{k}{2^j}+\lfloor s \rfloor,\frac{k+1}{2^j}+\lfloor s \rfloor\right)$, in which case
        $$ \left|\left\langle \phi, e_{2^j+k+1}^{\lfloor s \rfloor}\right\rangle\right| = \left|\int_{\max\left\{\frac{k}{2^j},s\right\}}^{\min\left\{\frac{k+1}{2^j},t\right\}} e_{2^j+k+1}^{\lfloor s \rfloor}(s)\,ds \right| \leq 2^{\frac{j}{2}} \min\left\{2^{-j},t-s\right\} \leq 2^{-j\left(\frac{1}{2} -\alpha\right)} (t-s)^{\alpha},$$
        \item or else $\langle \phi, e_{2^j+k+1}^n\rangle=0$.
    \end{itemize}
    Consequently, 
    \begin{align*}
        \sum_{i,n=1}^{\infty} c_{e_i^n} \left|\langle \phi, e_i^n\rangle\right| &=  \sqrt{1+\ln\left(\lfloor s \rfloor +1\right)} \left|\left\langle \phi, e_1^{\lfloor s \rfloor} \right\rangle\right| \\
        &\quad\quad+ \sum_{j=0}^{\infty} \sum_{k=0}^{2^j-1} \sqrt{1+\ln\left(2^j+k+1\right)+\ln\left(\lfloor s \rfloor+1\right)} \left|\left\langle \phi, e_{2^j+k+1}^{\lfloor s \rfloor +1} \right\rangle\right| \\
        &\leq \sqrt{1+\ln\left(\lfloor s \rfloor+1\right)} \left( 1 + 2\sum_{j=0}^{\infty} \sqrt{\frac{1+\ln\left(2^{j+1}\right)}{2^{j(1-2\alpha)}} } \right) (t-s)^{\alpha}\\
    \end{align*}
    Thus, $\1_{[s,t]}\in H_c$, and there exists a universal constant $D_{\alpha} < \infty$ such that
    \[
    \norm{\1_{[s,t]}}_c \leq \sqrt{1+\ln(\lfloor s \rfloor +1)} D_{\alpha} (t-s)^{\alpha}.
    \]
    Now, in general,
    $$ \1_{[s,t]} = \1_{[s,\lceil s \rceil]} + \sum_{n=\lceil s \rceil+1}^{\lfloor t \rfloor} e_1^n + \1_{[\lfloor t \rfloor,t]}.$$
    Consequently, $\1_{[s,t]}\in H_c$, and 
    \begin{align*}
        \norm{\1_{[s,t]}}_c &\leq \sqrt{1+\ln\left(\lfloor s \rfloor+1\right)} D_{\alpha} (\lceil s \rceil-s)^{\alpha} \\
        &\qquad + \sum_{n=\lceil s \rceil+1}^{\lfloor t \rfloor} \sqrt{1+\ln\left(\lfloor n \rfloor+1\right)} D_{\alpha} + \sqrt{1+\ln\left(\lfloor t \rfloor+1\right)} D_{\alpha} (t-\lfloor t \rfloor)^{\alpha} \\
        &\leq 3\sqrt{1+\ln\left(t+1\right)} D_{\alpha} \max\left\{t-s,(t-s)^{\alpha}\right\},
    \end{align*}
    as desired.
\end{proof}

\subsection{The space of noncommutative trace polynomials}

\begin{definition}
    Define $\tr\left(\C \la \cB \ra\right)$ to be the vector space 
    $$ \bigslant{\C \la \cB \ra}{\spn\left\{ PQ-QP : P,Q\in \C \la \cB \ra \right\}}. $$
    The equivalence class in $\tr\left(\C\la\cB\ra\right)$ of an element $P \in \C\la\cB\ra$ is denoted by $\tr\left(P\right)$.
    Also, define $\Tr^0(\cB)$ to be the symmetric tensor algebra of $\tr\left(\C \la \cB \ra\right)$ modulo the relation $\tr\left(1\right) = 1$.
    Finally, define $\Tr^{\ell}(\cB) \coloneqq H_c^{\otimes \ell-1}\otimes \C \la \cB \ra^{\otimes \ell} \otimes \Tr^0(\cB)$ for all $\ell \in \N$.
\end{definition}

Let $\ell \in \N$ and $(\cA,\tau)$ be a $\mathrm{W}^*$-probability space.
Note that each element $Q \in \Tr^{\ell}(\cB)$ induces, via evaluation, a continuous function $Q^{\cA} \colon \cA_{c,\sa}^{\cB,\ell} \to \cA^{\ell-1,\ell}$ with $\norm{Q^{\cA}}_{\cA,c,R} < \infty$ for all $R > 0$. Indeed, suppose
\[
Q = h_1 \otimes \cdots \otimes h_{\ell-1} \otimes P_1 \otimes \cdots \otimes P_{\ell} \otimes \tr\left(R_1\right)\otimes \cdots\otimes \tr\left(R_m\right)
\]
for some $h_1,\ldots,h_{\ell-1} \in H_c$ and $P_1,\ldots,P_{\ell},R_1,\dots,R_m \in \C\la \cB \ra$.
For $x = (x_1,\ldots,x_{\ell}) \in \cA_{c,\sa}^{\cB,\ell}$, define
\[
Q^{\cA}(x) = Q(x) \coloneqq h_1 \otimes \cdots \otimes h_{\ell-1} \otimes P_1(x_1) \otimes \cdots \otimes P_{\ell}(x_{\ell}) \times \tau(R_1(x_1))\dots \tau(R_{m}(x_{1})).
\]
Since $x_i$ and $x_j$ have the same noncommutative law for all $i,j=1,\ldots,\ell$, it does not matter which $x_i$ we plug into $R_1,\ldots,R_{m}$.
We leave it to the reader to verify that $Q^{\cA}$ is continuous and $\norm{Q^{\cA}}_{\cA,c,R} < \infty$ for all $R > 0$. Finally, beware that we shall often suppress the superscript $\cA$ in $Q^{\cA}$ and simply write $Q$ in order to lighten the notational burden.

\begin{definition}[Noncommutative differential of a trace polynomial]

    We first set $ \partial : \C\la \cB\ra \to H_c\otimes \C\la \cB\ra^{\otimes 2} $ by
    $$ \partial P = \sum_{e\in\cB} e\otimes \partial_e P, $$
    and
    $$ \partial^{(s)} \coloneqq (\id_{H_c^{\otimes (s-1)}}\otimes \partial \otimes\id_{\C\la\cB\ra^{\otimes (s-1)}})\circ\cdots\circ (\id_{H_c}\otimes\partial\otimes\id_{\C\la\cB\ra})\circ \partial, $$
    and for $\ell\geq 1$, we define $\partial^{(s)}:\Tr^{\ell}(\cB)\to \Tr^{\ell+s}(\cB)$ by 
    \begin{align*}
        &\partial^{(s)} \left(h_1\otimes\cdots\otimes h_{\ell} \otimes P_1\otimes \cdots\otimes P_{\ell}\otimes \tr(R_1)\otimes \cdots\otimes \tr(R_m) \right)\\
        &\coloneqq h_1\otimes\cdots\otimes h_{\ell} \otimes (\partial^{(s)} P_1) \otimes P_2\otimes \cdots\otimes P_{\ell}\otimes \tr(R_1)\otimes \cdots\otimes \tr(R_m).
    \end{align*}
    We also define $\widetilde{\partial}$ by 
    \begin{align*}
        &\widetilde{\partial} \left(h_1\otimes\cdots\otimes h_{\ell} \otimes P_1\otimes \cdots\otimes P_{\ell}\otimes \tr(R_1)\otimes \cdots\otimes \tr(R_m) \right)\\
        &\coloneqq \sum_{i=1}^m h_1\otimes\cdots\otimes h_{\ell} \otimes \cD R_i \otimes P_1 \otimes P_2\otimes \cdots\otimes P_{\ell} \otimes \tr(R_1)\otimes \cdots \tr(R_{i-1})\otimes \tr(R_{i+1})\cdots\otimes \tr(R_m).
    \end{align*}
    And finally we set $\widetilde{\partial}^{(s)}\coloneqq \partial^{(s-1)}\circ\widetilde{\partial}$.
\end{definition}

\begin{remark}
    We will sometimes need the usual free different quotient with respect to the variable $X_e$, i.e., the map $\partial_e := \la e| \circ \partial \colon \Tr^1(\cB) \to \Tr^1(\cB)$, where $\langle e| (h\otimes A\otimes B) = \la e, h\ra A\otimes B$. Similarly, we define $\cD_e$. Finally, when $\cB = \{e_1,\dots,e_d\}$ is finite, we denote by $\partial_i$ the map $\partial_{e_i}$.
\end{remark}

Note that the operator $\widetilde{\partial}$ is well-defined.
Indeed, for any $P,Q\in\C\la\cB\ra$, $\cD(PQ)=\cD(QP)$, therefore the operator $\cD$ is well-defined on $\tr\left(\C \la \cB \ra\right)$.

\begin{definition}\label{def.NCsmooth}
Let $\cB_p$ be the orthonormal basis of the direct sum $H^p = H^{\oplus p}$ induced by the orthonormal basis $\cB$ of $H$, $k \in \N_0$, $R > 0$, and $Q \in \Tr^1(\cB_p)$.
Write
\[
\norm{Q}_{j,c,R} \coloneqq \sup_{(\cA,\tau) \in \mathbb{W}}\ \sup_{n\geq 0}\ \sup_{j_0+\dots+j_n=j} \norm{\Big(\widetilde{\partial}^{(j_n)}\circ\cdots\circ \widetilde{\partial}^{(j_1)}\circ \partial^{(j_0)}Q\Big)^{\cA}}_{\cA,c,R},
\]
where we view $\Big(\widetilde{\partial}^{(j_n)}\circ\cdots\circ \widetilde{\partial}^{(j_1)}\circ \partial^{(j_0)}Q\Big)^{\cA}$ as a function from $\cA_{c,\sa}^{\cB,p\times (j+1)}$ to $\cA$ where each of the $j+1$ tensorand is evaluated in different variables.
We then set
\[
\norm{Q}_{\cC^k,R} \coloneqq \sup_{0\leq j\leq k} \norm{Q}_{j,c,R}.
\]
Define $\cC^k_p(\cB)$ to be the Fr\'echet-space completion of $\Tr^1(\cB_p)$ with respect to the family $\big\{\norm{\cdot}_{\cC^k,R} : R > 0\big\}$ of seminorms and $\cC^{\infty}_p(\cB)$ to be the Fr\'echet-space completion of $\Tr^1(\cB_p)$ with respect to the family $\big\{\norm{\cdot}_{\cC^k,R} : R > 0, \, k \in \N\big\}$ of seminorms. And finally, we set $\cC^k(\cB)$ the union of every $\cC^k_p(\cB)$.
\end{definition}

Here is a useful concrete description of $\cC^k(\cB)$ for $k \in \N_0 \cup \{\infty\}$.
Consider the space $\cS$ of $\W$-tuples $\mathbf{f} = \big(\mathbf{f}^{\cA}\big)_{(\cA,\tau) \in \W}$, where
\[
\mathbf{f}^{\cA} = \Big( f_{j_0,\dots,j_n}^{\cA} \colon \cA_{c,\sa}^{\cB,d\times(j_0+\dots+j_n+1)} \to \cA^{j_0+\dots+j_n,j_0+\dots+j_n+1}\Big)_{0 \leq j_0+\dots+j_n \leq k},
\]
such that the functions $f_{j_0,\dots,j_n}^{\cA}$ are continuous and satisfy
\[
\sup_{(\cA,\tau) \in \W}\big\|f_{j_0,\dots,j_n}^{\cA} \big\|_{\cA,c,R} < \infty \qquad (R > 0).
\]
(We use the notation $f^{\cA}$ for the function corresponding to the choice $j=0$.)
This space is a Fr\'echet space with the topology induced by the seminorms
\[
\mathbf{f} \mapsto \sup_{(\cA,\tau) \in \W}\big\|f_{j_0,\dots,j_n}^{\cA} \big\|_{\cA,c,R} \qquad (0 \leq j_0+\dots+j_n \leq k).
\]
If $Q \in \C\la \cB_p \ra$ and $\mathbf{Q} = \big( \mathbf{Q}^{\cA}\big)_{(\cA,\tau) \in \W}$ is defined by
\[
\mathbf{Q}^{\cA} \coloneqq \bigg( \widetilde{\partial}^{(j_n)}\circ\cdots\circ \widetilde{\partial}^{(j_1)}\circ \partial^{(j_0)}Q^{\cA} \colon \cA_{c,\sa}^{\cB,d\times (j_0+\dots+j_n+1)} \to \cA^{j_0+\dots+j_n,j_0+\dots+j_n+1} \bigg)_{0 \leq j_0+\dots+j_n \leq k},
\]
where we assume that each tensorand is evaluated in a different set of variables indexed by $\cB_p$, then $\mathbf{Q} \in \cS$.
The space $\cC^k_p(\cB)$ is the closure of $\big\{\mathbf{Q} : Q \in \C\la \cB\ra \big\}$ in $\cS$. 
We therefore write
\begin{equation}
    \widetilde{\partial}^{(j_n)}\circ\cdots\circ \widetilde{\partial}^{(j_1)}\circ \partial^{(j_0)}f^{\cA} \coloneqq f_{j_0,\dots,j_n}^{\cA} \qquad \left(\mathbf{f} = \big(f^{\cA}\big)_{(\cA,\tau) \in \W} \in \cC^k(\cB)\right).\label{eq.partialXnotation}
\end{equation}
As a consequence of this description, it is clear that if $k_1 \geq k_2$, then $\cC^{k_1}(\cB) \hookrightarrow \cC^{k_2}(\cB)$ in such a way that there is no notation conflict in \eqref{eq.partialXnotation}. Finally, note that outside of this section, we will often suppress the superscript $\cA$ in $f^{\cA}$, and simply write $f$, in order to lighten notations.

\begin{remark} \label{rem:finitecase}
    Note that if $\cB$ is finite, i.e., $H=\C^d$ for some $d$, then the choice of weight $c=(c_e)_{e\in\cB}$ does not matter, in the sense that the resulting space $\cC^k(\cB)$ will not depend on $c$. Therefore, we introduce the following self-contained definition. Note that the space $\cC^k(X_1,\dots,X_d)$ defined below corresponds to the space $\cC^k_1(\cB)$ of Definition \ref{def.NCsmooth} with $\cB$ the standard basis of $\C^d$.
\end{remark}

\begin{definition}
    \label{def.NCsmoothbasic}
Let $\Tr^1(X_1,\dots,X_d)$ be the space of trace polynomials in $d$ variables, $k \in \N_0$, $R > 0$, and $Q \in \Tr^1(X_1,\dots,X_d)$. Write
\[
\norm{Q}_{j,R} \coloneqq \sup_{(\cA,\tau) \in \mathbb{W}}\ \sup_{n\geq 0}\ \sup_{j_0+\dots+j_n=j} \norm{\Big(\widetilde{\partial}^{(j_n)}\circ\cdots\circ \widetilde{\partial}^{(j_1)}\circ \partial^{(j_0)}Q\Big)^{\cA}}_{\cA,R},
\]
where
$$ \norm{f}_{\cA,R} = \sup \left\{ \norm{f(x_1,\dots,x_{j+1})}_{\cA^{j,j+1}} \colon x_a\in\cA, \norm{x_a}\leq R, \forall a,b, x_a\sim x_b \right\}, $$
where one says that $x\sim y$ if $x$ and $y$ have the same noncommutative law, and given $P_1,\dots,P_{j+1}$ trace polynomials, we set
$$ \left( P_1\otimes\dots\otimes P_{j+1}\right)(x_1,\dots,x_{j+1}) = P_1(x_1)\otimes\dots\otimes P_{j+1}(x_{j+1}).  $$
We then set
\[
\norm{Q}_{\cC^k,R} \coloneqq \sup_{0\leq j\leq k} \norm{Q}_{j,R},
\]
and we define $\cC^k(X_1,\dots,X_d)$ to be the Fr\'echet-space completion of $\Tr^1(X_1,\dots,X_d)$ with respect to the family $\big\{\norm{\cdot}_{\cC^k,R} : R > 0\big\}$ of seminorms and $\cC^{\infty}(X_1,\dots,X_d)$ to be the completion with respect to the family $\big\{\norm{\cdot}_{\cC^k,R} : R > 0, \, k \in \N\big\}$ of seminorms.
\end{definition}

At several points, we will need to directly reference the norms of individual derivatives of noncommutative smooth functions.  Note that, since the derivatives are functions which output values in a tensor product, they are not elements of the original space.  Hence, we also make the following definition.

\begin{definition} \label{def.norms-of-derivatives}
Given $\ell,m \in \N$ and a function $\cf$ such that 
\[
\mathbf{f} = \Big( \cf^{\cA} \colon \cA_{c,\sa}^{\cB,d} \to \cA^{\ell,m}\Big)_{(\cA,\tau)\in\W},
\]
we define
\[
\norm{\cf}_{0,c,R} \coloneqq \sup_{(\cA,\tau) \in \mathbb{W}}\ \norm{\cf^{\cA}}_{\cA,c,R}.
\]
Similarly, in the case where $c_e = 1$ for all $e$ (such as the case of finite $\cB$) discussed above), we write simply $\norm{\cdot}_{0,R}$ rather than $\norm{\cdot}_{0,c,R}$.  Note that the dependence on $\ell,m$ is suppressed in the notation.
\end{definition}

Thus, in particular, Definition \ref{def.NCsmooth} can be expressed as
\[
\norm{Q}_{j,c,R} = \sup_{n\geq 0}\ \sup_{j_0+\dots+j_n=j} \norm{\Big(\widetilde{\partial}^{(j_n)}\circ\cdots\circ \widetilde{\partial}^{(j_1)}\circ \partial^{(j_0)}Q\Big)}_{0,c,R}.
\]

Next, let us explain how the abstract derivatives in our space of noncommutative functions relate to ordinary Fr{\'e}chet derivatives.

\begin{proposition}[Interpretation of $\partial$ and $\#$]\label{prop.nablaXiinterp}
If $Q \in \Tr^1(\cB)$, $(\cA,\tau)$ is a tracial von Neumann algebra, then $Q^{\cA} \in C^{\infty}\big(\cA_{c,\sa}^{\cB}; \cA\big)$ with respect to $\norm{\cdot}_{\cA,c}$ and $\norm{\cdot}_{\cA^{0,1}}$, and
\begin{equation}
    DQ^{\cA}(x)[y] = \partial Q^{\cA}(x,x)\sh y + \tau\otimes\id_{\cA}\left( \widetilde{\partial}Q^{\cA} \sh (y\otimes 1) \right) \qquad \left(x, y \in \cA_{c,\sa}^{\cB}\right),\label{eq.polyderiv}
\end{equation}
where $D$ denotes the Fr\'echet derivative---in particular, $D^0Q^{\cA}=Q^{\cA}$ and,
$$ D^jQ^{\cA}(x)[y_1,\dots,y_j] = \lim_{t\to 0} \frac{D^{j-1}Q^{\cA}(x+ty_j)[y_1,\dots,y_{j-1}] - D^{j-1}Q^{\cA}(x)[y_1,\dots,y_{j-1}]}{t}. $$
Besides, there exists a universal constant $C_j$ such that for all $x\in \cA^{\cB}_{c,\sa}$ with $\norm{x}_{\cA,c}\leq R$,
\begin{equation}
    \label{eq:frechetdiff}
    \norm{D^jQ^{\cA}(x)[y]}_{\cA^{0,1}} \leq C_j \norm{Q}_{j,c,R} \norm{y_1}_{\cA,c}\dots \norm{y_j}_{\cA,c}.
\end{equation}
\end{proposition}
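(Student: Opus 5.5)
By linearity it suffices to treat a pure tensor $Q = P\otimes \tr(R_1)\otimes\cdots\otimes\tr(R_m)$, so that $Q^{\cA}(x) = P(x)\,\tau(R_1(x))\cdots\tau(R_m(x))$. Reducing further, each of $P, R_1,\dots,R_m$ is a finite linear combination of monomials, and each monomial involves only finitely many indeterminates $X_e$. The plan is to establish the derivative formula first for the elementary factors and then assemble it by the Leibniz rule.

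First I treat a single monomial $M = X_{e_1}\cdots X_{e_n}$. The map $x\mapsto M(x)=x_{e_1}\cdots x_{e_n}$ is the restriction of a bounded $n$-linear map to the diagonal: each coordinate evaluation $x\mapsto x_e$ is linear with norm at most $c_e$ with respect to $\norm{\cdot}_{\cA,c}$. Hence it is $C^\infty$, and
\[
DM(x)[y]=\sum_{i} x_{e_1}\cdots x_{e_{i-1}}\, y_{e_i}\, x_{e_{i+1}}\cdots x_{e_n}.
\]
The factorisation $M = A X_{e_i} B$ with $A = X_{e_1}\cdots X_{e_{i-1}}$ and $B = X_{e_{i+1}}\cdots X_{e_n}$ shows that this sum equals $\partial M(x,x)\sh y$, by the definition of $\sh$ in Notation~\ref{nota.Hc,seminormAR}\ref{sharpdefi}. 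For a trace factor, $\tau$ is bounded and tracial, so
\[
D\tau(R(x))[y]=\sum_{R=AX_eB}\tau(A(x)\,y_e\,B(x))=\sum_{R=AX_eB}\tau(B(x)A(x)\,y_e),
\]
which is $\tau$ applied to $\cD R(x)\sh y$, with $\cD R(x)$ read as the element $\sum e\otimes BA$. Combining the factors by the product rule gives the first term $\partial Q(x,x)\sh y$ from $P$ and, for each $i$, a term $P(x)\,\tau(\cD R_i(x)\sh y)\prod_{j\neq i}\tau(R_j(x))$. The latter terms are exactly $\tau\otimes\id_{\cA}\big(\widetilde{\partial}Q\sh(y\otimes 1)\big)$, because $\widetilde{\partial}$ places $\cD R_i$ in the first slot, where $y$ is inserted and $\tau$ applied, and places $P$ in the second slot. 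This proves \eqref{eq.polyderiv}. Smoothness of all orders follows by iterating: $DQ^{\cA}(x)[y]$ is again a trace polynomial in $x$, affine-linear in $y$, so the same reasoning applies at every order.

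For \eqref{eq:frechetdiff}, I would show by induction on $j$ that $D^jQ^{\cA}(x)[y_1,\dots,y_j]$ can be written as a sum, indexed by the different ways of differentiating, of terms of the following form:
\[
\big(\text{some }\widetilde{\partial}^{(j_n)}\circ\cdots\circ\widetilde{\partial}^{(j_1)}\circ\partial^{(j_0)}Q \text{ with } j_0+\dots+j_n=j\big)(x,\dots,x),
\]
into which the $y_i$ are inserted by iterated $\sh$ operations, with $\tau$ applied on some tensor legs, followed by a multiplication. The number of such terms is bounded by a constant $C_j$ depending only on $j$. Each term is then bounded using three facts: Remark~\ref{rem.sharpwelldef} gives that each $\sh y_i$ has norm at most $\norm{y_i}_{\cA,c}$; $\tau$ and multiplication are contractive on the projective tensor product; and evaluating at the diagonal $(x,\dots,x)$ is admissible in the supremum defining $\norm{Q}_{j,c,R}$, since copies of $x$ trivially have the same law. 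Together these give \eqref{eq:frechetdiff}.

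The main obstacle is the bookkeeping in this induction. When differentiating a term that has already been differentiated, the operators must be recognised as compositions of the prescribed type $\widetilde{\partial}^{(j_n)}\circ\cdots\circ\partial^{(j_0)}$ applied to $Q$, with tensor legs permuted; the leg permutations are harmless because $\sh$ and $\tau$ commute with them isometrically. The key identity for this recognition is that differentiating a $\cD R$ leg is given by $\partial$ on that leg. This is consistent with the convention $\widetilde{\partial}^{(s)}=\partial^{(s-1)}\circ\widetilde{\partial}$, so at each step I must verify that the new term is obtained by applying a further $\partial$ or $\widetilde{\partial}$, appropriately placed, to the previous term.
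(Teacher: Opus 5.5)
Your proposal is correct and follows essentially the same route as the paper: the paper computes the directional derivative on the finitely many variables involved and upgrades it to a Fr\'echet derivative via continuity of $x \mapsto DQ^{\cA}(x)$, whereas you get Fr\'echet smoothness directly because $Q^{\cA}$ is built from bounded multilinear maps of finitely many coordinates, which is slightly cleaner. For $D^j$ the paper runs exactly the induction you describe: it writes the derivative through evaluation-and-insertion maps $m_x$ (insert the $y_i$, apply $\tau$ to groups of legs, multiply) together with the leg-permutation identity $(\mathrm{id}\otimes\partial\otimes\mathrm{id})\circ\partial^{(n-1)}M = T_s\circ\partial^{(n)}M$, and then bounds each term by the contractivity of these maps for the projective tensor norm, as in your plan.
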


\begin{proof}
Recall that the operator $\#$ was defined in Notation \ref{nota.Hc,seminormAR}. For $y \in \cA$, we also write $u\in \cA \potimes \cA \mapsto u\sh y \in \cA$ for the bounded linear map sending $a \otimes b$ to $ayb$, and $u\in \cA \potimes \cA \mapsto u\sh (y\otimes 1) \in \cA \potimes \cA$ for the bounded linear map sending $a \otimes b$ to $ay\otimes b$. (In particular, the symbol $\sh$ may mean something different from one line to another.)
If $Q \in \C\la \cB \ra$, then there exists a finite set $F \subseteq \cB$ such that $Q \in \C\la F \ra$.
Thus, if $x,y \in \cA_{c,\sa}^{\cB}$, then 
\begin{align*}
    \frac{\d}{\d t}\Big|_{t=0} Q(x+ty) & = \frac{\d}{\d t}\Big|_{t=0} Q((x_e)_{e \in F} + t(y_e)_{e \in F}) \\
    & = \sum_{e \in F} \partial_eQ((x_e)_{e \in F},(x_e)_{e \in F})\sh y_e + \tau\otimes\id_{\cA}\left( \widetilde{\partial}_eQ((x_e)_{e \in F},(x_e)_{e \in F})\sh y_e\otimes 1\right) \\
    & = \sum_{e \in \cB} \partial_eQ(x,x)\sh y_e + \tau\otimes\id_{\cA}\left( \widetilde{\partial}_eQ(x,x)\sh y\otimes 1\right)
\end{align*}
because $\partial_e Q = 0$ for all $e \in \cB \setminus F$.
Also,
\[
\partial Q(x,x)\sh y = \sum_{e \in \cB} (e \otimes \partial_eQ(x,x))\sh y = \sum_{e \in \cB}\sum_{f \in \cB} \underbrace{\la f,e\ra}_{\delta_{ef}} \partial_eQ(x,x)\sh y_f = \sum_{e \in \cB} \partial_eQ(x,x)\sh y_e + \widetilde{\partial}_eQ(x,x)\sh y_e.
\]
And similarly,
$$ \tau\otimes\id_{\cA}\left( \widetilde{\partial}Q \sh (y\otimes 1) \right) = \sum_{e \in \cB} \tau\otimes\id_{\cA}\left( \widetilde{\partial}_eQ(x,x)\sh y\otimes 1\right). $$
Thus Equation \eqref{eq.polyderiv} holds. Since the map
\[
\cA_{c,\sa}^{\cB} \ni x \mapsto \Big(y \mapsto \partial Q(x,x)\sh y + \tau\otimes\id_{\cA}\left( \widetilde{\partial}Q \sh (y\otimes 1) \right) \Big) \in B\big(\cA_{c,\sa}^{\cB};\cA\big)
\]
is continuous, we conclude from \cite[Fact 1.73]{HJ2014} that $Q^{\cA} \in C^1\big(\cA_{c,\sa}^{\cB};\cA\big)$ and
\[
DQ^{\cA}(x)[y] = \partial Q(x,x)\sh y \qquad \left(x,y \in \cA_{c,\sa}^X\right),
\]
as desired.

To obtain the higher-order derivatives and in particular to prove \eqref{eq:frechetdiff}, we proceed by induction. Let $m_x$ be such that 
\begin{align}
\label{lskjdvnskvn}
    &m_x\left(e_1\otimes \dots\otimes e_{j-1}\otimes A_1\otimes\dots\otimes A_j \otimes \tr(P_1)\otimes\dots\otimes \tr(P_m)\right) \\
    &= \tau(A_1(x) y_{e_1}\dots A_{i_1}(x) y_{e_{i_1}})\dots \tau(A_{i_{l-1}+1}(x) y_{e_{i_{l-1}+1}} \dots A_{i_l}(x) y_{e_{i_l}} ) \times \nonumber \\
    &\quad A_{i_l+1}(x) y_{e_{i_l+1}} A_{i_l+2}(x)\dots y_{e_{j-1}} A_j(x) \tau(P_1(x))\dots \tau(P_m(x)), \nonumber
\end{align}
for some integer $l$. We now want to compute
\begin{equation}
\label{sldfnvslknv}
    \frac{\d}{\d t}\Big|_{t=0} m_{x+ty}\left( \widetilde{\partial}^{(j_n)}\circ\cdots\circ \widetilde{\partial}^{(j_1)}\circ \partial^{(j_0)}Q \right),
\end{equation}
where $j_0+\dots+j_n = j-1$. First, let us remark that
$$ \frac{\d}{\d t}\Big|_{t=0} \tau(A_1(x) y_{e_1}\dots A_n(x) y_{e_n}) = \sum_{s=1}^n \tau(A_1(x) y_{e_1}\dots \partial A_s(x)\sh y \dots A_n(x) y_{e_n}), $$
$$ \frac{\d}{\d t}\Big|_{t=0} A_1(x) y_{e_1}\dots y_{e_{n-1}} A_n(x) = \sum_{s=1}^n A_1(x) y_{e_1}\dots \partial A_s(x)\sh y \dots y_{e_{n-1}} A_n(x), $$
$$ \frac{\d}{\d t}\Big|_{t=0} \tau(P_1(x))\dots \tau(P_m(x)) = \sum_{s=1}^m \tau(P_1(x))\dots \tau(\partial P_s \sh y) \dots \tau(P_m(x)).$$
Thus with 
$$ m_x^1\left(A_1 \otimes \dots \otimes A_n \right) = \tau(A_1(x) y_{e_1}\dots A_n(x) y_{e_n}), $$
$$ m_x^2\left(A_1 \otimes \dots \otimes A_n \right) = A_1(x) y_{e_1}\dots y_{e_{n-1}} A_n(x), $$
$$ m_x^3\left(\tr(P_1) \otimes \dots \otimes \tr(P_m) \right) = \tau(P_1(x))\dots \tau(P_m(x)),$$
we get that for some maps $m^{a,b}$ of the form defined in Equation \eqref{lskjdvnskvn},
$$ \frac{\d}{\d t}\Big|_{t=0} m_{x+ty}^1\left(\widetilde{\partial}^{(n)}\tr(P) \right) = \sum_{s=1}^n m_x^{1,s}\left(\id_{\C\la\cB\ra^{\otimes s-1}}\otimes\partial\otimes \id_{\C\la\cB\ra^{\otimes n-s}} \circ \widetilde{\partial}^{(n)}\tr(P) \right), $$
$$ \frac{\d}{\d t}\Big|_{t=0} m_{x+ty}^2\left(\partial^{(n-1)} P \right) = \sum_{s=1}^n m_x^{2,s}\left(\id_{\C\la\cB\ra^{\otimes s-1}}\otimes\partial\otimes \id_{\C\la\cB\ra^{\otimes n-s}} \circ \partial^{(n-1)}P \right), $$
$$ \frac{\d}{\d t}\Big|_{t=0}m_{x+ty}^3\left(\tr(P_1) \otimes \dots \otimes \tr(P_m) \right) = m^{3,1}\left( \widetilde{\partial}\left( \tr(P_1) \otimes \dots \otimes \tr(P_m)\right) \right).$$
However, note that for $M$ any monomial,
\begin{align*}
    &\id_{\C\la\cB\ra^{\otimes s-1}}\otimes\partial\otimes \id_{\C\la\cB\ra^{\otimes n-s}} \circ \partial^{(n-1)}M \\
    &=\sum_{f,e_1,\dots,e_{s-1}} \sum_{M=A_1X_{e_1}\dots A_s X_{f} A_{s+1}\dots X_{e_{n-1}} A_{n+1}} e_{n-1}\otimes \dots\otimes e_1\otimes f\otimes A_1 \otimes \dots \otimes A_{n+1} \\
    &=\sum_{f,e_1,\dots,e_{s-1}} \sum_{M=A_1X_{e_1}\dots X_{e_{s-1}} A_s X_{f} A_{s+1} X_{e_s}\dots X_{e_{n-1}} A_{n+1}} \\
    &\quad\quad\quad\quad\quad\quad\quad\quad\quad\quad\quad\quad\quad T_s\left(e_{n-1}\otimes \dots e_s \otimes f\otimes e_{s-1} \dots\otimes e_1\otimes f\otimes A_1 \otimes \dots \otimes A_{n+1}\right) \\
    &= T_s\circ \partial^{(n)}M,
\end{align*}
where $T_s$ is an operator permuting tensorands. Thus by combining those equations one can compute \eqref{sldfnvslknv} as a finite linear combination of terms involving different maps $m_x$ defined in \eqref{lskjdvnskvn}, permutations operator, and 
$$ \widetilde{\partial}^{(i_n)}\circ\cdots\circ \widetilde{\partial}^{(i_1)}\circ \partial^{(i_0)}Q,$$
where $i_0+\dots+i_n=j$. Besides, $T_s$ as a linear operator on $\cA^{j,j+1}$ has norm $1$, and as long as $\norm{x}_{\cA,c}\leq R$,
$$ \norm{m_x(H)} \leq \norm{H}_{\cA,c,R} \norm{y_1}_{\cA,c}\dots \norm{y_j}_{\cA,c}. $$
Thus by combining those estimates, we get Equation \eqref{eq:frechetdiff} and we conclude that $Q^{\cA} \in C^k\big(\cA_{c,\sa}^{\cB};\cA\big)$ for any $k$.
\end{proof}

As in the polynomial case, the $\partial^{(j)}$ operators enable the calculation of the derivative(s) of ``functions'' in $\cC^k(\cB)$.

\begin{proposition} 
\label{skjodncskncv}
If $\mathbf{f} \in \cC^k_1(\cB)$, then $f^{\cA} \in C^k\big(\cA_{c,\sa}^{\cB};\cA\big)$, and
\begin{equation}
    Df^{\cA}(x)[y] = \partial f^{\cA}(x,x)\sh y + \tau\otimes\id_{\cA}\left( \widetilde{\partial}f^{\cA} \sh (y\otimes 1) \right) \qquad \left(x, y \in \cA_{c,\sa}^{\cB}\right), \label{idsncsncds}
\end{equation}
where $D$ denotes the Fr\'echet derivative---in particular, $D^0f^{\cA}=f^{\cA}$ and,
$$ D^jf^{\cA}(x)[y_1,\dots,y_j] = \lim_{t\to 0} \frac{D^{j-1}f^{\cA}(x+ty)[y_1,\dots,y_{j-1}] - D^{j-1}f^{\cA}(x)[y_1,\dots,y_{j-1}]}{t}. $$
Besides, there exists a universal constant $C_j$ such that for all $x\in \cA^{\cB}_{c,\sa}$ with $\norm{x}_{\cA,c}\leq R$,
\begin{equation}
    \label{eq:frechetdiff2}
    \norm{D^jf^{\cA}(x)[y]}_{\cA^{0,1}} \leq C_j \norm{f}_{j,c,R} \norm{y_1}_{\cA,c}\dots \norm{y_j}_{\cA,c}.
\end{equation}
\end{proposition}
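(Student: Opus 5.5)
The plan is a density argument built on Proposition \ref{prop.nablaXiinterp}. Since $\mathbf{f}\in\cC^k_1(\cB)$ lies in the closure of trace polynomials, we pick $Q_n\in\Tr^1(\cB)$ with $\norm{Q_n-f}_{\cC^k,R}\to 0$ for every $R>0$. By the concrete description of $\cC^k(\cB)$ as a closed subspace of $\cS$, this means that for every multi-index $(j_0,\dots,j_n)$ with $j_0+\dots+j_n\le k$,
\[
\widetilde{\partial}^{(j_n)}\circ\cdots\circ\widetilde{\partial}^{(j_1)}\circ\partial^{(j_0)}Q_n^{\cA}\to f^{\cA}_{j_0,\dots,j_n}
\]
uniformly on $\norm{\cdot}_{\cA,c}$-balls, in projective tensor norm. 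This convergence holds uniformly in $(\cA,\tau)\in\W$, although only a fixed $\cA$ is needed here.

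First we identify the candidate Fréchet derivatives. The proof of Proposition \ref{prop.nablaXiinterp} expresses $D^jQ^{\cA}(x)[y_1,\dots,y_j]$ as a finite universal linear combination of terms
\[
m_x\bigl(T\circ\widetilde{\partial}^{(i_n)}\circ\cdots\circ\partial^{(i_0)}Q\bigr),
\]
where $i_0+\dots+i_n=j$, each $T$ is a tensor permutation of norm $1$, and each $m_x$ is a contraction map of the form \eqref{lskjdvnskvn}. This combination does not depend on $Q$. Each $m_x$ is multilinear in $y$ and bounded by $\norm{y_1}_{\cA,c}\cdots\norm{y_j}_{\cA,c}$ on the projective tensor product, by the estimate of Remark \ref{rem.sharpwelldef}. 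We then define $L_j(x)[y_1,\dots,y_j]$ by the same formula with $Q$ replaced by the components $f^{\cA}_{i_0,\dots,i_n}$.

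One point needs care here. The derivatives in $\cS$ are functions of $j+1$ separate variables, and $m_x$ evaluates them on the diagonal $(x,\dots,x)$, which is allowed since all tuples have the same law. With $\norm{x}_{\cA,c}\le R$, uniform convergence on balls gives
\[
\sup_{\norm{x}_{\cA,c}\le R}\ \sup_{\norm{y_i}_{\cA,c}\le1}\norm{D^jQ_n^{\cA}(x)[y]-L_j(x)[y]}\to0 \qquad (j\le k).
\]

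The main step is to pass from this uniform convergence to genuine differentiability. Each $Q_n^{\cA}$ is $C^\infty$ by Proposition \ref{prop.nablaXiinterp}, and $D^jQ_n^{\cA}\to L_j$ uniformly on bounded sets as maps into bounded multilinear maps. The classical theorem on uniform limits of derivatives then applies inductively on $j$. We write
\[
D^{j-1}Q_n^{\cA}(x+ty)-D^{j-1}Q_n^{\cA}(x)=\int_0^1 D^jQ_n^{\cA}(x+sty)[\,\cdot\,,ty]\,ds
\]
and pass to the limit, which gives $L_{j-1}(x+ty)-L_{j-1}(x)=\int_0^1 L_j(x+sty)[\cdot,ty]\,ds$. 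Each $L_j$ is continuous as a uniform limit of continuous maps, so $L_{j-1}$ is Fréchet differentiable with derivative $L_j$. Hence $f^{\cA}\in C^k$ and $D^jf^{\cA}=L_j$. The case $j=1$ gives \eqref{idsncsncds}, since $L_1(x)[y]=\partial f^{\cA}(x,x)\sh y+\tau\otimes\id(\widetilde{\partial}f^{\cA}\sh(y\otimes1))$.

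Finally, \eqref{eq:frechetdiff2} follows by passing to the limit in \eqref{eq:frechetdiff} applied to $Q_n$, using $\norm{Q_n}_{j,c,R}\to\norm{f}_{j,c,R}$. Alternatively, it follows directly from the formula for $L_j$: it has finitely many terms, each bounded by $\norm{f}_{j,c,R}\prod_i\norm{y_i}_{\cA,c}$, so the universal constant $C_j$ is the number of terms.

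The main obstacle is bookkeeping rather than analysis. We must check that the combinatorial formula for $D^jQ$ from the polynomial case is genuinely universal, so that it transfers verbatim to the limit components. We must also check that the diagonal evaluation and the contraction maps $m_x$ are continuous for the projective norm, uniformly on balls.
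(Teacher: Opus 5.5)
Your proposal is correct and follows essentially the paper's route. The paper also approximates $\mathbf{f}$ by trace polynomials $Q_n$ and uses the universal bound \eqref{eq:frechetdiff}, applied to $Q_n-Q_m$, to get uniform convergence of $D^jQ_n^{\cA}$ on bounded sets; it then invokes the uniform-limit-of-derivatives theorem \cite[Theorem 1.85]{HJ2014}, which you reprove inline via the integral form of the mean value theorem. The only difference is that you identify the limit $L_j$ explicitly through the universal formula, where the paper uses a Cauchy-sequence argument.
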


\begin{proof}
Let $(Q_n)_{n \in \N}$ be a sequence in $\C\la \cB \ra$ converging to $\mathbf{f}$ in $\cC^k_1(\cB)$. Therefore, for any $(\cA,\tau) \in\W$, thanks to Equation \eqref{eq:frechetdiff}, the sequences $(D^jQ^{\cA}_n)_{n\in\N}$ for $j\in [0,k]$ converge uniformly on bounded sets.
The conclusion then follow from Proposition \ref{prop.nablaXiinterp} and \cite[Theorem 1.85]{HJ2014}.
\end{proof}

\begin{corollary} \label{cor:taylorineq}
If $f \in \cC^k_1(\cB)$, then there exists functions $f^j\in \cC^{k-j}(\cB^{j+1})$, and a constant $C_k$ such that if $(\cA,\tau)\in\W$, $y\in\cA^{\cB}_{c,\sa}$, 
\begin{equation}
    \label{vjnsvn}
    \sup_{x\in \cA_{\sa},\ \norm{x}_{\cA,c}\leq R}\ \norm{f(x+y) - f(x) - f^1(x,y) \dots - f^{k-1}(x,y,\dots,y) } \leq C_k \norm{f}_{k,c,R+\norm{y}_{\cA,c}} \left( \norm{y}_{\cA,c}\right)^{k}. 
\end{equation}
Besides, for all $x,y_1,\dots,y_j,z\in\cA^{\cB}_{c,\sa}$, $i\in [1,j]$,
$$ f^j(x,y_1,\dots,y_i+z,\dots,y_j) = f^j(x,y_1,\dots,y_i,\dots,y_j) + f^j(x,y_1,\dots,z,\dots,y_j).$$
And finally, if we define
$$    \norm{f^j}_{\cA,c,R,T} \coloneqq \sup \left\{\norm{f^j(x,y_1,\dots,y_j)}_{\cA^{\ell,m}} : x,y_1,\dots,y_j \in \cA_{\sa,c}^{\cB,d}, \; \norm{x}_{\cA,c,d} \leq R, \norm{y_i}_{\cA,c,d} \leq T\right\}.
$$
then by similarly defining $\norm{\cdot}_{j,c,R,T}$ as in Definition \ref{def.NCsmooth} but with $\norm{\cdot}_{\cA,c,R,T}$ instead of $\norm{\cdot}_{\cA,c,R}$, then for all $i\leq k-j$ there exists a constant $C_i$ such that
\begin{equation}
\label{dvjnslnvsdv}
    \norm{f^j}_{i,c,R,T} \leq C_i \norm{f}_{\cC^k,R} (T+1)^i,
\end{equation}
\end{corollary}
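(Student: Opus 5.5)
We will define the Taylor coefficients explicitly as the noncommutative analogues of $\frac{1}{j!}D^jf(x)[y,\dots,y]$, expressed through the abstract derivatives, and then obtain \eqref{vjnsvn} from Taylor's formula with integral remainder applied to the Fr\'echet-smooth function $f^{\cA}$ supplied by Proposition \ref{skjodncskncv}.

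First, we work with a trace polynomial $Q\in\Tr^1(\cB)$. From the induction in the proof of Proposition \ref{prop.nablaXiinterp}, $D^jQ^{\cA}(x)[y_1,\dots,y_j]$ is a finite linear combination, with universal combinatorial coefficients, of terms $m_x\big(T\circ\widetilde{\partial}^{(i_n)}\circ\cdots\circ\widetilde{\partial}^{(i_1)}\circ\partial^{(i_0)}Q\big)$. Here $i_0+\dots+i_n=j$, $T$ is a tensorand permutation, and $m_x$ is one of the contraction maps \eqref{lskjdvnskvn}: it inserts the $y$'s via $\sh$ and applies $\tau$ to some groups of tensorands. We define $Q^j(x,y_1,\dots,y_j)$ to be $\frac1{j!}$ times this expression, viewed as a trace-polynomial-type expression in the enlarged variable set $\cB^{j+1}$, with $x$ the first copy and $y_i$ the others. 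Because $m_x$ is built from products and traces, $Q^j$ is indeed an element of $\Tr^1(\cB_{j+1})$. It is multilinear in $(y_1,\dots,y_j)$ since each $y_i$ appears exactly once in each term, which gives the additivity claim.

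Second, we bound its seminorms. Differentiating $Q^j$ $i$ times (using $\partial,\widetilde\partial$ on the enlarged variables) either hits an $x$-slot, producing an $(i+j)$-th derivative of $Q$ contracted in the same way, or hits a $y$-slot, which removes that $y$ linearly. Iterating, each term of $\widetilde{\partial}^{(\cdot)}\cdots\partial^{(\cdot)}Q^j$ is a contraction of some derivative of $Q$ of order at most $i+j\le k$ against at most $j$ copies of $y$'s. Each contraction costs at most $\norm{y}_{\cA,c}\le T$ in projective norm, using Remark \ref{rem.sharpwelldef}, the fact that $\tau$ is contractive, and multiplication being contractive on the projective tensor product. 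The number of $y$'s surviving is at most $j$, and the count of terms is combinatorial. This yields $\norm{Q^j}_{i,c,R,T}\le C\norm{Q}_{\cC^k,R}(T+1)^{j}$. We note that the claimed $(T+1)^i$ bound should also be checkable by this argument (exponent $\min(i,j)$ or $j$, depending on the normalization of $\norm{\cdot}_{j,c,R,T}$ chosen); we would adjust the exponent bookkeeping there. By linearity and density, $Q\mapsto Q^j$ extends continuously from $\cC^k$ to $\cC^{k-j}(\cB^{j+1})$, which defines $f^j$ and gives \eqref{dvjnslnvsdv}.

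Third, for the estimate itself: by construction, $f^j(x,y,\dots,y)=\frac1{j!}D^jf^{\cA}(x)[y,\dots,y]$. This holds for polynomials and passes to the limit by the uniform convergence of derivatives used in Proposition \ref{skjodncskncv}. Taylor's formula for the $C^k$ map $t\mapsto f^{\cA}(x+ty)$ then gives the remainder $\int_0^1\frac{(1-t)^{k-1}}{(k-1)!}D^kf^{\cA}(x+ty)[y,\dots,y]\,dt$. By \eqref{eq:frechetdiff2} at radius $R+\norm{y}_{\cA,c}$, this is bounded by $C_k\norm{f}_{k,c,R+\norm{y}}\norm{y}^k$. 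The main obstacle is the second step: verifying that the $\tau$-contractions inside $m_x$ keep everything inside the trace-polynomial completion and are bounded in the projective norms uniformly over $\W$. This requires care with the partially traced tensorands, which must be controlled through the convention that all copies share the same law.
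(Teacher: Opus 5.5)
Your proposal is correct and follows essentially the paper's route. As in the paper, $f^j$ is the $\frac{1}{j!}D^jf$ term, written via the contraction maps $m_x$ from the proof of Proposition \ref{prop.nablaXiinterp} applied to permuted derivatives of $f$; the paper just cites Proposition \ref{proppourvoila} for $f^j\in\cC^{k-j}(\cB^{j+1})$, where you do the density bookkeeping yourself. Likewise, \eqref{vjnsvn} follows from Taylor's formula together with \eqref{eq:frechetdiff2}, and your remainder $\int_0^1\frac{(1-t)^{k-1}}{(k-1)!}D^kf(x+ty)[y,\dots,y]\,dt$ is the correct one for an expansion stopping at $f^{k-1}$.

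On \eqref{dvjnslnvsdv}, trust your computation and drop the hedge. Since $f^j$ is $j$-linear in $(y_1,\dots,y_j)$, the quantity $\norm{f^j}_{0,c,R,T}$ is homogeneous of degree $j$ in $T$ (unless $f^j$ vanishes). So neither $(T+1)^i$ nor $(T+1)^{\min(i,j)}$ can hold when $i<j$. The bound $(T+1)^j$ is what your argument proves, and it is exactly the form used later in Corollary \ref{cor:multicompfin}.
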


\begin{proof}
    Equation \eqref{vjnsvn} is a direct corollary of Equation \eqref{eq:frechetdiff2} and Taylor's formula in a Banach space which states that
    $$ f(x+y) = f(x) + Df(x)[y]+\dots + \frac{1}{k!} D^kf(x)[y,\dots,y] + \frac{1}{k!} \int_0^1 D^kf(x+ty)[y,\dots,y] (1-t)^kdt. $$
    Besides, as shown in the proof of Proposition \ref{prop.nablaXiinterp}, $Df^j$ is defined with the maps $m_x$ defined by
\begin{align*}
    &m_x\left(e_1\otimes \dots\otimes e_{j-1}\otimes A_1\otimes\dots\otimes A_j \otimes \tr(P_1)\otimes\dots\otimes \tr(P_m)\right) \\
    &= \tau(A_1(x) y_{e_1}\dots A_{i_1}(x) y_{e_{i_1}})\dots \tau(A_{i_{l-1}+1}(x) y_{e_{i_{l-1}+1}} \dots A_{i_l}(x) y_{e_{i_l}} ) \times \nonumber \\
    &\quad A_{i_l+1}(x) y_{e_{i_l+1}} A_{i_l+2}(x)\dots y_{e_{j-1}} A_j(x) \tau(P_1(x))\dots \tau(P_m(x)), \nonumber
\end{align*}
the differentials of order $j$, i.e 
$$ \widetilde{\partial}^{(j_n)}\circ\cdots\circ \widetilde{\partial}^{(j_1)}\circ \partial^{(j_0)}f, $$
as well as permutations operators. Therefore the fact that $f^j\in\cC^{k-j}(\cB^{j+1})$ is a consequence of Proposition \ref{proppourvoila}. 
\end{proof}

\subsection{Example of $\cC^k$-functions}

The aim of this subsection is to give concrete example of $\cC^k$-functions, which combined with other results of this paper will provide a large array of functions that we can study.

\begin{example}\label{ex.Sasfunc}
Let  $h \in H_c$ and $F \subseteq \cB$ be a non-empty, finite set.
If
\[
P_{h,F}(X) \coloneqq \sum_{e \in F} \ip{e,h}\,X_e \in \C\ip{\cB} \; \text{ and } \; h_F \coloneqq \sum_{e \in F} \ip{e,h} \,e \in H_c,
\]
then
\[
\widetilde{\partial}^{(j_n)}\circ\cdots\circ \widetilde{\partial}^{(j_1)}\circ \partial^{(j_0)} P_{h,F}(X) = \begin{cases}
    \partial P_{h,F}(X) = h_F \otimes 1 \otimes 1 & \text{if } j_0=1 \text{ and } n=0, \\
    0 & \text{else}.
\end{cases}
\]
Consequently, if $k \in \N$, $\emptyset \neq F_0 \subseteq F$, and $R > 0$, then
\begin{align*}
    \norm{P_{h,F} - P_{h,F_0}}_{\cC^k,R} & = \norm{P_{h,F} - P_{h,F_0}}_{0,c,R} + \norm{P_{h,F} - P_{h,F_0}}_{1,c,R} \\
    & \leq R\sum_{e \in F \setminus F_0} |\ip{e,h}| \,c_e + \norm{h_F-h_{F_0}}_{H_c} \\
    & = (R+1)\sum_{e \in F \setminus F_0} |\ip{e,h}|\,c_e.
\end{align*}
Since $h \in H_c$, it follows that $F \mapsto P_{h,F}$ is a Cauchy net in $\cC^{\infty}(\cB)$.
Its limit is denoted by $\mathbf{f}_h \in \cC^{\infty}(\cB)$, and it is clear from the calculations above that if $(\cA,\tau) \in \W$, then
\begin{align*}
    f_h^{\cA}(x) & = \sum_{e \in \cB} \ip{e,h} \, x_e \in \cA \; \text{ for all } x \in \cA_{c,\sa}^{\cB}, \; \text{ and} \\
    \widetilde{\partial}^{(j_n)}\circ\cdots\circ \widetilde{\partial}^{(j_1)}\circ \partial^{(j_0)}f_h^{\cA} & \equiv \begin{cases}
    h \otimes 1 \otimes 1 & \text{if } j_0=1 \text{ and } n=0, \\
    0 & \text{else}.
\end{cases}
\end{align*}
Furthermore, $\norm{\mathbf{f}_h}_{\cC^k,R} \leq (R+1)\norm{h}_{H_c}$ for all $R>0$ and $k \in \N_0$.
In particular, the linear map $H_c \ni h \mapsto \mathbf{f}_h \in \cC^{\infty}(\cB)$ is continuous.

This example will be crucial in a couple of parts of our development;
see, e.g., Section \ref{sec.SDE}.
\end{example}

\begin{example}\label{ex.functionalcalculus}
Suppose $H = \C$ and therefore $\cB$ only has one element. Also, take $c = c_1 \coloneqq 1$ for the weight(s).
Now, let $f \colon \R \to \C$ be a continuous function.
For each unital $\mathrm{C}^*$-algebra $\cA$, $f$ may be promoted to a map $f^{\cA} \colon \cA_{\sa} \to \cA$ defined via the continuous functional calculus, and it is easy to see that
\[
\mathbf{f} \coloneqq \left(f^{\cA}\right)_{(\cA,\tau) \in \W} \in \cC^0(X_1).
\]
\end{example}

More generally, this can be improved to the case of functions differentiable in the usual sense of the term.

\begin{proposition}
    Let $f \colon \R \to \C$ be a function $k+1$ times differentiable in the usual sense of the term. Then $f$ may be promoted to a map $f^{\cA} \colon \cA_{\sa} \to \cA$ defined via the continuous functional calculus. Besides if $f^{(j)}$ is the $j$-th differential in the classical sense, assuming that $f\in L^1(\R)\cap L^2(\R) $ and $f^{(k+1)}\in L^2(\R)$, then
    \[
    \mathbf{f} \coloneqq \left(f^{\cA}\right)_{(\cA,\tau) \in \W} \in \cC^k(X_1).
    \]
\end{proposition}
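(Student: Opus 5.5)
The natural approach is Fourier analysis: show that the Fourier transform gives an absolutely convergent representation of $f$ by unitary exponentials, and that the exponentials have explicitly controlled noncommutative derivatives.

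First, the hypotheses control the Fourier transform $\hat f(\xi)=\int f(t)e^{-\ii t\xi}\,dt$. Since $f\in L^1(\R)$, $\hat f$ is bounded. Since $f\in L^2(\R)$ and $f^{(k+1)}\in L^2(\R)$, Plancherel gives $(1+|\xi|^{k+1})\hat f\in L^2(\R)$. Cauchy--Schwarz then yields
\[
\int_\R |\hat f(\xi)|(1+|\xi|)^k\,d\xi \le \Big(\int (1+|\xi|)^{2k+2}|\hat f(\xi)|^2d\xi\Big)^{1/2}\Big(\int (1+|\xi|)^{-2}d\xi\Big)^{1/2}<\infty .
\]
Consequently $f(x)=\frac1{2\pi}\int \hat f(\xi)e^{\ii x\xi}\,d\xi$ holds pointwise, using continuity of $f$. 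The same formula holds in any $(\cA,\tau)\in\W$ for self-adjoint $x$ via functional calculus, as a Bochner integral in $\cA$.

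Second, treat the exponential $E_\xi(X)=e^{\ii\xi X}$. For $H=\C$ there are no trace terms, so $\widetilde\partial$ kills everything and only $\partial^{(j)}$ matters. Duhamel's formula gives
\[
\partial^{(j)}E_\xi(x_1,\dots,x_{j+1})=(\ii\xi)^j\int_{\Delta_j} e^{\ii s_0\xi x_1}\otimes\cdots\otimes e^{\ii s_j\xi x_{j+1}}\,ds ,
\]
where $\Delta_j$ is the simplex $s_0+\dots+s_j=1$. Each integrand is a pure tensor of unitaries, so its projective norm is $1$. Hence $\norm{\partial^{(j)}E_\xi}_{0,R}\le |\xi|^j/j!$, uniformly in $R$ and in $\cA$. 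To see that $E_\xi\in\cC^k(X_1)$, I would approximate it by its Taylor polynomials $\sum_{n\le M}(\ii\xi X)^n/n!$. The $j$-th difference quotient of the tail is bounded in projective norm by $\sum_{n>M}\binom{n}{j}|\xi|^nR^{n-j}/n!\to0$. So these polynomials are Cauchy in every $\norm{\cdot}_{\cC^k,R}$, and their limit coincides with the Duhamel formula above.

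Third, assemble the pieces. Define $\mathbf f$ as the $\cC^k(X_1)$-valued integral $\frac1{2\pi}\int\hat f(\xi)E_\xi\,d\xi$. The map $\xi\mapsto E_\xi$ is continuous into the Fréchet space $\cC^k(X_1)$; this follows from the same estimates applied to $E_\xi-E_{\xi'}$, using $\norm{e^{\ii a}-e^{\ii b}}\le|a-b|$. By the first step, $\int|\hat f|\,\norm{E_\xi}_{\cC^k,R}\,d\xi\le\int|\hat f|(1+|\xi|)^k<\infty$. So the integral converges absolutely in the completion. Equivalently, Riemann sums over truncations $[-L,L]$, each lying in $\cC^k(X_1)$, form a Cauchy sequence. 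Evaluating at $j=0$ recovers the functional calculus $f^{\cA}$, because evaluation is continuous.

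The main obstacle is the second step: the rigorous identification of $\partial^{(j)}E_\xi$ in the completed space together with the uniform projective-norm bound, and the continuity in $\xi$. The key fact that makes this work is that the Duhamel integrands are pure tensors of unitaries, so each has projective norm $1$.
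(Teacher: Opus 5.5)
Your proposal is correct and follows essentially the same route as the paper: you represent $f$ through its Fourier transform, obtain the Duhamel formula for $\partial^{(j)}e^{\ii\xi X}$ by matching it with the Taylor series of the exponential, and control $\int|\xi|^j|\widehat f(\xi)|\,d\xi$ via Plancherel and Cauchy--Schwarz against $(1+|\xi|)^{-1}$. The only difference is organizational: the paper builds the polynomial approximants $Q_m=\int_{[-\sqrt m,\sqrt m]}\sum_{n\le m}\frac{(\ii yX)^n}{n!}\,\widehat f(y)\,dy$ directly, whereas you first place each $e^{\ii\xi X}$ in $\cC^k(X_1)$ and then integrate in the Fr\'echet space.
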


\begin{proof}

We define 
$$ \widehat{f}\colon y\mapsto \frac{1}{2\pi} \int_{\R} f(x) e^{\ii x y} dx. $$
Note that for all $j\in [0,k]$, there exists constants $C_1,C_2$ such that
\begin{align*}
\int_{\R} |y|^{j} |\widehat{f}(y)| dy &\leq C_1 \int_{\R} \frac{1 +|y|^{k+1}}{1+|y|} |\widehat{f}(y)| dy \\
&= C_1 \int_{\R} \frac{ |\widehat{f}(y)| +|\widehat{f^{(k+1)}}(y)|}{1+|y|} dy \\
&\leq C_2\left( \norm{\widehat{f}}_{L^2(\R)} + \norm{\widehat{f^{(k+1)}}}_{L^2(\R)} \right) \\
&= C_2\left( \norm{f}_{L^2(\R)} + \norm{f^{(k+1)}}_{L^2(\R)} \right),
\end{align*}
where we used Plancherel's theorem in the last line. Consequently, if $x=(x_0,\dots,x_j)$, then one can define
\begin{align*}
    \partial^{(j)} f^{\cA} \colon x\mapsto \int_{\R} \partial e^{\ii y X}(x)\ \widehat{f}(y)\ dy,
\end{align*}
where
\begin{align*}
    \partial e^{\ii y X}(x) = (\ii y)^j \int_{[0,1]^j}\alpha_1^{j-1}\alpha_2^{j-2}\dots \alpha_{j-1}\  & e^{\ii\alpha_j\dots \alpha_1 y x_0}\otimes e^{\ii(1-\alpha_{j})\alpha_{j-1}\dots \alpha_1 y x_1}\otimes \cdots \\
&\cdots \otimes e^{\ii(1-\alpha_2)\alpha_1 y x_{j-1}} \otimes e^{\ii(1-\alpha_1)y x_j}\ d\alpha_1\dots d\alpha_j
\end{align*}
and if $n>0$,
$$ \widetilde{\partial}^{(j_n)}\circ \cdots \circ\widetilde{\partial}^{(j_1)}\circ \partial^{(j_0)} f^{\cA} = 0.$$
Note that since $H=H_c=\C$, and $\C\otimes \cA \simeq \cA$, we can define $\partial^{(j)} f^{\cA}(x)$ as an element of $\cA^{\otimes j+1}$ instead of $H_c^{\otimes j}\otimes \cA^{\otimes j+1}$.
We have (see \cite[\S 4.5]{parraud2024spectrum}) that
\begin{align*}
    &\frac{(n_1-1)! (n_2-n_1-1)!\dots (n_j-n_{j-1}-1)! (n-n_j)! }{n!} \\
	&= \int_{[0,1]^j} \left(\prod_{i=1}^j\alpha_i^{j-i}\right) \prod_{i=1}^j \left((1-\alpha_i)\prod_{s=1}^{i-1}\alpha_s\right)^{n_i-n_{i-1}-1} (\alpha_1\dots\alpha_j)^{n-n_j} d\alpha_1\dots d\alpha_j.
\end{align*}
Therefore,
\begin{align*}
    \sum_{k\geq 0} \frac{(\ii y)^n (\partial^{(j)}X^n)(x)}{n!} = \partial e^{\ii y X}(x).
\end{align*}
Consequently, if we set
$$ Q_m^y = \sum_{0\leq n\leq m} \frac{(\ii y X)^n}{n!}, $$
then for some constant $C$,
$$ \norm{Q_m^y - e^{\ii y X} }_{j,R} \leq C \frac{y^j e^{y R}}{(m-j)!}.$$
Hence by setting 
$$ Q_m = \int_{[-\sqrt{m},\sqrt{m}]} Q_m^y \widehat{f}(y)\ dy, $$
with the help of the fact that for all $j\in [0,k]$,
$$ \lim_{m\to\infty} \int_{|y|\geq \sqrt{m}} |y|^j |\widehat{f}(y)|\ dy = 0 $$
we have that for all $R>0$,
$$ \lim_{m\to\infty} \norm{\mathbf{f}-Q_m}_{\cC^k,R} =0.$$
Hence the conclusion.
\end{proof}

\subsection{Stability of the noncommutative smooth functions under various operations}

The aim of this subsection is to prove that our space $\cC^k(\cB)$ is stable under different operations, such as the composition or taking a conditional expectation. We begin by a proposition describing several simple but important operations.

\begin{proposition}
\label{proppourvoila}
 Consider the following maps:
    
    \begin{enumerate}[font=\normalfont,label=(\roman*)]
    
        \item $T_{\sigma,\nu}:\cA^{\ell,m}\mapsto\cA^{\ell,m}$ for $\sigma,\nu$ permutations, such that 
        $$ T_{\sigma,\nu}(h_1\otimes\cdots\otimes h_l\otimes a_1\otimes \cdots \otimes a_m) = h_{\sigma(1)}\otimes\cdots\otimes h_{\sigma(l)}\otimes a_{\nu(1)}\otimes\cdots\otimes a_{\nu(m)}, $$
        
        \item $\langle f|:\cA^{\ell,m}\mapsto\cA^{\ell-1,m}$ such that 
        $$\langle f|(h_1\otimes\cdots\otimes h_l\otimes a_1\otimes\cdots\otimes a_m) = \langle f| h_1\rangle h_2\otimes\cdots\otimes h_l \otimes a_{1}\otimes\cdots\otimes a_{m}, $$
        
        \item given $\cU$ a subset of $\cB$ and $p_{\cU}$ the orthogonal projection on $\spn(\cU)$, $\eta_{\cU}: \cA^{\ell,m}\mapsto\cA^{\ell-2,m}$ such that 
        $$\eta_{\cU}(h_1\otimes\cdots\otimes h_l\otimes a_1\otimes\cdots\otimes a_m) = \langle p_{\cU}(h_1)| p_{\cU}(h_2)\rangle h_3\otimes\cdots\otimes h_l\otimes a_{1}\otimes\cdots\otimes a_{m},$$

        \item $m:\cA^{\ell,m}\mapsto\cA^{\ell,m-1}$ such that 
        $$ m(h_1\otimes\cdots\otimes h_{\ell}\otimes a_1\otimes \cdots \otimes a_m) = h_1\otimes\cdots\otimes h_{\ell}\otimes a_1a_2\otimes a_3\otimes\cdots\otimes a_m, $$

        \item $\tr:\cA^{\ell,m}\mapsto\cA^{\ell,m-1}$ such that 
        $$ \tr(h_1\otimes\cdots\otimes h_{\ell}\otimes a_1\otimes \cdots \otimes a_m) = \tau(a_1) h_1\otimes\cdots\otimes h_{\ell}\otimes a_2\otimes\cdots\otimes a_m. $$

    \end{enumerate}

    \noindent If $\mathbf{f} \in \cC^k(\cB)$, and $\cJ \colon \cA^{j,j+1} \mapsto \cA$ is a linear map built out of the maps above, then given $j_0,\dots,j_n$ such that $j_0+\dots+j_n = j$, we have that $\cJ\circ \widetilde{\partial}^{(j_n)}\circ\cdots\circ \widetilde{\partial}^{(j_1)}\circ \partial^{(j_0)} \mathbf{f} \in \cC^{k-j}(\cB)$. Besides there exists a constant $C_{\cJ}$ such that for any $R>0$,
    \begin{equation}
    \label{skljdvnsdk}
        \norm{\cJ\circ \widetilde{\partial}^{(j_n)}\circ\cdots\circ \widetilde{\partial}^{(j_1)}\circ \partial^{(j_0)} \mathbf{f} }_{\cC^{k-j}(\cB),R} \leq C_{\cJ} \norm{\mathbf{f}}_{\cC^k(\cB),R}.
    \end{equation}
\end{proposition}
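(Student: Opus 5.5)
The plan is to reduce everything to trace polynomials and then pass to the completion. By Definition \ref{def.NCsmooth}, $\mathbf{f}$ is a limit in $\cC^k(\cB)$ of elements $Q_n \in \Tr^1(\cB_p)$. For each fixed $(\cA,\tau)$ the maps in (i)--(v) are bounded on the projective tensor products: permutations are isometries; $\langle f|$ has norm $\norm{f}$; $\eta_{\cU}$ has norm at most $1$ on $H_c^{\potimes 2}$ after pairing; $m$ and $\tr$ are contractive on $\cA\potimes\cA$, using $\norm{ab}\le\norm a\norm b$ and $|\tau(a)|\le\norm a$. Note that the weighted norm $\norm{\cdot}_c$ on $H_c$ dominates the Hilbert norm because $c_e\ge1$. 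Consequently $\cJ$ is a bounded linear map whose norm $C_{\cJ}$ is independent of $\cA$.

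The first step is to show that, for a trace polynomial $Q$, the element $\cJ\circ \widetilde{\partial}^{(j_n)}\circ\cdots\circ \partial^{(j_0)} Q$, evaluated with all tensorands at the same $x$, is again given by a trace polynomial $Q'\in\Tr^1(\cB_{p'})$. At the algebraic level, applying $m$ multiplies two polynomial tensorands. Applying $\tr$ turns a tensorand $P$ into a factor $\tr(P)$. The contractions $\langle f|$ and $\eta_{\cU}$ only produce scalar coefficients. The $H_c$-legs not contracted remain as vector coefficients, and we absorb them using the $\cB_p$ and $\cC^k(\cB)=\bigcup_p\cC^k_p(\cB)$ formalism or through Example \ref{ex.Sasfunc}.

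The second step, and the main obstacle, is to control the derivatives of $Q'$ of order $i\le k-j$ by derivatives of $Q$ of order $j+i$. The plan is a Leibniz-type commutation rule. Differentiating $Q'$ with $\partial$ or $\widetilde{\partial}$ amounts to differentiating one of the tensorands of $\widetilde{\partial}^{(j_n)}\circ\cdots\circ\partial^{(j_0)}Q$ or one of its trace factors. This is exactly the computation in the proof of Proposition \ref{prop.nablaXiinterp}: one uses the identity
\[
(\id\otimes\partial\otimes\id)\circ\partial^{(n-1)} = T_s\circ\partial^{(n)}.
\]
Derivatives landing inside a product $a_1a_2$ created by $m$ split by the Leibniz rule $\partial(AB)=\partial A\cdot B + A\cdot\partial B$. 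Derivatives landing inside a trace created by $\tr$ become $\widetilde{\partial}$ terms, since $\partial\circ\tr$ corresponds to $\cD$. We would therefore establish, by induction on the number of elementary maps composing $\cJ$, an identity of the form
\[
\widetilde{\partial}^{(i_r)}\circ\cdots\circ\partial^{(i_0)}\big(\cJ\circ D^{(j)}Q\big)=\sum_{\alpha}\cJ_\alpha\circ \widetilde{\partial}^{(l^\alpha_{s})}\circ\cdots\circ\partial^{(l^\alpha_0)} Q, \qquad \sum_t l^\alpha_t=j+i,
\]
where $D^{(j)}Q=\widetilde{\partial}^{(j_n)}\circ\cdots\circ\partial^{(j_0)}Q$, the sum is finite, and each $\cJ_\alpha$ is again built from maps of type (i)--(v). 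The combinatorics depend only on $\cJ$, $i$ and $j$. The delicate point is bookkeeping: a derivative hitting a tensorand of $D^{(j)}Q$ that was already a $\cD$-output must be rewritten as a permuted higher-order $\widetilde{\partial}$, and the evaluation points must be matched correctly. Here the assumption that all $x_a$ in $\cA^{\cB,d}_{c,\sa}$ share a noncommutative law is what makes the traces unambiguous.

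Given this identity, the bound for trace polynomials follows:
\[
\norm{\cJ\circ D^{(j)}Q}_{\cC^{k-j},R}\le C_{\cJ}\norm{Q}_{\cC^k,R},
\]
with $C_{\cJ}$ depending on $\cJ,k$ but not on $R$ or $\cA$. Applying this to $Q_n-Q_m$ shows that $\cJ\circ D^{(j)}Q_n$ is Cauchy in $\cC^{k-j}(\cB)$. Its limit agrees with $\cJ\circ D^{(j)}\mathbf{f}$ pointwise, because $\cJ$ is bounded and $D^{(j)}Q_n\to D^{(j)}\mathbf f$ uniformly on $R$-balls. This gives membership in $\cC^{k-j}(\cB)$ and \eqref{skljdvnsdk} by passing to the limit.
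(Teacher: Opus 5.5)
Your proposal is correct and follows essentially the same route as the paper. You reduce to trace polynomials and use the commutation identity (derivatives taken at arbitrary tensor positions are permutations $T_{\sigma,\id}$ of the canonical $\partial^{(j)}$, as in the proof of Proposition~\ref{prop.nablaXiinterp}) to write each order-$i$ derivative of $\cJ\circ \widetilde{\partial}^{(j_n)}\circ\cdots\circ\partial^{(j_0)}P$ as a finite combination of the maps (i)--(v) applied to derivatives of $P$ of total order $i+j$; you then use contractivity of these maps for the projective tensor norm and conclude with a Cauchy-net argument. Your extra care about the norms of $\langle f|$ and $\eta_{\cU}$ (via $c_e\geq 1$) and about identifying the limit pointwise only makes explicit details the paper leaves implicit.
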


\begin{proof}

First, given $h_1,\dots,h_{s-1}\in H_c$, $A_1,\dots,A_s\in\C\la \cB_p\ra$ monomials, $r\in [1,s]$, let us define 
\begin{align*}
    &\partial^{[r]} (h_1\otimes\dots\otimes h_{s-1}\otimes A_1\otimes \dots \otimes A_s) \\
    &= \sum_{e\in \cB_p} \sum_{A_n = U X_e V} h_1\otimes\dots\otimes h_{s-1}\otimes e\otimes A_1\otimes \dots \otimes A_{r-1}\otimes U\otimes V \otimes A_{r+1} \otimes \dots \otimes A_s.
\end{align*}
Then, given $i_s\in [1,s]$ for $s\in [1,j]$, by induction we can find a permutation $\sigma$ such that for any $M\in \C\la \cB_p\ra$ a monomial, 
\begin{align*}
    \partial^{[i_j]}\circ \dots \circ \partial^{[i_1]} M &= \sum_{M=A_0X_{e_1}A_1 \dots X_{e_j} A_j} e_{\sigma(1)}\otimes \cdots\otimes e_{\sigma(j)}\otimes A_0\otimes \dots \otimes A_j \\
    &= T_{\sigma,\id} \circ \partial^{(j)}(M).
\end{align*}
Thus, if $i=i_0+\dots+i_m$, $j=j_0+\dots+j_n$, given a trace polynomial $P\in\Tr^1\la \cB_p\ra$, $i_0,\dots,i_m\in \N$, one can write 
$$ \widetilde{\partial}^{(i_m)}\circ\cdots\circ \widetilde{\partial}^{(i_1)}\circ \partial^{(i_0)} \left( \cJ\circ \widetilde{\partial}^{(j_n)}\circ\cdots\circ \widetilde{\partial}^{(j_1)}\circ \partial^{(j_0)} P\right)$$
as a finite linear combination of terms defined with the help of the first $i+j$ derivatives of $P$, as well as the maps $T_{\sigma,\nu}$, $\langle f|$, $\eta_{\cU}$, $ m$ and $\tr$. Since those maps are contractions for the projective tensor norm on $\cA^{\ell,m}$, given a net $(Q_{\lambda})_{\lambda\in\Lambda}$ of elements of $\Tr^1( \cB_p)$ converging towards $\mathbf{f}$, then $\left(\cJ\circ \widetilde{\partial}^{(j_n)}\circ\cdots\circ \widetilde{\partial}^{(j_1)}\circ \partial^{(j_0)}Q_{\lambda}\right)_{\lambda\in\Lambda}$ is a Cauchy net in $\cC_{dj}^{k-j}(\cB)$ therefore converges. 
\end{proof}

Next, we prove that taking the conditional expectation of a sufficiently smooth function also yields a $\cC^k(\cB)$-function. Note that since our definition of a smooth function is different from the usual one which only involves the Fr{\'e}chet derivatives (see for example \cite{JLS2022}), it makes the following proposition fairly non-trivial. In particular, it is the reason why we need to assume that in the Definition of $\cA_{c,\sa}^{\cB,d}$ (see Definition \ref{nota.Hc,seminormAR}) the noncommutative law of every $x_i$ is the same. 

\begin{proposition}
\label{prop:condexp}
    Given $f\in \cC^{3k}_p(\cB^1\cup \cB^2)$, with $\cB^1$ and $\cB^2$ Hilbert bases of $H_1$ and $H_2$. We set
    $y= (y_e)_{e\in\cB^2_p}$ a family of semicircular variables, if $\cB^2_p = (g_{e,i})_{e\in\cB^2,i\in [1,p]}$, we assume that 
    \begin{itemize}
        \item if $e\neq f$, then $y_{g_{e,i}}$ and $y_{g_{f,j}}$ are free for any $i,j$,
        \item otherwise, either $y_{g_{e,i}}=y_{g_{e,j}}$ for all $e\in\cB^2_p$, or $y_{g_{e,i}} $ and $y_{g_{e,j}}$ are free for all $e\in\cB^2_p$.
    \end{itemize}
    We define $\tr\big(f|\cB^1\big)$ the conditional expectation of $f$ knowing $y$ by
    $$ \left( \tr\big(f|\cB^1\big) \right)^{\cA} \colon x\in \cA_{c_1,\sa}^{\cB^1,p} \mapsto \tau\big(f^{\cA}(x,y)|\cA\big),$$
    where $y$ is taken to be free from $\cA$, and $\tau\big(\cdot|\cA\big)$ is the conditional expectation on $\cA$. Then $\tr\big(f|\cB^1\big)$ is an element of $\cC^k_p(\cB^1)$. Besides, there is a constant $C_{k,p}$ such that for all $R>0$,
    \begin{equation}
    \label{ksknvdojfbdo}
        \norm{\tr\big(f|\cB^1\big)}_{\cC^k(\cB^1),R} \leq C_{k,p} \norm{f}_{\cC^{3k}(\cB^1\cup\cB^2),R}.
    \end{equation}
\end{proposition}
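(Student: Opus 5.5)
The plan is to work first with trace polynomials, obtain a bound of the form \eqref{ksknvdojfbdo} for them, and then pass to general $f$ by density. Since $\cC^{3k}_p(\cB^1\cup\cB^2)$ is the completion of $\Tr^1$, it suffices to do three things. First, show that for $Q \in \Tr^1(\cB^1_p\cup\cB^2_p)$ the function $x\mapsto \tau(Q(x,y)\,|\,\cA)$ is again a trace polynomial in the $\cB^1$-variables, or at least lies in $\cC^k_p(\cB^1)$. Second, identify its derivatives explicitly. Third, prove the estimate \eqref{ksknvdojfbdo} for such $Q$. The general case then follows because a Cauchy net $Q_\lambda\to f$ yields a Cauchy net $\tr(Q_\lambda|\cB^1)$, whose limit agrees pointwise with $\tau(f(x,y)|\cA)$ by continuity of conditional expectation in operator norm.

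For the algebraic step I would use freeness of $y$ from $\cA$. For a monomial $A_0 y_{g_1}A_1\cdots y_{g_r}A_r$ with $A_i$ words in $x$, the conditional expectation onto $\cA$ is computed by the free moment--cumulant formula with semicircular (pair-partition) cumulants. Since the $y$ are semicircular, only noncrossing pairings with compatible labels contribute. Each such pairing contributes a product of the form $A_{0}\,\tau(\cdots)\cdots A_{r}$, in which the blocks enclosed by a pair are replaced by traces. This is exactly why trace polynomials, and not just polynomials, are needed.

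To obtain norm bounds that survive the completion, I would not estimate the explicit pairing formula directly, since its size grows with the degree. Instead I would use an interpolation, or Gaussian integration by parts, representation. Let $y^{(t)}$ be the free Brownian motion at time $t$, started at $0$. Free Itô calculus gives
\[
\tfrac{d}{dt}\,\tau\big(f(x,y^{(t)})\,\big|\,\cA\big)=\tfrac12\,\tau\big(\Delta_y f(x,y^{(t)})\,\big|\,\cA\big),
\]
where $\Delta_y$ is a combination of maps from Proposition~\ref{proppourvoila}, namely $\eta_{\cB^2}$, multiplications and $\tr$, applied to the second derivatives $\partial^{(2)}f$ and $\widetilde\partial\partial f$ in the $y$-directions. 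At $t=0$ the function is $f(x,0)$. This lets me avoid a full expansion, but the right-hand side still involves the conditional expectation.

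A cleaner route is to differentiate in $x$ under the conditional expectation. Because $y$ is free from $\cA$, applying $\partial_x$ to $\tau(f(x,y)|\cA)$ amounts to applying $\tau(\cdot|\cA)\otimes\tau(\cdot|\cA)$ to $\partial_x f(x,y)$. This is not literally true: for $a\otimes b$ with $a,b$ in $W^*(\cA,y)$, the conditional expectation of $a\,z\,b$ for $z\in\cA$ is not $E(a)zE(b)$. Free independence with amalgamation instead gives $E(a z b)=E(a)zE(b)+{}$ terms in which the free cumulants of $y$ connect $a$ and $b$. For semicircular $y$ these connections are traces against contractions, and generating them produces exactly the $\widetilde\partial$ and $\eta$ operations on higher derivatives of $f$.

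The main obstacle is therefore to show that each $x$-derivative of order $j$ of the conditional expectation can be written as a finite combination of maps $\cJ$ from Proposition~\ref{proppourvoila}, composed with conditional expectations, applied to derivatives of $f$ of order at most $3j$. I would establish this by induction, using a Schwinger--Dyson (free integration by parts) identity for $y$. That identity replaces each cross-tensorand connection by one $y$-derivative on each side, together with one contraction $\eta_{\cB^2}$. Each $x$-derivative then costs at most three derivatives of $f$: one in $x$ and two in $y$ for the contraction, which is the source of the loss $k\mapsto 3k$.

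Because the semicircular variables have norm $2$ and the weights satisfy $c_e\ge1$, the evaluation points remain in a ball of radius $\max(R,2)$. Contractivity of the projective-tensor operations and of $\tau(\cdot|\cA)$ then gives \eqref{ksknvdojfbdo}. The two freeness patterns allowed for $y_{g_{e,i}}$ (identical or free copies) only change which pairings survive, so both are handled by the same argument. I expect the combinatorial bookkeeping of this induction, in which the count of terms depends only on $k$ and $p$, to be where most of the work lies.
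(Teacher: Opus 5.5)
Your plan is essentially the paper's proof. You work with trace polynomials and use the semicircular Schwinger--Dyson relation twice: to write $\tr(Q|\cB^1)$ as a trace polynomial, and to express each $\widetilde{\partial}^{(j_n)}\circ\cdots\circ\widetilde{\partial}^{(j_1)}\circ\partial^{(j_0)}\tr(Q|\cB^1)$ through conditional expectations, the covariance contraction $\eta$, and the maps of Proposition~\ref{proppourvoila}, applied to derivatives of $Q$ of order at most $3j$. You then conclude by contractivity and density.

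One correction: in the trace-polynomial calculus, $\partial\tr(Q|\cB^1)=(\id_{H_1}\otimes\tr(\cdot|\cB^1)^{\otimes 2})\circ\partial Q$ holds exactly. Every cross-pairing term in $E(azb)$ places $z$ inside a trace, so it belongs to $\widetilde{\partial}$, and that is the only place the two extra $y$-derivatives are spent.

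Your observation that the evaluation points only lie in the ball of radius $\max(R,2)$ is correct. It is also all the paper's argument gives, so both yield $\norm{f}_{\cC^{3k},\max(R,2)}$ on the right-hand side. The version with the same $R$ on both sides actually fails for $R<2$: take $f=X_g^{2n}$ with $c_g=1$ and $R=1$. The left side is at least the Catalan number $\tau(y_g^{2n})$, while the right side is $O(n^{3k})$.
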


\begin{proof}

    Let us first assume that $f$ is a polynomial. We begin by computing the differentials of $\tr\big(f|\cB^1\big)$. First, given $(\cA,\tau)$ a tracial von Neumann algebra, $M$ a monomial, if $g\in \cB^2_p$, then for all $a\in\cA$,
    $$ \tau\left( a M(x,y) y_g \right) = \sum_{M=AX_hB,\ h\in\cB^2_p} \tau\left( a A(x,y) \right) \tau\left( B(x,y) \right) \times \tau(y_hy_g), $$
    thanks to the Schwinger--Dyson equations. Thus,
    \begin{equation*}
        \tau\big(M(x,y)y_g|\cA\big) = \sum_{M=AX_hB,\ h\in\cB^2_p}\ \tau\big(A(x,y)|\cA\big)\ \tau\big(B(x,y)\big) \times \tau(y_hy_g),
    \end{equation*}
    which implies
    \begin{equation}
        \label{skljdvneb}
        \tr\big(M X_g|\cB^1\big) = \sum_{M=AX_hB,\ h\in\cB^2_p}\ \tr\big(A|\cB^1\big) \tr_1\big(B\big) \times \tau(y_hy_g),
    \end{equation}
    where $\tr_1 B = \tr\left( \tr\left( B | \cB^1 \right) \right)$. Note that if $g\in \cB^1_p$ then by definition of the conditional expectation $\tr\left( MX_g | \cB^1 \right) = \tr\left( M | \cB^1 \right)X_g$, thus when combined with Equation \eqref{skljdvneb}, this yields an explicit construction of $\tr\left( M | \cB^1 \right)$ as an element of $\Tr^1(\cB^1_p)$ (by induction on the degree of $M$).
    
    We then show by induction on the degree of $M$ that with $\id_{H_1}$ the projection on the vector space generated by $\cB_p^1$,
    \begin{equation}
        \label{sodvnsk}
        \partial \tr\big(M|\cB^1\big) = \left( \id_{H_1} \otimes \tr\big(\cdot|\cB^1\big)^{\otimes 2} \right) \circ\partial M
    \end{equation}
    If $M=1$, then the formula above is a tautology. Otherwise, if $M=RX_g$ with $g\in\cB_p^1$, then by definition of the conditional expectation, $\tau\big(M(x,y)|\cA\big) = \tau\big(R(x,y)|\cA\big) x_g$, thus 
    \begin{equation}
        \label{skjdvnsknv}
        \tr\big(M|\cB^1\big) = \tr\big(R|\cB^1\big) X_g,
    \end{equation}
    and thanks to our induction hypothesis, 
    \begin{align*}
        \partial \tr\big(M|\cB^1\big)(x) &= \left( \partial \left(\tr\left( M|\cB^1\right)\right) \times 1\otimes X_g + \id_{H_1}(g)\otimes \tr\left( R | \cB^1 \right) \right)(x) \\
        &= \left( \left( \left( \id_{H_1} \otimes \tr\big(\cdot|\cB^1\big)^{\otimes 2} \right) \circ\partial R\right) \times 1\otimes X_g + \id_{H_1}(g)\otimes \tr\left( R | \cB^1 \right) \right)(x) \\
        &= \id_{H_1} \otimes \tr\big(\cdot|\cB^1\big)^{\otimes 2} \circ\partial M.
    \end{align*}
    On the other hand, if $M=RX_g$ with $g\in\cB_p^2$, then thanks to Equation \eqref{skljdvneb},
    \begin{align*}
        \partial \tr\big(M|\cB^1\big) &= \sum_{R=AX_hB,\ h\in\cB^2_p}\ \partial\tr\big(A|\cB^1\big)\times \tr_1\big(B\big) \times \tau(y_hy_g) \\
        &= \sum_{R=AX_hB,\ h\in\cB^2_p}\ \id_{H_1} \otimes \tr\big(\cdot|\cA\big)^{\otimes 2} \circ\partial A \times \tr_1\big(B\big) \times \tau(y_hy_g) \\
        &= \sum_{R=A_1X_fA_2X_hB,\ f,h\in\cB^2_p}\ f\otimes \tr\big(A_1|\cB^1\big) \otimes \tr\big(A_2|\cB^1\big)  \times \tr_1\big(B\big) \times \tr(y_hy_g)\\
        &= \sum_{M=A_1X_fA_2,\ h\in\cB^2_p}\ f\otimes\tr\big(A_1|\cB^1\big) \otimes \tr\big(A_2|\cB^1\big) \\
        &= \id_{H_1} \otimes \tr\big(\cdot|\cB^1\big)^{\otimes 2} \circ\partial M.
    \end{align*}
    Hence the proof of Equation \eqref{sodvnsk}. Consequently, we have by induction that
    \begin{equation}
    \label{sdfjvnskvdn}
        \partial^{(s)} \tr\big(M|\cB^1\big) = \id_{H_1^{\otimes s}} \otimes \tr\big(\cdot|\cB^1\big)^{s+1} \circ\partial^{(s)} M.
    \end{equation}
    
    \noindent Besides, if $M\in\C\la \cB^1_p\ra$, one has that
    \begin{equation}
            \label{osjdgvn}
        \widetilde{\partial} \tr(M) = \sum_{M=AX_f B} BA = \cD M.
    \end{equation}
    We will now prove that if $M\in\C\la \cB^1_p,\cB_p^2\ra$,
    \begin{equation}
        \label{skjdvnsl0}
        \widetilde{\partial} \tr(M) = \id_{H_1}\otimes \tau\big(\cdot|\cA\big) \circ \cD M.
    \end{equation}
    We proceed by induction on the degree of $M$ in $(X_{g})_{g\in\cB_p^2}$. If this degree is equal to $0$, then it is simply Equation \eqref{osjdgvn}. Otherwise, by traciality one can assume that $M= R X_g $ with $g\in \cB_p^2$, and thanks to Equation \eqref{skljdvneb}, after taking the trace,
    \begin{align*}
        \tr_1\left( M \right) = \sum_{R=AX_hB,\ h\in\cB^2_p} \tr_1\left(A \right) \tr_1\left( B \right) \times \tau(y_hy_g).
    \end{align*}
    Thus, thanks to our induction hypothesis,
    \begin{align*}
        \widetilde{\partial} \tr_1\left( M \right) &= \sum_{R=AX_hB,\ h\in\cB^2_p} \widetilde{\partial}\tr_1\left(A\right) \times \tr_1\left( B \right) \times \tau(y_hy_g) + \tr_1\left(A \right)\times \widetilde{\partial}\tr_1\left( B \right) \times \tau(y_hy_g) \\
        &=  \sum_{R=AX_hB,\ h\in\cB^2_p} \id_{H_1}\otimes \tr\big(\cdot|\cB^1\big) \circ \cD A \times \tr_1\left( B\right) \times \tau(y_hy_g) \\
        &\quad\quad\quad\quad\quad\quad\quad + \tr_1\left( A \right)\times \id_{H_1}\otimes \tr\big(\cdot|\cB^1\big) \circ \cD B \times \tau(y_hy_g)\\
        &=  \sum_{f\in\cB_p^1} f\otimes \Bigg( \sum_{R=A_1X_fA_2X_hB,\ h\in\cB^2_p}  \tr\big(A_2A_1|\cB^1\big) \times \tr_1\left( B \right) \times \tau(y_hy_g) \\
        &\quad\quad\quad\quad\quad\quad + \sum_{R=AX_hB_1X_fB_2,\ h\in\cB^2_p} \tr_1\left( A\right)\times \tr\big(B_2B_1|\cB^1\big) \times \tau(y_hy_g) \Bigg) \\
        &=  \sum_{f\in\cB_p^1} \sum_{R=AX_fB} f\otimes \Bigg( \sum_{B=B_1X_hB_2,\ h\in\cB^2_p}  \tr\big(B_1A|\cB^1\big) \times \tr_1\left( B_2\right) \times \tau(y_hy_g) \\
        &\quad\quad\quad\quad\quad\quad\quad\quad\quad + \sum_{A=A_1X_hA_2,\ h\in\cB^2_p} \tr_1\left( A_1 \right)\times \tau\big(BA_2|\cB^1\big) \times \tau(y_hy_g) \Bigg) \\
        &= \id_{H_1}\otimes \tau\big(\cdot|\cA\big) \circ \cD M,
    \end{align*}
    where, given that $g\in\cB_p^2$, we used in the last line that 
    \begin{align*}
        \tr\big(BX_gA|\cB_1\big) =&\ \sum_{B=B_1X_hB_2,\ g\in\cB^2_p} \tr\big(B_1A|\cB^1\big) \tr_1\big(B_2\big) \times \tau(y_hy_g) \\
        &+  \sum_{A=A_1X_hA_2,\ g\in\cB^2_p} \tr_1\big(A_1\big) \tr\big(BA_2|\cB^1\big) \times \tau(y_hy_g).
    \end{align*}
    Note that the proof of this equation is the same as the one of Equation \eqref{skljdvneb}. Hence by induction, Equation \eqref{skjdvnsl0} is true. Therefore, we get thanks to Equation \eqref{sdfjvnskvdn} that,
    \begin{equation}
        \label{skjdvnsl}
        \widetilde{\partial}^{(s)} \tr_1(M) = \left(\id_{H_1^{\otimes s}}\otimes \tr\big(\cdot|\cB^1\big)^{\otimes s} \right) \circ \left(\widetilde{\partial}^{(s)} \tr( M)\right),
    \end{equation}
    where $\widetilde{\partial}^{(s)}$ on the right hand side is the differential in $\Tr^1\left(\cB^1_p,\cB^2_p\right)$. Finally, we are going to prove that 
    \begin{equation}
    \label{ksjvnsl}
        \widetilde{\partial}^{(s)} \tr\big(M|\cB^1\big) = m \circ\left( \eta\otimes\id \otimes \right(\widetilde{\partial}^{(s)}\circ \tr_1\left) \otimes \id \right)\circ \partial^{(2)} M,
    \end{equation}
    where $\eta(g\otimes h)$ is equal to $1$ if $g,h\in\cB_2$ and $\tau(y_hy_g)=1$, or $0$ otherwise, and 
    $$m(A\otimes B\otimes C) = B \otimes \left(\tr\left(A | \cB^1 \right) \tr\left(C | \cB^1 \right)\right) .$$
    Note that for a monomial $M$, Equation \eqref{ksjvnsl} is the same as 
    $$ \widetilde{\partial}^{(s)} \tr\big(M|\cB^1\big) = \sum_{g,h\in\cB_p^2} \tau(y_hy_g) \sum_{M=AX_gBX_hC} \left( \widetilde{\partial}^{(s)}\tr_1(B)\right)\otimes \tr\left( A | \cB^1 \right) \tr\left( C | \cB^1 \right). $$
    We will prove this equation by induction on the degree of $M$ with respect to $(X_g)_{g\in\cB_p^2}$. In particular, if this degree is equal to $0$ or $1$ then $\widetilde{\partial}^{(s)} \tr\big(M|\cB^1\big)=0$ and so is the right hand side of the equation above. Note thanks to Equation \eqref{skjdvnsknv}, we can assume that $M= R X_g $ with $g\in \cB_p^2$. Thus, thanks to Equation \eqref{skljdvneb}, by using our induction hypothesis, we have that 
    \begin{align*}
        &\widetilde{\partial}^{(s)} \tr\big(M|\cB^1\big) \\
        &= \sum_{M=AX_hBX_g,\ h,g\in\cB^2_p} \widetilde{\partial}^{(s)}\tr\big(A|\cB^1\big) \times \tr_1(B) \times \tau(y_hy_g) \\
        &\quad + \sum_{M=AX_hBX_g,\ h,g\in\cB^2_p} \widetilde{\partial}^{(s)}\tr_1(B)\otimes \tr\big(A|\cB^1\big) \times \tau(y_hy_g) \\
        &= \sum_{g,h,g',h'\in\cB_p^2} \tau(y_hy_g)\tau(y_{h'}y_{g'})  \sum_{M=A_1X_{h'}A_2X_{g'}A_3X_hBX_g} \widetilde{\partial}^{(s)} \tr_1(A_2) \otimes \Big( \tr\left( A_1 | \cB^1 \right) \tau\left( A_3 | \cB^1 \right) \Big) \times \tr_1(B) \\
        &\quad + \sum_{M=AX_hBX_g,\ h,g\in\cB^2_p} \widetilde{\partial}^{(s)}\tr_1(B)\otimes \tr\big(A|\cB^1\big) \times \tau(y_hy_g).
    \end{align*}
    Thus by using \eqref{skljdvneb}, we get that
    \begin{align*}
        \widetilde{\partial}^{(s)} \tr\big(M|\cB^1\big) =&\ \sum_{g,h\in\cB_p^2} \tau(y_hy_g) \sum_{M=AX_hBX_gC,\ C\neq 1} \widetilde{\partial}^{(s)}\tr_1(B) \otimes \tr\left( A | \cB^1 \right) \tr\left( C | \cB^1 \right) \\
        &+ \sum_{g,h\in\cB_p^2} \tau(y_hy_g) \sum_{M=AX_hBX_g} \widetilde{\partial}^{(s)}\tr_1(B)\otimes \tr\big(A|\cB^1\big) \tr\big(1|\cB^1\big).
    \end{align*}
    Hence, Equation \eqref{ksjvnsl} is proved. Note that thanks to our assumption on the freeness of the semicirculars $y_{e,i}$, the quantity $\kappa_{i,j}=\tau(y_{e,i}y_{e,j})$ does not depend on $e$, and
    $$\eta(v\otimes w) = \sum_{1\leq i,j\leq p} \kappa_{i,j} \sum_{e\in\cB^2} \langle g_{e,i} | v\rangle \langle g_{e,j} | w\rangle.$$
    In particular, there is a constant $C_p$ such that $|\eta(v\otimes w)|\leq C_p \norm{v}_2 \norm{w}_2$. Finally,
    if
    $$ M = R\otimes \tr P_1 \otimes \cdots \otimes \tr P_m,$$
    then by Equation  \eqref{skjdvnsl} and \eqref{ksjvnsl},
    $$ \widetilde{\partial}^{(s)} \tr\big( M | \cB^1 \big) = m \circ\left( \eta\otimes\id \otimes \right(\widetilde{\partial}^{(s)}\circ \tr_1\left) \otimes \id \right)\circ \partial^{(2)} M + \left(\id_{H_1^{\otimes s}}\otimes \tr\big(\cdot|\cB^1\big)^{\otimes s} \right) \circ \left(\widetilde{\partial}^{(s)} M\right).$$
    
    So by combining this result with Equation \eqref{sdfjvnskvdn} and \eqref{skjdvnsl} again, one can express the first $k$ differentials of $\tr\big(f|H_1\big)$ with the help of the first $3k$ differentials of $f$ as well as maps defined in Proposition \ref{proppourvoila}. The conclusion follows by a density argument. In particular, Equation \eqref{skljdvnsdk} yields Equation \eqref{ksknvdojfbdo}.
\end{proof}

Our last necessary tool will be a version of the chain rule adapted to our space of functions.

\begin{theorem}[Chain rule]
\label{chainrule}
Let $\mathbf{J} \in\cC^k(X_1,\dots,X_d)$ and $\mathbf{f}_1,\dots,\mathbf{f}_d\in\cC^k(\cB)_{\sa}$, and write $\mathbf{f} \coloneqq (\mathbf{f}_1,\ldots,\mathbf{f}_d)$. Let $\mathbf{g} \coloneqq \mathbf{J} \circ \mathbf{f}$ be defined by
\begin{align*}
    g^{\cA} &\coloneqq J^{\cA} \circ f^{\cA} = \big(x \mapsto J^{\cA}(f_1^{\cA}(x),\ldots,f_d^{\cA}(x))\big),
\end{align*}
then $\mathbf{J} \circ \mathbf{f} \in \cC^k(\cB)$.\footnote{More precisely, if $(P_n(X_1,\ldots,X_d))_{n \in \N}$ is a sequence of trace polynomials converging in $\cC^k(X_1,\ldots,X_d)$ to $\mathbf{J}$ and $(Q_n(X)) = (Q_{1,n}(X),\ldots,Q_{d,n}(X))_{n \in \N}$ is a sequence of $d$-tuples of self-adjoint trace polynomials converging in $\cC^k(\cB)_{\sa}^d$ to $\mathbf{f}$, then $(P_n(Q_n(X)))_{n \in \N}$ converges in $\cC^k(\cB)$ to a limit, $\mathbf{J} \circ \mathbf{f}$, independent of the choice of approximating sequences.}
Besides, if we define
$$ \Lambda^s_R(\cf) = \sup_{\substack{1\leq i_1,\dots,i_m\leq d \\ s_1+\dots+s_m = s \\ s_l\geq 1,\ m\geq 2 }}\ \prod_{l=1}^m\ \norm{\cf_{i_l}}_{s_l,c,R}\ ,\quad\quad S\coloneqq \max_{1\leq i\leq d} \norm{\cf_i}_{0,c,R}, $$
then there exists a constant $C_k$ such that, if $j_0>0$, $j\coloneqq j_0+\dots+j_n$,
\begin{align}
\label{chainestimate}
    &\norm{\widetilde{\partial}^{(j_n)}\circ\cdots\circ\widetilde{\partial}^{(j_1)}\circ\partial^{(j_0)}\mathbf{g} - \sum_{i=1}^d  (\partial_i \mathbf{J})\circ\cf\ \sh_1 \left(\widetilde{\partial}^{(j_n)}\circ\cdots\circ\widetilde{\partial}^{(j_1)}\circ\partial^{(j_0)}\mathbf{f_i} \right)}_{0,c,R} \\
    &\leq C_k\times \Lambda^j_R(\cf) \times \sup_{s\in [2, j]} \norm{\mathbf{J}}_{s,S}, \nonumber
\end{align}
and if $j_0=0$,
\begin{align}
\label{chainestimate2}
    &\Bigg\| \widetilde{\partial}^{(j_n)}\circ\cdots\circ\widetilde{\partial}^{(j_1)}\mathbf{g} - \sum_{i=1}^d  (\partial_i \mathbf{J})\circ\cf\ \sh_2 \left(\widetilde{\partial}^{(j_n)}\circ\cdots\circ\widetilde{\partial}^{(j_1)}\mathbf{f_i} \right) - \sum_{i=1}^d  (\widetilde{\partial}_i \mathbf{J} )\circ\cf\ \sh_3 \left(\widetilde{\partial}^{(j_n)}\circ\cdots\circ\widetilde{\partial}^{(j_1)} \mathbf{f_i} \right) \Bigg\|_{0,c,R} \\
    &\leq C_k\times \Bigg( \Lambda_R^j(\cf) \times \sup_{s\in [2, j]} \norm{\mathbf{J}}_{s,S} + \sup_i\ \norm{\widetilde{\partial}^{(j_n)}\circ\cdots\circ\widetilde{\partial}^{(j_2)} \circ \partial^{(j_1)}\mathbf{f_i} }_{0,c,R} \times \sup_i\ \norm{\widetilde{\partial}_i\mathbf{J}}_{0,S}\Bigg). \nonumber
\end{align}
Where if we set,
$$ \cX = A\otimes B \otimes \tr(R_1)\otimes \cdots\otimes \tr(R_m), $$
$$ \cY = h_1\otimes\dots\otimes h_{k-1}\otimes C_1\otimes \dots\otimes C_k \otimes \tr(R_{m+1})\otimes \cdots\otimes \tr(R_n), $$
then
\begin{align*}
    &\cX\sh_1\cY = h_1\otimes\dots\otimes h_{k-1}\otimes AC_1\otimes C_2\otimes\dots\otimes C_{k-1}\otimes C_k B \otimes \tr(R_1)\otimes \cdots\otimes \tr(R_n), \\
    &\cX\sh_2\cY = h_1\otimes\dots\otimes h_{k-1}\otimes C_1\otimes \dots\otimes C_{k-1}\otimes A C_k B \otimes \tr(R_1)\otimes \cdots\otimes \tr(R_n), \\
    &\cX\sh_3\cY = h_1\otimes\dots\otimes h_{k-1}\otimes C_1\otimes \dots\otimes C_{k-1} \otimes B \otimes \tr(AC_k) \otimes \tr(R_1)\otimes \cdots\otimes \tr(R_n).
\end{align*}
\end{theorem}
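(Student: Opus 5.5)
We first prove the statement when $\mathbf{J}=P$ is a trace polynomial in $X_1,\dots,X_d$ and $\mathbf{f}=Q$ is a $d$-tuple of self-adjoint trace polynomials in $\Tr^1(\cB)$. In that case $P\circ Q$ is itself a trace polynomial, and its derivatives $\widetilde{\partial}^{(j_n)}\circ\cdots\circ\widetilde{\partial}^{(j_1)}\circ\partial^{(j_0)}(P\circ Q)$ can be computed by a combinatorial Fa\`a di Bruno formula. By linearity it suffices to take $P$ to be a monomial times a product of traces of monomials.

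Applying $\partial$ to $P\circ Q$ hits one occurrence of some $Q_i$, either outside the traces or inside a trace. Differentiating $Q_i$ by $\partial$ gives the term $(\partial_i P)\circ Q\,\sh_1\,\partial Q_i$. Differentiating inside a trace of $P$ contributes to $\widetilde{\partial}$. Differentiating a trace term of $Q_i$ by $\widetilde{\partial}$ gives either a $\sh_2$ term (when the occurrence of $Q_i$ sits outside the traces of $P$) or a $\sh_3$ term (when it sits inside a trace of $P$, where the cyclic structure yields $\tr(AC_k)$). Iterating by induction on $j$, every term of the $j$-th derivative is obtained as follows: pick a derivative $\widetilde{\partial}^{(i_r)}\circ\cdots\circ\partial^{(i_0)}P$ of order $s$, evaluate it at $Q$, and glue into its tensor slots derivatives of orders $s_1,\dots,s_s\ge 1$ of components $Q_{i_l}$, with $s_1+\dots+s_s=j$. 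The gluing uses maps of the type in Proposition~\ref{proppourvoila}: multiplications $\sh$, tensor permutations $T_{\sigma,\nu}$, and traces. We carry out this induction as in the proofs of Propositions~\ref{prop.nablaXiinterp} and \ref{proppourvoila}, tracking permutations through the operators $T_s$.

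Since the gluing maps are contractions for the projective tensor norm, each term is bounded by $\norm{P}_{s,S}\prod_l\norm{Q_{i_l}}_{s_l,c,R}$. This uses the fact that when $\norm{x}_{\cA,c}\le R$, the points $Q(x_a)$ satisfy $\norm{Q_i(x_a)}\le S$ and have the same law across tensorands. The $s=1$ terms are exactly the displayed main terms $\sh_1$, $\sh_2$, $\sh_3$, with one exception: when $j_0=0$ the first derivative is $\widetilde{\partial}$, and the chain can also enter through $\widetilde{\partial}_i P$ composed with a lower-order mixed derivative of $Q_i$. This produces the extra error term in \eqref{chainestimate2}, bounded by $\norm{\widetilde{\partial}_iP}_{0,S}$ times $\norm{\widetilde{\partial}^{(j_n)}\circ\cdots\circ\partial^{(j_1)}Q_i}_{0,c,R}$. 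The terms with $s\ge 2$ have $m\ge2$ factors and give $\Lambda^j_R(Q)\sup_{s\in[2,j]}\norm{P}_{s,S}$. This establishes \eqref{chainestimate} and \eqref{chainestimate2} for polynomials, with a combinatorial constant $C_k$.

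To pass to general $\mathbf{J}$ and $\mathbf{f}$, we take approximating sequences $P_n\to\mathbf{J}$ and $Q_n\to\mathbf{f}$ as in the footnote. Because the Fa\`a di Bruno expression is multilinear in the derivatives of $P$ and of the $Q_{i_l}$, and each entry is bounded as above, the expression is jointly continuous. One point needs care: $\norm{Q_{n,i}}_{0,c,R}\le S+1$ for large $n$, so we bound the derivatives of $P_n$ on the slightly larger ball of radius $S+1$. A further ingredient is the uniform continuity of $\partial^{(s)}P$ on balls, used to handle $P_n\circ Q_n-P_n\circ Q_m$. We obtain this from the mean value estimate \eqref{eq:frechetdiff} applied to $\partial^{(s)}P$, which costs one derivative of $P$. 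It therefore needs $\mathbf{J}\in\cC^{k+1}$, or else a different argument.

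The main obstacle is this one-derivative gap. We would close it with an $\varepsilon/3$ argument: first replace $\mathbf{J}$ by a fixed polynomial $P_m$ close to it in $\cC^k$ on the ball of radius $S+1$; then use the fact that, for this fixed polynomial, $P_m\circ Q_n\to P_m\circ\mathbf{f}$, since polynomials have derivatives of every order. This gives the Cauchy property and independence of the approximating sequences, and the estimates pass to the limit.
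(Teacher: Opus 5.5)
Your overall route is the paper's own. You prove the first-order rules for trace polynomials (these are Corollary \ref{ksvnslvslv}, including the $\sh_4$ term you describe in words), iterate them into a Fa\`a di Bruno expansion, and bound each term using that the gluing maps of Proposition \ref{proppourvoila} are contractive for the projective norm. Your density step is correct and more careful than the paper, which does not discuss the passage to the limit at all. You rightly note that the Lipschitz bound for $(\partial^{(s)}P_n)\circ Q_n$ costs one derivative, and the $\varepsilon/3$ argument through a fixed polynomial $P_m$ is what keeps you inside $\cC^k$.

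The gap is your claim that the only $s=1$ terms besides the main $\sh_1,\sh_2,\sh_3$ terms come from the $\sh_4$ route, controlled by $\widetilde{\partial}^{(j_n)}\circ\cdots\circ\widetilde{\partial}^{(j_2)}\circ\partial^{(j_1)}\mathbf{f}_i$. There is a second route. The $\sh_3$ term creates a mixed trace $\tr(AF)$, where $F$ is the polynomial part of $\mathbf{f}_i$, and a later $\widetilde{\partial}$ can differentiate this trace at $F$. That keeps $\mathbf{J}$ at order one but produces a different derivative of $\mathbf{f}_i$.

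In fact \eqref{chainestimate2} is false as stated. Take a single variable $X=X_e$ with $c_e=1$, $\mathbf{J}=\tr(X_1)$ and $\mathbf{f}_1=X\tr(X^2)$, so $\mathbf{g}=\tr(X)\tr(X^2)$. Use the derivative $\widetilde{\partial}\circ\partial\circ\widetilde{\partial}$, i.e.\ $j_0=0$, $j_1=2$, $j_2=1$, with $k\geq 3$. A direct computation gives $\widetilde{\partial}\circ\partial\circ\widetilde{\partial}\,\mathbf{g}=2\,e\otimes e\otimes e\otimes 1\otimes 1\otimes 1\otimes 1$, of norm $2$. On the other hand $\partial_1\mathbf{J}=0$, $\widetilde{\partial}\circ\partial\circ\widetilde{\partial}\,\mathbf{f}_1=0$ and $\partial^{(2)}\mathbf{f}_1=0$. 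Every derivative of $\mathbf{J}$ of order at least $2$ also vanishes, so the right-hand side of \eqref{chainestimate2} is $0$. The nonzero term is exactly $(\widetilde{\partial}_1\mathbf{J})\circ\mathbf{f}$ glued, through the mixed trace, to $\widetilde{\partial}^{(2)}\circ\partial\,\mathbf{f}_1$.

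The paper's one-line proof has the same hole. What your bookkeeping does prove is the following corrected version of \eqref{chainestimate2}. Replace the exceptional factor by $\sup_i\norm{\widetilde{\partial}_i\mathbf{J}}_{0,S}$ times $\sup_i\sup\norm{\widetilde{\partial}^{(a_q)}\circ\cdots\circ\widetilde{\partial}^{(a_1)}\circ\partial^{(a_0)}\mathbf{f}_i}_{0,c,R}$, the inner supremum taken over $a_0\geq 1$ with $a_0+\dots+a_q=j$. This holds because in both extra routes the polynomial part of $\mathbf{f}_i$ is differentiated at least once, and iterated derivatives agree with such canonical ones up to tensor permutations. This is also all that Theorem \ref{thm:fastdecay} needs, since it bounds the exceptional term by the already established $j_0>0$ case. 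You should state and prove that version instead.
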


As a corollary of the proof of the theorem above, we also get the following.

\begin{corollary}
\label{ksvnslvslv}
With the notations of Theorem \ref{chainrule},
\begin{equation}
    \label{skvjnlsdvkn}
    \partial \cg = \sum_{i=1}^d \left( (\partial_i \mathbf{J})\circ \cf \right)\sh_1 \left(\partial \mathbf{f_i} \right).
\end{equation}
Besides, if given $ \cX = A\otimes B \otimes \tr(R_1)\otimes \cdots\otimes \tr(R_m)$  and $ \cY = h\otimes C_1 \otimes C_2 \otimes \tr(R_{m+1})\otimes \cdots\otimes \tr(R_n)$, we set
$$ \cX\sh_4\cY = h\otimes C_2AC_1 \otimes B \otimes \tr(R_1)\otimes \cdots\otimes \tr(R_n),$$
then
\begin{equation}
    \label{skvjnlsdvkn2}
    \widetilde{\partial} \cg = \sum_{i=1}^d \left( (\partial_i \mathbf{J})\circ \cf \right)\sh_2 \left(\widetilde{\partial} \mathbf{f_i} \right) + \sum_{i=1}^d \left( (\widetilde{\partial}_i \mathbf{J})\circ \cf \right)\sh_3 \left(\widetilde{\partial} \mathbf{f_i} \right) + \sum_{i=1}^d \left( (\widetilde{\partial}_i \mathbf{J})\circ \cf \right)\sh_4 \left(\partial \mathbf{f_i} \right).
\end{equation}

\end{corollary}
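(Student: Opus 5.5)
The plan is to prove both identities first for trace polynomials, on monomial-type generators, and then pass to general $\mathbf{J}$ and $\mathbf{f}$ by the density argument already used for Theorem \ref{chainrule}. For the density step, take trace polynomials $P_n \to \mathbf{J}$ in $\cC^k(X_1,\dots,X_d)$ and self-adjoint $Q_n \to \mathbf{f}$ in $\cC^k(\cB)^d_{\sa}$. The footnote of Theorem \ref{chainrule} gives $P_n\circ Q_n \to \mathbf{g}$, so $\partial(P_n\circ Q_n)\to\partial\mathbf{g}$ and $\widetilde{\partial}(P_n\circ Q_n)\to\widetilde{\partial}\mathbf{g}$ uniformly on $\norm{\cdot}_{\cA,c}$-balls. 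On the right-hand sides, each $\sh_a$ is a contraction for the projective tensor norms once the arguments are evaluated, since it is built from the maps of Proposition \ref{proppourvoila}. Moreover $(\partial_i P_n)\circ Q_n \to (\partial_i \mathbf{J})\circ\cf$ uniformly on balls, because $\partial_i P_n\to\partial_i\mathbf{J}$ uniformly on $S$-balls with $S$ bounded along the sequence, and the first derivative controls the Lipschitz dependence on the argument. So the identities pass to the limit.

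For polynomials, I first prove \eqref{skvjnlsdvkn} by linearity and the Leibniz rule. Take $J = M\,\tr(R_1)\cdots\tr(R_m)$ with $M = X_{i_1}\cdots X_{i_r}$. Then $g = f_{i_1}\cdots f_{i_r}\,\tau(R_1(\cf))\cdots\tau(R_m(\cf))$. The operator $\partial$ only acts on the part outside the traces, so the trace factors are carried along untouched. On the product, $\partial$ is a derivation: $\partial(AB) = \partial A\cdot(1\otimes B) + (A\otimes 1)\cdot\partial B$. This gives
\[
\partial g = \sum_{s} (f_{i_1}\cdots f_{i_{s-1}}\otimes f_{i_{s+1}}\cdots f_{i_r})\sh_1 \partial f_{i_s}\times(\text{traces}),
\]
which is exactly $\sum_i (\partial_i J\circ\cf)\sh_1\partial\cf_i$. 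One should also check that $\partial$ applied to the inner trace polynomials $f_{i_s}$ contributes only through their non-trace part; this holds by definition, since the trace parts of $f_{i_s}$ end up inside the trace factors of $g$.

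For \eqref{skvjnlsdvkn2}, the derivative $\widetilde{\partial}$ hits every trace term appearing in $g$, and these come from three sources. The first source is trace terms internal to the factors $f_{i_s}$ sitting in the product $M(\cf)$. Differentiating one of them leaves the product structure, with the hit factor replaced by its $\widetilde{\partial}$, which yields the $\sh_2$ term with $\partial_i J$. The second source is trace terms internal to the $f$'s sitting inside some $\tr(R_l(\cf))$. These give the $\sh_3$ term: by the chain rule for the cyclic derivative, the coefficient is $(\widetilde{\partial}_i J)\circ\cf$ with $AC_k$ placed inside a trace. The third source is the outer traces $\tr(R_l(\cf))$ themselves, differentiated through the non-trace part of the $f$'s. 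Here I use $\cD(R_l\circ\cf)$: writing $\cD$ as $m\circ\partial$ and applying the already proven formula for $\partial$ to $R_l\circ\cf$ gives $\sum_i$ of $\partial_i R_l\circ\cf$ composed with $\partial\cf_i$ and then cyclically multiplied. This produces the $C_2AC_1$ pattern of $\sh_4$, with $\widetilde{\partial}_iJ = \partial_i R_l$ times the remaining traces.

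The main obstacle is bookkeeping: matching the ordering of the tensorands, the position of the $H_c$ vector, and the placement of $A,B$ in $\sh_2,\sh_3,\sh_4$ against the conventions for $\widetilde{\partial}$, where the new tensorand $\cD R_i$ is put first. I would first verify the case $J=\tr(X_1X_2)$, $\cf_i$ monomials with a single trace factor, to fix the conventions, and then argue by multilinearity.
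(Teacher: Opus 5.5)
Your argument is correct and is essentially the paper's proof. The paper also works with trace polynomials and obtains \eqref{skvjnlsdvkn} and the $\sh_2$ term from the Leibniz rule, by induction on the degree of a monomial $\mathbf{J}=X_sR$. It then inducts on the number of traces, writing $\mathbf{J}=R\otimes\tr(Q)$ and splitting $\widetilde{\partial}\,\tr(Q\circ\mathbf{f})$ into the $\sh_4$ term (from $\mathcal{D}=m\circ\partial$ and the already proven formula for $\partial$) and the $\sh_3$ term (from the traces inside the $\mathbf{f}_i$); these are exactly your three sources.

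Your density step is only implicit in the paper, and it needs one small fix. When $k=1$, $\partial_i\mathbf{J}$ is not differentiable, so the convergence $(\partial_i P_n)\circ Q_n\to(\partial_i\mathbf{J})\circ\mathbf{f}$ cannot rely on a derivative of $\partial_i\mathbf{J}$. Instead, use an $\varepsilon/3$ argument through a fixed trace polynomial $\partial_i P_p$, which is Lipschitz on balls.
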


\begin{proof}[Proof of Theorem \ref{chainrule} and Corollary \ref{ksvnslvslv}] 
    Let us first assume that $\bJ,\cf_1,\dots,\cf_d$ are trace polynomials, one can define $\cg$ by using the following computation rules,
    $$ P\otimes \tr(R_1)\otimes\dots\otimes \tr(R_m)\ \times\ Q\otimes \tr(S_1)\otimes\dots\otimes \tr(S_n) = (PQ) \otimes \tr(R_1)\otimes\dots\otimes \tr(R_m) \otimes \tr(S_1)\otimes\dots\otimes \tr(S_n),$$
    $$ \tr\left( P\otimes \tr(R_1)\otimes\dots\otimes \tr(R_m) \right) = 1\otimes \tr(P) \otimes \tr(R_1)\otimes\dots\otimes \tr(R_m).$$
    First, let us prove Equation \eqref{skvjnlsdvkn} by induction. It is obvious if $\bJ=1$. Otherwise if $\bJ = X_s R$ with $R$ a monomial, then by using our induction hypothesis,
    \begin{align*}
        \partial \cg &= \partial\cf_s \times 1\otimes R\circ\cf + \cf_s\otimes 1\times \sum_{i=1}^d  \left( (\partial_i R)\circ \cf \right) \sh_1 \partial \mathbf{f_i} \\
        &= \sum_{i=1}^d \left( (\partial_i \mathbf{J})\circ \cf \right)\sh_1 \left(\partial \mathbf{f_i} \right).
    \end{align*}
    Hence the conclusion. Besides, if we now assume that $\cg$ is a trace polynomial then Equation \eqref{skvjnlsdvkn} still holds.
    
    Let us now prove Equation \eqref{skvjnlsdvkn2}. We first assume that $\bJ$ is a monomial, then 
    $$ \widetilde{\partial} \cg = \sum_{i=1}^d \left( (\partial_i \mathbf{J})\circ \cf \right)\sh_2 \left(\widetilde{\partial} \mathbf{f_i} \right), $$
    which we prove by induction on the degree of $\bJ$. Indeed, once again it is obvious if $\bJ=1$. Otherwise, if $\bJ = X_s R$ with $R$ a monomial, then by using our induction hypothesis,
    \begin{align*}
        \widetilde{\partial} \cg &= \widetilde{\partial}\cf_s \times 1\otimes R\circ\cf + 1\otimes \cf_s \times \sum_{i=1}^d \left( (\partial_i R)\circ \cf \right)\sh_2 \left(\widetilde{\partial} \mathbf{f_i} \right) \\
        &= \sum_{i=1}^d \left( (\partial_i \mathbf{J})\circ \cf \right)\sh_2 \left(\widetilde{\partial} \mathbf{f_i} \right).
    \end{align*}
    Let us now prove Equation \eqref{skvjnlsdvkn2} by induction on the number of traces. By writing $\bJ = R\otimes \tr(Q)$,
    \begin{align*}
        \widetilde{\partial} \cg &= (\widetilde{\partial}(R\circ \cf)) \otimes \tr(Q\circ\cf) + \widetilde{\partial}(\tr (Q\circ\cf))\otimes R\circ \cf \\
        &= (\widetilde{\partial}(R\circ \cf)) \otimes \tr(Q\circ\cf) + \sum_{i=1}^d ((\cD_i Q)\circ \cf\otimes 1)\sh_4 \partial \cf_i \times 1 \otimes R\circ \cf \\
        &\quad + \sum_{i=1}^d \left( (\cD_i Q)\circ \cf \otimes 1\right)\sh_3 \left(\widetilde{\partial} \mathbf{f_i} \right) \times 1 \otimes R\circ \cf \\
        &= \left( \sum_{i=1}^d \left( (\partial_i R)\circ \cf \right)\sh_2 \left(\widetilde{\partial} \mathbf{f_i} \right) + \sum_{i=1}^d \left( (\widetilde{\partial}_i R)\circ \cf \right)\sh_3 \left(\widetilde{\partial} \mathbf{f_i} \right) + \sum_{i=1}^d \left( (\widetilde{\partial}_i R)\circ \cf \right)\sh_4 \left(\partial \mathbf{f_i} \right)\right)\otimes \tr(Q\circ\cf) \\
        &\quad + \sum_{i=1}^d ((\cD_i Q)\circ \cf\otimes 1)\sh_4 \partial \cf_i \times 1 \otimes R\circ \cf + \sum_{i=1}^d \left( (\cD_i Q)\circ \cf \otimes 1\right)\sh_3 \left(\widetilde{\partial} \mathbf{f_i} \right) \times 1 \otimes R\circ \cf \\
        &=\sum_{i=1}^d \left( (\partial_i \mathbf{J})\circ \cf \right)\sh_2 \left(\widetilde{\partial} \mathbf{f_i} \right) + \sum_{i=1}^d \left( (\widetilde{\partial}_i \mathbf{J})\circ \cf \right)\sh_3 \left(\widetilde{\partial} \mathbf{f_i} \right) + \sum_{i=1}^d \left( (\widetilde{\partial}_i \mathbf{J})\circ \cf \right)\sh_4 \left(\partial \mathbf{f_i} \right).
    \end{align*}
    Hence the proof of Equation \eqref{skvjnlsdvkn2}.

    Thus by induction, we can express the higher order derivative of $\cg$ with the help of those of $\bJ$ and $\cf$ as well as the operators defined in Proposition \ref{proppourvoila}. In particular, thanks to Equation \eqref{skljdvnsdk}, we get Equations \eqref{chainestimate} and \eqref{chainestimate2}.
\end{proof}

This theorem yields the following corollaries.

\begin{corollary} \label{cor:complip}
    There exists a constant $C_k < \infty$ such that for all $\mathbf{K} \in \cC_1^{k+1}(X_1,\ldots,X_d)$, $R > 0$, and $\mathbf{f},\mathbf{g} \in \cC^k(\cB)_{\sa}^d$,
\[
\norm{\mathbf{K} \circ \mathbf{f} - \mathbf{K} \circ \mathbf{g}}_{\cC^k,R} \leq C_k(S+1)^k \sum_{j=1}^{k+1}\norm{\mathbf{K}}_{j,S} \times \norm{\mathbf{f} - \mathbf{g}}_{\cC^k,R},
\]
where
\begin{align*}
    & \norm{\mathbf{h}}_{\cC^k,R} \coloneqq \sum_{i=1}^d \norm{\mathbf{h}_i}_{\cC^k,R} \quad\qquad \big(\mathbf{h} = (\mathbf{h}_1,\ldots,\mathbf{h}_d) \in \cC_1^k(\cB)^d\big), \\
    & S \coloneqq \max\left\{ \norm{\mathbf{f}}_{\cC^k,R},\norm{\mathbf{g}}_{\cC^k,R} \right\}.
\end{align*}
Also, here, $\cC^k(\cB)_{\sa}$ is the set of $\mathbf{f} \in \cC_1^k(\cB)$ such that $f^{\cA}(x) \in \cA_{\sa}$ for all $(\cA,\tau) \in \W$ and $x \in \cA_{\sa}$.
\end{corollary}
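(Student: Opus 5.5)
The plan is to interpolate linearly between $\mathbf{f}$ and $\mathbf{g}$ and apply the chain rule of Theorem \ref{chainrule} to a derivative of $\mathbf{K}$. Set $\mathbf{h} \coloneqq \mathbf{f}-\mathbf{g}$ and $\mathbf{f}^{(t)} \coloneqq \mathbf{g} + t\mathbf{h}$ for $t \in [0,1]$. This path stays in $\cC^k(\cB)_{\sa}^d$, and $\norm{\mathbf{f}^{(t)}}_{\cC^k,R} \leq S$.

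First I reduce to trace polynomials. By the footnote to Theorem \ref{chainrule}, composition is the limit of compositions of approximating trace polynomials, and both sides of the claimed inequality are continuous in the relevant seminorms. So it suffices to treat $\mathbf{K}$, $\mathbf{f}$, $\mathbf{g}$ trace polynomials, with the $\mathbf{f}$'s and $\mathbf{g}$'s self-adjoint. In that case, evaluation-wise, I use
\[
\mathbf{K}\circ\mathbf{f} - \mathbf{K}\circ\mathbf{g} = \int_0^1 \frac{d}{dt}\,\mathbf{K}\circ\mathbf{f}^{(t)}\,dt .
\]
By \eqref{idsncsncds}, the integrand equals
\[
\sum_{i=1}^d \big((\partial_i\mathbf{K})\circ\mathbf{f}^{(t)}\big)\sh \mathbf{h}_i + \big(\tau\otimes\id\big)\Big(\big((\widetilde{\partial}_i\mathbf{K})\circ\mathbf{f}^{(t)}\big)\sh(\mathbf{h}_i\otimes 1)\Big).
\]
At the level of trace polynomials this is an algebraic identity: $\mathbf{K}(\mathbf{g}+t\mathbf{h})$ is a polynomial in $t$, and its $t$-derivative is exactly the expression above, built from the multiplication maps of Proposition \ref{proppourvoila}. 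This identity commutes with all the operators $\widetilde{\partial}^{(j_n)}\circ\cdots\circ\partial^{(j_0)}$. Hence it suffices to bound the $\cC^k$-seminorm of each integrand uniformly in $t$, and then integrate.

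For a fixed $t$, each term is a product. One factor is $(\partial_i\mathbf{K})\circ\mathbf{f}^{(t)}$ (or its $\widetilde{\partial}$ analogue, which carries a trace); the other factor is $\mathbf{h}_i$. The plan is as follows.

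1. Apply the Leibniz rule for $\partial$ and $\widetilde{\partial}$ on products, which is valid for trace polynomials and contractive in projective tensor norms. This distributes the $j\le k$ derivatives between the two factors.

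2. Derivatives landing on $\mathbf{h}_i$ give at most $\norm{\mathbf{h}}_{\cC^k,R}$.

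3. Derivatives landing on $(\partial_i\mathbf{K})\circ\mathbf{f}^{(t)}$ are controlled by applying Theorem \ref{chainrule} (via \eqref{chainestimate}, \eqref{chainestimate2}) with $\mathbf{J}=\partial_i\mathbf{K}$, viewed through Proposition \ref{proppourvoila} as a function with values in a tensor product. Derivatives of order $s\le k$ of $\partial_i\mathbf{K}$ are derivatives of $\mathbf{K}$ of order $s+1\le k+1$, evaluated at points of norm at most $S$ (which is why $\norm{\mathbf{K}}_{j,S}$ appears). The products $\Lambda^s_R(\mathbf{f}^{(t)})$ involve at most $s\le k$ factors, each bounded by $S+1$; this produces the factor $(S+1)^k$.

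4. Collecting the terms gives $C_k(S+1)^k\sum_{j=1}^{k+1}\norm{\mathbf{K}}_{j,S}\,\norm{\mathbf{h}}_{\cC^k,R}$.

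The main obstacle is step 3. Theorem \ref{chainrule} bounds the top-order derivative of a composition only up to error terms, and it is stated for scalar-valued $\mathbf{J}$. Here the composed function $\partial_i\mathbf{K}$ is tensor-valued and must then be contracted against $\mathbf{h}_i$ via $\sh$. I would first try to rely on the proof of Theorem \ref{chainrule}, which expresses every derivative of $\mathbf{J}\circ\mathbf{f}$ as a finite combination of derivatives of $\mathbf{J}$ composed with $\mathbf{f}$, glued with derivatives of the $\mathbf{f}_i$ by maps from Proposition \ref{proppourvoila}. That expression works equally for tensor-valued $\mathbf{J}$, and the Leibniz expansion of $\sh\mathbf{h}_i$ fits into the same structure. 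Tracking the count of $\mathbf{f}$-factors is what yields the polynomial factor $(S+1)^k$. The zeroth-order term involves only $\norm{\mathbf{K}}_{1,S}$, so every derivative order from $1$ to $k+1$ appears, matching the sum in the statement.
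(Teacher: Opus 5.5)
Your argument is correct, but it is organized differently from the paper's. The paper differentiates first. It iterates Corollary \ref{ksvnslvslv} to write $\widetilde{\partial}^{(j_n)}\circ\cdots\circ\partial^{(j_0)}(\mathbf{K}\circ\mathbf{f})$ as a finite sum of terms $\cM\big((\widetilde{\partial}^{(a_p)}\circ\cdots\circ\partial^{(a_0)}\mathbf{K})\circ\mathbf{f},\ \text{derivatives of the } \mathbf{f}_l\big)$, where $\cM$ is multilinear and built from the maps of Proposition \ref{proppourvoila}. It then compares the $\mathbf{f}$- and $\mathbf{g}$-versions of each term by telescoping through $\cM$. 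The fundamental theorem of calculus along $t\mathbf{f}+(1-t)\mathbf{g}$ is used only for the zeroth-order difference of the composed $\mathbf{K}$-derivative, which costs one extra derivative of $\mathbf{K}$.

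You instead interpolate once, before differentiating. This gives $\mathbf{K}\circ\mathbf{f}-\mathbf{K}\circ\mathbf{g}=\int_0^1 \mathbf{K}^1\circ(\mathbf{f}^{(t)},\mathbf{h})\,dt$, where $\mathbf{K}^1$ is the first-order Taylor term of Corollary \ref{cor:taylorineq}. Since the integrand is linear in $\mathbf{h}$, you only need a boundedness estimate for a composition, uniform in $t$, and no Lipschitz or telescoping estimate for the gluing maps. This makes three things transparent: the linear dependence on $\norm{\mathbf{h}}_{\cC^k,R}$, the range $1,\ldots,k+1$ of derivative orders of $\mathbf{K}$, and the factor $(S+1)^k$.

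The price is that you need the chain-rule expansion for the tensor-valued $\partial_i\mathbf{K}$, together with a Leibniz expansion. The obstacle you flag in step 3 is resolved as you suggest, because the structural description in the proof of Theorem \ref{chainrule} never uses that $\mathbf{J}$ is scalar-valued. Alternatively, you can apply it directly to the scalar trace polynomial $\mathbf{K}^1$ in $2d$ variables, which is linear in the last $d$. Both routes ultimately rest on the same structural fact: every derivative of a composition is a combination of composed derivatives of the outer function glued to derivatives of the inner functions by the maps of Proposition \ref{proppourvoila}.
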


\begin{proof}

First, note that as explained in the proof of Theorem \ref{chainrule}, by iterating the Equations obtained in Corollary \ref{ksvnslvslv}, we can express the differential of $\mathbf{K} \circ \mathbf{f}$ with the operators defined in Proposition \ref{proppourvoila}, as well as the differentials of  $\mathbf{K}$ and $\mathbf{f}$. More precisely, if $j\coloneqq j_0+\dots+j_n \geq 1$, one has that
$$ \widetilde{\partial}^{(j_n)}\circ\cdots\circ\widetilde{\partial}^{(j_1)}\circ\partial^{(j_0)}(\mathbf{K}\circ \mathbf{f}) $$
is a finite linear combination of terms of the form
$$ \cM\left( \left(\widetilde{\partial}^{(a_p)}\circ\cdots\circ\widetilde{\partial}^{(a_1)}\circ\partial^{(a_0)}\mathbf{K} \right) \circ \mathbf{f}, \left( \widetilde{\partial}^{(i_m)}\circ\cdots\circ\widetilde{\partial}^{(i_1)}\circ\partial^{(i_0)} \mathbf{f}_l\right)_{1\leq l\leq d,\ (i_0,\dots,i_m)\in\cF} \right), $$
where $\cM$ is a multilinear map defined with the operators of Proposition \ref{proppourvoila}, $a_0+\dots+a_p \in [1,j]$, and $\cF$ has at most $j$ elements. Note that since every maps defined in Proposition \ref{proppourvoila} is Lipschitz with respect to the projective tensor norm, we have that for some constant $C_{\cM}$,
\begin{align*}
    &\Bigg\|\cM\left( \left(\widetilde{\partial}^{(a_p)}\circ\cdots\circ\widetilde{\partial}^{(a_1)}\circ\partial^{(a_0)}\mathbf{K} \right) \circ \mathbf{f}, \left( \widetilde{\partial}^{(i_m)}\circ\cdots\circ\widetilde{\partial}^{(i_1)}\circ\partial^{(i_0)} \mathbf{f}_l\right)_{1\leq l\leq d,\ (i_0,\dots,i_m)\in\cF} \right) \\
    & - \cM\left( \left(\widetilde{\partial}^{(a_p)}\circ\cdots\circ\widetilde{\partial}^{(a_1)}\circ\partial^{(a_0)}\mathbf{K} \right) \circ \mathbf{g}, \left( \widetilde{\partial}^{(i_m)}\circ\cdots\circ\widetilde{\partial}^{(i_1)}\circ\partial^{(i_0)} \mathbf{g}_l\right)_{1\leq l\leq d,\ (i_0,\dots,i_m)\in\cF} \right) \Bigg\|_{0,c,R}\\
    &\leq C_{\cM} \norm{ \left(\widetilde{\partial}^{(a_p)}\circ\cdots\circ\widetilde{\partial}^{(a_1)}\circ\partial^{(a_0)}\mathbf{K} \right) \circ \mathbf{f} - \left(\widetilde{\partial}^{(a_p)}\circ\cdots\circ\widetilde{\partial}^{(a_1)}\circ\partial^{(a_0)}\mathbf{K} \right) \circ \mathbf{g}}_{0,c,R} (1+S)^k \\
    &\quad + dk\times C_{\cM} \norm{\mathbf{K}}_{1+a_0+\dots+a_p,S} (1+S)^{k-1} \norm{\mathbf{f}-\mathbf{g}}_{\cC^k,R}.
\end{align*}
Besides, with the same strategy as in Corollary \ref{cor:taylorineq}, one can show that
\begin{align*}
    &\left(\widetilde{\partial}^{(a_p)}\circ\cdots\circ\widetilde{\partial}^{(a_1)}\circ\partial^{(a_0)}\mathbf{K} \right) \circ \mathbf{f} - \left(\widetilde{\partial}^{(a_p)}\circ\cdots\circ\widetilde{\partial}^{(a_1)}\circ\partial^{(a_0)}\mathbf{K} \right) \circ \mathbf{g} \\
    &= \int_0^1 \frac{\d}{\d t} \left(\widetilde{\partial}^{(a_p)}\circ\cdots\circ\widetilde{\partial}^{(a_1)}\circ\partial^{(a_0)}\mathbf{K} \right) \circ \left( (t\mathbf{f} +(1-t) \mathbf{g} \right) dt,
\end{align*}
which, as in Corollary \ref{cor:taylorineq}, implies that for some constant $C$,
\begin{align*}
    &\norm{\left(\widetilde{\partial}^{(a_p)}\circ\cdots\circ\widetilde{\partial}^{(a_1)}\circ\partial^{(a_0)}\mathbf{K} \right) \circ \mathbf{f} - \left(\widetilde{\partial}^{(a_p)}\circ\cdots\circ\widetilde{\partial}^{(a_1)}\circ\partial^{(a_0)}\mathbf{K} \right) \circ \mathbf{g}}_{0,c,R} \\
    &\leq C \norm{\bK}_{1+a_0+\dots+a_p,S} \norm{\mathbf{f} - \mathbf{g}}_{0,c,R}.
\end{align*}
Consequently, this implies that for some constant $C_{\cM}$,
\begin{align} \label{slkvjsnvjn}
    &\Bigg\|\cM\left( \left(\widetilde{\partial}^{(a_p)}\circ\cdots\circ\widetilde{\partial}^{(a_1)}\circ\partial^{(a_0)}\mathbf{K} \right) \circ \mathbf{f}, \left( \widetilde{\partial}^{(i_m)}\circ\cdots\circ\widetilde{\partial}^{(i_1)}\circ\partial^{(i_0)} \mathbf{f}_l\right)_{1\leq l\leq d,\ (i_0,\dots,i_m)\in\cF} \right) \\
    & - \cM\left( \left(\widetilde{\partial}^{(a_p)}\circ\cdots\circ\widetilde{\partial}^{(a_1)}\circ\partial^{(a_0)}\mathbf{K} \right) \circ \mathbf{g}, \left( \widetilde{\partial}^{(i_m)}\circ\cdots\circ\widetilde{\partial}^{(i_1)}\circ\partial^{(i_0)} \mathbf{g}_l\right)_{1\leq l\leq d,\ (i_0,\dots,i_m)\in\cF} \right) \Bigg\|_{0,c,R} \nonumber \\
    &\leq C_{\cM} (S+1)^k\norm{\mathbf{K}}_{1+a_0+\dots+a_p,S}\norm{\mathbf{f} - \mathbf{g}}_{\cC^k,R}. \nonumber
\end{align}
And since as long as $j\leq k$, one can upper bound 
$$ \norm{\widetilde{\partial}^{(j_n)}\circ\cdots\circ\widetilde{\partial}^{(j_1)}\circ\partial^{(j_0)}(\mathbf{K}\circ \mathbf{f}) - \widetilde{\partial}^{(j_n)}\circ\cdots\circ\widetilde{\partial}^{(j_1)}\circ\partial^{(j_0)}(\mathbf{K}\circ \mathbf{g})}_{0,c,R} $$
by a finite number of terms of the form \ref{slkvjsnvjn}, we can conclude.
\end{proof}

\begin{corollary}\label{cor.Compbd}
    There exist constants $A_1,\ldots,A_k < \infty$ such that for all $\mathbf{K} \in \cC_1^{k+1}(X_1,\ldots,X_d)$, $n=1,\ldots,k$, $R> 0$, and $\mathbf{f} \in \cC^k(\cB)_{\sa}^d$,
\[
\norm{\mathbf{K} \circ \mathbf{f}}_{n,c,R} \leq A_n\left( \norm{\mathbf{K}}_{1,\infty} \norm{\mathbf{f}}_{n,c,R} + \Big( \norm{\mathbf{f}}_{\cC^{n-1},R}^n+\norm{\mathbf{f}}_{\cC^{n-1},R}^2\Big)\times \max_{j=2,\ldots,n}\norm{\mathbf{K}}_{j,\infty}  \right),
\]
where $\norm{\mathbf{f}}_{n,c,R} \coloneqq \sum_{i=1}^d \norm{\mathbf{f}_i}_{n,c,R}$ and an empty max is interpreted in this case as $0$.
\end{corollary}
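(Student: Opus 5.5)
We follow the same strategy as in the proof of Corollary \ref{cor:complip}: iterate Corollary \ref{ksvnslvslv} to expand each derivative of order $n$ of $\mathbf{K}\circ\mathbf{f}$, and then bound the terms of the expansion one at a time. Fix $j_0+\dots+j_n=n$ with $n\geq 1$. By induction on $n$, using \eqref{skvjnlsdvkn} and \eqref{skvjnlsdvkn2}, the derivative
$$\widetilde{\partial}^{(j_n)}\circ\cdots\circ\widetilde{\partial}^{(j_1)}\circ\partial^{(j_0)}(\mathbf{K}\circ\mathbf{f})$$
is a finite linear combination, with universal combinatorial coefficients, of terms
$$\cM\Big(\big(\widetilde{\partial}^{(a_p)}\circ\cdots\circ\partial^{(a_0)}\mathbf{K}\big)\circ\mathbf{f},\ D^{(1)}\mathbf{f}_{i_1},\dots,D^{(m)}\mathbf{f}_{i_m}\Big).$$
Here $a:=a_0+\dots+a_p=m\geq 1$, each $D^{(l)}$ is a derivative operator of order $s_l\geq 1$, and $s_1+\dots+s_m=n$. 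The map $\cM$ is multilinear and built from the maps $\sh_1,\dots,\sh_4$ and the operations of Proposition \ref{proppourvoila}, all of which are contractions for the projective tensor norm. The one bookkeeping point to check during the induction is the \emph{degree count}: each differentiation either hits the $\mathbf{K}$-factor, which raises $a$ by one and adds a new factor of $\mathbf{f}$ of order $1$, or it hits an existing $\mathbf{f}$-factor, which raises that factor's $s_l$ by one. Hence the number of $\mathbf{f}$-factors always equals the order of the $\mathbf{K}$-derivative.

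To bound a single term, use that the $\mathbf{K}$-factor is evaluated at $f(x)$. Its $(0,c,R)$-norm is therefore at most $\norm{\mathbf{K}}_{a,\infty}$, since we take the supremum over all radii. This is exactly why the $\infty$-norms of $\mathbf{K}$ appear in the statement and no dependence on $S$ survives. The $\mathbf{f}$-factors are bounded by $\norm{\mathbf{f}_{i_l}}_{s_l,c,R}$, which gives
$$\norm{\text{term}}_{0,c,R}\leq C_{\cM}\,\norm{\mathbf{K}}_{m,\infty}\prod_{l=1}^m\norm{\mathbf{f}}_{s_l,c,R}.$$
Now split into two cases.
\begin{itemize}
\item If $m=1$, then $s_1=n$, and the term is at most $\norm{\mathbf{K}}_{1,\infty}\norm{\mathbf{f}}_{n,c,R}$. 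This is the linear chain-rule contribution, as in \eqref{chainestimate} and \eqref{chainestimate2}.
\item If $2\leq m\leq n$, then every $s_l\leq n-1$, because all $s_l\geq 1$ and they sum to $n$. Each factor is therefore at most $\norm{\mathbf{f}}_{\cC^{n-1},R}$, and the product is at most $\norm{\mathbf{f}}_{\cC^{n-1},R}^m\leq \norm{\mathbf{f}}_{\cC^{n-1},R}^n+\norm{\mathbf{f}}_{\cC^{n-1},R}^2$. For the last step, consider whether $\norm{\mathbf{f}}_{\cC^{n-1},R}\geq 1$ or $<1$. Moreover $\norm{\mathbf{K}}_{m,\infty}\leq \max_{2\leq j\leq n}\norm{\mathbf{K}}_{j,\infty}$.
\end{itemize}
When $n=1$ only the first case occurs, which is consistent with the empty max being $0$.

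To finish, sum over the finitely many terms and over the finitely many tuples $(j_0,\dots,j_n)$. This yields $A_n$ depending only on $n$, and on $d$ through the sums over $i_l$. The argument has been carried out for trace polynomials, and the general case follows by density together with the continuity of composition from Theorem \ref{chainrule}.

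The main obstacle is making the combinatorial expansion rigorous, in particular confirming that the $\widetilde{\partial}$-terms, for example the $\sh_4$ term that involves $\partial\mathbf{f}_i$ rather than $\widetilde{\partial}\mathbf{f}_i$, still satisfy the degree count $\sum_l s_l=n$ with each $s_l\geq1$. I would handle this by a careful induction on $n$ applied to \eqref{skvjnlsdvkn2}.
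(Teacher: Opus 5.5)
Your argument is correct and is essentially the paper's. The paper simply invokes the chain-rule estimates \eqref{chainestimate} and \eqref{chainestimate2}, together with submultiplicativity of $\sh_1,\sh_2,\sh_3$; their remainder $\Lambda^n_R(\mathbf{f})$ is exactly your $m\geq 2$ case, and their extra $j_0=0$ term is of your $m=1$ type. You instead inline the Fa\`a di Bruno-type expansion from which those estimates were derived.

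One small point of care concerns the density step. Run it on the expansion identity, or on the bound with $\norm{\mathbf{K}}_{a,S}$ and $S=\max_i\norm{\mathbf{f}_i}_{0,c,R}$ as in \eqref{chainestimate}, and only afterwards use $\norm{\mathbf{K}}_{a,S}\leq\norm{\mathbf{K}}_{a,\infty}$. Trace polynomials such as $X_1^2$ have $\norm{\cdot}_{1,\infty}=\infty$, so the final inequality itself cannot be transferred from polynomial approximants of $\mathbf{K}$.
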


\begin{proof}
    This is a direct corollary of Equations \eqref{chainestimate} and \eqref{chainestimate}, together with the fact that with the notations of Proposition \ref{chainrule},
    $$\norm{\cX \sh_1 \cY}_{0,c,R} \leq \norm{\cX}_{0,c,R} \norm{\cY}_{0,c,R}, $$
    $$\norm{\cX \sh_2 \cY}_{0,c,R} \leq \norm{\cX}_{0,c,R} \norm{\cY}_{0,c,R}, $$
\[
\norm{\cX \sh_3 \cY}_{0,c,R} \leq \norm{\cX}_{0,c,R} \norm{\cY}_{0,c,R}. \qedhere
\]
\end{proof}

\begin{corollary} \label{cor:multicompfin}
    Let $f\in\cC^k(X_1,\dots,X_d)$ then if $f^j$ is the function defined in Corollary \ref{cor:taylorineq}, $g_{a,b}\in\cC^k(X_1,\dots,X_d)$ for $a\in [0,j],\ b\in [1,d] $ , then
    $$ \cg \coloneqq f\circ\big( ( g_{a,b} )_{0\leq a\leq j, 1\leq b\leq d} \big)$$
    belongs to $\cC^k(X_1,\dots,X_d)$. Besides, for some constant $C_k$,
    \begin{equation}
        \label{eq:multicomp}
        \norm{\cg}_{\cC^{k-j},R} \leq C_k \norm{f}_{\cC^k,S} \left( \sup_{1\leq a\leq j}\ \sum_{b=1}^d\ \norm{g_{a,b}}_{\cC^k,R} \right)^j,
    \end{equation}
    where
    $$S\coloneqq \sup_{1\leq b\leq d} \sum_{a=1}^d \norm{g_{0,b}}_{0,R}.$$
\end{corollary}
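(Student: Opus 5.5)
The plan is to treat $f^j$ as a genuine noncommutative smooth function of $(j+1)d$ variables and then reduce to the chain rule. By Corollary \ref{cor:taylorineq}, $f^j$ is (up to the factor $1/j!$) the $j$-th Fréchet derivative $D^jf(x)[y,\dots,y]$ with the directions kept as independent slots $y_1,\dots,y_j$. By Proposition \ref{skjodncskncv} and its iteration in the proof of Proposition \ref{prop.nablaXiinterp}, it is a finite linear combination of expressions
\[
\cJ\circ\big(\widetilde{\partial}^{(j_n)}\circ\cdots\circ\widetilde{\partial}^{(j_1)}\circ\partial^{(j_0)} f\big)\sh(y_1,\dots,y_j),
\]
with $j_0+\dots+j_n=j$, where $\cJ$ is built from the maps of Proposition \ref{proppourvoila}: permutations, multiplications and partial traces, and these absorb the insertion of the $y_a$'s. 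So I will regard $f^j$ as an element of $\cC^{k-j}(X_{0,1},\dots,X_{j,d})$, with $x=(X_{0,b})_b$ and $y_a=(X_{a,b})_b$, and write $\cg=f^j\circ(g_{a,b})$.

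The first step is to show, at the level of trace polynomials, that $f^j$ lies in $\cC^{k-j}$ in these $(j+1)d$ variables. Its derivatives come in two kinds. Derivatives in the $x$-slots are higher derivatives of $f$ contracted by the same kind of maps $\cJ$. Derivatives in a $y_a$-slot, since $f^j$ is linear in each $y_a$, are obtained by extracting that slot; this again gives a derivative of $f$ of order at most $j$ plus the number of $x$-derivatives. By Proposition \ref{proppourvoila}, estimate \eqref{skljdvnsdk}, we get
\[
\norm{f^j}_{i,R,T}\le C\norm{f}_{\cC^k,R}(T+1)^i
\]
for $i\le k-j$. This is essentially \eqref{dvjnslnvsdv}. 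The important feature is that $f^j$ is a polynomial of degree exactly one in each $y_a$. Hence, for a term that differentiates in some $y_a$-slots, the remaining $y$'s appear only multiplicatively, with norm at most $T$ each.

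The second step applies the chain rule, Theorem \ref{chainrule} together with Corollary \ref{ksvnslvslv}, to $\cg=f^j\circ(g_{a,b})$. This already gives $\cg\in\cC^{k-j}$, and the estimate comes from expanding the derivatives of order $i\le k-j$ via \eqref{chainestimate}--\eqref{chainestimate2}, i.e.\ iterated Faà di Bruno. Each resulting term is a derivative of $f^j$, evaluated at $(g_{a,b})$, contracted via the $\sh_\ell$ products against products of derivatives of the $g_{a,b}$.

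In these contractions the $g_{0,b}$ enter the $f$-derivatives only through evaluation points of norm $\le S$. The multilinearity in the $y_a$ slots then ensures that each $a\in\{1,\dots,j\}$ contributes exactly one factor built from $g_{a,\cdot}$ or its derivatives, bounded by $\sum_b\norm{g_{a,b}}_{\cC^k,R}$. Derivatives landing on the $g_{0,b}$ produce extra factors $\norm{g_{0,b}}_{\cC^k,R}$.

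The main obstacle is precisely this last point. The stated bound \eqref{eq:multicomp} only has the $j$-th power of the $y$-norms and $\norm{f}_{\cC^k,S}$, with no polynomial factor in $\norm{g_{0,\cdot}}_{\cC^k,R}$. So I would first try to show that those extra factors can be absorbed, perhaps through the convention $\sup_{1\le a\le j}$ combined with harmless normalizations. If that fails, I would accept a constant depending on $\norm{g_{0,\cdot}}_{\cC^k,R}$, or read $S$ as controlling those norms as well. All contraction maps are contractive for the projective norm, so the remaining bookkeeping reduces to counting terms, which depends only on $k$ and gives the constant $C_k$.
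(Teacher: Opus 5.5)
Your plan is the paper's own proof. The paper's argument reads in full: ``$f^j\in\cC^{k-j}$ in $d(j+1)$ variables by Corollary \ref{cor:taylorineq}; apply Theorem \ref{chainrule} with $\mathbf{J}=f^j$; then use \eqref{dvjnslnvsdv}.'' This correctly gives $\cg=f^j\circ(g_{a,b})\in\cC^{k-j}$. The printed ``$f\circ$'' and ``$\cC^k$'' are typos, $S$ should read $\max_b\norm{g_{0,b}}_{0,R}$, and the $g_{0,b}$ must be self-adjoint for Theorem \ref{chainrule} to apply.

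The obstacle you isolate is genuine, and it is the statement, not your argument, that fails. The error terms $\Lambda^j_R$ in \eqref{chainestimate}--\eqref{chainestimate2} contain derivative norms of \emph{every} inner function, including the $g_{0,b}$, and the paper's one-line proof simply drops them. Concretely, take $d=1$, $k=2$, $j=1$, $f(X)=X^2$ (so $f^1(x,y)=xy+yx$), $g_{1,1}=1$, $g_{0,1}=\varepsilon X^n$ with $\varepsilon\le 1/2$, and $R=1$. Then $S=\varepsilon$, and the right-hand side of \eqref{eq:multicomp} equals $C_2\norm{X^2}_{\cC^2,\varepsilon}\norm{1}_{\cC^2,1}=C_2$. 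On the other hand $\cg=2\varepsilon X^n$, and evaluating $\partial\cg$ at $x_1=x_2=1$ gives $\norm{\cg}_{\cC^1,1}\geq 2\varepsilon n$, which is unbounded in $n$.

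Reading $S$ as a $\cC^k$-type quantity does not rescue the estimate either. For $f=\phi(X)$ with $\phi$ a Gaussian, $\norm{f}_{\cC^2,S}$ is bounded uniformly in $S$. Yet with $g_{1,1}=1$ and $g_{0,1}=\lambda X$ one gets $\cg=\phi'(\lambda X)$, whose first derivative at $0$ has size $2\lambda$. So abandon the attempt to absorb the extra factors.

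Your bookkeeping does prove a corrected bound. Each $y_a$-slot takes at most one derivative block, since $f^j$ is linear in it. At most $k-j$ blocks land on $g_0$. And $f$ is differentiated at most $k$ times, at points of norm $\le S$. This gives
\[
\norm{\cg}_{\cC^{k-j},R}\le C_{k,d}\,\norm{f}_{\cC^k,S}\Big(1+\max_{b}\max_{1\le s\le k-j}\norm{g_{0,b}}_{s,R}\Big)^{k-j}\Big(\max_{1\le a\le j}\sum_{b=1}^d\norm{g_{a,b}}_{\cC^{k-j},R}\Big)^{j},
\]
with $S=\max_b\norm{g_{0,b}}_{0,R}$, and this is the form you should state and prove. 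It costs nothing where the corollary is used (the proof of Theorem \ref{thm:main}). There $g_{0,\cdot}=T^0_r$ with $\norm{T^0_r-\id}_{\cC^k,\infty}\le C_{W,k}$, so the extra factor is a constant independent of $R$.

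One small slip, inherited from \eqref{dvjnslnvsdv}: the factor there should be $(T+1)^j$, not $(T+1)^i$. For $i=0$, $f^j(x,y,\dots,y)$ is homogeneous of degree $j$ in $y$. This is exactly what your own multilinearity remark gives, and it does not affect your final count.
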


\begin{proof}
    Since $f^j$ belongs to $\cC^{k-j}(X_1,\dots,X_{d(j+1)})$ by Corollary \ref{cor:taylorineq}, we simply apply Theorem \ref{chainrule} with $\bJ=f^j$. Equation \eqref{eq:multicomp} is then a consequence of Equation \eqref{dvjnslnvsdv}.
\end{proof}

\section{The asymptotic expansion} \label{sec.asymptoticexpansion}

From now on, we make the following assumption on our weights. This will ensure that the functions we consider are integrable.

\begin{assumption}
	\label{Assum:weight}
	We consider a set $\cB$ with associated weights $(c_e)_{e\in \cB}$. We assume that $c_e\geq 1$ for all $e\in \cB$, and that there exists a constant $\alpha$ and a partition of $\cB$ into $\cU$ and $\cV$ such that
	$$ \sum_{e\in \cU} e^{\alpha (1-c_e^2)} <\infty.$$
\end{assumption}

\begin{example}[Haar basis with standard weights]
With $\cU=\cB$ and $\cV=\emptyset$, the Haar basis $\cB$ and Brownian weights $(c_e)_{e \in \cB}$ from Definition \ref{def:HaarBasis} satisfy Assumption \ref{Assum:weight} for any $\alpha>1$.
Indeed,
\begin{align*}
    \sum_{e\in \cB} e^{\alpha(1-c_e^2)} = \sum_{i,n\geq 1} e^{-(\ln\left(i\right)+\ln\left(n\right))\alpha} = \left( \sum_{i\geq 1} \frac{1}{i^{\alpha}} \right)^2 <\infty,
\end{align*}
as desired.
\end{example}

\begin{definition}
	We say that $f\in \cL^k(\cB)$ if and only if it belongs to $\cC^{k}(\cB)$, and there exists a constant $K$ such that,
	\[
	\norm{f}_{\cL^k,K} \coloneqq \sup_{R>0}\ \norm{f}_{\cC^k,R}\ e^{-KR^2} <\infty,
	\]
    where $\norm{\cdot}_{\cC^k(H),R}$ is as in Definition \ref{def.NCsmooth}.
\end{definition}

We immediately state the following proposition which ensures that when evaluated in GUE random matrices, our functions are integrable.

\begin{proposition}
\label{sodncks}
	Given the following objects,
	\begin{itemize}
        \item $\cU$ and $\cV$ bases of Hilbert spaces $H_{\cU}$ and $H_{\cV}$,
		\item $Z^N = (Z_i^N)_{1\leq i\leq d}$ deterministic matrices of size $N$,
		\item $X^N = (X_e^N)_{e\in \cU}$ independent GUE random matrices of size $N$,
		\item $x = (x_e)_{e\in \cV}$ free semicircular variables,
		\item $f\in \cL^k(\cB)$ with $\cB$ the basis of $\C^d\oplus H_{\cU}\oplus H_{\cV}$.
	\end{itemize}
	Then if our weights satisfy Assumption \ref{Assum:weight}, with $\cA_N$ the free product of $\M_N(\C)$ and the von Neumann algebra generated by $x$,
    $$ \limsup_{N\to\infty} \E\left[\norm{(Z^N,X^N,x)}_{\cA_N,c}\right] <\infty,$$
    and almost surely $R_N=\norm{(Z^N,X^N,x)}_{\cA_N,c}$ is finite. Finally, for all $M\geq 0$ and $K>0$, there exists a constant $C_{M,K}$ such that for $N>2K$,
    \begin{equation}
        \label{upperboundLk}
        \E\left[\norm{f}_{\cC^k,R_N+M}\right] \leq C_{M,K} e^{K\sup_i \norm{Z_i^N}^2} \norm{f}_{\cL^k,K}.
    \end{equation}
\end{proposition}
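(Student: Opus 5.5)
The argument reduces to Gaussian concentration for each GUE matrix combined with the summability in Assumption \ref{Assum:weight}. First, the norm splits as
\[
R_N=\norm{(Z^N,X^N,x)}_{\cA_N,c} = \max\Big\{\max_i \norm{Z_i^N},\ \sup_{e\in\cU}\tfrac{\norm{X_e^N}}{c_e},\ \sup_{e\in\cV}\tfrac{\norm{x_e}}{c_e}\Big\}.
\]
The weights in $\C^d$ are taken to be $1$ (Remark \ref{rem:finitecase}). The free semicircular variables satisfy $\norm{x_e}=2$, and $c_e\ge1$, so the $\cV$-part is at most $2$. The $Z$-part is deterministic. Everything therefore reduces to controlling $\Sigma_N\coloneqq\sup_{e\in\cU}\norm{X_e^N}/c_e$.

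By \eqref{lvndlsvosndn}, for every $t\ge0$,
\[
\P\big(\norm{X_e^N}\ge c_e(L+t)\big)\le \P\big(\norm{X_e^N}\ge L+c_e t\big)\le 2e^{-c_e^2t^2N/2},
\]
using $c_e\ge 1$. A union bound gives $\P(\Sigma_N\ge L+t)\le 2\sum_{e\in\cU}e^{-c_e^2t^2N/2}$. Once $t^2N/2\ge\alpha$, we have $e^{-c_e^2t^2N/2}\le e^{-t^2N/2}e^{\alpha}e^{-\alpha c_e^2}\cdot e^{(1-c_e^2)(t^2N/2-\alpha)}$. Since the last factor is at most $1$, this yields
\[
\P(\Sigma_N\ge L+t)\le C_\alpha\, e^{-(t^2N/2-\alpha)},\qquad C_\alpha\coloneqq 2\sum_{e\in\cU}e^{\alpha(1-c_e^2)}<\infty.
\]
(I would double-check this algebra; the point is to factor out $e^{\alpha(1-c_e^2)}$ and keep Gaussian decay in $t$.) This tail bound implies that $\Sigma_N$ is almost surely finite, so $R_N<\infty$ a.s. Integrating the tail gives $\E[\Sigma_N]\le L+O(1)$ uniformly in $N\ge1$, which bounds $\limsup_N \E[R_N]$. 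I would read the statement as implicitly assuming $\sup_N\max_i\norm{Z_i^N}<\infty$ for this part; otherwise the bound simply carries $\max_i\norm{Z_i^N}$ along.

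For \eqref{upperboundLk}, write $z=\max_i\norm{Z_i^N}$ and $R_N\le z+2+\Sigma_N$. Because $R\mapsto\norm{f}_{\cC^k,R}$ is nondecreasing and bounded by $\norm f_{\cL^k,K}e^{KR^2}$, we get
\[
\E\norm{f}_{\cC^k,R_N+M}\le \norm f_{\cL^k,K}\,\E\, e^{K(z+M+2+\Sigma_N)^2}.
\]
The main obstacle is matching the exact factor $e^{Kz^2}$ while the other terms sit inside the square. To handle it, split $(z+a)^2\le(1+\epsilon)z^2+(1+\epsilon^{-1})a^2$. Applying this with $\epsilon$ alone would give $e^{K(1+\epsilon)z^2}$, which is not quite the claimed factor. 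So the plan is to first prove the bound with $K'<K$ on the left in place of $K$, using $\norm f_{\cL^k,K'}\le$ \dots; this is awkward. Instead I would use the monotonicity of $\norm f_{\cL^k,K}$ in $K$ and apply the argument with $K/2$ replaced appropriately. Concretely, one can take $\epsilon=1$ and obtain $e^{2K'z^2}\E e^{2K'(M+2+\Sigma_N)^2}$ with $K'=K/2$, noting $\norm f_{\cL^k,K/2}$ versus $\norm f_{\cL^k,K}$. If constants like these are unavoidable, the statement should be adjusted by renaming $K$, which the hypothesis $N>2K$ suggests the authors tolerate.

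What remains is the Gaussian moment bound. The quantity $\E e^{K''(a+\Sigma_N)^2}$ is finite and bounded uniformly in $N$ as soon as the tail $e^{-t^2N/2}$ beats $e^{K''t^2}$, that is, $N/2>K''$. This is exactly where the condition $N>2K$ enters, via the layer-cake formula as in \eqref{eq.layercake}. The result is a constant $C_{M,K}$ depending only on $M,K,\alpha,L$ and $\sum_{e\in\cU}e^{\alpha(1-c_e^2)}$.
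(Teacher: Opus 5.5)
Your handling of the first two claims is the paper's argument. You use the union bound over $\cU$, the bound \eqref{lvndlsvosndn} at level $L+c_e t$, and you factor out $e^{\alpha(1-c_e^2)}$ once $Nt^2/2\ge\alpha$. Your displayed algebra is in fact an identity, so the tail is $C_\alpha e^{-Nt^2/2}$ without the extra $e^{\alpha}$. You are also right that $\limsup_N\E[R_N]<\infty$ needs $\sup_N\max_i\norm{Z_i^N}<\infty$; the paper's proof only shows $\E[R_N]<\infty$ for each $N$.

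The gap is in \eqref{upperboundLk}. You do not prove it. Your fallback, with $\norm{f}_{\cL^k,K/2}$ in place of $\norm{f}_{\cL^k,K}$, is equivalent after renaming to the threshold $N>4K$ and the factor $e^{2Kz^2}$. That loss comes from your own bound $R_N\le z+2+\Sigma_N$. Squaring it creates the cross term $2Kz(M+2+\Sigma_N)$. Already for $M=0$ this gives $\E\, e^{K(z+2+\Sigma_N)^2}\ge e^{Kz^2+4Kz}$, so no choice of $\epsilon$ recovers the factor $e^{Kz^2}$ along this route, even where the statement is true.

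The missing idea is to keep the maximum. The paper does this by applying its tail bound only for $R\ge\max(L,\norm{Z_i^N})$, where $Z^N$ drops out of the event. Set $\Sigma_N'\coloneqq\max\{\Sigma_N,\sup_{e\in\cV}\norm{x_e}/c_e\}\le\max\{\Sigma_N,2\}$. With unit weights on $\C^d$, as you assume, $R_N=\max\{z,\Sigma_N'\}$ exactly, hence
\[
e^{K(R_N+M)^2}\le e^{K(z+M)^2}+e^{K(\Sigma_N'+M)^2},
\]
and $z$ never interacts with the randomness.

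The second term has expectation at most $C_{M,K}$ for all integers $N>2K$. In the layer-cake integral the exponent $K(L+M+t)^2-Nt^2/2$ has leading coefficient $-(N/2-K)\le-\tfrac12(\lfloor 2K\rfloor+1-2K)<0$. This integrality gap is what makes the bound uniform in $N$; for real $N\downarrow 2K$ it would blow up. The result keeps $\norm{f}_{\cL^k,K}$ with the same $K$:
\[
\E\norm{f}_{\cC^k,R_N+M}\le\big(e^{K(z+M)^2}+C_{M,K}\big)\norm{f}_{\cL^k,K}.
\]
This is \eqref{upperboundLk} exactly when $M=0$.

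For $M>0$ your doubt about the literal factor $e^{Kz^2}$ is justified, but the fix belongs in the $Z$-factor, not in $K$. Since $R_N\ge z$ deterministically, $\E\norm{f}_{\cC^k,R_N+M}\ge\norm{f}_{\cC^k,z+M}$. Take for example $f=\sum_{n\ge0}(KX_1^2)^n/(n!\,(n+1)^k)$: it has $\norm{f}_{\cL^k,K}<\infty$ and $\norm{f}_{\cC^0,R}\ge e^{KR^2}/\mathrm{poly}(R)$. So $\norm{f}_{\cC^k,z+M}$ is not $O\big(e^{Kz^2}\norm{f}_{\cL^k,K}\big)$ uniformly in $Z^N$.

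The paper's own layer-cake computation has the same feature. On the range $\sqrt{K^{-1}\ln t}<M+\max(L,\norm{Z_i^N})$ it bounds the probability by $1$, which contributes $e^{K(M+\max(L,z))^2}$. The correct statement therefore has $e^{K(\sup_i\norm{Z_i^N}+M)^2}$, or assumes $Z^N$ bounded, with the same $K$ and the same threshold $N>2K$.
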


\begin{proof}

    Thanks to \cite[Lemma 2.3.3]{AGZ2009}, coupled with the fact that given $Y_N$ a GUE random matrix $\sup_{N\geq 1}\E[\norm{Y_N}] $ is finite (see \cite[Lemma 3.3.2]{AGZ2009}), one can find a constant $L$ such that for all $e\in U$, $N\in\N$, and $\delta\geq 0$,
    $$\P\left( \norm{X_{e}^N} \geq L+\delta \right) \leq 2 e^{- \frac{\delta^2 N}{2}}.$$

    \noindent With $\alpha$ as in Assumption \ref{Assum:weight}, for $R\geq \max(L,\norm{Z_1^N},\dots,\norm{Z_d^N})$ such that $\left(R-L\right)^2\geq 2\alpha$,
    \begin{align}
    \label{sdkjvns}
        \P\left( R_N \geq R\right) &= \P\left(\exists e\in U,\ \norm{X_e^N} \geq c_e R\right) \\ \nonumber
		&\leq \sum_{e\in U} \P\left( \norm{X_{e}^N} \geq c_e R \right) \\ \nonumber
		&\leq \sum_{e\in U} \P\left( \norm{X_{e}^N} \geq L + c_e (R-L) \right) \\ \nonumber
		&\leq 2 \sum_{e\in U} e^{-Nc_e^2\times\left(R-L\right)^2/2} \\ \nonumber
		&\leq 2 \sum_{e\in U} e^{-N(c_e^2-1)\times \left(R-L\right)^2/2} e^{-N\times \left(R-L\right)^2/2}\\ \nonumber
		&\leq 2 \left(\sum_{e\in U} e^{-(c_e^2-1) \alpha}\right) e^{-N\times \left(R-L\right)^2/2}
    \end{align}
    In particular, 
    $$ \E[R_N] = \int_0^{\infty} \P\left( R_N \geq R\right) dR <\infty, $$
    thus almost surely $R_N$ is finite. Besides, for a given $N$, let us remark that for any $K>0$,
	$$ \norm{f}_{\cC^k,R_N+M} \leq \norm{f}_{\cL^k,K} \times e^{K(R_N+M)^2}.$$
	Consequently one has that
	$$ \E\left[\norm{f}_{\cC^k,R_N}\right] \leq \norm{f}_{\cL^k,K} \E\left[e^{K(R_N+M)^2}\right] = \norm{f}_{\cL^k,K} \int_{t\geq 0} \P\left( e^{K(R_N+M)^2} \geq t\right) dt.$$
    
	\noindent Thus with $\alpha$ as in Assumption \ref{Assum:weight}, for $t$ such that $\sqrt{K^{-1}\ln(t)}\geq M+ \max(L,\norm{Z_1^N},\dots,\norm{Z_d^N})$ and $\left(\sqrt{K^{-1}\ln(t)}-L-M\right)^2\geq 2 \alpha$, by using Equation \eqref{sdkjvns} with $R=\sqrt{(sK)^{-1}\ln(t)}$,
	\begin{align*}
		\P\left( e^{K(R_N+M)^2} \geq t\right) &= \P\left( R_N \geq \sqrt{K^{-1}\ln(t)}-M\right) \\
		&\leq 2 \left(\sum_{e\in U} e^{-(c_e^2-1) \alpha}\right) e^{-N\times \left(\sqrt{K^{-1}\ln(t)}-L-M\right)^2/2}
	\end{align*}
	Besides, 
    $$ \sup_{N>2K} \int_{0}^{\infty} e^{-N\times \left(\sqrt{K^{-1}\ln(t)}-L-M\right)^2/2} dt < \infty, $$
    Hence one can find a constant $C_{K,M}$ such that for $N>2K$, 
    $$ \int_{t\geq 0} \P\left( e^{KR_N^2} \geq t\right) dt \leq C_{K,M} e^{ K\sup_i \norm{Z_i^N}^2}$$
    Hence the conclusion.
\end{proof}

Let us now prove the Schwinger--Dyson equations for differentiable functions.

\begin{proposition}
	\label{soicdmlsmc}
	Given the following objects,
	\begin{itemize}
        \item $\cU$ and $\cV$ bases of Hilbert spaces $H_{\cU}$ and $H_{\cV}$,
		\item $Z^N = (Z_i^N)_{1\leq i\leq d}$ deterministic matrices of size $N$,
		\item $X^N = (X_e^N)_{e\in \cU}$ independent GUE random matrices of size $N$,
		\item $x = (x_e)_{e\in \cV}$ free semicircular variables,
		\item $f\in \cL^1(\cB)$ with $\cB$ the basis of $\C^d\oplus H_{\cU}\oplus H_{\cV}$.
	\end{itemize}
	Then if our weights satisfy Assumption \ref{Assum:weight}, with $\tau_N$ the trace on $\cA_N$ the free product of $\M_N(\C)$ and the von Neumann algebra generated by $x$, one has the following,
	\begin{align}
		\label{ocmslswc}
		\forall e\in \cU,\quad \E\left[\tau_N\left( X_e^N f(Z^N,X^N,x) \right)\right] &= \frac{1}{N} \sum_{1\leq i,j\leq N} \E\left[\tau_N\left(E_{i,j}\ \partial_e f(Z^N,X^N,x)\# E_{j,i}\right)\right] \\
        &\quad +\frac{1}{N} \E\left[ \sum_{1\leq i,j\leq N} \tau_N\otimes\tau_N\left( \widetilde{\partial}_e f(Z^N,X^N,x) \#\left(E_{j,i} \otimes E_{i,j} \right)\right) \right]. \nonumber \nonumber
	\end{align}
	\begin{equation}
		\label{mcslmc}
		\forall e\in \cV,\quad \tau_N\left(x_ef(Z^N,X^N,x)\right) = \tau_N\otimes\tau_N\left(\partial_e f(Z^N,X^N,x)\right).
	\end{equation}
\end{proposition}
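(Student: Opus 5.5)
The plan is to prove both identities first for trace polynomials $f \in \Tr^1(\cB)$, and then pass to general $f\in\cL^1(\cB)$ by density and dominated convergence.

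\textbf{Polynomial case of \eqref{ocmslswc}.} Fix $e\in\cU$ and write $X_e^N=\sum_{i,j} (X_e^N)_{j,i}E_{j,i}$. Conditionally on the other variables, apply Gaussian integration by parts to each entry. In the normalization of Definition \ref{3GUEdef} this reads $\E[(X_e^N)_{j,i} F]=\frac1N\E[\partial_{(X_e^N)_{i,j}}F]$, where the derivative is taken in the direction $E_{i,j}$ and real and imaginary parts are combined. Apply this with $F=\tau_N(E_{j,i}f(Z^N,X^N,x))$.

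The directional derivative of $f$ in the direction $y$ with $y_e=E_{i,j}$ (other coordinates zero) is given by Proposition \ref{prop.nablaXiinterp}, equation \eqref{eq.polyderiv}, applied in $\cA_N$. The $\partial$ part gives $\tau_N(E_{j,i}\,\partial_e f\sh E_{i,j})$. Relabeling $i\leftrightarrow j$ turns this into the first term, $\tau_N(E_{i,j}\,\partial_e f\# E_{j,i})$.

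The $\widetilde\partial$ part gives
\[
\tau_N\big(E_{j,i}\,(\tau_N\otimes\id)(\widetilde\partial_e f\sh(E_{i,j}\otimes 1))\big)=\tau_N\otimes\tau_N\big(\widetilde\partial_e f\#(E_{i,j}\otimes E_{j,i})\big),
\]
with the convention that $\#$ places the matrices into the two tensor legs. Relabeling again yields the second term. The freeness of $x$ from $\M_N(\C)$ enters only in that $\tau_N$ is the free product trace; the integration by parts concerns only the Gaussian entries, with $x$ and $Z^N$ held fixed. Since $f$ is a polynomial, all moments are finite, so the integration by parts is justified.

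\textbf{Polynomial case of \eqref{mcslmc}.} For $e\in\cV$ this is the free Schwinger--Dyson equation for a semicircular $x_e$ free from the algebra generated by the remaining variables together with $\M_N(\C)$. It is proved monomial by monomial exactly as in equation \eqref{skljdvneb} in the proof of Proposition \ref{prop:condexp}: expand $\tau_N(x_e M)$ using free cumulants, where only the second cumulant of $x_e$ survives. Trace factors $\tau(R)$ are scalars and pass through unchanged. The $\widetilde\partial$ term is absent here because $x$ is deterministic, so there is no fluctuation of the trace factors.

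\textbf{Extension to general $f$.} Take a sequence of trace polynomials $Q_n\to f$ in $\cC^1(\cB)$. Each side of the identities is bounded by a multiple of $\norm{\cdot}_{\cC^1,R_N}$ evaluated at the random point, with $R_N=\norm{(Z^N,X^N,x)}_{\cA_N,c}$. For this, use the contractivity of $\#$ from Remark \ref{rem.sharpwelldef}, together with the bounds $\norm{E_{i,j}}\le1$ and $\norm{X_e^N}\le c_eR_N$. There is a subtlety: the double sum over $i,j$ produces an $N^2$ factor. This is harmless because $N$ is fixed; alternatively, one can resum to $\sum_{i,j}E_{i,j}\otimes E_{j,i}$, which has projective norm at most $N$.

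\textbf{Main obstacle.} The delicate step is uniform integrability. Convergence in $\cC^1$ is only uniform on balls $\{\norm{x}_{\cA,c}\le R\}$, whereas $R_N$ is unbounded. I would handle this by truncation. On the event $\{R_N\le R\}$, uniform convergence gives the limit directly. On the complement, bound the contribution by $\E[(1+R_N)\norm{Q_n-f}_{\cC^1,R_N}\mathbf 1_{R_N>R}]$.

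The approximants must be chosen so that $\sup_n\norm{Q_n}_{\cL^1,K'}<\infty$ for some $K'$, for instance by cutting off along the approximating net (as in Example \ref{ex.Sasfunc}). With such a choice, Proposition \ref{sodncks} gives a uniform Gaussian tail, namely the estimate \eqref{sdkjvns}. This makes the tail contribution vanish as $R\to\infty$, uniformly in $n$, for each fixed $N$.

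If such approximants are unavailable, a fallback is to prove \eqref{ocmslswc} directly for $f\in\cL^1$. One uses Proposition \ref{skjodncskncv}: $f$ is $C^1$ in the Fréchet sense, with the derivative formula \eqref{idsncsncds}. One then applies the classical Gaussian integration by parts for $C^1$ functions with Gaussian-integrable derivative bounds, using \eqref{upperboundLk} to control those bounds. This bypasses the polynomial approximation entirely. For \eqref{mcslmc}, no expectation is involved, so uniform convergence on the ball of radius $R_N$ (finite almost surely) suffices.
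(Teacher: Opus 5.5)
Your fallback is exactly the paper's proof of \eqref{ocmslswc}. The paper never passes through trace polynomials there. It applies Proposition \ref{skjodncskncv} directly to $f\in\cL^1(\cB)$ to get the derivative in the direction $E_{i,j}$ placed in the $e$-slot, and then does one-dimensional Gaussian integration by parts on the real and imaginary parts of each entry of $X_e^N$. Integrability comes from Proposition \ref{sodncks}. (As in the paper, the bound \eqref{upperboundLk} you invoke is only available for $N>2K$, so that restriction is implicit in the statement.) Your treatment of \eqref{mcslmc} also matches the paper's: the free Schwinger--Dyson equation for trace polynomials, then approximation in $\norm{\cdot}_{\cC^1,R_N}$ with $R_N<\infty$ almost surely.

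You should drop the primary density route for \eqref{ocmslswc}, because the step it hinges on is not justified. Nothing in the definition of $\cC^1(\cB)$ as a Fr\'echet completion lets you choose trace polynomials $Q_n\to f$ with $\sup_n\norm{Q_n}_{\cL^1,K'}<\infty$. Convergence in $\cC^1$ only controls $Q_n$ on each ball $\{\norm{x}_{\cA,c}\le R\}$, and outside the ball where $Q_n$ is close to $f$ the approximants may grow arbitrarily fast.

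Example \ref{ex.Sasfunc} does not help. There the approximants are partial sums of a linear series whose tails are dominated by $\norm{h}_c$, and nothing comparable exists for a general element of $\cL^1(\cB)$. Without that uniform growth bound, your tail term $\E\big[(1+R_N)\norm{Q_n-f}_{\cC^1,R_N}\mathbf{1}_{R_N>R}\big]$ cannot be made small uniformly in $n$, so the truncation argument does not close. The direct argument needs only the $\cL^1$ bound on $f$ itself, so lead with it.
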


\begin{proof}

    To begin with, let us remark that if $f\in \cL^k$, then for some constant $K>0$,
    $$ \sup_{R>0}\ \norm{f}_{\cC^k,R}\ e^{-KR^2} <\infty. $$
    Hence
    $$ \sup_{R>0}\ \norm{f \times X_e}_{\cC^k,R}\ e^{-2KR^2} <\infty. $$
    Thus $f \times X_e\in \cL^k$, and the left hand side of Equations \eqref{ocmslswc} and \eqref{mcslmc} are well-defined thanks to Proposition \ref{sodncks}. Let us now prove Equation \eqref{ocmslswc}. First, given $H\in\M_N(\C)$, we set $\overline{H}= (H \1_{f=e})_{f\in U}$. Then thanks to Proposition \ref{skjodncskncv},
	\begin{align*}
		\frac{d}{dt}\biggr|_{t = 0} f(Z^N,X^N+t\overline{H},x) &= Df(Z^N,X^N,x)[\overline{H}] \\
        &= \partial f(Z^N,X^N,x) \# \overline{H} + \tau_N\otimes\id_{\cA_N}\left( \widetilde{\partial}f(Z^N,X^N,x) \# (\overline{H}\otimes 1) \right) \\
        &= \partial_e f(Z^N,X^N,x) \# H + \tau_N\otimes\id_{\cA_N}\left( \widetilde{\partial}_e f(Z^N,X^N,x) \# (H\otimes 1) \right),
	\end{align*}
	
	\noindent Thus, using integration by parts, with $r$ the real part of $\sqrt{2N}(X_e^N)_{i,j}$, if $i\neq j$,
	\begin{align*}
		&\E\left[ r\ \tau_N\left(f(Z^N,X^N,x) \frac{E_{i,j}}{\sqrt{2N}}\right) \right] \\
        &= \E\left[ \frac{1}{\sqrt{2\pi}}\int_{\R} \tau_N\left(f(Z^N,X^N,x) \frac{E_{i,j}}{\sqrt{2N}}\right)\ ue^{-u^2/2}du \right] \\
		&= \E\left[ \frac{1}{\sqrt{2\pi}}\int_{\R} \tau_N\left(\frac{d}{dt}\biggr|_{t = 0} f\left(Z^N,X^N+t\overline{\frac{E_{i,j}+E_{j,i}}{\sqrt{2N}}},x\right) \frac{E_{i,j}}{\sqrt{2N}} \right)\ e^{-u^2/2}du \right] \\		
		&= \E\left[ \frac{1}{\sqrt{2\pi}}\int_{\R} \tau_N\left( \left( \partial_e f(Z^N,X^N,x) \#\left(\frac{E_{i,j}+E_{j,i}}{2N}\right) \right) E_{i,j} \right)\ e^{-u^2/2}du \right] \\
        &\quad +\E\left[ \frac{1}{\sqrt{2\pi}}\int_{\R} \tau_N\otimes\tau_N\left( \widetilde{\partial}_e f(Z^N,X^N,x) \#\left(\left(\frac{E_{i,j}+E_{j,i}}{2N}\right) \otimes E_{i,j} \right)\right)\ e^{-u^2/2}du \right] \\
		&= \E\left[ \tau_N\left( \left( \partial_e f(Z^N,X^N,x) \#\left(\frac{E_{i,j}+E_{j,i}}{2N}\right) \right) E_{i,j} \right) \right] \\
        &\quad +\E\left[ \tau_N\otimes\tau_N\left( \widetilde{\partial}_e f(Z^N,X^N,x) \#\left(\left(\frac{E_{i,j}+E_{j,i}}{2N}\right) \otimes E_{i,j} \right)\right) \right].
    \end{align*}
    The equation above remains true with $r =\sqrt{N}(X_e^N)_{i,i}$. Finally, for $i\neq j$, with $r$ the imaginary part of $\sqrt{2N}(X_e^N)_{i,j}$, one has that
	\begin{align*}
		\E\left[ r\ \tau_N\left(f(Z^N,X^N,x) \frac{\mathbf{i} E_{i,j}}{\sqrt{2N}}\right) \right] &= \E\left[ \tau_N\left( \left( \partial_e f(Z^N,X^N,x) \#\left(\frac{E_{j,i}-E_{i,j}}{2N}\right) \right) E_{i,j} \right) \right] \\
        &\quad +\E\left[ \tau_N\otimes\tau_N\left( \widetilde{\partial}_e f(Z^N,X^N,x) \#\left(\left(\frac{E_{j,i}-E_{i,j}}{2N}\right) \otimes E_{i,j} \right)\right) \right]. 
	\end{align*}
	Hence, for all $i,j\in [1,N]$,
	\begin{align*}
		\E\left[ (X^N_e)_{i,j} \tau_N\left(f(Z^N,X^N,x) E_{i,j} \right) \right] &= \frac{1}{N} \E\left[ \tau_N\left( E_{i,j} \ \partial_e f(Z^N,X^N,x) \# E_{j,i} \right) \right] \\
        &\quad +\frac{1}{N} \E\left[ \tau_N\otimes\tau_N\left( \widetilde{\partial}_e f(Z^N,X^N,x) \#\left(E_{j,i} \otimes E_{i,j} \right)\right) \right].
	\end{align*}
	Hence we deduce Equation \eqref{ocmslswc} by summing over $i,j$.

    To prove Equation \eqref{mcslmc}, let us first remark that when $f$ is a polynomial, this is simply a reformulation of the Schwinger--Dyson equations for a system of free semicircular variables (see \cite[Lemma 5.4.7]{AGZ2009}). Then thanks to Proposition \ref{sodncks}, one can assume that $R_N=\norm{(Z^N,X^N,x)}_{\cA_N,c}$ is finite. Following Definition \ref{def.NCsmooth}, for any $\varepsilon>0$, one can pick a polynomial $g_{\varepsilon}$ such that $\norm{f-g_{\varepsilon}}_{\cC^1,R_N} \leq \varepsilon$.  Therefore,
    $$ \left| \tau_N\left(f(Z^N,X^N,x) x_e\right) - \tau_N\left(g_{\varepsilon}(Z^N,X^N,x) x_e\right) \right| \leq 2 \norm{f-g_{\varepsilon}}_{0,c,R_N}, $$
	$$ \left| \tau_N\otimes\tau_N\left(\partial_e f(Z^N,X^N,x)\right) - \tau_N\otimes\tau_N\left( \partial_e f(Z^N,X^N,x)\right) \right| \leq \norm{f-g_{\varepsilon}}_{1,c,R_N}, $$
    which completes the proof.
\end{proof}

Let us now state the asymptotic expansion theorem.

\begin{theorem}
	\label{3lessopti}
	Let the following objects be given:
	\begin{itemize}
        \item $\cU$ and $\cV$ Hilbert bases of Hilbert spaces $H_{\cU}$ and $H_{\cV}$,
		\item $Z^N = (Z_i^N)_{1\leq i\leq d}$ deterministic matrices of size $N$,
		\item $X^N = (X_e^N)_{e\in U}$ independent GUE random matrices of size $N$,
		\item $x^i = (x_e^i)_{e\in \cU}$, $y^i = (y_e^i)_{e\in \cV}$ free semicircular variables,
		\item $f\in \cL^{k}\left(\cB\right)$ where $\cB$ is a basis of $\C^d\oplus H_{\cU}\oplus H_{\cV}$,
        \item $K$ such that $\norm{f}_{\cL^k,K}<\infty$.
	\end{itemize}
    
	\noindent Then if our weights satisfy Assumption \ref{Assum:weight}, there exist a function $g\in \cL^{k-4}\left(\widetilde{\cB}\right)$, where $\widetilde{\cB}$ is a basis of $\C^d\oplus H_{\cU}^{\oplus 6} \otimes H_{\cV}^{\oplus 7}$, such that for $N>2K$,
	\begin{align}
		\label{3mainresu0}
		\E\left[ \tau_N\Big(f\left(Z^N,X^N,y^0\right)\Big| \M_N(\C)\Big)\right] = &\ \tau_N\Big( f\left(Z^N,x^0,y^0\right)\Big| \M_N(\C)\Big) \\
        &+ \frac{1}{N^{2}} \E\left[ \tau_N\Big(g\left(Z^N,X^N,x^1,\dots,x^6,y^1,\dots,y^6\right)\Big| \M_N(\C)\Big)\right], \nonumber
	\end{align}
    where $\tau_N(\ \cdot\ | \M_N(\C))$ is the noncommutative conditional expectation on $\M_N(\C)$. Besides, there exists a constant $C_k$ such that for any $R>0$,
    \begin{equation}
        \label{eq:normupperexp}
        \norm{g}_{\cC^{k-4},R} \leq C_k \norm{f}_{\cC^k,R}.
    \end{equation}
    
\end{theorem}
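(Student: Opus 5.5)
We follow the interpolation method of \cite{parraud2023asymptotic}. For $t\in[0,\infty]$, let $X^N_t = e^{-t/2}X^N + (1-e^{-t})^{1/2}x^1$, where $x^1=(x^1_e)_{e\in\cU}$ is a free semicircular family, realized as in Proposition \ref{dojvdsomv} inside $\cA_N$ and free from $\M_N(\C)$ and from $y^0$. The law of $X^N_t$ interpolates between $X^N_0=X^N$ and the free semicircular family $X^N_\infty=x^1$. Set $F(t)=\E[\tau_N(f(Z^N,X^N_t,y^0)\,|\,\M_N(\C))]$. Then
\[
F(0)-F(\infty) = -\int_0^\infty F'(t)\,dt .
\]
We compute $F'$ using Proposition \ref{skjodncskncv}. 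Here $X^N_t$ is viewed as a $\cC^\infty$ function of $(X^N,x^1)$, by Example \ref{ex.Sasfunc}, and we use the chain rule, Theorem \ref{chainrule}. The derivative $F'(t)$ splits into a GUE part and a semicircular part. The GUE part is handled by the Gaussian integration by parts \eqref{ocmslswc}, and the semicircular part by the free Schwinger--Dyson equation \eqref{mcslmc}. To legitimize this for $\cL^k$ functions we use Proposition \ref{sodncks}, after testing against a matrix $E_{a,b}$ to pass to the conditional expectation.

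At order $N^0$ the two contributions cancel exactly, as in the Gaussian case. The GUE term $\frac1N\sum_{i,j}E_{i,j}\,\partial_e f\#E_{j,i}$ is rewritten via \eqref{sgwlefnnf}, and this is where we introduce a second free semicircular copy. The $\widetilde\partial$ term of \eqref{ocmslswc} carries an explicit $\frac1N\sum_{i,j}$ of products of traces. By \eqref{sgwlefnnf} it becomes $N^{-2}$ times a trace of a permuted multiplication of $\widetilde\partial_e f$ evaluated at two free copies. This is already of order $N^{-2}$ and lies in $\cC^{k-1}$, so it is harmless by Proposition \ref{proppourvoila}.

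The remaining difference, between the matricial coupling $\partial_e f(X^N_t)\#E_{j,i}$ and its free analogue, is treated as in \cite{parraud2023asymptotic}. We introduce a second interpolation, in which each tensor leg is evaluated at an independent copy: $X^N_t$ in the first leg and a copy built from a fresh semicircular $x^2$ and the same $X^N$ in the second. We then differentiate once more in the new interpolation parameter and apply the integration by parts again. Each application of integration by parts that does not cancel produces a factor $N^{-1}$ together with one more derivative and one more free semicircular copy. After two rounds we obtain the exact $N^{-2}$ factor. Reaching it costs four derivatives of $f$ (a second and a first derivative from each round, combined through the twisted multiplication $a\otimes b\otimes c\otimes d\mapsto badc$) and up to six copies $x^1,\dots,x^6$, and correspondingly copies $y^1,\dots,y^6$ of $y^0$. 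The integrand is then an explicit combination of fourth-order differentials $\widetilde\partial^{(j_n)}\circ\cdots\circ\partial^{(j_0)}f$ with permutations, multiplications $m$ and partial traces $\tr$. It is evaluated at points of the form $\alpha X^N+\beta x^i+\gamma x^j$, where the coefficients are explicit functions of the time parameters with exponential decay $e^{-t}$.

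The function $g$ is defined as the time integral of this integrand. To show $g\in\cL^{k-4}(\widetilde\cB)$ together with \eqref{eq:normupperexp}, we argue as follows. The linear evaluation maps are $\cC^\infty$ with norms bounded uniformly in the time parameters, by Example \ref{ex.Sasfunc}, and at radius at most $R$ since the coefficients satisfy $\alpha^2+\beta^2\le1$. By Theorem \ref{chainrule}, Corollary \ref{cor.Compbd} and Proposition \ref{proppourvoila}, each integrand then has $\cC^{k-4}$-norm at most $C_k\norm{f}_{\cC^k,R}$. The exponential decay in time makes the integral converge in the Fréchet space. The $\cL$-bound follows from the $\cC$-bound. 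Finally, the conditional expectation structure requires that free copies be integrated out only through traces, which is consistent with Proposition \ref{prop:condexp}.

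The main obstacle is the second round of integration by parts. There the tensor legs are evaluated at different copies, and we must check that \eqref{sgwlefnnf} applies with general noncommutative smooth functions and trace terms, so that every term is a legitimate element of $\cC^{k-4}$ with the same $R$. We would handle this by first verifying all identities for trace polynomials $f$, where everything is algebraic, and then extending by density. The density step is justified because all constants depend only on the seminorms.
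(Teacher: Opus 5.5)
Your outline follows the paper's proof. The shared steps are: the interpolation $e^{-t/2}X^N+(1-e^{-t})^{1/2}x$; Gaussian integration by parts played against the free Schwinger--Dyson equation; \eqref{sgwlefnnf} to make the trace term an explicit $N^{-2}$ term; a second interpolation between coupled and decoupled tensor legs; and verification on trace polynomials followed by density. Interpolating only one leg, as you suggest, works as well as the paper's symmetric choice.

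A few details are off but harmless:
\begin{itemize}
\item The trace term is $\widetilde\partial_e\mathcal{D}_e f\in\mathcal{C}^{k-2}$, not $\widetilde\partial_e f\in\mathcal{C}^{k-1}$.
\item In the second round $X^N$ is frozen, so only the free Schwinger--Dyson equation is used. The second factor $N^{-1}$ comes from applying \eqref{sgwlefnnf} to the resulting product of two traces. This is where the further copies ($x^4,x^5,x^6$ in the paper) appear, and it costs one derivative on each leg.
\item The $\widetilde\partial$-part of $F'$ itself must also go through the same two rounds; the paper handles this by repeating the process.
\end{itemize}

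The step that fails as you justify it is keeping the same $R$. The seminorm $\|g\|_{\mathcal{C}^{k-4},R}$ is a supremum over arbitrary self-adjoint inputs with $\|X_e\|,\|x^i_e\|\le c_eR$. These inputs need not be free or semicircular, so $\alpha^2+\beta^2+\gamma^2\le 1$ controls nothing. The triangle inequality only gives $\|\alpha X_e+\beta x^i_e+\gamma x^j_e\|\le(|\alpha|+|\beta|+|\gamma|)\,c_eR\le\sqrt{3}\,c_eR$. Equality holds when $X_e=x^i_e=x^j_e$ has norm $c_eR$ and $\alpha=\beta=\gamma=3^{-1/2}$, a value the interpolation coefficients do reach.

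Since $\|f\|_{\mathcal{C}^k,R}$ only sees $f$ on the $R$-ball, what the construction actually gives is $\|g\|_{\mathcal{C}^{k-4},R}\le C_k\|f\|_{\mathcal{C}^k,\sqrt{3}\,R}$. The paper's proof also asserts \eqref{eq:normupperexp} with the same $R$ without addressing this point.

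The weaker bound is enough for everything that follows:
\begin{itemize}
\item It still yields $g\in\mathcal{L}^{k-4}$, with $3K$ in place of $K$.
\item Integrability in \eqref{3mainresu0} for $N>2K$ is unaffected. At the actual evaluation point the semicircular inputs have norm $2$, so only $\|f\|_{\mathcal{C}^k,R_N+M}$ enters Proposition~\ref{sodncks}.
\item The proof of Theorem~\ref{thm:main} uses only the $R=\infty$ norms.
\end{itemize}
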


\begin{proof}
	First we set $Z_{d+1}^N$ to be an arbitrary element of $\M_N(\C)$, then we define $h = f\times Z_{d+1}$. Finally we set,
	$$ \tau_N \coloneqq \frac{1}{N}\Tr \circ E_N,\quad z_{t} = \left(Z^N,(1-e^{-t} )^{1/2} x^0 + e^{-t/2}X^N,y^0\right).$$
    Note that since we work with the projective tensor norm, with $R_N$ defined as in Proposition \ref{sodncks},
    $$ \frac{e^{-t}}{2} \sum_{e\in \cU} \norm{ \partial_e h\left(z_t\right) \# \left( \frac{x_e^0}{(1-e^{-t})^{1/2}} - e^{t/2} X_e^N\right)} \leq \frac{e^{-t/2}}{(1-e^{-t})^{1/2}} \norm{(X^N,x)}_{\cA_N,c} \norm{h}_{\cC^1, R_N}.  $$
    \noindent Thanks to Proposition \ref{sodncks}, the right hand side is integrable. Thus, since
	\begin{align}
    \label{3vantlmbd2}
		\frac{d}{dt}\tau_N\Big(h\left(z_t\right) \Big) &= \frac{e^{-t}}{2} \sum_{e\in \cU} \tau_N\left(\partial_e h\left(z_t\right) \# \left( \frac{x_e}{(1-e^{-t})^{1/2}} - e^{t/2} X_e^N\right) \right) \\
        &\quad + \frac{e^{-t}}{2} \sum_{e\in \cU} \tau_N\otimes\tau_N\left(\widetilde{\partial}_e h\left(z_t\right) \# \left( \frac{x_e}{(1-e^{-t})^{1/2}} - e^{t/2} X_e^N\right)\otimes 1 \right), \nonumber
	\end{align}
    by dominated convergence theorem,  we have
    \begin{equation}
    \label{3vantlmbd1}
        \E\left[\tr_N\Big(h\left(Z^N,X^N,y^0\right)\Big)\right] - \tau_N\Big(h\left(Z^N,x^0,y^0\right)\Big) = -\int_{0}^{\infty} \E\left[ \frac{d}{dt} \tau_N\Big( h\left(z_{t}\right)\Big) \right] dt.
    \end{equation}
	
	\noindent And thanks to the Schwinger--Dyson equations, i.e., Proposition \ref{soicdmlsmc}, we get that
	\begin{align}
		\label{3vantlmbd}
		& \E\left[ \sum_{e\in \cU} \tau_N\left(\partial_e h\left(z_t\right) \# \left( \frac{x_e}{(1-e^{-t})^{1/2}} - e^{t/2} X_e^N\right) \right) \right] \\
        &= \E\Bigg[ \sum_{e\in \cU} \Bigg( \tau_N\otimes\tau_N \Big( \partial_e \cD_e h\left(z_t\right)\Big) - \frac{1}{N}\sum_{1\leq u,v\leq N} \tau_N\Big( E_{u,v}\ \partial_e \cD_e h\left(z_t \right)\# E_{v,u} \Big)\Bigg) \Bigg]. \nonumber \\
        &\quad -\frac{1}{N} \E\left[ \sum_{e\in \cU} \sum_{1\leq u,v\leq N} \tau_N\otimes\tau_N\left( \widetilde{\partial}_e\cD_e h(z_t) \#\left(E_{v,u} \otimes E_{u,v} \right)\right) \right] \nonumber \\
		&= \E\Bigg[ \tau_N\otimes\tau_N \Big(\eta_{\cU}\circ \partial \cD h\left(z_t\right)\Big) - \frac{1}{N}\sum_{1\leq u,v\leq N} \tau_N\Big( E_{u,v}\  \eta_{\cU}\circ\partial\cD h\left(z_t \right)\# E_{v,u} \Big) \Bigg] \nonumber \\
        &\quad -\frac{1}{N} \E\left[ \sum_{1\leq u,v\leq N} \tau_N\otimes\tau_N\left( \eta_{\cU}\circ \widetilde{\partial}\cD h(z_t) \#\left(E_{v,u} \otimes E_{u,v} \right)\right) \right]. \nonumber
	\end{align}
	
	\noindent For $A,B\in \C\langle \cB\rangle$, let
	\begin{align*}
		\Gamma_{N,t} &:= \frac{1}{N}\sum_{1\leq u,v\leq N} \tau_N\Big( A\left(z_t\right) E_{v,u}\Big) \tau_N\Big( E_{u,v} B\left(z_t\right) \Big), \\
        \Lambda_{N,t} &:= \tau_N\Big(A\left(z_t\right)\Big) \tau_N\Big(B\left(z_t\right)\Big)  - \frac{1}{N}\sum_{1\leq u,v\leq N} \tau_N\Big( E_{u,v}A\left(z_t\right) E_{v,u} B\left(z_t\right) \Big).
	\end{align*}
	
	\noindent We now want to compute those quantities. To do so we set,
	\begin{itemize}
        \item $z^1_t= \Big(Z^N, (1-e^{-t})^{1/2} x^1 + e^{-t/2} X^N ,y^1\Big)$,
		\item $z_t^2$ defined similarly but with $x^2,y^2$ instead of $x^1,y^1$,
		\item $ z_{s,t}^{1} = \Big(Z^N, (1-e^{-t})^{1/2}\left((1-e^{-s})^{1/2} x^2 +  e^{-s/2}x^1\right) + e^{-t/2} X^N ,(1-e^{-s})^{1/2} y^2 +  e^{-s/2}y^1\Big) ,$
		\item $z_{s,t}^{2}$ defined similarly but with $x^3,y^3$ instead of $x^2,y^2$,
		\item $\widetilde{z}_{s,t}^{1},\widetilde{z}_{s,t}^{2}$ defined similarly but where we replaced $x^1,x^2,x^3,y^1,y^2,y^3$ by $x^4,x^5,x^6,y^4,y^5,y^6$.
	\end{itemize}
    
	\noindent Next, note that $z_t$ has the same distribution as $z^1_{s,t}$ or $z^2_{s,t}$. Besides, $z^1_{0,t} = z^2_{0,t}$. Thus, thanks to Equation \eqref{sgwlefnnf},
	\begin{align*}
		\Lambda_{N,t} &= \lim_{s\to\infty}\tau_N\Big(A\left(z^1_{s,t}\right)\Big) \tau_N\Big(B\left(z^2_{s,t}\right)\Big)  - \frac{1}{N}\sum_{u,v} \tau_N\Big( E_{u,v}A\left(z^1_{0,t}\right) E_{v,u} B\left(z^2_{0,t}\right) \Big) \\
		&= \lim_{s\to\infty} \sum_{u,v} \tau_N\Big(A\left(z^1_{s,t}\right)E_{v,v}\Big) \tau_N\Big(B\left(z^2_{s,t}\right)E_{u,u}\Big) - \frac{1}{N}\sum_{u,v} \tau_N\Big( E_{u,v}A\left(z^1_{0,t}\right) E_{v,u} B\left(z^2_{0,t}\right) \Big) \\
        &= \lim_{s\to\infty} \frac{1}{N}\sum_{u,v} \tau_N\Big(E_{u,v}A\left(z^1_{s,t}\right)E_{v,u}B\left(z^2_{s,t}\right)\Big) - \tau_N\Big( E_{u,v}A\left(z^1_{0,t}\right) E_{v,u} B\left(z^2_{0,t}\right) \Big) \\
		&= \frac{1}{N}\sum_{u,v} \int_0^{\infty} \frac{d}{ds}\tau_N\Big(E_{u,v}A\left(z^1_{s,t}\right)E_{v,u}B\left(z^2_{s,t}\right)\Big)\ ds.
	\end{align*}
	Besides,
	\begin{align*}
		&\frac{d}{ds}\tau_N\Big(E_{u,v}A\left(z^1_{s,t}\right)E_{v,u}B\left(z^2_{s,t}\right)\Big) \\
		&= \sum_{e\in \cU} \frac{(1-e^{-t})^{1/2} e^{-s}}{2}\ \Bigg(\tau_N\left(E_{u,v}\ \partial_e A\left(z^1_{s,t}\right)\#\left( \frac{x^2_e}{(1-e^{-s})^{1/2}} - e^{s/2}x^1_e \right)\ E_{v,u}B\left(z^2_{s,t}\right)\right) \\
		&\quad\quad\quad\quad\quad\quad\quad\quad\quad\quad\quad+  \tau_N\left(E_{u,v} A\left(z^1_{s,t}\right) E_{v,u}\ \partial_e B\left(z^2_{s,t}\right)\#\left( \frac{x^3_e}{(1-e^{-s})^{1/2}} - e^{s/2}x^1_e \right)\right) \Bigg) \\
		&\quad + \sum_{e\in \cV} \frac{e^{-s}}{2}\ \Bigg(\tau_N\left(E_{u,v}\ \partial_e A\left(z^1_{s,t}\right)\#\left( \frac{y^2_e}{(1-e^{-s})^{1/2}} - e^{s/2}y^1_e \right)\ E_{v,u}B\left(z^2_{s,t}\right)\right) \\
		&\quad\quad\quad\quad\quad\quad\quad+  \tau_N\left(E_{u,v} A\left(z^1_{s,t}\right) E_{v,u}\ \partial_e B\left(z^2_{s,t}\right)\#\left( \frac{y^3_e}{(1-e^{-s})^{1/2}} - e^{s/2}y^1_e \right)\right) \Bigg)
	\end{align*}
	Thus if for $e\in \cU$ we write $\partial_e A = \sum_i A_i^1\otimes A_i^2$ and $\partial_e B = \sum_{j} B_j^1\otimes B_j^2$, then thanks to the Schwinger--Dyson equations, as well as Equation \eqref{sgwlefnnf},
	\begin{align*}
		&\tau_N\left(E_{u,v}\ \partial_e A\left(z^1_{s,t}\right)\#\left( \frac{x^2_e}{(1-e^{-s})^{1/2}} - e^{s/2}x^1_e \right)\ E_{v,u}B\left(z^2_{s,t}\right)\right) \\
		&= \sum_{i} \tau_N\left(\left( \frac{x^2_e}{(1-e^{-s})^{1/2}} - e^{s/2}x^1_e \right) A_i^2\left(z^1_{s,t}\right) E_{v,u}B\left(z^2_{s,t}\right) E_{u,v} A_i^1\left(z^1_{s,t}\right) \right) \\	
		&= -(1-e^{-t})^{1/2}\sum_{i,j} \tau_N\left( A_i^2\left(z^1_{s,t}\right) E_{v,u}B_j^1\left(z^2_{s,t}\right)\right)\ \tau_N\left( B_j^2\left(z^2_{s,t}\right) E_{u,v} A_i^1\left(z^1_{s,t}\right) \right) \\	
		&= -\frac{(1-e^{-t})^{1/2}}{N} \sum_{i,j} \tau_N\left( A_i^2\left(z^1_{s,t}\right) E_{v,v} A_i^1\left(\widetilde{z}^1_{s,t}\right) B_j^2\left(\widetilde{z}^2_{s,t}\right) E_{u,u} B_j^1\left(z^2_{s,t}\right) \right).
	\end{align*}
    Similarly, we also have
	\begin{align*}
		&\tau_N\left(E_{u,v} A\left(z^1_{s,t}\right) E_{v,u}\ \partial_e B\left(z^2_{s,t}\right)\#\left( \frac{x^3_e}{(1-e^{-s})^{1/2}} - e^{s/2}x^1_e \right)\right) \\
		&= -\frac{(1-e^{-t})^{1/2}}{N} \sum_{i,j} \tau_N\left( A_i^2\left(z^1_{s,t}\right) E_{v,v} A_i^1\left(\widetilde{z}^1_{s,t}\right) B_j^2\left(\widetilde{z}^2_{s,t}\right) E_{u,u} B_j^1\left(z^2_{s,t}\right) \right).
	\end{align*}
	Hence we get that,
	\begin{align*}
		&\frac{(1-e^{-t})^{1/2}}{2}\sum_{1\leq u,v\leq N} \tau_N\left(E_{u,v}\ \partial_e A\left(z^1_{s,t}\right)\#\left( \frac{x^2_e}{(1-e^{-s})^{1/2}} - e^{s/2}x^1_e \right)\ E_{v,u}B\left(z^2_{s,t}\right)\right) \\
		&\quad\quad\quad\quad\quad\quad\quad\quad\quad+  \tau_N\left(E_{u,v} A\left(z^1_{s,t}\right) E_{v,u}\ \partial_e B\left(z^2_{s,t}\right)\#\left( \frac{x^3_e}{(1-e^{-s})^{1/2}} - e^{s/2}x^1_e \right)\right) \\
		&= -\frac{1-e^{-t}}{N} \sum_{i,j} \tau_N\left( A_i^2\left(z^1_{s,t}\right) A_i^1\left(\widetilde{z}^1_{s,t}\right) B_j^2\left(\widetilde{z}^2_{s,t}\right) B_j^1\left(z^2_{s,t}\right) \right) \\
		&= -\frac{1-e^{-t}}{N} \tau_N\left( \ev_{z^1_{s,t}, \widetilde{z}^1_{s,t}, \widetilde{z}^2_{s,t}, z^2_{s,t}}\circ T \circ (\partial_e\otimes \partial_e) (A\otimes B) \right),
	\end{align*}
    where the permuted multiplication map $T(A_1\otimes A_2\otimes A_3\otimes A_4) = A_2A_1A_4A_3$ is a combination of operators from Proposition \ref{proppourvoila}, and $\ev_{z^1_{s,t}, \widetilde{z}^1_{s,t}, \widetilde{z}^2_{s,t}, z^2_{s,t}}$ is an operator evaluating a given function of $\cC^k_4(\cB)$ in different families of variables.
	Consequently, by doing the same process for $e\in V$, we get that
	\begin{align*}
		&\sum_{1\leq u,v\leq N} \frac{d}{ds}\tau_N\Big(E_{u,v}A\left(z^1_{s,t}\right)E_{v,u}B\left(z^2_{s,t}\right)\Big) \\
		&= -\frac{(1-e^{-t}) e^{-s}}{N} \tau_N\left( \ev_{z^1_{s,t}, \widetilde{z}^1_{s,t}, \widetilde{z}^2_{s,t}, z^2_{s,t}}\circ \eta_{\cU}\circ T \circ (\partial\otimes\partial) (A\otimes B) \right) \\
		&\quad -\frac{e^{-s}}{N} \tau_N\left( \ev_{z^1_{s,t}, \widetilde{z}^1_{s,t}, \widetilde{z}^2_{s,t}, z^2_{s,t}}\circ \eta_{\cV}\circ T \circ (\partial\otimes\partial) (A\otimes B) \right) \\
		&= -\frac{e^{-s}}{N} \tau_N\left( \ev_{z^1_{s,t}, \widetilde{z}^1_{s,t}, \widetilde{z}^2_{s,t}, z^2_{s,t}}\circ (\eta_{\cV}+(1-e^{-t})\eta_{\cU})\circ T \circ (\partial\otimes\partial) (A\otimes B) \right),
	\end{align*}
    
	And finally,
	\begin{align*}
		\Lambda_{N,t} &= - \frac{1}{N^2} \int_0^{\infty} e^{-s}\tau_N\left( \ev_{z^1_{s,t}, \widetilde{z}^1_{s,t}, \widetilde{z}^2_{s,t}, z^2_{s,t}}\circ (\eta_{\cV}+(1-e^{-t})\eta_{\cU})\circ T \circ (\partial\otimes\partial) (A\otimes B) \right)ds.
	\end{align*}
    Since we assumed that $f\in \cL^{k}\left(\cB\right)$, we have $h \in \cL^{k}\left(\cB\right)$ as well. Besides, $z^1_{s,t}, \widetilde{z}^1_{s,t}, \widetilde{z}^2_{s,t}$ and $ z^2_{s,t}$ all have the same noncommutative distribution, therefore thanks to Proposition \ref{proppourvoila} once again, there exists a constant $C$ independent of $s,t$ such that for any trace polynomials,
    $$ H_1(P) \coloneqq \frac{1}{N^2} \int_0^{\infty} e^{-s} \tau_N\left( \ev_{z^1_{s,t}, \widetilde{z}^1_{s,t}, \widetilde{z}^2_{s,t}, z^2_{s,t}}\circ (\eta_{\cV}+(1-e^{-t})\eta_{\cU})\circ T \circ (\partial\otimes\partial) (\eta_{\cU}\circ\partial\cD P) \right)ds, $$
    $$ H_2(P) \coloneqq \tau_N\otimes\tau_N \Big(\eta_{\cU}\circ \partial \cD h\left(z_t\right)\Big) - \frac{1}{N}\sum_{1\leq u,v\leq N} \tau_N\Big( E_{u,v}\  \eta_{\cU}\circ\partial\cD P\left(z_t \right)\# E_{v,u} \Big), $$
    Since $z^1_{s,t}, \widetilde{z}^1_{s,t}, \widetilde{z}^2_{s,t}$ and $ z^2_{s,t}$ all have the same noncommutative distribution, thanks to Proposition \ref{proppourvoila}, $H_1$ and $H_2$ are continuous linear maps on $\cL^4\left(\cB\right)$. Besides, we just showed that they are equal on trace polynomials; therefore, since we assumed that $f\in \cL^{k}\left(\cB\right)$ and thus so does $h$, we have that $H_1(h) = H_2(h)$. 

    Similarly, thanks to Equation \eqref{sgwlefnnf}, we have that
    \begin{align*}
        \Gamma_{N,t} &= \frac{1}{N^2}\sum_{1\leq u,v\leq N} \tau_N\Big( E_{v,v} A\left(z_t^1\right) E_{u,u} B\left(z_t^2\right) \Big) \\
        &= \frac{1}{N^2}\tau_N\Big( A\left(z_t^1\right) B\left(z_t^2\right) \Big).
    \end{align*}
    And therefore, with the same reasoning as above,
    $$ \frac{1}{N} \sum_{1\leq u,v\leq N} \tau_N\otimes\tau_N\left( \eta_{\cU}\circ \widetilde{\partial}\cD h(z_t) \#\left(E_{v,u} \otimes E_{u,v} \right)\right) = \frac{1}{N^2} \tau_N\left( \ev_{z^1_t,z^2_t}\circ m(\eta_{\cU}\circ \widetilde{\partial}\cD h) \right),$$
    where $m$ is as in Proposition \ref{proppourvoila}. Consequently, by combining Equations \eqref{3vantlmbd},\eqref{3vantlmbd1} and \eqref{3vantlmbd2} with previous computations we get that
    \begin{align}
    \label{skdjvnsnv}
		&\E\left[\tau_N\Big(h\left(Z^N,X^N,y^0\right)\Big)\right] - \tau_N\Big(h\left(Z^N,x^0,y^0\right)\Big) \\
		&= \frac{1}{2N^2}\int_{0}^{\infty}\int_0^{\infty} e^{-t-s} \E\left[ \tau_N\left( \ev_{z^1_{s,t}, \widetilde{z}^1_{s,t}, \widetilde{z}^2_{s,t}, z^2_{s,t}}\circ (\eta_{\cV}+(1-e^{-t})\eta_{\cU})\circ T \circ (\partial\otimes\partial) (\eta_{\cU}\circ\partial\cD h) \right) \right] ds dt \nonumber \\
        &\quad + \frac{1}{2N^2}\int_{0}^{\infty} e^{-t} \E\left[ \tau_N\left( \ev_{z^1_t,z^2_t}\circ m(\eta_{\cU}\circ \widetilde{\partial}\cD h) \right) \right] dt \nonumber \\
        &\quad + \frac{1}{2}\int_{0}^{\infty} e^{-t}\ \E\left[ \sum_{e\in \cU} \tau_N\otimes\tau_N\left(\widetilde{\partial}_e h\left(z_t\right) \# \left( \frac{x_e}{(1-e^{-t})^{1/2}} - e^{t/2} X_e^N\right)\otimes 1 \right) \right] dt. \nonumber
	\end{align}
	Note that one can write 
    $$ \ev_{z^1_t,z^2_t}\circ m(\eta_{\cU}\circ \widetilde{\partial}\cD h) = \Big[ \ev_{z^1_t,z^2_t} \circ \Gamma(\widetilde{\partial}\circ\partial f) \Big] \times Z_{d+1}^N, $$
    where $\Gamma(B\otimes A_1\otimes A_2) = A_1 B A_2$. One can then set 
    $$ g(Z,X,x^1,y^1,x^2,y^2) = \frac{1}{2} \int_0^{\infty} e^{-t} \ev_{z^1_t,z^2_t} \circ \Gamma(\widetilde{\partial}\circ\partial f), $$
    and 
    $$ \frac{1}{2N^2}\int_{0}^{\infty} e^{-t} \E\left[ \tau_N\left( \ev_{z^1_t,z^2_t}\circ m(\eta_{\cU}\circ \widetilde{\partial}\cD h) \right) \right] dt = \frac{1}{N^2} \E\left[ \tau_N\right(g(Z^N,X^N,x^1,y^1,x^2,y^2) Z_{d+1}^N\left) \right]. $$
    Moreover, one can easily find a constant $C_k$ such that $\norm{g}_{\cC^{k-2},R} \leq C_k \norm{f}_{\cC^{k},R}$. One can then do a similar process for the second line of Equation \eqref{skdjvnsnv}. Finally, we can repeat the entire process for the last line of Equation \eqref{skdjvnsnv}. This yields a function $g$ which thanks to Proposition \ref{proppourvoila} belongs to $\cL^{k-4}(\cB)$ and such that
    \begin{align*}
        \E\left[\tau_N\Big(f\left(Z^N,X^N,y^0\right) Z_{d+1}^N\Big)\right] &= \tau_N\Big(f\left(Z^N,x^0,y^0\right)Z_{d+1}^N\Big) \\
        &\quad + \frac{1}{N^2} \E\left[ \tau_N\Big(g(Z^N,X^N,x^1,\dots,x^6,y^1,\dots,y^6) Z_{d+1}^N\Big) \right].
    \end{align*}	
    Since $Z_{d+1}^N$ can be taken arbitrarily in $\M_N(\C)$ this yields the conclusion.
\end{proof}

\section{Analysis of a key stochastic differential equation}\label{sec.SDE}

\subsection{An integral equation}\label{sec.IE}

Key to our development is an integral equation of the form $y = c + \int_a^{\boldsymbol{\cdot}} f(t,y(t))\,\d t$.  This abstract form of integral equation in fact includes the free Langevin SDE, where we take $c$ to be the free Brownian motion.  We will thus analyze the behavior of the solution to the SDE as a smooth function of the process $(S_t)_{t \geq 0}$ expressed in terms of an orthonormal basis, by means of existence/uniqueness theory for general integral equations which is essentially identical to that of a standard ODE.

If $\cX$ is a Banach space, $x_0 \in \cX$, and $r > 0$, write
\[
C_r(x_0) \coloneqq \{x \in \cX : \norm{x-x_0}_{\cX} \leq r\}
\]
for the closed ball in $\cX$ of radius $r$ centered at $x_0$.

\begin{theorem}\label{thm.IE}
Let $\cX$ be a Banach space and $a \in \R$.
Suppose $f \colon [a,\infty) \times \cX \to \cX$ is a continuous function such that for all $T > a$ and $R > 0$, there exists a constant $c_{T,R} < \infty$ such that
\[
\sup_{a \leq t \leq T}\norm{f(t,x) - f(t,y)}_{\cX} \leq c_{T,R}\norm{x-y}_{\cX} \qquad (x,y \in C_R(x_0)).
\]
If $c \colon [a,\infty) \to \cX$ is continuous, then there exists a $T_{\infty} \in (a,\infty]$ and a continuous function $y_{\infty} \colon [a,T_{\infty}) \to \cX$ such that:
\begin{enumerate}[label=(\roman*),font=\normalfont]
    \item $\displaystyle y_{\infty}(t) = c(t) + \int_a^t f(s,y_{\infty}(s))\,\d s$ for all $t \in [a,T_{\infty})$, i.e., $y_{\infty} = c + \int_a^{\boldsymbol{\cdot}} f(t,y_{\infty}(t))\,\d t$ on $[a,T_{\infty})$;\label{item.solveIE}
    \item If $T > a$ and $y \colon [a,T) \to \cX$ is a continuous function such that $y = c + \int_a^{\boldsymbol{\cdot}} f(t,y(t))\,\d t$ on $[a,T)$, then $T \leq T_{\infty}$, and $y_{\infty}|_{[a,T)} = y$.\label{item.maxsolIE}
\end{enumerate}
Moreover, if $T_{\infty} < \infty$, then $\norm{y_{\infty}(t)}_{\cX} \to \infty$ as $t \nearrow T_{\infty}$.
The function $y_{\infty}$ is the \textbf{maximal solution} to the integral equation $y = c + \int_a^{\boldsymbol{\cdot}} f(t,y(t))\,\d t$, and $T_{\infty}$ is its \textbf{lifetime} (or \textbf{blow-up time}).
\end{theorem}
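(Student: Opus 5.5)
The plan is to run the classical Picard--Lindelöf argument for ODEs in a Banach space, with the forcing term $c$ playing the role of the initial condition. One remark on the hypothesis first: as written, the Lipschitz constant is required on balls $C_R(x_0)$. Since $C_R(x_0)\subseteq C_{R+\norm{x_0}}(0)$ and every bounded set lies in such a ball, this is the same as local Lipschitz continuity on bounded sets, uniformly for $t$ in compact intervals. I will use it in that form.

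\emph{Local existence.} Fix $t_0 \geq a$ and a continuous $z$ on $[a,t_0]$ solving the equation there; for $t_0=a$ we just take $z(a)=c(a)$. Extend the problem to $[t_0,t_0+\delta]$ by considering the map
\[
\Phi(y)(t) = c(t) + \int_a^{t_0} f(s,z(s))\,\d s + \int_{t_0}^t f(s,y(s))\,\d s
\]
on $\{y \in C([t_0,t_0+\delta];\cX) : \sup_t \norm{y(t)-z(t_0)} \leq 1\}$, with the sup norm. Choose $R$ with $\norm{z(t_0)}+1 \leq R$ and set $M = \sup\{\norm{f(s,x)} : s\in[t_0,t_0+1],\ \norm{x}\leq R\}$. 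This $M$ is finite, because Lipschitz continuity on the ball together with continuity of $s\mapsto f(s,0)$ on a compact interval bound $f$ there. Using continuity of $c$ at $t_0$, and noting $z(t_0)=c(t_0)+\int_a^{t_0}f$, pick $\delta$ small enough that $\norm{c(t)-c(t_0)}+\delta M\le 1$ and $\delta c_{t_0+1,R}<1$. Then $\Phi$ maps the set into itself and is a contraction, so Banach's fixed point theorem gives a unique solution on $[t_0,t_0+\delta]$. The Riemann integrals of continuous $\cX$-valued functions are unproblematic.

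\emph{Uniqueness.} If two continuous solutions $y_1,y_2$ are defined on a common interval $[a,T)$, a Grönwall argument on each $[a,T']$ with $T'<T$ shows they coincide. Indeed, both are bounded there, so $f$ is Lipschitz along them with a single constant $L$, and
\[
\norm{y_1(t)-y_2(t)} \leq L\int_a^t \norm{y_1-y_2}\,\d s,
\]
which forces $y_1=y_2$. We now define $T_\infty$ as the supremum of all $T$ admitting a solution on $[a,T)$, and $y_\infty$ as the gluing of these solutions. Uniqueness makes the gluing consistent, which gives (i) and (ii).

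\emph{Blow-up alternative.} This is the only slightly delicate step, since it needs the full limit $\norm{y_\infty(t)}\to\infty$ and not merely $\limsup$. Suppose instead that $T_\infty<\infty$ and $\norm{y_\infty(t_n)}\leq R_0$ along some sequence $t_n\nearrow T_\infty$. The natural first attempt is to show that the local existence time $\delta$ above depends only on $R_0$ and on quantities uniform over $[a,T_\infty+1]$. The constant $M$ and the Lipschitz constant are uniform in this way. The condition on $c$ can also be made uniform, because $c$ is uniformly continuous on the compact interval $[a,T_\infty+1]$. Hence there is a uniform $\delta>0$ such that, starting the extension at $t_0=t_n$, we get a solution on $[t_n,t_n+\delta]$. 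Choosing $n$ with $t_n+\delta>T_\infty$ and using uniqueness to glue this with $y_\infty$ contradicts the maximality of $T_\infty$. Therefore $\liminf_{t\nearrow T_\infty}\norm{y_\infty(t)}=\infty$, i.e.\ $\norm{y_\infty(t)}\to\infty$.
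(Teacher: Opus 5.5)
Your proof is correct and follows the paper's route: local existence, Gr\"onwall uniqueness, the maximal solution obtained by gluing, and a blow-up argument that restarts at times $t_n$ with bounded norm using a uniform existence time. The differences are minor. You obtain local solutions from the contraction mapping principle with small $\delta$, where the paper uses Picard iteration with the factorial bound. You get the uniform existence time from uniform continuity of $c$, whereas the paper applies its local result in a large ball $C_{2(2L+R)}(0)$ that contains the shifted forcing terms $c_n = c - c(t_n) + y(t_n)$.
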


For the sake of completeness, we prove this result below.
First, though, note that the``blow-up'' phenomenon stated at the end of Theorem \ref{thm.IE} implies that if $f$ also grows sub-linearly, then $T_{\infty} = \infty$.
Indeed, suppose tnhat for every $T > a$, there exists an $M_T < \infty$ such that
\[
\sup_{a \leq t \leq T}\norm{f(t,x)}_{\cX} \leq M_T(1+\norm{x}_{\cX}) \qquad \forall x \in \cX.
\]
If $a \leq t < T < \infty$ and $y$ solves $y = c+\int_a^{\boldsymbol{\cdot}} f(t,y(t))\,\d t$ on $[a,T)$, then
\begin{align*}
    \norm{y(t)}_{\cX} & \leq \norm{c(t)}_{\cX} + \int_a^t \norm{f(s,y(s))}_{\cX} \,\d s \\
    & \leq \sup_{a \leq s \leq t} \norm{c(s)}_{\cX} + M_T\int_a^t (1+\norm{y(s)}_{\cX}) \,\d s \\
    & = \sup_{a \leq s \leq t} \norm{c(s)}_{\cX} + M_T(t-a) + M_T\int_a^t \norm{y(s)}_{\cX} \,\d s.
\end{align*}
Therefore, by Gr\"onwall's inequality,
\[
 \norm{y(t)}_{\cX} \leq \left( \sup_{0 \leq s \leq t} \norm{c(s)}_{\cX} + M_T(t-a)\right) e^{M_T(t-a)} \qquad (a \leq t < T).
\]
In particular, $\sup\left\{\norm{y(s)}_{\cX} : a \leq t < T\right\} < \infty$.
It follows that $T_{\infty} = \infty$.

The proof of Theorem \ref{thm.IE} begins with a Picard--Lindel\"of-type local existence theorem.

\begin{theorem}\label{thm.PL}
Let $\cX$ be a Banach space, and fix $x_0 \in \cX$, $r > 0$, $a \in \R$, and $T_0 > a$.
Suppose $f \colon [a,\infty) \times C_r(x_0) \to \cX$ is a continuous function such that for all $T > a$, there exists a $c_T < \infty$ such that
\[
\sup_{a \leq t \leq T} \norm{f(t,x) - f(t,y)}_{\cX} \leq c_T\norm{x-y}_{\cX} \qquad (x,y \in C_r(x_0)).
\]
Define
\[
M \coloneqq \sup_{(t,x) \in [a,T_0] \times C_r(x_0)} \norm{f(t,x)}_{\cX} < \infty.
\]
If $a < T < \min\left\{T_0,a + r/(2M)\right\}$ and $c \colon [a,T] \to C_{r/2}(x_0)$ is continuous, then there exists a continuous function $y \colon [a,T] \to C_r(x_0)$ such that
\begin{equation}
    y(t) = c(t) + \int_a^t f(s,y(s))\,\d s \qquad (a \leq t \leq T).\label{eq.IE}
\end{equation}
\end{theorem}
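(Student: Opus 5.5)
We will prove Theorem \ref{thm.PL} by the standard Picard iteration. Let $E \coloneqq C([a,T];\cX)$ with the sup norm $\norm{y}_{\infty} \coloneqq \sup_{a \le t \le T}\norm{y(t)}_{\cX}$; this is a Banach space. Let $K \coloneqq \{y \in E : y(t) \in C_r(x_0) \text{ for all } t \in [a,T]\}$, which is a closed subset of $E$ and hence complete. On $K$ define
\[
(\Phi y)(t) \coloneqq c(t) + \int_a^t f(s,y(s))\,\d s \qquad (a \le t \le T).
\]
The integral is a Riemann (or Bochner) integral of the continuous $\cX$-valued function $s \mapsto f(s,y(s))$. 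Note that $M < \infty$ is implicitly part of the hypotheses: we need $f$ bounded on $[a,T_0]\times C_r(x_0)$, and this bound is used below.

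\textbf{Step 1: $\Phi$ maps $K$ into $K$.} The function $\Phi y$ is continuous because $c$ is continuous and the integral is Lipschitz in $t$ with constant $M$. Moreover, since $T - a < r/(2M)$,
\[
\norm{(\Phi y)(t) - x_0}_{\cX} \le \norm{c(t)-x_0}_{\cX} + M(t-a) \le \frac{r}{2} + M(T-a) < r.
\]

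\textbf{Step 2: a suitable iterate of $\Phi$ is a contraction.} By the Lipschitz hypothesis with constant $L \coloneqq c_T$ (take $T \le T_0$), we have
\[
\norm{(\Phi y)(t) - (\Phi z)(t)}_{\cX} \le L\int_a^t \norm{y(s)-z(s)}_{\cX}\,\d s.
\]
Induction on $n$ gives
\[
\norm{(\Phi^n y)(t)-(\Phi^n z)(t)}_{\cX} \le \frac{L^n (t-a)^n}{n!}\,\norm{y-z}_{\infty}.
\]
Hence $\Phi^n$ is a contraction on $K$ once $n$ is large enough that $L^n(T-a)^n/n! < 1$.

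\textbf{Step 3: conclude.} A map some iterate of which is a contraction on a complete metric space has a unique fixed point, and that fixed point is the limit of the Picard iterates $\Phi^m(c)$; note that $c \in K$ because $c$ takes values in $C_{r/2}(x_0)$. Alternatively, one can weight the norm by $e^{-2L(t-a)}$ to make $\Phi$ itself a contraction. The fixed point $y$ satisfies \eqref{eq.IE}.

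There is no real obstacle here. The one point needing care is the self-mapping in Step 1, which is exactly where the choices $c \in C_{r/2}(x_0)$ and $T < a + r/(2M)$ are used.
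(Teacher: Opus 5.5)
Your proposal is correct and follows essentially the paper's proof: the same Picard map $\Phi_c$, the same self-mapping check via $\norm{c(t)-x_0}_{\cX}\le r/2$ and $M(T-a)\le r/2$, and the same iterated bound $c_T^n(t-a)^n/n!$. The only difference is packaging. The paper sums $\sup_{a\le t\le T}\norm{y_{n+1}(t)-y_n(t)}_{\cX}$ directly to show that the iterates starting from $c$ are Cauchy in $C([a,T];C_r(x_0))$. You instead invoke the fixed-point theorem for maps with a contracting iterate (or a Bielecki-weighted norm), which also gives uniqueness within $K$ for free.
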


\begin{proof}
For any continuous $y \colon [a,T] \to C_r(x_0)$, define
\[
\Phi_c(y)(t) \coloneqq c(t) + \int_a^t f(s,y(s))\,\d s \qquad (a \leq t \leq T).
\]
If $a \leq t \leq T$, then
\[
\norm{\Phi_c(y)(t) - x_0}_{\cX} \leq \norm{c(t) - x_0}_{\cX} + \int_a^t \norm{f(s,y(s))}_{\cX} \,\d s \leq \frac{r}{2} + M(T-a) \leq r,
\]
i.e., $\Phi_c(y)$ is a continuous function from $[a,T]$ to $C_r(x_0)$.
Consequently,
\[
\Phi_c^n(y) \coloneqq \underbrace{\left(\Phi_c \circ \cdots \circ \Phi_c\right)}_{n \text{ times}}(y) \qquad (n \in \N)
\]
is well defined.

Next, observe that if $y,z \colon [a,T] \to C_r(x_0)$ are continuous and $a \leq t \leq T$, then
\begin{align*}
    \norm{\Phi_c(y)(t) - \Phi_c(z)(t)}_{\cX} & = \norm{\int_a^t \left(f(y(s)) - f(z(s))\right)\d s}_{\cX} \\
    & \leq \int_a^t \norm{f(s,y(s)) - f(s,z(s))}_{\cX}\,\d s \\
    & \leq c_T\int_a^t \norm{y(s) - z(s)}_{\cX}\,\d s.
\end{align*}
Consequently, if $y_0 \coloneqq c$ and $y_n \coloneqq \Phi_c^n(y_{n-1})$ for all $n \in \N$, then
\begin{align*}
    \norm{y_{n+1}(t) - y_n(t)}_{\cX} & = \norm{\Phi_c\left(\Phi_c^n(c)\right)(t) - \Phi_c\left(\Phi_c^{n-1}(c)\right)(t)}_{\cX} \\
    & \leq  c_T\int_a^t \norm{\Phi_c^n(c)(s) - \Phi_c^{n-1}(c)(s)}_{\cX}\,\d s \\
    & = c_T\int_a^t \norm{y_n(s) - y_{n-1}(s)}_{\cX}\,\d s
\end{align*}
for all $n \in \N$.
Applying this estimate inductively yields
\begin{align*}
    \norm{y_{n+1}(t) - y_n(t)}_{\cX} & \leq c_T^n\int_a^t \int_a^{t_1}\cdots \int_a^{t_{n-1}} \norm{y_1(t_n) - y_0(t_n)}_{\cX}\,\d t_n \cdots \d t_2\, \d t_1  \\
    & \leq \sup_{a \leq s \leq t}\norm{y_1(s) - y_0(s)}_{\cX} c_T^n\frac{(t-a)^n}{n!}
\end{align*}
for all $n \in \N_0$.
Writing $M_0 \coloneqq \sup\left\{\norm{y_1(s) - y_0(s)}_{\cX} : a \leq s \leq T\right\} < \infty$, it follows that
\[
\sum_{n=0}^{\infty} \sup_{a \leq t \leq T} \norm{y_{n+1}(t) - y_n(t)}_{\cX} \leq M_0\sum_{n=0}^{\infty} \frac{(c_T(T-a))^n}{n!} = M_0e^{c_T(T-a)} < \infty.
\]
Since $\cY \coloneqq C([a,T];C_r(x_0))$ is a complete metric space with the metric
\[
d(z_1,z_2) \coloneqq \sup_{a \leq t \leq T}\norm{z_1(t) - z_2(t)}_{\cX} \qquad (z_1,z_2 \in \cY),
\]
it follows that $(y_n)_{n \in \N}$ converges in $\cY$.
Writing $y \in \cY = C([a,T];C_r(x_0))$ for the limit of $(y_n)_{n \in \N}$, it is easy to check that $y$ satisfies \eqref{eq.IE}.
\end{proof}

Let $\cX$ be a Banach space, $\cU \subseteq \cX$ be an open set, and $I \subseteq \R$ be an interval.
A function $f \colon I \times \cU \to \cX$ is \textbf{locally Lipschitz in space}, \textbf{locally uniformly in time} if for all compact $J \subseteq I$ and $x_0 \in \cU$, there exist $r > 0$ and $c_{J,x_0,r} < \infty$ such that $C_r(x_0) \subseteq \cU$ and
\[
\sup_{t \in J}\norm{f(t,x) - f(t,y)}_{\cX} \leq c_{J,x_0,r}\norm{x-y}_{\cX} \qquad (x,y \in C_r(x_0)).
\]
By a standard compactness argument, if $f \colon I \times \cU \to \cX$ is locally Lipschitz in space, locally uniformly in time and $J \subseteq I$ and $K \subseteq \cU$ are both compact, then there exists a $c_{J,K} < \infty$ such that
\begin{equation}
    \sup_{t \in J}\norm{f(t,x) - f(t,y)}_{\cX} \leq c_{J,K}\norm{x-y}_{\cX} \qquad (x,y \in K).\label{eq.locLip}
\end{equation}
With this in mind, here is a uniqueness result for \eqref{eq.IE} when the coefficient $f$ is locally Lipschitz in space, locally uniformly in time.

\begin{proposition}\label{prop.uniqueness}
Suppose $\cX$ is a Banach space, $\cU \subseteq \cX$ is an open set, and $-\infty < a < T < \infty$.
If $f \colon [a,\infty) \times \cU \to \cX$ is locally Lipschitz in space, locally uniformly in time; $c_1,c_2 \in C([a,T];\cX)$; and $y_1,y_2 \in C([a,T];\cU)$ satisfy
\[
y_i(t) = c_i(t) + \int_a^t f(s,y_i(s))\,\d s \qquad (a \leq t \leq T, \; i=1,2),
\]
then
\[
\norm{y_1(t) - y_2(t)}_{\cX} \leq  e^{C(t-a)}\sup_{a \leq s \leq t}\norm{c_1(s) - c_2(s)}_{\cX} \qquad (a \leq t \leq T),
\]
where $C = c_{J,K}$ is as in \eqref{eq.locLip} with $J \coloneqq [a,T]$ and $K \coloneqq y_1([a,T]) \cup y_2([a,T])$.
In particular, if $c_1 \equiv c_2$, then $y_1 \equiv y_2$.
\end{proposition}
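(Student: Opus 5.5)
The plan is a Grönwall argument on the difference of the two integral equations. Both $y_1$ and $y_2$ take values in the compact set $K = y_1([a,T]) \cup y_2([a,T])$: each is a continuous image of the compact interval $[a,T]$, and a finite union of compacts is compact. Since $f$ is locally Lipschitz in space, locally uniformly in time, \eqref{eq.locLip} with $J=[a,T]$ gives a constant $C=c_{J,K}<\infty$ such that $\norm{f(s,x)-f(s,z)}_{\cX} \le C\norm{x-z}_{\cX}$ for all $s\in[a,T]$ and $x,z\in K$. We never need to leave $K$, so no convex-hull or segment argument is required.

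For $a\le t\le T$, subtract the two equations and estimate:
\[
\norm{y_1(t)-y_2(t)}_{\cX} \le \norm{c_1(t)-c_2(t)}_{\cX} + \int_a^t \norm{f(s,y_1(s))-f(s,y_2(s))}_{\cX}\,\d s \le \delta(t) + C\int_a^t \norm{y_1(s)-y_2(s)}_{\cX}\,\d s,
\]
where $\delta(t)\coloneqq \sup_{a\le s\le t}\norm{c_1(s)-c_2(s)}_{\cX}$. The function $\delta$ is finite by continuity and nondecreasing.

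Now fix $t_0\in[a,T]$. For $t\le t_0$ the estimate gives $u(t)\le \delta(t_0)+C\int_a^t u(s)\,\d s$, where $u(t)=\norm{y_1(t)-y_2(t)}_{\cX}$ is continuous. Grönwall's inequality with a constant forcing term then yields $u(t)\le \delta(t_0)e^{C(t-a)}$ on $[a,t_0]$. Evaluating at $t=t_0$ gives the claimed bound.

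For uniqueness, if $c_1\equiv c_2$ then $\delta\equiv0$, so $y_1\equiv y_2$. The only point needing care, which is minor, is that \eqref{eq.locLip} requires a compact set inside $\cU$. This holds because the $y_i$ are assumed to take values in $\cU$. The compactness argument behind \eqref{eq.locLip} is taken as given in the text: cover $K$ by finitely many balls on which $f$ is Lipschitz, and use a Lebesgue number together with boundedness of $f$ on $J\times K$ for pairs of points that are far apart.
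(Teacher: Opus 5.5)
Your proposal is correct and is essentially the paper's proof: subtract the two integral equations, bound the integrand using the Lipschitz constant $C=c_{J,K}$ from \eqref{eq.locLip} on the compact set $K\subseteq\cU$, and apply Gr\"onwall's inequality. Freezing the nondecreasing forcing term at $\delta(t_0)$ before applying Gr\"onwall just spells out the step the paper calls immediate.
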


\begin{proof}
If $a \leq t \leq T$, then
\begin{align*}
    \norm{y_1(t) - y_2(t)}_{\cX} & \leq \norm{c_1(t) - c_2(t)}_{\cX} + \int_a^t \norm{f(s,y_1(s)) - f(s,y_2(s))}_{\cX} \,\d s \\
    & \leq \sup_{a \leq s \leq t}\norm{c_1(s) - c_2(s)}_{\cX} + C\int_a^t \norm{y_1(s) - y_2(s)}_{\cX} \,\d s.
\end{align*}
Consequently, the desired estimate follows immediately from Gr\"onwall's inequality.
\end{proof}

Theorem \ref{thm.PL} and Proposition \ref{prop.uniqueness} together yield the existence and uniqueness of a maximal solution to \eqref{eq.IE} when $f$ is locally Lipschitz in space, locally uniformly in time.

\begin{theorem}\label{thm.existuniq}
Let $\cX$ be a Banach space, $\cU \subseteq \cX$ be an open set, and $a \in \R$.
If $f \colon [a,\infty) \times \cU \to \cX$ is locally Lipschitz in space, locally uniformly in time and $c \in C([a,\infty);\cX)$ is such that $c(a) \in \cU$, then there exists a time $T_{\infty} \in (a,\infty]$ and a continuous function $y_{\infty} \colon [a,T_{\infty}) \to \cU$ such that:
\begin{enumerate}[label=(\roman*),font=\normalfont]
    \item $y_{\infty} = c + \int_a^{\boldsymbol{\cdot}} f(t,y_{\infty}(t))\,\d t$ on $[a,T_{\infty})$;\label{item.soln}
    \item If $T \in (a,\infty]$ and $y \colon [a,T) \to \cU$ is a continuous function such that $y = c + \int_a^{\boldsymbol{\cdot}} f(t,y(t))\,\d t$ on $[a,T)$, then $T \leq T_{\infty}$, and $y_{\infty}|_{[a,T)} = y$.\label{item.max}
\end{enumerate}
The function $y_{\infty}$ is the \textbf{maximal solution} to $y = c + \int_a^{\boldsymbol{\cdot}} f(t,y(t))\,\d t$, and $T_{\infty}$ is its \textbf{lifetime}.
\end{theorem}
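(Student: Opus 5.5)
We build the maximal solution by gluing local solutions from Theorem \ref{thm.PL} and using Proposition \ref{prop.uniqueness} to make the gluing consistent. Define $\mathcal{S}$ as the set of pairs $(T,y)$ with $T\in(a,\infty]$ and $y\colon[a,T)\to\cU$ continuous solving $y=c+\int_a^{\boldsymbol\cdot}f(t,y(t))\,\d t$ on $[a,T)$. Set $T_\infty\coloneqq\sup\{T:(T,y)\in\mathcal{S}\}$, and define $y_\infty(t)\coloneqq y(t)$ for any $(T,y)\in\mathcal S$ with $t<T$. Properties \ref{item.soln} and \ref{item.max} then follow at once from the construction, provided two facts hold: $\mathcal S$ is nonempty, and any two members of $\mathcal S$ agree on their common domain.

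\textbf{Nonemptiness.} Put $x_0\coloneqq c(a)\in\cU$. Since $f$ is locally Lipschitz in space, locally uniformly in time, with $J=[a,a+1]$ we can pick $r>0$ with $C_r(x_0)\subseteq\cU$ and a Lipschitz constant $c_J$ on $C_r(x_0)$.

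Theorem \ref{thm.PL} wants the Lipschitz constant uniform on $[a,T]$ for every $T$. To arrange this, replace $f$ by $\tilde f(t,x)\coloneqq f(\min\{t,a+1\},x)$, which is continuous and uniformly Lipschitz on $C_r(x_0)$. Take $T_0=a+1$. The quantity $M$ is finite: $f(t,x)$ is bounded by $\norm{f(t,x_0)}+c_J r$, and $t\mapsto f(t,x_0)$ is bounded on the compact interval $[a,a+1]$.

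By continuity of $c$, choose $\delta>0$ with $c([a,a+\delta])\subseteq C_{r/2}(x_0)$. Then Theorem \ref{thm.PL}, applied with $T<\min\{a+1,a+\delta,a+r/(2M)\}$, gives a solution on $[a,T]$, hence on $[a,T)$. So $\mathcal S\neq\emptyset$.

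\textbf{Consistency.} Let $(T_1,y_1),(T_2,y_2)\in\mathcal S$ and take any $T'<\min\{T_1,T_2\}$. Apply Proposition \ref{prop.uniqueness} on $[a,T']$ with $c_1=c_2=c$. The required Lipschitz bound \eqref{eq.locLip} holds because $y_1([a,T'])\cup y_2([a,T'])$ is compact, being the image of a compact interval under continuous maps, and it lies in $\cU$. Hence $y_1=y_2$ on $[a,T']$, and therefore on the whole common domain. This makes $y_\infty$ well defined and continuous on $[a,T_\infty)$, since locally it coincides with some continuous $y$.

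For \ref{item.soln}: given $t<T_\infty$, pick $(T,y)\in\mathcal S$ with $T>t$. Then $y_\infty=y$ on $[a,t]$, so the integral identity transfers to $y_\infty$. For \ref{item.max}: any solution $(T,y)$ lies in $\mathcal S$, so $T\le T_\infty$, and consistency gives $y_\infty|_{[a,T)}=y$.

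The only delicate point is the Lipschitz hypothesis of Theorem \ref{thm.PL}: it is stated for all $T$, while we only have a Lipschitz bound on a compact time window. The time-freezing trick $\tilde f$ handles this, and since $\tilde f=f$ on $[a,a+1]$, the solution it produces on $[a,T]$ with $T\le a+1$ solves the original equation.
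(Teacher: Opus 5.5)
Your proof is correct and follows the paper's argument: a local solution from Theorem \ref{thm.PL} shows the family of solutions is nonempty, Proposition \ref{prop.uniqueness} makes any two solutions agree on their common domain, and $y_\infty$ is obtained by gluing over the union of domains, whose right endpoint is $T_\infty$. Your time-freezing $\tilde f$ adds a little rigor that the paper skips. The paper simply feeds the Lipschitz constant on $[a,T_0]$ into Theorem \ref{thm.PL}, which is harmless because that proof only uses $c_T$ for $T<T_0$. Like the paper, you are tacitly assuming $f$ is continuous: the statement does not list this, but Theorem \ref{thm.PL} and the finiteness of $M$ both need it.
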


\begin{proof}
Let $T_0 > a$ be arbitrary.
Since $x_0 \coloneqq c(a) \in \cU$ and $f$ is locally Lipschitz in space, locally uniformly in time, there exist $r > 0$ and $c_{[a,T_0],r} < \infty$ such that $C_r(x_0) \subseteq \cU$ and
\[
\sup_{a \leq t \leq T_0} \norm{f(t,x) - f(t,y)}_{\cX} \leq c_{[a,T_0],r}\norm{x-y}_{\cX} \qquad (x,y \in C_r(x_0)).
\]
Let $M \coloneqq \sup\left\{ \norm{f(t,x)}_{\cX} : (t,x) \in [a,T_0] \times C_r(x_0)\right\} < \infty$.
Since $c(a) = x_0$ and $c$ is continuous, there exists a $T > a$ such that $c(t) \in C_{r/2}(x_0)$ for all $t \in [a,T]$.
Assuming, in addition, that $T < \min\left\{T_0,a+ r/(2M)\right\}$, we obtain from Theorem \ref{thm.PL} that there exists a continuous function $y \colon [a,T] \to C_r(x_0) \subseteq \cU$ such that \eqref{eq.IE} holds.

Next, define $\cS$ to be the set of all continuous functions $y \colon [a,T) \to \cU$ such that $T \in (a,\infty]$ and $y = c + \int_a^{\boldsymbol{\cdot}} f(t,y(t))\,\d t$ on $[a,T)$.
By the previous paragraph, $\cS \neq \emptyset$.
Let $I \coloneqq \bigcup_{y \in \cS} \operatorname{dom} \left(y\right) \subseteq [a,\infty)$, where $\operatorname{dom} \left(y\right)$ denotes the domain of $y$.
Since $\operatorname{dom}\left(y\right)$ is a left-closed, right-open interval with left endpoint $a$ for every $y \in \cS$, the set $I$ is also a left-closed, right-open interval with left endpoint $a$.
Write $T_{\infty} \in (a,\infty]$ for the right endpoint of $I$ so that $I = [a,T_{\infty})$.
By Proposition \ref{prop.uniqueness}, if $y_1,y_2 \in \cS$, then $y_1 \equiv y_2$ on $\operatorname{dom}\left(y_1\right) \cap \operatorname{dom}\left(y_2\right)$.
Consequently, there exists a well-defined continuous function $y_{\infty} \colon I = [a,T_{\infty}) \to \cU$ such that for every $y \in \cS$, $y_{\infty}|_{\operatorname{dom}\left(y\right)} = y$.
It is easy to check from these definitions of $T_{\infty}$ and $y_{\infty}$ that \ref{item.soln} holds, and \ref{item.max} holds by construction.
\end{proof}


If $f$ has the stronger Lipschitz property in Theorem \ref{thm.IE}, then the lifetime of the solution can be characterized in terms of blow-up.

\begin{theorem}\label{thm.blowup}
Retain the setup of Theorem \ref{thm.IE}.
Suppose $c \colon [a,\infty) \to \cX$ is continuous, and let $y_{\infty} \colon [a,T_{\infty}) \to \cX$ be the maximal solution to $y = c + \int_a^{\boldsymbol{\cdot}} f(t,y(t))\,\d t$ provided by Theorem \ref{thm.existuniq}.
If $T_{\infty} < \infty$, then $\norm{y_{\infty}(t)}_{\cX} \to \infty$ as $t \nearrow T_{\infty}$.
Therefore, in this case, $T_{\infty}$ is also called the \textbf{blow-up time} of $y = c + \int_a^{\boldsymbol{\cdot}} f(t,y(t))\,\d t$.
\end{theorem}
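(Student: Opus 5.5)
We argue by contradiction. Assume $T_{\infty} < \infty$ but $\norm{y_{\infty}(t)}_{\cX} \not\to \infty$ as $t \nearrow T_{\infty}$. Then there exist $R_0 < \infty$ and a sequence $t_n \nearrow T_{\infty}$ with $\norm{y_{\infty}(t_n)}_{\cX} \leq R_0$ for all $n$. The plan is to restart the integral equation at a time $t_n$ close to $T_{\infty}$. Using Theorem \ref{thm.PL} on a ball of radius independent of $n$, we get a solution on an interval of length independent of $n$. For large $n$ this interval goes past $T_{\infty}$, contradicting maximality (Theorem \ref{thm.existuniq}\ref{item.max}).

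\textbf{Restarting the equation.} For $t \geq t_n$ the equation reads
\[
y(t) = c_n(t) + \int_{t_n}^t f(s,y(s))\,\d s, \qquad c_n(t) \coloneqq c(t) - c(t_n) + y_{\infty}(t_n).
\]
Note that $c_n(t_n) = y_{\infty}(t_n)$. Let $T_0 \coloneqq T_{\infty}+1$ and $R \coloneqq 2R_0 + 2$.

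\textbf{Step 1: uniform bound on $f$.} The hypothesis of Theorem \ref{thm.IE} gives a Lipschitz constant $c_{T_0,R}$ on $C_R(x_0)$, uniformly for $t\in[a,T_0]$. Combined with continuity of $f$, this bounds $f$ on $[a,T_0]\times$ any ball of radius $\le R$ around points of norm $\le R_0$:
\[
\norm{f(t,x)}_{\cX} \leq \sup_{s\in[a,T_0]}\norm{f(s,x_0)}_{\cX} + c_{T_0,R}\norm{x-x_0}_{\cX} \eqqcolon M .
\]
Here $x_0$ is the reference center from the hypothesis, and $M$ is finite by compactness of $[a,T_0]$. I will take the hypothesis to mean that for each $R$ there is a Lipschitz constant on the $R$-ball, and enlarge $R$ as needed so that this ball contains the relevant points. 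Set $r \coloneqq 1$. Then for every $n$, the map $f$ is Lipschitz on $[a,T_0]\times C_r(y_{\infty}(t_n))$ with constant $c_{T_0,R}$ and is bounded there by $M$, both independent of $n$.

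\textbf{Step 2: local solution of uniform length.} Since $c$ is uniformly continuous on $[a,T_0]$, there exists $\delta>0$, independent of $n$, such that $\norm{c(t)-c(t_n)}_{\cX}\le r/2$ whenever $|t-t_n|\le\delta$. Hence $c_n$ maps $[t_n,t_n+\delta]$ into $C_{r/2}(y_{\infty}(t_n))$. Choose $h \coloneqq \min\{\delta,\, r/(4M),\, 1/2\}$. Theorem \ref{thm.PL}, applied with initial time $t_n$ and center $y_{\infty}(t_n)$, gives a continuous solution $z_n$ on $[t_n,t_n+h]$. Now pick $n$ with $t_n > T_{\infty}-h$.

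\textbf{Step 3: gluing.} Define $\tilde y$ to equal $y_{\infty}$ on $[a,t_n]$ and $z_n$ on $[t_n,t_n+h]$. This function is continuous, since $z_n(t_n)=y_{\infty}(t_n)$. Splitting the integral at $t_n$ shows that it solves the original equation on $[a,t_n+h)$. But $t_n+h > T_{\infty}$, contradicting Theorem \ref{thm.existuniq}\ref{item.max}. Hence $\norm{y_{\infty}(t)}_{\cX}\to\infty$.

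The main obstacle is Step 1: obtaining constants independent of $n$. This is where the global-in-ball Lipschitz hypothesis of Theorem \ref{thm.IE} is needed, rather than just the local Lipschitz condition of Theorem \ref{thm.existuniq}, which would not give a uniform existence time. Once $M$ and $\delta$ are uniform in $n$, the rest is routine.
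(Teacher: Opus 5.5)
Your proposal is correct and follows the same strategy as the paper: you restart the equation at times $t_n \nearrow T_{\infty}$ where $\|y_{\infty}(t_n)\|$ stays bounded, and use Theorem~\ref{thm.PL} with an existence time independent of $n$ to get a solution beyond $T_{\infty}$. The differences are minor: the paper centers the ball at $0$ with radius $2(2L+R)$, using only a bound $L$ on $\|c\|$ rather than uniform continuity of $c$, and glues at $T_{\infty}$ via Proposition~\ref{prop.uniqueness}; your gluing at $t_n$ needs no uniqueness, and your Step 1 makes explicit why $M<\infty$ in infinite dimensions, which the paper leaves implicit.
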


\begin{proof}
Suppose $a < T_0 < \infty$ and $y_0 \colon [a,T_0) \to \cX$ satisfies $y_0 = c + \int_a^{\boldsymbol{\cdot}} f(t,y_0(t))\,\d t$ on $[a,T_0)$.
We claim that if $\norm{y_0(t)}_{\cX} \not\to \infty$ as $t \nearrow T_0$, then there exists a $T > T_0$ and a continuous function $y \colon [a,T] \to \cX$ extending $y_0$ and satisfying $y = c + \int_a^{\boldsymbol{\cdot}} f(t,y(t))\,\d t$ on $[a,T]$.
If we prove this claim, then we are done.

To prove the claim, note that if $\norm{y_0(t)}_{\cX} \not\to \infty$ as $t \nearrow T_0$, then there exists a sequence $(t_n)_{n \in \N}$ in $[a,T_0)$ such that $t_n \nearrow T_0$ as $n \to \infty$ and $R \coloneqq \sup\left\{\norm{y(t_n)}_{\cX} : n \in \N\right\} < \infty$.
Now, set
\[
L \coloneqq \sup_{t_1 \leq t \leq T_0+1}\norm{c(t)}_{\cX} < \infty, \; r \coloneqq 2(2L+R), \; \text{ and } \; M \coloneqq \sup_{(t,x) \in [a,T_0+1] \times C_r}\norm{f(x)}_{\cX} < \infty,
\]
where $C_r = C_r(0)$.
Also, choose an $n \in \N$ such that $t_n + r/(2M) > T_0$ and a $T$ such that $T_0 < T < \min\left\{T_0+1,t_n+r/(2M)\right\}$.
Finally, if
\[
c_n(t) \coloneqq c(t) + \int_a^{t_n} f(t,y_0(t))\,\d t = c(t) - c(t_n) + y(t_n) \qquad (a \leq t < \infty),
\]
then
\[
\sup_{t_n \leq t \leq T_0 + 1} \norm{c_n(t)}_{\cX}  \leq 2 \sup_{t_1 \leq t \leq T_0+1} \norm{c(t)}_{\cX} + \sup_{m \in \N} \norm{y(t_m)}_{\cX} = 2L+R = \frac{r}{2}.
\]
Consequently, Theorem \ref{thm.PL} produces a continuous function $y_n \colon [t_n,T] \to C_r \subseteq \cX$ such that
\[
y_n = c_n + \int_{t_n}^{\boldsymbol{\cdot}} f(t, y_n(t))\,\d t \; \text{ on } \; [t_n,T].
\]
We now paste $y_n$ and $y_0$ together to obtain the desired $y$.
First, note that if $t_n \leq t < T_0$, then
\begin{align*}
    y_0(t) & = c(t) + \int_a^t f(s,y_0(s))\,\d s \\
    & = c(t) + \int_a^{t_n} f(s,y_0(s))\,\d s + \int_{t_n}^t f(s,y_0(s))\,\d s \\
    & = c_n(t) + \int_{t_n}^t f(s,y_0(s))\,\d s.
\end{align*}
Therefore, by Proposition \ref{prop.uniqueness}, $y_0 \equiv y_n$ on $[t_n,T_0)$.
Consequently,
\[
y(t) \coloneqq \begin{cases}
    y_0(t) & \text{if } a \leq t < T_0, \\
    y_n(t) & \text{if } T_0 \leq t \leq T,
\end{cases}
\]
unambiguously defines a continuous function from $[a,T]$ to $\cX$.
By construction, $y$ satisfies the equation $y = c + \int_a^{\boldsymbol{\cdot}} f(t,y(t))\,\d t$ on $[a,T_0)$.
If $T_0 \leq t \leq T$, then
\begin{align*}
    y(t) & = y_n(t) = c_n(t) + \int_{t_n}^t f(s,y_n(s))\,\d s \\
    & = c(t) + \int_a^{t_n} f(s,y_0(s))\,\d s + \int_{t_n}^t f(s,y_n(s))\,\d s \\
    & = c(t) + \int_a^{t_n} f(s,y(s))\,\d s + \int_{t_n}^t f(s,y(s))\,\d s \\
    & = c(t) + \int_a^t f(s,y(s))\,\d s.
\end{align*}
Thus, $y$ satisfies $y = c + \int_a^{\boldsymbol{\cdot}} f(t,y(t))\,\d t$ on all of $[a,T]$, as desired.
\end{proof}

\begin{proof}[Proof of Theorem \ref{thm.IE}]
Combine Theorems \ref{thm.existuniq} and \ref{thm.blowup}.
\end{proof}

\subsection{Existence and uniqueness for the stochastic differential equation}\label{sec.subSDE}

Let $H$, $\cB$, $(c_e)_{e \in \cB}$, and $\W$ be as they were at the beginning of Section \ref{sec.NCfunc}.
The purpose of this section is to study an integral equation, interpreted as a kind of abstract stochastic differential equation (SDE), in our space of noncommutative $C^k$ functions.
To prepare for the main result, we introduce some extra terminology and notation.

\begin{definition}\label{def.CkLip}
Let $k \in \N_0$ and $\mathbf{f} \in \cC_1^k(\cB)$.
We say that $\mathbf{f}$ is \textbf{$\boldsymbol{\cC^k}$-Lipschitz on bounded sets} if for every $R > 0$, there exists a $C < \infty$ such that for all $k_0,\ldots,k_n \in \N_0$ with $j \coloneqq k_0+\cdots+k_n \leq k$, $(\cA,\tau) \in \W$, and $\mathbf{x},\mathbf{y} \in \cA_{c,\sa}^{\cB,j+1}$ with $\max\big\{ \norm{\mathbf{x}}_{\cA,c,j+1},\norm{\mathbf{y}}_{\cA,c,j+1}\big\} \leq R$,
Write
\[
\norm{\widetilde{\partial}^{(j_n)}\circ\cdots\circ \widetilde{\partial}^{(j_1)}\circ \partial^{(j_0)}f^{\cA}(\mathbf{x}) - \widetilde{\partial}^{(j_n)}\circ\cdots\circ \widetilde{\partial}^{(j_1)}\circ \partial^{(j_0)}f^{\cA}(\mathbf{y})}_{\cA^{j,j+1}} \leq C\norm{\mathbf{x}-\mathbf{y}}_{\cA,c,j+1}.
\]
In this case, the smallest such constant $C$ is denoted by $[\mathbf{f}]_{\operatorname{Lip},k,R}$.
If $C$ may be chosen independent of $R$, then $\mathbf{f}$ is said to be (\textbf{globally}) \textbf{$\boldsymbol{\cC^k}$-Lipschitz}, and the smallest constant $C$ (that works for all $R>0$) is denoted by $[\mathbf{f}]_{\operatorname{Lip},k}$.
We take $[\mathbf{f}]_{\operatorname{Lip},k} = \infty$ (or $[\mathbf{f}]_{\operatorname{Lip},k,R} = \infty$ for some $R > 0$) to mean $\mathbf{f}$ is not $\cC^k$-Lipschitz (on bounded sets).
\end{definition}

If $P \in \Tr^1(\cB)$, $j_0,\ldots,j_n$ are such that $j_0+\cdots+j_n = k$, $(\cA,\tau) \in \W$, and $\mathbf{x},\mathbf{y} \in \cA_{\sa}^{\cB,k+1}$, then, similarly to the proof of Proposition \ref{prop.nablaXiinterp}, one can compute
\[
\frac{\d}{\d t}\Big|_{t=0} \big(\widetilde{\partial}^{(j_n)}\circ\cdots\circ \widetilde{\partial}^{(j_1)}\circ \partial^{(j_0)}P\big)^{\cA}(\mathbf{x} + t\mathbf{y})
\]
in terms of the functions
\[
\big(\widetilde{\partial}^{(i_m)}\circ\cdots\circ \widetilde{\partial}^{(i_1)}\circ \partial^{(i_0)}P\big)^{\cA} \qquad (i_0+\cdots+i_m = k+1).
\]
A consequence of this computation is that there exists a universal constant $C_k$ such that
\[
\norm{\frac{\d}{\d t}\Big|_{t=0} \big(\widetilde{\partial}^{(j_n)}\circ\cdots\circ \widetilde{\partial}^{(j_1)}\circ \partial^{(j_0)}P\big)^{\cA}(\mathbf{x} + t\mathbf{y})}_{\cA^{k,k+1}} \leq C_k\norm{P}_{k+1,c,R}\norm{\mathbf{y}}_{\cA,c,k+1},
\]
where $R \coloneqq \norm{\mathbf{x}}_{\cA,c,k+1}$
By a limiting argument, the same statements are true with an arbitrary $\mathbf{f} \in \cC_1^k(\cB)$ in place of $P$.
As a result, if $\mathbf{f} \in \cC_1^{k+1}(\cB)$, $j_0,\ldots,j_n$ are such that $j \coloneqq j_0+\cdots+j_n \leq k$, $R > 0$, and $\mathbf{x},\mathbf{y} \in \cA_{c,\sa}^{\cB,j+1}$ are such that $\max\big\{\norm{\mathbf{x}}_{\cA,c,j+1},\norm{\mathbf{y}}_{\cA,c,j+1}\big\} \leq R$, then
\begin{align*}  
        \Big\|\widetilde{\partial}^{(j_n)}\circ\cdots\circ \widetilde{\partial}^{(j_1)} &\circ\partial^{(j_0)}f^{\cA}(\mathbf{x}) - \widetilde{\partial}^{(j_n)}\circ\cdots\circ \widetilde{\partial}^{(j_1)}\circ \partial^{(j_0)}f^{\cA}(\mathbf{y})\Big\|_{\cA^{j,j+1}} \\
        &= \norm{\int_0^1 \frac{\d}{\d t} \widetilde{\partial}^{(j_n)}\circ\cdots\circ \widetilde{\partial}^{(j_1)}\circ \partial^{(j_0)}f^{\cA}(t\mathbf{x}+(1-t)\mathbf{y})\, \d t}_{\cA^{j,j+1}} \\
        & \leq \int_0^1 \norm{\frac{\d}{\d t} \widetilde{\partial}^{(j_n)}\circ\cdots\circ \widetilde{\partial}^{(j_1)}\circ \partial^{(j_0)}f^{\cA}(t\mathbf{x}+(1-t)\mathbf{y})}_{\cA^{j,j+1}}\, \d t \\
        & \leq C_j\norm{\mathbf{f}}_{j+1,c,R}\norm{\mathbf{x} - \mathbf{y}}_{\cA,c,j+1}.
\end{align*}
In particular, if $\mathbf{f} \in \cC_1^{k+1}(\cB)$, then $\mathbf{f}$ is $\cC^k$-Lipschitz on bounded sets with
\begin{equation}
    [\mathbf{f}]_{\operatorname{Lip},k,R} \lesssim \max_{n=1,\ldots,k+1}\norm{\mathbf{f}}_{n,c,R} \qquad (R > 0).\label{eq.Lipestim}
\end{equation}
Consequently, if, in addition, $\norm{\mathbf{f}}_{\cC^n,\infty} \coloneqq \sup\big\{ \norm{\mathbf{f}}_{\cC^n,R} : R > 0 \big\} < \infty$ for all $n=1,\ldots,k+1$, then $\mathbf{f}$ is globally $\cC^k$-Lipschitz with $[\mathbf{f}]_{\operatorname{Lip},k} \lesssim \max\big\{\norm{\mathbf{f}}_{n,c,\infty} : n=1,\ldots,k+1\big\}$.

We are now prepared for the main result of this section.

\begin{theorem}\label{thm.keySDE}
Let $k \in \N_0$, and suppose $\mathbf{J} = (\mathbf{J}_1,\ldots,\mathbf{J}_d) \colon \R_+ \to \cC^{k+1}(X_1,\ldots,X_d)_{\sa}^d$ is a continuous function satisfying
\begin{equation}
    \sum_{n=1}^{k+1}\sup_{0 \leq t \leq T}\norm{\mathbf{J}_i(t)}_{n,\infty} < \infty \qquad (R,T > 0, \; i=1,\ldots,d). \label{eq.SDEhyp}
\end{equation}
If $\mathbf{c} = (\mathbf{c}_1,\ldots,\mathbf{c}_d) \colon \R_+ \to \cC^k(\cB)_{\sa}^d$ is a continuous function, then there exists a unique continuous function $\mathbf{X} \colon \R_+ \to \cC^k(\cB)_{\sa}^d$ such that
\begin{equation}
    \mathbf{X}(t) = \mathbf{c}(t) + \int_0^t \mathbf{J}(s,\mathbf{X}(s))\,\d s \qquad (t \geq 0),\label{eq.theSDE}
\end{equation}
i.e., $\mathbf{X}_i(t) = \mathbf{c}_i(t) + \int_0^t \mathbf{J}_i(s)\circ \mathbf{X}(s)\,\d s$ for all $i=1,\ldots,d$ and $t \geq 0$.
\end{theorem}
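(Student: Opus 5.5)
We will obtain Theorem~\ref{thm.keySDE} from the abstract integral-equation theory of Section~\ref{sec.IE}, namely Theorem~\ref{thm.IE} together with the sublinear-growth remark after it. The difficulty is that $\cC^k(\cB)$ is a Fr\'echet space, not a Banach space. We therefore fix $R>0$ and work in a Banach space $\cX_R$: the completion of $\cC^k(\cB)^d$ (or its self-adjoint part) with respect to the single norm $\norm{\cdot}_{\cC^k,R}$, realized concretely inside the space $\cS$ of families of bounded continuous derivative-functions on the $R$-ball. On $[0,\infty)\times\cX_R$ we set $F(t,\mathbf{x}) \coloneqq \mathbf{J}(t)\circ\mathbf{x}$. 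Composition makes sense here because the chain rule (Theorem~\ref{chainrule}) and its estimates only involve values of $\mathbf{x}$ and its derivatives on the $R$-ball, together with norms of $\mathbf{J}(t)$ on the ball of radius $S=\norm{\mathbf{x}}_{0,c,R}$. Hypothesis \eqref{eq.SDEhyp} controls the latter uniformly in $S$, since it involves $\norm{\cdot}_{n,\infty}$.

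The key steps are as follows.

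\textbf{Step 1 (local Lipschitz bound).} Corollary~\ref{cor:complip} gives, for $\mathbf{f},\mathbf{g}$ with $\norm{\cdot}_{\cC^k,R}\le \rho$,
\[
\norm{\mathbf{J}(t)\circ\mathbf{f}-\mathbf{J}(t)\circ\mathbf{g}}_{\cC^k,R}\le C_k(\rho+1)^k\sum_{j=1}^{k+1}\norm{\mathbf{J}(t)}_{j,\infty}\norm{\mathbf{f}-\mathbf{g}}_{\cC^k,R}.
\]
By \eqref{eq.SDEhyp} this is uniform over $t\le T$. This is exactly the hypothesis of Theorem~\ref{thm.IE}.

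\textbf{Step 2 (continuity in $t$).} Continuity of $t\mapsto F(t,\mathbf{x})$ comes from continuity of $\mathbf{J}$ into $\cC^{k+1}$ combined with the chain-rule estimates.

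\textbf{Step 3 (no blow-up).} Corollary~\ref{cor.Compbd} gives
\[
\norm{\mathbf{J}(t)\circ \mathbf{x}}_{n,c,R}\lesssim \norm{\mathbf{J}}_{1,\infty}\norm{\mathbf{x}}_{n,c,R}+(\norm{\mathbf{x}}_{\cC^{n-1},R}^n+\norm{\mathbf{x}}^2_{\cC^{n-1},R})\max_{j\ge2}\norm{\mathbf{J}}_{j,\infty}.
\]
The $0$-th order term grows at most linearly because $\norm{\mathbf{J}}_{1,\infty}<\infty$. This growth is not globally sublinear, so instead we run Gr\"onwall inductively on $n$. The order-$0$ and order-$1$ norms satisfy linear inequalities, so they stay bounded on $[0,T]$. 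Once $\norm{\mathbf{x}(s)}_{\cC^{n-1},R}$ is known to be bounded on $[0,T]$, the order-$n$ norm satisfies a linear inequality in itself, so it is bounded too. Hence $\norm{\mathbf{X}(t)}_{\cC^k,R}$ cannot tend to infinity in finite time, and the blow-up alternative of Theorem~\ref{thm.IE} forces $T_\infty=\infty$.

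\textbf{Step 4 (gluing over $R$).} Theorem~\ref{thm.IE} gives, for each $R$, a global solution $\mathbf{X}^R$ in $\cX_R$. For $R'>R$, restricting $\mathbf{X}^{R'}$ to the $R$-ball gives a solution in $\cX_R$, so uniqueness (Proposition~\ref{prop.uniqueness}) shows it equals $\mathbf{X}^R$. The solutions are therefore compatible and assemble into a continuous $\mathbf{X}\colon\R_+\to\cC^k(\cB)^d$. Self-adjointness is preserved because the Picard iterates (Theorem~\ref{thm.PL}) stay in the closed self-adjoint subspace. Uniqueness in $\cC^k(\cB)^d$ follows from uniqueness in each $\cX_R$.

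Two points need care, and Step 4 is the main obstacle. First, an element of $\cX_R$ is not a priori the restriction of an element of $\cC^k(\cB)$. We must show that the glued family is a genuine limit of trace polynomials in every seminorm at once. To do this we will use the Picard iteration directly in the Fr\'echet space: $\mathbf{J}(t)\circ\mathbf{x}$ lies in $\cC^k(\cB)$ by Theorem~\ref{chainrule}, and the Riemann integrals converge in each seminorm. The iterates thus lie in the complete space $\cC^k(\cB)$, and they converge in every $\norm{\cdot}_{\cC^k,R}$ on each $[0,T]$ by the estimate $c_T^n T^n/n!$. The limit is therefore in $\cC^k(\cB)$, and its restriction to the $R$-ball coincides with the $\cX_R$ solution.

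Second, the Picard step only converges on a short interval whose length depends on the ball radius $\rho$. The a priori bounds from Step 3 fix that radius uniformly on $[0,T]$, so we can restart the iteration repeatedly and cover $[0,T]$.
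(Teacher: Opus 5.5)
Your Steps 1--4 follow the paper's proof. Both use Banach completions for fixed $R$, the local Lipschitz bound of Corollary~\ref{cor:complip}, Theorem~\ref{thm.IE}, the inductive Gr\"onwall argument from \eqref{eq.compbound0} and Corollary~\ref{cor.Compbd} to rule out blow-up, and compatibility of the $\mathbf{X}^R$ via uniqueness. The problem is the device you propose for what you call the main obstacle.

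The estimate $c_T^nT^n/n!$ on all of $[0,T]$ needs a Lipschitz constant valid along every Picard iterate. Corollary~\ref{cor:complip} only gives one with the factor $(S+1)^k$, where $S$ is the $\norm{\cdot}_{\cC^k,R}$-size of the iterates. Your Step~3 bounds the solution, not the iterates.

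Your fallback of restarting on short intervals does not transfer to the Fr\'echet space. The step length that Theorem~\ref{thm.PL} provides in $\cX_R$ depends on $R$: it depends on the $\norm{\cdot}_{\cC^k,R}$-size of the current value and on the resulting bound for $\mathbf{J}(t)\circ\mathbf{x}$ on the ball, both of which grow with $R$. So no single step works for all seminorms at once. Restarting separately for each $R$ would require the restart value $\mathbf{X}(t_j)$ to already lie in $\cC^k(\cB)$, not merely in $\cX_R$. That is exactly what this step was supposed to prove, so the argument is circular.

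The obstacle is milder than you think, and the paper removes it directly. A compatible family $(\mathbf{f}_R)_{R>0}$ with $\rho_{R,S}(\mathbf{f}_R)=\mathbf{f}_S$ comes from a unique element of $\cC^k(\cB)$. To see this:
\begin{itemize}
\item Pick trace polynomials $P_n$ with $\norm{P_n-\mathbf{f}_n}_{\cC^k,n}<1/n$.
\item The seminorms increase with $R$ and the restriction maps are contractive, so $\norm{P_m-P_l}_{\cC^k,R}\le 1/m+1/l$ for $m,l\ge R$.
\item Hence $(P_n)$ is Cauchy in the Fr\'echet topology, and its limit restricts to every $\mathbf{f}_R$.
\end{itemize}
Apply this to $(\mathbf{X}^R(t))_{R>0}$ for each $t$.

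Alternatively, your Fr\'echet-space Picard idea can be repaired. The bound in Corollary~\ref{cor.Compbd} is linear in the top-order seminorm. So the inductive argument of your Step~3, with Gr\"onwall replaced by induction on the iteration index, gives $\sup_m\sup_{t\le T}\norm{y_m(t)}_{\cC^k,R}<\infty$ for each $R$. Here the iterates are $y_{m+1}(t)=\mathbf{c}(t)+\int_0^t\mathbf{J}(s)\circ y_m(s)\,\d s$. Then the $c^nT^n/n!$ estimate holds on all of $[0,T]$ with no restarting. This route even makes the spaces $\cX_R$ and the blow-up alternative unnecessary. Either repair closes the gap, but as written the step does not go through.
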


\begin{proof}
We begin with the uniqueness part.
Observe from Corollary \ref{cor:complip} that there exists a constant $C_k < \infty$ such that for all $\mathbf{K} \in \cC_1^{k+1}(X_1,\ldots,X_d)$, $R > 0$, and $\mathbf{f},\mathbf{g} \in \cC^k(\cB)_{\sa}^d$,
\begin{equation}
    \norm{\mathbf{K} \circ \mathbf{f} - \mathbf{K} \circ \mathbf{g}}_{\cC^k,R} \leq C_k(S+1)^k\norm{\mathbf{K}}_{\cC^{k+1},S}\norm{\mathbf{f} - \mathbf{g}}_{\cC^k,R},\label{eq.compLip}
\end{equation}
where $S \coloneqq \max\big\{ \norm{\mathbf{f}}_{\cC^k,R},\norm{\mathbf{g}}_{\cC^k,R} \big\}$.
Now, suppose $\mathbf{Y},\mathbf{X} \colon \R_+ \to \cC^k(\cB)_{\sa}^d$ both solve \eqref{eq.theSDE}, let $T,R > 0$, and define
\[
S_T \coloneqq \sup_{0 \leq t \leq T} \max\big\{\norm{\mathbf{X}(t)}_{\cC^k,R},\norm{\mathbf{Y}(t)}_{\cC^k,R}\big\} < \infty.
\]
By \eqref{eq.compLip}, if $0 \leq t \leq T$, then
\begin{align*}
    \|\mathbf{X}(t ) - \mathbf{Y}(t)\|_{\cC^k,R}& = \norm{\int_0^t \left(\mathbf{J}(s,\mathbf{X}(s)) - \mathbf{J}(s,\mathbf{Y}(s))\right)\d s}_{\cC^k,R}\\
     & \leq \int_0^t \norm{\mathbf{J}(s,\mathbf{X}(s)) - \mathbf{J}(s,\mathbf{Y}(s))}_{\cC^k,R} \,\d s \\
     & = \int_0^t \norm{\mathbf{J}(s) \circ \mathbf{X}(s) - \mathbf{J}(s)\circ \mathbf{Y}(s)}_{\cC^k,R} \,\d s \\
    & \leq C_k(S_T+1)^k\sup_{0 \leq s \leq T}\norm{\mathbf{J}(s)}_{\cC^{k+1},S_T}\int_0^t\norm{\mathbf{X}(s) - \mathbf{Y}(s)}_{\cC^k,R}\,\d s.
\end{align*}
It therefore follows from Gr\"onwall's inequality that $\norm{\mathbf{X}(t) - \mathbf{Y}(t)}_{\cC^k,R} = 0$ for all $t \in [0,T]$.
Since $R,T>0$ were arbitrary, we conclude that $\mathbf{X} = \mathbf{Y}$, as desired.

Next, we tackle the existence of a solution.
For each $R > 0$, let $\cC^k(\cB)_R$ be the Banach-space completion of $\Tr^1(\cB)$ with respect to $\norm{\cdot}_{\cC^k,R}$.
Also, let $\cC^k(\cB)_{\sa,R}$ be the real-linear subspace of $\cC^k(\cB)_R$ such that $f^{\cA}(x) \in \cA_{\sa}$ for all $x \in \cA_{c,\sa}^{\cB}$ with $\norm{x}_{\cA,c} \leq R$.
(The function $f^{\cA}$ should be defined analogously to the ``$R=\infty$'' case.)
We note for later use that if $R \geq S > 0$, then there are natural continuous ``restriction'' maps $\rho_{R,S} \colon \cC^k(\cB)_{\sa,R}^d \to \cC^k(\cB)_{\sa,S}^d$ and $\rho_{\infty,R} \colon \cC^k(\cB)_{\sa}^d \to \cC^k(\cB)_{\sa,R}^d$.

Let $R > 0$.
By \eqref{eq.Lipestim}, if $\mathbf{K} \in \cC^{k+1}(X_1,\ldots,X_d)$, then we may sensibly define $\mathbf{K} \circ \mathbf{f} \in \cC^k(\cB)_R$ for all $\mathbf{f} \in \cC^k(\cB)_{\sa,R}^d$,\footnote{More precisely, if $(P_n(X_1,\ldots,X_d))_{n \in \N}$ is a sequence of trace polynomials converging in $\cC^k(X_1,\ldots,X_d)$ to $\mathbf{K}$ and $(Q_n(X)) = (Q_{1,n}(X),\ldots,Q_{d,n}(X))_{n \in \N}$ is a sequence of $d$-tuples of self-adjoint trace polynomials converging in $\cC^k(\cB)_{\sa,R}^d$ to $\mathbf{f}$, then $(P_n(Q_n(X)))_{n \in \N}$ converges in $\cC^k(\cB)_R$ to a limit, $\mathbf{K} \circ \mathbf{f}$, independent of the choice of approximating sequences.}
and $\mathbf{K} \circ \mathbf{f} \in \cC^k(\cB)_{\sa,R}$ whenever $\mathbf{K} \in \cC^{k+1}(X_1,\ldots,X_d)_{\sa}$.
Consequently, it is easy to see the function
\[
\R_+ \times \cC^k(\cB)_{\sa,R}^d \ni (t,\mathbf{f}) \mapsto \mathbf{J}_R(t,\mathbf{f}) \coloneqq \mathbf{J}(t) \circ \mathbf{f} \in \cC^k(\cB)_{\sa,R}^d
\]
is well defined and continuous.
Furthermore, if $S > 0$ and $\mathbf{f},\mathbf{g} \in \cC^k(\cB)_{\sa,R}^d$ are such that
\[
\max\big\{\norm{\mathbf{f}}_{\cC^k,R},\norm{\mathbf{g}}_{\cC^k,R}\big\} \leq S,
\]
then \eqref{eq.Lipestim} yields
\[
\sup_{0 \leq t \leq T}\norm{\mathbf{J}_R(t, \mathbf{f}) - \mathbf{J}_R(t,\mathbf{g})}_{\cC^k,R} \leq C_k(S+1)^k\sup_{0 \leq t \leq T}\norm{\mathbf{J}(t)}_{\cC^{k+1},S}\norm{\mathbf{f} - \mathbf{g}}_{\cC^k,R}.
\]
Finally, if $\mathbf{c}_R \coloneqq \rho_{\infty,R} \circ \mathbf{c}$, then $\mathbf{c}_R \colon \R_+ \to \cC^k(\cB)_{\sa,R}^d$ is continuous.
Thus, the triple
\[
(\cX,f,c) \coloneqq \big(\cC^k(\cB)_{\sa,R}^d, \mathbf{J}_R,\mathbf{c}_R\big)
\]
satisfies the hypotheses of Theorem \ref{thm.IE}.

By Theorem \ref{thm.IE}, there exists a unique maximal solution $\mathbf{X}_R \colon [0,T_{\ell}) \to \cC^k(\cB)_{\sa,R}^d$ to the integral equation $\mathbf{X}_R = \mathbf{c}_R + \into  \mathbf{J}_R(t,\mathbf{X}_R(t))\,\d t$.
We claim that $T_{\ell} = \infty$.
We shall prove this using the blow-up criterion in Theorem \ref{thm.IE}.
Specifically, we shall prove that if $0 < T \leq T_{\ell}$ and $T < \infty$, then $\sup \big\{ \norm{\mathbf{X}_R(t)}_{\cC^k,R} : 0 \leq t < T  \big\} < \infty$, which will establish the claim.
To this end, observe from Corollary \ref{cor.Compbd} that there exist constants $A_1,\ldots,A_k < \infty$ such that for all $\mathbf{K} \in \cC_1^{k+1}(X_1,\ldots,X_d)$, $n=1,\ldots,k$, $R> 0$, and $\mathbf{f} \in \cC^k(\cB)_{\sa,R}^d$,
\begin{equation}
    \norm{\mathbf{K} \circ \mathbf{f}}_{n,c,R} \leq A_n\left( \norm{\mathbf{K}}_{1,\infty} \norm{\mathbf{f}}_{n,c,R} + \Big( \norm{\mathbf{f}}_{\cC^{n-1},R}^n+\norm{\mathbf{f}}_{\cC^{n-1},R}^2\Big)\times \max_{j=2,\ldots,n}\norm{\mathbf{K}}_{j,\infty}  \right).\label{eq.compboundn}
\end{equation}
In addition, observe from \eqref{eq.Lipestim} that if $\mathbf{K} \in \cC_1^1(X_1,\ldots,X_d)$, $\mathbf{f} \in \cC^0(\cB)_{\sa}$, and $R > 0$, then
\begin{equation}
    \norm{\mathbf{K} \circ \mathbf{f}}_{0,c,R} \leq \norm{\mathbf{K}}_{1,\infty}\norm{\mathbf{f}}_{0,c,R} + \norm{\mathbf{K} \circ \mathbf{0}}_{0,c,\infty} < \infty.\label{eq.compbound0}
\end{equation}
Now, by \eqref{eq.compbound0}, if $0 \leq t < T$, then
\begin{align*}
    \norm{\mathbf{X}_R(t)}_{0,c,R} & = \norm{\mathbf{c}_R(t) + \int_0^t \mathbf{J}_R(s,\mathbf{X}_R(s)) \,\d s}_{0,c,R} \\
    & \leq \sup_{0 \leq t \leq T} \norm{\mathbf{c}(t)}_{0,c,R} + \int_0^t \norm{\mathbf{J}_R(s,\mathbf{X}_R(s))}_{0,c,R}\,\d s \\
    & \leq \sup_{0 \leq t \leq T} \norm{\mathbf{c}(t)}_{0,c,R} + \int_0^t \left( \norm{\mathbf{J}_R(s,\mathbf{0})}_{0,c,R}+ \norm{\mathbf{J}(s)}_{1,\infty} \norm{\mathbf{X}_R(s)}_{0,c,R}\right)\d s \\
    & \leq  \sup_{0 \leq t \leq T} \norm{\mathbf{c}(t)}_{0,c,R} + \sup_{0 \leq s \leq T}\norm{\mathbf{J}(s) \circ \mathbf{0}}_{0,c,R}T + \sup_{0 \leq s \leq T}\norm{\mathbf{J}(s)}_{1,\infty} \int_0^t \norm{\mathbf{X}_R(s)}_{0,c,R}\,\d s.
\end{align*}
Consequently, by Gr\"onwall's inequality,
\[
\norm{\mathbf{X}_R(t)}_{0,c,R} \leq\left(\sup_{0 \leq t \leq T} \norm{\mathbf{c}(t)}_{0,c,R} + \sup_{0 \leq s \leq T}\norm{\mathbf{J}(s) \circ \mathbf{0}}_{0,c,R}T\right)\exp\left(\sup_{0 \leq s \leq T}\norm{\mathbf{J}(s)}_{1,\infty}t\right)
\]
for all $t \in [0,T)$.
In particular, $\sup\big\{ \norm{\mathbf{X}_R(t)}_{0,c,R} : 0 \leq t < T\big\} < \infty$.
Now, suppose $n=1,\ldots,k$, and assume $\sup\big\{ \norm{\mathbf{X}_R(t)}_{j,c,R} : 0 \leq t < T, \; j=0,\ldots,n-1\big\} < \infty$.
In particular,
\[
M_0 \coloneqq \sup_{0 \leq t < T}\norm{\mathbf{X}_R(t)}_{\cC^{n-1},R} < \infty.
\]
Also, write $M_1(t) \coloneqq \max\big\{ \norm{\mathbf{J}(t)}_{j,\infty} : j=2,\ldots,n\big\}$.
By \eqref{eq.compboundn}, if $0 \leq t < T$, then
\begin{align*}
   \|\mathbf{X}_R (t)\|_{n,c,R} & = \norm{\mathbf{c}_R(t) + \int_0^t \mathbf{J}_R(s,\mathbf{X}_R(s)) \,\d s}_{n,c,R} \\
   & \leq \sup_{0 \leq s \leq T}\norm{\mathbf{c}_R(s)}_{n,c,R} + \int_0^t \norm{\mathbf{J}_R(s,\mathbf{X}_R(s))}_{n,c,R} \,\d s\\
    & \leq \sup_{0 \leq s \leq T}\norm{\mathbf{c}_R(s)}_{n,c,R} + A_n\int_0^t\left(\norm{\mathbf{J}(s)}_{1,\infty}\norm{\mathbf{X}_R(s)}_{n,c,R} \right. \\
    & \qquad\qquad\qquad\qquad\left. + \Big( \norm{\mathbf{X}_R(s)}_{\cC^{n-1},R}^n+\norm{\mathbf{X}_R(s)}_{\cC^{n-1},R}^2\Big)M_1(s) \right)\d s \\
    & \leq \sup_{0 \leq s \leq T}\norm{\mathbf{c}_R(s)}_{n,c,R} +A_n(M_0^n+M_0^2)\sup_{0 \leq s \leq T}M_1(s)\,T \\
    & \qquad\qquad \qquad\qquad+ A_n\sup_{0 \leq s \leq T}\norm{\mathbf{J}(s)}_{1,\infty}\int_0^t \norm{\mathbf{X}_R(s)}_{n,c,R}\,\d s.
\end{align*}
Consequently, by Gr\"onwall's inequality,
\begin{align*}
    \norm{\mathbf{X}_R(t)}_{n,c,R} & \leq \left( \sup_{0 \leq s \leq T}\norm{\mathbf{c}_R(s)}_{n,c,R} +A_n(M_0^n+M_0^2)\sup_{0 \leq s \leq T}M_1(s)\,T\right) \exp\left(A_n\sup_{0 \leq s \leq T}\norm{\mathbf{J}(s)}_{1,\infty}t\right)
\end{align*}
for all $t \in [0,T)$.
In particular, $\sup \big\{ \norm{\mathbf{X}_R(t)}_{n,c,R} : 0 \leq t < T\big\} < \infty$.
This establishes that
\[
\sup_{0 \leq t < T}\norm{\mathbf{X}_R(t)}_{\cC^n,R} < \infty,
\]
so we are done proving that $T_{\ell} = \infty$.

To complete the proof, we must glue the solutions $\left\{ \mathbf{X}_R : R > 0\right\}$ together.
Though a slight headache, the argument is routine, so we only sketch it briefly.
First, show that if $\mathbf{f}_R \in \cC^k(\cB)_{\sa,R}^d$ for all $R > 0$ and $\rho_{R,S}(\mathbf{f}_R) = \mathbf{f}_S$ for all $R \geq S > 0$, then there exists a unique $\mathbf{f} \in \cC^k(\cB)_{\sa}^d$ such that $\rho_{\infty,R}(\mathbf{f}) = \mathbf{f}_R$ for all $R > 0$;
furthermore, $\norm{\mathbf{f}}_{\cC^k,R} = \norm{\mathbf{f}_R}_{\cC^k,R}$ for all $R > 0$.
With this in mind, by applying $\rho_{R,S}$ to both sides of $\mathbf{X}_R = \mathbf{c}_R + \into \mathbf{J}_R(s,\mathbf{X}_R(s))\,\d s$, we obtain that $t \mapsto y(t) \coloneqq \rho_{R,S}(\mathbf{X}_R(t))$ solves $y = \mathbf{c}_S + \into \mathbf{J}_S(s,y(s))\,\d s$.
It then follows from the uniqueness of solutions in Theorem \ref{thm.IE} that $\rho_{R,S}(\mathbf{X}_R(t)) = \mathbf{X}_S(t)$ for all $t \geq 0$.
Consequently, for each $t \geq 0$, there exists a unique $\mathbf{X}(t) \in \cC^k(\cB)_{\sa}^d$ such that $\rho_{\infty,R}(\mathbf{X}(t)) = \mathbf{X}_R(t)$ for all $R > 0$.
Finally, check that $\mathbf{X} \colon \R_+ \to \cC^k(\cB)_{\sa}^d$ is continuous and satisfies $\mathbf{X} = \mathbf{c} + \into \mathbf{J}(t,\mathbf{X}(t))\,\d t$ on $\R_+$, which completes the proof.
\end{proof}

For our purposes, the most important example of a $\mathbf{c} \colon \R_+ \to \cC^k(\cB)_{\sa}^d$ in Theorem \ref{thm.keySDE} is one of the form $\mathbf{c} = \mathbf{f}_h = (\mathbf{f}_{h_1},\ldots,\mathbf{f}_{h_d})$, where $h = (h_1,\ldots,h_d) \colon \R_+ \to H_c^d$ a continuous function with the property that $\ip{e,h_i(t)} \in \R$ for all $i=1,\ldots,d$, $t \geq 0$, and $e \in \cB$.
(Please see Example \ref{ex.Sasfunc} for the $\mathbf{f}_{h_i}$ notation.)
A particular instance of this example also explains our use of the word ``stochastic'' in the title of this subsection.

The reader should note well that there is nothing stochastic about Theorem \ref{thm.keySDE}.
Indeed, it is an integral equation that we used the ODE-type result Theorem \ref{thm.IE} to prove.
However, in our applications of interest (Section \ref{sec.transport} and the next subsection), Theorem \ref{thm.keySDE} will give rise to (free) SDEs when evaluated on certain elements of certain tracial von Neumann algebras.
Specifically, suppose $H = L^2(\R_+)^d \oplus \C^d$ and $\cB = \cB_1 \sqcup \cdots \sqcup \cB_d$ is the orthonormal basis of $H$ defined as follows:
For each $i=1,\ldots,d$, let $\cB_i \coloneqq \cC_i \cup \{e_i\}$, where $\cC_i$ is the Haar basis from Definition \ref{def:HaarBasis} of the $i$th copy of $L^2(\R_+)$ in $H$ and $e_i$ is the $i$th standard basis element.
Also, let $c = (c_e)_{e \in \cB}$ be the weights those from Definition \ref{def:HaarBasis} on $(c_e)_{e \in \cC_i}$ and $c_{e_i} \coloneqq 1$ ($i=1,\ldots,d$).
Now, for each $i=1,\ldots,d$, let $h_i \colon \R_+ \to H_c$ be the path defined by
\[
h_i(t) \coloneqq (0,\ldots,0,\underbrace{1_{[0,t]}}_{\text{spot } i},0,\ldots,0,e_i) \in H_c \qquad (t \geq 0),
\]
and define $h \coloneqq (h_1,\ldots,h_d) \colon \R_+ \to H_c^d$.
Finally, let $\mathbf{X}$ be the solution to $\mathbf{X} = \mathbf{f}_h + \into \mathbf{J}(t,\mathbf{X}(t))\,\d t$ provided by Theorem \ref{thm.keySDE}.
If $(x_e)_{e \in \cC_1\cup \cdots \cup \cC_d}$ are i.i.d.\ $N \times N$ GUEs on the probability space $(\Om,\sF,P)$, $(x_{1_1},\ldots,x_{1_d}) \coloneqq (x_{0,1},\ldots,x_{0,d}) = x_0$ is any $\MnC_{\sa}^d$-valued random variable independent of $(x_e)_{e \in \cC_1\cup \cdots \cup \cC_d}$, and $(\cA,\tau) \coloneqq (\MnC,\tr_N)$, then for almost all $\om \in \Om$, $(x_e)_{e \in \cB} \in \MnC_{\sa,c}^{\cB}$, and the (a.s.-defined) stochastic process
\[
\R_+ \times \Om \ni (t,\om) \mapsto f_{h(t)}^{\MnC}\big((x_e(\om))_{e \in \cB}\big) = \underbrace{\begin{bmatrix}
    f_{1_{[0,t]}}^{\MnC}\big((x_e(\om))_{e \in \cC_1}\big) \\
    \vdots & \\
    f_{1_{[0,t]}}^{\MnC}\big((x_e(\om))_{e \in \cC_d}\big)
\end{bmatrix}}_{S^N(t)(\om)} + x_0 \in \MnC_{\sa}^d
\]
is an $\MnC_{\sa}^d$-valued Brownian motion starting at $x_0$.
(Note that on the right-hand side above, we are changing Hilbert spaces to just $L^2(\R_+)$ and thereby slightly abusing notation.)
Thus, the (a.s.-defined) stochastic process
\[
\R_+ \times \Om \ni (t,\om) \mapsto X^N(t)(\om) \coloneqq X^{\MnC}(t)((x_e(\om))_{e \in \cB}) = \begin{bmatrix}
    X_1^{\MnC}(t)((x_e(\om))_{e \in \cB}) \\
    \vdots & \\
    X_d^{\MnC}(t)((x_e(\om))_{e \in \cB})
\end{bmatrix} \in \MnC_{\sa}^d
\]
solves the $\MnC_{\sa}^d$-valued SDE
\[
\begin{cases}
    \d X^N(t) = \d S^N(t) + J^{\MnC}(t,X_n(t))\,\d t& \\
   \;\, X^N(0) = x_0, & 
\end{cases}
\]
i.e., $X^N = (X_1^N,\ldots,X_d^N)$ solves the coupled $\MnC_{\sa}$-valued SDEs
\[
\begin{cases}
    \d X_i^N(t) = \d S_i^N(t) + J_i^{\MnC}(t,X_n(t))\,\d t& \\
   \;\, X_i^N(0) = x_{0,i} & 
\end{cases} \qquad (i=1,\ldots,d).
\]
Similarly, if $(x_e)_{e \in \cC_1 \cup \cdots \cup \cC_d}$ are, instead, f.i.i.d.\ semicirculars in some tracial von Neumann algebra $(\cA,\tau)$, and $(x_{1_1},\ldots,x_{1_d}) \coloneqq (x_{0,1},\ldots,x_{0,d}) = x_0 \in \cA_{\sa}^d$ is an arbitrary $d$-tuple freely independent of $(x_e)_{e \in \cC_1 \cup \cdots \cup \cC_d}$, then the free stochastic process
\[
\R_+ \ni t \mapsto f_{h(t)}^{\cA}\big((x_e)_{e \in \cB}\big) = \underbrace{\begin{bmatrix}
    f_{1_{[0,t]}}^{\cA}\big((x_e)_{e \in \cC_1}\big) \\
    \vdots \\
    f_{1_{[0,t]}}^{\cA}\big((x_e)_{e \in \cC_d}\big)
\end{bmatrix}}_{S(t)} + x_0 \in \cA_{\sa}^d
\]
is a semicircular Brownian motion starting at $x_0$.
Thus, the free stochastic process
\[
\R_+ \ni t \mapsto X(t) \coloneqq X^{\cA}(t)((x_e)_{e \in \cB}) \in \cA_{\sa}^d
\]
solves the free SDE
\[
\begin{cases}
    \d X(t) = \d S(t) + J^{\cA}(t,X(t))\,\d t& \\
   \;\, X(0) = x_0, & 
\end{cases}
\]
i.e., $X \coloneqq (X_1,\ldots,X_d)$ solves the coupled free SDEs
\[
\begin{cases}
    \d X_i(t) = \d S_i(t) + J_i^{\cA}(t,X(t))\,\d t& \\
   \;\, X_i(0) = x_{0,i} & 
\end{cases} \qquad (i=1,\ldots,d).
\]
This is the reason we interpret $\mathbf{X} = \mathbf{f}_h + \into \mathbf{J}(t,\mathbf{X}(t))\,\d t$ as a stochastic differential equation.
In the next subsection, we establish important bounds on certain seminorms of the solution in a special case of interest.

\subsection{Estimate on the large-time asymptotics}

In this section we study the equation, 
\begin{equation}
    \label{skvjdnand}
    \bX_{t,p}(x,X) = x_p -\frac{1}{2} \int_0^t \bJ_p\circ\bX_s(x,X) ds + S_{t,p}(X),\quad 1\leq p\leq d,
\end{equation}
 where $(x,X)\in \cA^{\cB}_{c,\sa} $. In particular, $\cB = \cU \cup \cV$ with $\cU$ the orthonormal basis of $L^2(\R_+)^{\oplus d}$ defined with the Haar basis of $L^2(\R_+)$ from Definition \ref{def:HaarBasis} and $\cV=\{e_1,\dots,e_d\}$ the standard basis of $\C^d$. Besides, as in Example \ref{ex.Sasfunc}, if $\1_{[0,t]}^p$ is viewed as an element of the $p$-th space in the direct sum $L^2(\R_+)^{\oplus d}$,
\begin{equation}
    \label{skgjnslvms}
    S_{t,p} = \sum_{f\in\cU}\ \la \1_{[0,t]}^p | f\ra\ X_f.
\end{equation}
In particular, if $\cW$ is the Haar basis of $L^2(\R_+)$ then $S_{t,p} = \sum_{e\in\cW}\ \la \1_{[0,t]} | e\ra\ X_{e^p}$. In particular, if we evaluate $S_{t,p}$ in a family of independent Gaussian random variables, or GUE random matrix, or free semicircular, we get respectively the classical Brownian motion, the Hermitian Brownian motion, and the free Brownian motion.
 
 Theorem \ref{thm.keySDE} prove that, assuming $\bJ\in (\cC^{k+1}(\cB))^d$, there exists a continuous solution $(\bX_{t,p})_{t\geq 0, 1\leq p\leq d}$ such that for any $t,p$, $\bX_{t,p}\in\cC^k(\cB)$. Besides, it also shows that 
\begin{equation}
    \label{skvsjdvb}
     \partial \bX_{t,p} = e_p\otimes 1\otimes 1 -\frac{1}{2} \int_0^t \partial(\bJ_p\circ\bX_s) ds + \1_{[0,t]}^p\otimes 1 \otimes 1,\quad 1\leq p\leq d, 
\end{equation}
anf if $j_0>1$ or $j_1>0$,
\begin{equation}
    \label{soivsvd}
    \widetilde{\partial}^{(j_n)}\circ\cdots\circ \widetilde{\partial}^{(j_1)}\circ \partial^{(j_0)} \bX_{t,p} = -\frac{1}{2} \int_0^t \widetilde{\partial}^{(j_n)}\circ\cdots\circ \widetilde{\partial}^{(j_1)}\circ \partial^{(j_0)} (\bJ_p\circ\bX_s) ds ,\quad 1\leq p\leq d.
\end{equation}

\begin{theorem}
\label{thm:fastdecay}
    With $\bX$ the solution to Equation \eqref{skvjdnand}, assuming $\bJ\in (\cC^{k+1}(\cB))^d$, and that for all $p\in [1,d]$, $\norm{\bJ_p - X_p }_{\cC^k,\infty} <\infty$, then we set
    $$\kappa \coloneqq \sup_{R>0}\ \left\{ \sum_{1\leq i\leq d} \norm{\partial_i\bJ_p - \1_{i=p} 1\otimes 1}_{0,R} + \norm{\widetilde{\partial}_i \bJ_p }_{0,R} \right\},$$
    where $\norm{\cdot}_{0,R}$ is in Definition \ref{def.norms-of-derivatives}.  And there exists a constant $C_{\bJ}$ such that for any $j_0,\dots,j_n$ with $j\coloneqq j_0+\dots+j_n \in [1,k]$, $R>0$, and $i,p\in [1,d]$,
    \begin{align}
        \label{eq:fastdecay}
        \norm{ \la e_i| \widetilde{\partial}^{(j_n)}\circ\cdots\circ \widetilde{\partial}^{(j_1)}\circ \partial^{(j_0)} \bX_{t,p}}_{{0,c,R}} &\leq C_{\bJ} (1+t)^{2j} \times e^{-\frac{1-j\kappa}{2}t}, \\
        \norm{\widetilde{\partial}^{(j_n)}\circ\cdots\circ\widetilde{\partial}^{(j_1)}\circ\partial^{(j_0)} \bX_{t,p}}_{0,c,R} &\leq C_{\bJ} (1+t)^{2j} e^{\frac{\kappa j}{2}t}.          \label{eq:fastdecay2}
    \end{align}
\end{theorem}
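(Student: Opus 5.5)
We prove \eqref{eq:fastdecay} and \eqref{eq:fastdecay2} simultaneously by induction on $j$, differentiating the integral equation \eqref{skvjdnand} with the chain rule (Theorem \ref{chainrule}, Corollary \ref{ksvnslvslv}) and solving the resulting linear equations by a Duhamel/Grönwall argument. Write $\bJ_p = X_p + \bK_p$, so that $\norm{\bK_p}_{\cC^k,\infty}<\infty$ and the first derivatives of $\bK$ are controlled by $\kappa$.

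\textbf{Order $j=1$.} By \eqref{skvsjdvb} and the chain rule,
\[
\partial \bX_{t,p} = e_p\otimes 1\otimes 1 + \1^p_{[0,t]}\otimes 1\otimes 1 - \frac12\int_0^t \Big(\partial \bX_{s,p} + \sum_i (\partial_i\bK_p)\circ\bX_s \sh_1 \partial \bX_{s,i}\Big)ds .
\]
Differentiating in $t$ gives
\[
\frac{d}{dt}\partial \bX_{t,p} = -\tfrac12 \partial\bX_{t,p} - \tfrac12\sum_i (\partial_i\bK_p)\circ\bX_t\sh_1\partial\bX_{t,i} + \tfrac{d}{dt}\big(\1^p_{[0,t]}\otimes1\otimes1\big).
\]
The Brownian source term is killed by $\la e_i|$, since $e_i\in\cV$ is orthogonal to $\cU$. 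Variation of constants then gives
\[
\la e_i|\partial\bX_{t,p} = e^{-t/2}\delta_{ip}\,1\otimes 1 - \tfrac12\int_0^t e^{-(t-s)/2}\sum_q (\partial_q\bK_p)\circ\bX_s\sh_1 \la e_i|\partial\bX_{s,q}\,ds .
\]
Because $\sh_1$ is contractive for the projective norm (proof of Corollary \ref{cor.Compbd}), $u(t)=\max_p\norm{\la e_i|\partial\bX_{t,p}}_{0,c,R}$ satisfies $u(t)\le e^{-t/2}+\frac{\kappa}{2}\int_0^te^{-(t-s)/2}u(s)\,ds$. Grönwall on $e^{t/2}u$ yields $u(t)\le e^{-(1-\kappa)t/2}$.

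For \eqref{eq:fastdecay2} without $\la e_i|$, the $\1_{[0,s]}$ contribution has to be handled separately. Its $H_c$-norm is not bounded uniformly, only by Proposition \ref{prop.indicatorHaarweightnorm} as $\lesssim \sqrt{1+\ln(1+t)}\,(1+t)$. Instead of using that bound directly, we decompose $\partial\bX_{t,p}$ through the identity $\1_{[0,t]}=\1_{[0,s]}+\1_{[s,t]}$. The contribution from noise injected at time $s$ then evolves by the same linearized flow started at $s$, and therefore decays like $e^{-(1-\kappa)(t-s)/2}$. Integrating the increments $\norm{\1_{[s,s+1]}}_c\lesssim \sqrt{1+\ln(2+s)}$ against this kernel gives a polynomially bounded estimate, within $(1+t)^{2}$. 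Even cruder, one can use the term $\frac12\norm{\bK}$-Grönwall with rate $e^{\kappa t/2}$, since the $-\frac12\partial\bX$ term only helps. The $\widetilde\partial$ derivative at order one is handled the same way using \eqref{skvjnlsdvkn2}, where the source term is zero.

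\textbf{Induction step, order $j\ge2$.} Apply \eqref{soivsvd} together with \eqref{chainestimate}/\eqref{chainestimate2}. The top-order derivative $D^{(j)}\bX_{t,p}$ satisfies the same linear equation, with drift $-\frac12 D^{(j)}\bX_{t,p}-\frac12\sum_q(\partial_q\bK_p\text{ or }\widetilde\partial_q\bK_p)\circ\bX_t\sh D^{(j)}\bX_{t,q}$. The forcing term is a finite sum of products of higher derivatives of $\bK$ evaluated at $\bX_t$, with $\prod_{l}D^{(s_l)}\bX_{t,i_l}$ where $m\ge2$ and $\sum s_l=j$. Such a forcing term is bounded by $C\Lambda^j_R$, with constants from $\norm{\bK}_{\cC^k,\infty}$. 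The essential point is that after pairing with $\la e_i|$ exactly one factor carries the vector $e_i$, because $\la e_i|$ acts on a single $H_c$ tensorand. That factor contributes $(1+s)^{2s_1}e^{-(1-s_1\kappa)s/2}$ by induction, and each other factor contributes $(1+s)^{2s_l}e^{\kappa s_ls/2}$. The forcing is therefore bounded by $C(1+s)^{2j}e^{-(1-j\kappa)s/2}\cdot e^{-\dots}$, and more precisely by $(1+s)^{2(j-1)}e^{-s/2}e^{(j\kappa)s/2}$.

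Duhamel with the homogeneous rate $e^{-(1-\kappa)(t-s)/2}$ gives
\[
\int_0^t e^{-(1-\kappa)(t-s)/2}(1+s)^{2j-2}e^{-(1-j\kappa)s/2}\,ds\le (1+t)^{2j-1}e^{-(1-j\kappa)t/2},
\]
using $\kappa\le j\kappa$. Hence $(1+t)^{2j}$ absorbs the extra power, which proves \eqref{eq:fastdecay}. Estimate \eqref{eq:fastdecay2} follows in the same way, with all factors growing like $e^{\kappa s_l s/2}$ and the homogeneous part growing at most like $e^{\kappa(t-s)/2}$.

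\textbf{Main obstacle.} The main difficulty is bookkeeping the $\la e_i|$ slot through the permuted products $\sh_1,\sh_2,\sh_3,\sh_4$ and the $\widetilde\partial$ terms. We must check that the tensorand holding the $e_i$ always lies in one factor of every product. We must also check that the uniform-in-$R$ constants, which come from $\sup_R$ norms of $\bK$, never depend on $\norm{\bX_t}_{0,c,R}$; this is exactly why only $\norm{\cdot}_{n,\infty}$ norms of $\bK$ with $n\ge1$ are allowed to appear.
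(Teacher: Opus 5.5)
Your architecture is the paper's. You use variation of constants for $\la e_i|\partial\bX_{t,p}$ (where the Brownian term drops out), then induction on $j$, with the chain rule splitting off a linear top-order term of coefficient at most $\kappa$ plus a forcing made of products with $m\ge2$ factors, exactly one of which carries $\la e_i|$.

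The step that fails as written is the polynomial bookkeeping in the induction. Your claim that the forcing is bounded by $(1+s)^{2(j-1)}e^{-(1-j\kappa)s/2}$ does not follow from your induction hypotheses. Multiplying $(1+s)^{2s_1}e^{-(1-s_1\kappa)s/2}$ by $\prod_{l\ge2}(1+s)^{2s_l}e^{\kappa s_l s/2}$ gives $(1+s)^{2j}$; this already happens for $j=3$ with $\la e_i|$ on an order-$2$ factor. With the correct power, your crude Duhamel estimate (freezing $e^{(j-1)\kappa s/2}$ at $s=t$ and integrating the polynomial) gives $(1+t)^{2j+1}e^{-(1-j\kappa)t/2}$, which $(1+t)^{2j}$ does not absorb. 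The repair, which the paper's Gr\"onwall step uses implicitly, is to integrate the exponential gap rather than the polynomial. For $j\ge2$ and $\kappa>0$,
\[
\int_0^t e^{-\frac{1-\kappa}{2}(t-s)}(1+s)^{2j}e^{-\frac{1-j\kappa}{2}s}\,ds=e^{-\frac{1-\kappa}{2}t}\int_0^t(1+s)^{2j}e^{\frac{(j-1)\kappa}{2}s}\,ds\le\frac{2}{(j-1)\kappa}\,(1+t)^{2j}e^{-\frac{1-j\kappa}{2}t},
\]
and the $\kappa$-dependence can be absorbed into $C_{\bJ}$. For \eqref{eq:fastdecay2}, likewise do not replace the homogeneous propagator by the growth bound $e^{\kappa(t-s)/2}$ and integrate crudely. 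Keep the damping $e^{-(1-\kappa)(t-s)/2}$: the gap is then $\frac{1+(j-1)\kappa}{2}\ge\frac12$ and no power of $(1+t)$ is lost.

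There are also two smaller points.

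\textbf{The same-order term when $j_0=0$.} In this case \eqref{chainestimate2} contains a term of the same total order $j$: the $\sh_4$-type term in which $(\widetilde\partial_q\bJ_p)\circ\bX_t$ is paired with $\widetilde\partial^{(j_n)}\circ\cdots\circ\widetilde\partial^{(j_2)}\circ\partial^{(j_1)}\bX_{t,q}$. It is neither one of your $m\ge2$ products nor covered by induction on $j$. If you put it into the drift and run Gr\"onwall on the maximum over derivative types, $\sum_q\norm{\widetilde\partial_q\mathbf{K}_p}$ is counted twice, so the rate is only controlled by $2\kappa$. Instead, at each order treat $j_0\ge1$ first and feed that bound into the $j_0=0$ case as forcing. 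This is what the paper does, and what you already did at $j=1$.

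\textbf{The Brownian source at $j=1$.} The map $t\mapsto\1_{[0,t]}$ is not differentiable in $H_c$, so your $\frac{d}{dt}\big(\1^p_{[0,t]}\otimes1\otimes1\big)$ must be replaced by an integration by parts in time. The paper integrates by parts against $e^{u/2}$ and gets $C(1+t)^2e^{\kappa t/2}$ for $\partial\bX_{t,p}$. Your ``noise injected at time $s$'' decomposition is a valid alternative once made rigorous by integrating by parts on each unit interval against the linearized propagator. For $\kappa<1$ it even gives a bound growing only like $\sqrt{1+\ln(2+t)}$.
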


\begin{proof}

\textbf{Step 1:}  First note that thanks to the chain rule, i.e., Theorem \ref{chainrule}, we have
    $$ \partial (\bJ_p\circ \bX_t) = \sum_{j=1}^d  \partial_j \bJ_p\sh_1 \partial \bX_{t,j}. $$
    Therefore
    \begin{equation}
        \label{eq:sldvjn}
        \norm{ \partial \left( \bJ_p \circ \bX_t - \bX_{t,p} \right) }_{0,c,R} \leq \kappa \times  \sup_p\ \norm{ \partial \bX_{t,p}}_{0,c,R}.
    \end{equation}
    Besides,
    $$  \partial \bX_{t,p} = (e_p + \1_{[0,t]}^p)\otimes 1\otimes 1 -\frac{1}{2} \int_0^t \partial(\bJ_p\circ\bX_s) ds.$$
    Hence, since $\la e_i|\1^p_{[0,t]} \ra =0$, $\la e_i| \partial \bX_{t,p}$ is actually differentiable with respect to $t$, and
    $$ \frac{d}{dt} \left(e^{t/2}\la e_i| \partial \bX_{t,p}\right) = \frac{1}{2} e^{t/2}\la e_i| \partial \bX_{t,p} - \frac{1}{2} e^{t/2}\la e_i| \partial(\bJ_p\circ\bX_t) = \frac{1}{2} e^{t/2}\la e_i| \partial( \bX_{t,p} - \bJ_p\circ\bX_t). $$
    
    \noindent Thus if we set $\alpha(t)\coloneqq e^{t/2} \sup_p \norm{ \la e_i| \partial \bX_{t,p}}_{0,c,R} $, thanks to Equation \eqref{eq:sldvjn}, we get that
    $$ \alpha(t) \leq 1 + \frac{\kappa}{2} \int_0^t \alpha(s) ds. $$
    So by Gr\"onwall's inequality, we get that $\alpha(t)\leq e^{\frac{\kappa}{2}t}$, which implies that for all $p,i\in [1,d]$,
    \begin{equation}
        \label{slvdknsldkvms}
        \norm{ \la e_i| \partial \bX_{t,p}}_{0,c,R} \leq e^{-\frac{1-\kappa}{2}t}.
    \end{equation}

\textbf{Step 2:} Let us upper bound $\norm{\partial \bX_{t,p}}_{0,c,R}$ which we will need for further computations. First, we have 
\begin{align*}
    &e^{t/2}\ \partial \bX_{t,p} \\
    &= \frac{1}{2}\int_0^t e^{u/2}du \left( (e_p+\1_{[0,t]}^p)\otimes 1\otimes 1 - \frac{1}{2} \int_0^t \partial(\bJ_p\circ\bX_s) ds \right) \\
    &= e^{t/2} (e_p+\1_{[0,t]}^p)\otimes 1\otimes 1 -\frac{1}{4} \int_0^t\int_0^s e^{u/2} du\ \partial(\bJ_p\circ\bX_s) ds - \frac{1}{4} \int_0^t\int_0^u \partial(\bJ_p\circ\bX_s) ds\  e^{u/2} du \\
    &= e^{t/2} (e_p+\1_{[0,t]}^p)\otimes 1\otimes 1  -\frac{1}{2} \int_0^t (e^{s/2}-1) \partial(\bJ_p\circ\bX_s) ds + \frac{1}{2} \int_0^t \left( \partial \bX_{u,p} - (e_p+\1_{[0,u]}^p)\otimes 1\otimes 1 \right)  e^{u/2} du \\
    &= e^{t/2} (e_p+\1_{[0,t]}^p)\otimes 1\otimes 1 - \frac{1}{2} \int_0^t (e_p+\1_{[0,u]}^p)\otimes 1\otimes 1  e^{u/2} du \\
    &\quad + \frac{1}{2} \int_0^t e^{s/2}\ \partial(\bX_{s,p} - \bJ_p\circ\bX_s) ds + \frac{1}{2} \int_0^t \partial(\bJ_p\circ\bX_s) ds.
\end{align*}
Note that thanks to Proposition \ref{prop.indicatorHaarweightnorm}, for some constant $C$,
$$\norm{e^{t/2} (e_p+\1_{[0,t]}^p)\otimes 1\otimes 1 - \frac{1}{2} \int_0^t (e_p+\1_{[0,u]}^p)\otimes 1\otimes 1  e^{u/2} du}_c \leq C (t+1)^2 e^{t/2}.$$
Therefore, thanks to Equation \ref{skvjnlsdvkn}, if $\alpha(t) = e^{t/2}\ \norm{\partial \bX_{t,p}}_{0,c,R}$, we have that
\begin{align*}
    \alpha(t) \leq C(1+t)^2 e^{t/2} + \int_0^t \left( \frac{\kappa}{2} + \frac{e^{-s/2}}{2} \norm{\bJ_p}_{1,c,\infty} \right) \alpha(s) ds.
\end{align*}
Thus, by Gr\"onwall inequality, 
$$ \alpha(t) \leq C(1+t)^2 e^{t/2} \exp\left( \int_0^t \frac{\kappa}{2} + \frac{e^{-s/2}}{2} \norm{\bJ_p}_{1,c,\infty} ds\right) \leq C(1+t)^2 e^{t/2} e^{\frac{\kappa}{2}t+\norm{\bJ_p}_{1,c,\infty}}.$$
Hence, for some constant $C_{\bJ}$,
\begin{equation}
    \label{sdlvjnsovdm}
    \norm{\partial \bX_{t,p}}_{0,c,R} \leq C_{\bJ}(1+t)^2 e^{\frac{\kappa}{2}t}.
\end{equation}

\textbf{Step 3:} Let us prove Equation \eqref{eq:fastdecay2} by induction. We set $j\coloneqq j_0+\dots+j_n$. Thanks to Equation \eqref{sdlvjnsovdm} this is true if $j=j_0=1$. Else, if $j\geq 2$ or $j_1\geq 1$, then thanks to Equation \eqref{soivsvd}, one can differentiate $t\mapsto e^{t/2} \widetilde{\partial}^{(j_n)}\circ\cdots\circ\widetilde{\partial}^{(j_1)}\circ\partial^{(j_0)} \bX_{t,p} $, and
\begin{align}
\label{sjchbskjcnks}
        \frac{d}{dt} \left(e^{t/2}\ \widetilde{\partial}^{(j_n)}\circ\cdots\circ\widetilde{\partial}^{(j_1)}\circ\partial^{(j_0)} \bX_{t,p}\right) = \frac{1}{2} e^{t/2}\ \widetilde{\partial}^{(j_n)}\circ\cdots\circ\widetilde{\partial}^{(j_1)}\circ\partial^{(j_0)}( \bX_{t,p} - \bJ_p\circ\bX_t).
\end{align}
Thus if $j_0>0$, then by Equation \eqref{chainestimate}, if $\alpha(t)\coloneqq e^{t/2} \sup_p \norm{\widetilde{\partial}^{(j_n)}\circ\cdots\circ\widetilde{\partial}^{(j_1)}\circ\partial^{(j_0)} \bX_{t,p}}_{0,c,R}$, there exists a constant $C_{\bJ}$ such that,
$$ \alpha(t) \leq \kappa \int_0^t \alpha(s) + C_{\bJ}\times e^{s/2} \Lambda^j_R(\bX_s) \ ds. $$

\noindent Besides, by induction, since in the definition of $\Lambda^j_R$ we have $m\geq 2$, thanks to Equation \eqref{sdlvjnsovdm} and our induction hypothesis, there exist a constant $C_{\bJ}$ such that,
$$\Lambda^j_R(\bX_s) \leq C_{\bJ} (1+t)^{2j} e^{\frac{\kappa j}{2}t}.$$
Thus for some constant $C_{\bJ}$,
$$ \alpha(t) \leq \kappa \int_0^t \alpha(s) ds + C_{\bJ}\times e^{t/2} (1+t)^{2j} e^{\frac{\kappa j}{2}t}. $$

\noindent Therefore, by Gr\"onwall lemma, there exist a constant $C_{\bJ}$ such that,
$$ \alpha(t) \leq C_{\bJ}\times (1+t)^{2j} \left( e^{t/2}e^{\frac{\kappa j}{2}t} + \kappa \int_0^t e^{s/2} e^{\frac{\kappa j}{2}s} e^{\frac{\kappa}{2}(t-s)} ds  \right) \leq  C_{\bJ} (1+2\kappa)\times (1+t)^{2j} e^{t/2}e^{\frac{\kappa j}{2}t}, $$
which implies Equation \eqref{eq:fastdecay2}.

If we now assume that $j_0=0$, then Equation \eqref{sjchbskjcnks} remains true, but instead of using Equation \eqref{chainestimate} we need to use Equation \eqref{chainestimate2}, this yields that for some constant $C_{\bJ}$,
$$ \alpha(t) \leq \kappa \int_0^t \alpha(s) + C_{\bJ}\times e^{s/2} \left( \Lambda^j_R(\bX_s)  + \sup_p\ \norm{\widetilde{\partial}^{(j_n)}\circ\cdots\circ\widetilde{\partial}^{(j_2)} \circ \partial^{(j_1)}\mathbf{X}_{s,p} }_{0,c,R} \right) \ ds. $$
And so by using the case $j_0>0$ which we just proved, we still get that
$$ \alpha(t) \leq \kappa \int_0^t \alpha(s) ds + C_{\bJ}\times e^{t/2} (1+t)^{2j} e^{\frac{\kappa j}{2}t}. $$
And from there the proof follows as in the previous case.

\textbf{Step 4:} Let us now prove Equation \eqref{eq:fastdecay} by induction on $j\coloneqq j_0+\dots+j_n$. If $j=j_0=1$ then we know that \eqref{eq:fastdecay} is true thanks to our first step. Else, thanks to Equation \eqref{soivsvd}, we have 
\begin{equation}
    \label{soivsvd23}
    \la e_i| \widetilde{\partial}^{(j_n)}\circ\cdots\circ \widetilde{\partial}^{(j_1)}\circ \partial^{(j_0)} \bX_{t,p} = -\frac{1}{2} \int_0^t \la e_i| \widetilde{\partial}^{(j_n)}\circ\cdots\circ \widetilde{\partial}^{(j_1)}\circ \partial^{(j_0)} (\bJ_p\circ\bX_s) ds ,\quad 1\leq p\leq d.
\end{equation}
In particular, one can differentiate with respect to $t$ and we get that
    \begin{align*}
        &\frac{d}{dt} \left(e^{t/2}\la e_i| \widetilde{\partial}^{(j_n)}\circ\cdots\circ\widetilde{\partial}^{(j_1)}\circ\partial^{(j_0)} \bX_{t,p}\right) = \frac{1}{2} e^{t/2}\la e_i| \widetilde{\partial}^{(j_n)}\circ\cdots\circ\widetilde{\partial}^{(j_1)}\circ\partial^{(j_0)}( \bX_{t,p} - \bJ_p\circ\bX_t).
    \end{align*}
    So if we set $\alpha(t)\coloneqq e^{t/2} \sup_p \norm{ \la e_i| \widetilde{\partial}^{(j_n)}\circ\cdots\circ\widetilde{\partial}^{(j_1)}\circ\partial^{(j_0)} \bX_{t,p}}_{0,c,R}$, and we assume that $j_0>1$, then thanks to Equation \eqref{chainestimate}, we get that
    $$ \alpha(t) \leq \kappa \int_0^t \alpha(s) + C_{\bJ}\times e^{s/2} \widetilde{\Lambda}^j_R(\bX_s) \ ds, $$
    where 
    $$ \widetilde{\Lambda}^s_R(\cf) = \sup_{\substack{1\leq i_1,\dots,i_m\leq d \\ s_1+\dots+s_m = s \\ s_l\geq 1,\ m\geq 2 }}\ \left( \sup_{a_0+\dots+a_q=s_1}\ \norm{\la e_i| \widetilde{\partial}^{(a_q)}\circ\cdots\circ\widetilde{\partial}^{(a_1)}\circ\partial^{(j_0)} \cf_{i_l}}_{0,c,R} \right) \prod_{l=2}^m\ \norm{\cf_{i_l}}_{s_l,c,R}, $$
    And thanks to our induction hypothesis as well as Equation \eqref{eq:fastdecay2} we get that
    $$ \alpha(t) \leq \kappa \int_0^t \alpha(s) ds + C_{\bJ}\times (1+t)^{2j} e^{\frac{\kappa j}{2}t}. $$

    \noindent Therefore, by Gr\"onwall lemma, there exist a constant $C_{\bJ}$ such that,
    $$ \alpha(t) \leq C_{\bJ}\times (1+t)^{2j} \left(e^{\frac{\kappa j}{2}t} + \kappa \int_0^t e^{\frac{\kappa j}{2}s} e^{\frac{\kappa}{2}(t-s)} ds  \right) \leq 3 C_{\bJ} \times (1+t)^{2j} e^{\frac{\kappa j}{2}t}, $$
    which implies Equation \eqref{eq:fastdecay}.
    
    Let us now assume that $j_0=0$, then thanks to the chain rule, and more precisely Equation \eqref{chainestimate2},
    $$ \alpha(t) \leq \kappa \int_0^t \alpha(s) + C_{\bJ}\times e^{s/2} \left( \widetilde{\Lambda}^j_R(\bX_s) + \sup_{1\leq i,p\leq d}\ \norm{ \la e_i| \widetilde{\partial}^{(j_n)}\circ\cdots\circ\widetilde{\partial}^{(j_2)} \circ \partial^{(j_1)}\bX_{s,p} }_{0,c,R} \right) \ ds. $$
    And so by using the case $j_0>0$ which we just proved, we still get that
    $$ \alpha(t) \leq \kappa \int_0^t \alpha(s) ds + C_{\bJ}\times (1+t)^{2j} e^{\frac{\kappa j}{2}t}. $$
    Therefore, by Gr\"onwall lemma, there exist a constant $C_{\bJ}$ such that, if $j>1$,
    $$ \alpha(t) \leq C_{\bJ}\times (1+t)^{2j} \left(e^{\frac{\kappa j}{2}t} + \kappa \int_0^t e^{\frac{\kappa j}{2}s} e^{\frac{\kappa}{2}(t-s)} ds  \right) \leq 3 C_{\bJ} \times (1+t)^{2j} e^{\frac{\kappa j}{2}t}, $$
    which implies Equation \eqref{eq:fastdecay}.

    Finally, for the case $j=j_1=1$, we have
    $$ \alpha(t) \leq \kappa \int_0^t \alpha(s) + C_{\bJ}\times e^{s/2} \sup_{1\leq i,p\leq d}\ \norm{ \la e_i| \partial\bX_{s,p} }_{0,c,R} \ ds. $$
    And the proof follows as in the previous case by using Equation \eqref{slvdknsldkvms}.
\end{proof}

\section{Construction of transport maps}\label{sec.transport}

\subsection{General construction of a transport map}

This subsection aims to provide a short, self-contained explanation of the method we will use to construct transport maps in the classical case. We begin with the following definitions.

\begin{definition}
    Given $V:\R^n\to\R$ we define
    $$ d\mu_V(x) := \frac{e^{-V(x)}}{Z_V}\ dx,$$
    where $Z_V$ is a normalizing constant. Besides, if $V_0:\R^n\to\R$ and $V_1:\R^n\to\R$, we say that $T$ is a tranport map between $\mu_{V_0}$ and $\mu_{V_1}$ if $T_*\mu_{V_0} = \mu_{V_1} $, that is if for all measurable function $f$,
    $$ \int_{\R^n} f( T (x)) \frac{e^{-V_0(x)}}{Z_{V_0}}\ dx = \int_{\R^n} f (x) \frac{e^{-V_1(x)}}{Z_{V_1}}\ dx$$
\end{definition}

Various methods of constructing transport maps have a long history in PDE and optimal transport theory. See for example \cite[Chapters 3 and 4]{villani2003topics} or the survey paper \cite{de2014monge} for an account on optimal transport theory and its link to the Monge-Ampère equation. The approach detailed in this subsection and notably the next proposition will follow the heuristic explained in \cite[Section 2]{bekerman2015transport}, although instead of taking $\mu_V$ to be the law of the eigenvalues of our random matrices, we will eventually work directly with the law of our random matrices.  This is the same approach taken to construct free transport maps \cite[\S 8.4]{JLS2022}, and the motivation in terms of the Wasserstein manifold is explained in \cite[\S 2.2]{JLS2022}.

\begin{proposition}
\label{pot1}
        Given $V_0:\R^n\to\R$ and $V_1:\R^n\to\R$, we define $V_t := tV_1+(1-t)V_0$. We also assume that $V_0,V_1\in\cC^1(\R^n)$ and that there exists a function $\psi:\R^n\times [0,1]\to\R$ such that 
        \begin{equation}
            \label{eqduifpsi}
            \Delta \psi_t(x) - \langle\nabla V_t(x),\nabla \psi_t(x)\rangle_{\R^n} = \dot{V_t}(x) - \mu_{V_t}\big[\dot{V_t}\big],
        \end{equation}
        where $\Delta$ is the Laplacian, $\nabla$ the gradient, $\langle\cdot,\cdot\rangle_{\R^n}$ the usual scalar product in $\R^n$, $\dot{V_t} = V_1-V_0$, and $\mu_{V_t}\big[f\big]$ is the expectation of $f$ with respect to $\mu_{V_t}$. If we set $T_t^N$ the solution to the following differential equation
        \begin{equation}
            \label{floweq}
             \dot{T_t}(x) = \nabla \psi_t\left( T_t(x)\right),\quad T_0 = \id_{\R^n},
        \end{equation}
        then $T_t$ is a transport map between $\mu_{V_0}$ and $\mu_{V_t}$.
\end{proposition}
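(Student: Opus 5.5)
We will use the standard continuity-equation argument. Let $\rho_t \coloneqq e^{-V_t}/Z_{V_t}$ be the density of $\mu_{V_t}$ and $\nu_t \coloneqq (T_t)_*\mu_{V_0}$. The goal is to show $\nu_t = \mu_{V_t}$ for all $t\in[0,1]$. We do this by showing that both families solve the same continuity equation $\partial_t \varrho_t + \operatorname{div}(\varrho_t \nabla\psi_t) = 0$, in the weak sense, with the same initial datum $\mu_{V_0}$, and then invoking uniqueness.

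\textbf{Step 1: $\mu_{V_t}$ solves the continuity equation.} Since $\dot V_t = V_1 - V_0$, we have
\[
\partial_t \rho_t = -\big(\dot V_t - \mu_{V_t}[\dot V_t]\big)\rho_t ,
\]
where the normalising term comes from differentiating $\log Z_{V_t}$. On the other hand,
\[
\operatorname{div}(\rho_t\nabla\psi_t) = \rho_t\big(\Delta\psi_t - \langle \nabla V_t,\nabla\psi_t\rangle\big) = \rho_t\big(\dot V_t - \mu_{V_t}[\dot V_t]\big)
\]
by \eqref{eqduifpsi}. Hence $\partial_t\rho_t + \operatorname{div}(\rho_t\nabla\psi_t)=0$. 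In weak form, for every test function $f\in C_c^\infty(\R^n)$,
\[
\frac{d}{dt}\int f\,d\mu_{V_t} = \int \langle \nabla f,\nabla\psi_t\rangle\, d\mu_{V_t}.
\]
To justify this, integrate by parts against $f$; the boundary terms vanish because $f$ has compact support. This avoids any integrability issues at infinity.

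\textbf{Step 2: $\nu_t$ solves the continuity equation.} By the chain rule and \eqref{floweq},
\[
\frac{d}{dt}\int f(T_t(x))\,d\mu_{V_0}(x) = \int \langle\nabla f(T_t(x)),\nabla\psi_t(T_t(x))\rangle\, d\mu_{V_0}(x) = \int\langle\nabla f,\nabla\psi_t\rangle\, d\nu_t .
\]
Differentiation under the integral is justified by the assumed regularity of $\psi$ and the existence of the flow. So $\nu_t$ is a weak solution with $\nu_0=\mu_{V_0}$.

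\textbf{Step 3: uniqueness.} This is the main obstacle. The cleanest route avoids a general uniqueness theorem for continuity equations and uses duality with the flow instead. Fix $s\in(0,1]$ and a test function $f$, and set $g_t \coloneqq f\circ T_s\circ T_t^{-1}$, where $T_s\circ T_t^{-1}$ is the flow from time $t$ to time $s$. Then $g_t$ solves the backward transport equation
\[
\partial_t g_t + \langle\nabla\psi_t,\nabla g_t\rangle = 0, \qquad g_s = f .
\]
Using Step 1 with $g_t$ in place of $f$, we get
\[
\frac{d}{dt}\int g_t\, d\mu_{V_t} = \int \partial_t g_t\, d\mu_{V_t} + \int \langle\nabla g_t,\nabla\psi_t\rangle\, d\mu_{V_t} = 0 .
\]
Therefore
\[
\int f\, d\mu_{V_s} = \int g_0\, d\mu_{V_0} = \int f\circ T_s\, d\mu_{V_0}.
\]
This argument needs $\psi$ to be $C^2$ with locally Lipschitz gradient, so that the flow is a $C^1$ diffeomorphism and $g_t$ is $C^1$; we take these as implicit in the hypotheses. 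It also needs $g_t$ to be an admissible test function in Step 1. Here $g_t$ is compactly supported, because $T_s\circ T_t^{-1}$ is a homeomorphism and hence maps compact sets to compact sets. The resulting identity for all $f\in C_c^\infty$ gives $(T_s)_*\mu_{V_0}=\mu_{V_s}$, which is the claim.
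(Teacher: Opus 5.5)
Your proof is correct, but it is the Eulerian (weak/dual) version of what the paper does in Lagrangian form.

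The paper never integrates by parts. For each fixed $x$ it shows that $e^{-V_s(T_s(x))}Z_{V_s}^{-1}\det dT_s(x)$ is constant in $s$, using four ingredients:
\begin{itemize}
\item the identity $\frac{d}{ds}Z_{V_s}^{-1}=Z_{V_s}^{-1}\mu_{V_s}[\dot V_s]$;
\item Jacobi's formula for $\frac{d}{ds}\det dT_s(x)$;
\item the variational equation $\frac{d}{ds}dT_s(x)=\nabla^2\psi_s(T_s(x))\,dT_s(x)$, whose trace term produces $\Delta\psi_s(T_s(x))$;
\item the equation \eqref{eqduifpsi}.
\end{itemize}
The resulting pointwise identity $\rho_t(T_t(x))\det dT_t(x)=\rho_0(x)$ then gives the claim for every measurable $f$ at once, via the change of variables $y=T_t(x)$ (using $\det dT_t>0$).

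You instead use \eqref{eqduifpsi} in divergence form, $\partial_t\rho_t=-\operatorname{div}(\rho_t\nabla\psi_t)$, and identify the pushforward by transporting the test function backwards along the flow. This needs no Jacobian, no Jacobi formula and no orientation argument, and every integration by parts is against a compactly supported function. The price is that you need:
\begin{itemize}
\item the inverse flow;
\item $C^1$ regularity of $(t,y)\mapsto f(T_s(T_t^{-1}(y)))$, with supports contained in a fixed compact set for $t\in[0,s]$;
\item a product rule for a time-dependent test function;
\item a density step from $C_c^\infty$ to measurable $f$.
\end{itemize}
The paper's identity, by contrast, is purely pointwise.

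Both routes rest on the same implicit hypotheses, which neither the statement nor the paper spells out. These are that $\psi$ is $C^2$ in $x$ with derivatives continuous in $(t,x)$, that $\nabla\psi_t$ generates a globally defined flow of $C^1$ diffeomorphisms, and that $t\mapsto Z_{V_t}$ is differentiable.

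Your Step 2 is never used. Step 3 only invokes Step 1 and $T_0=\id$, so the uniqueness issue you flag is settled without showing that $\nu_t$ solves the continuity equation.
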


\begin{proof} 
Since $T_t$ is the flow of an ordinary differential equation, it is a diffeomorphism of $\R^n$ for any $t$. Besides since $T_0 = \id_{\R^n}$, by continuity for all $t\in [0,1]$ and $x\in\R^n$, $\det( dT_s(x))>0$. Hence by a change of variable, we get that
\begin{align*}
    &\int_{\R^n} f( T_t (x)) \frac{e^{-V_0(x)}}{Z_{V_0}}\ dx - \int_{\R^n} f(x) \frac{e^{-V_t(x)}}{Z_{V_t}}\ dx \\
    &= \int_{\R^n} f( T_t (x)) \left(\frac{e^{-V_0(x)}}{Z_{V_0}}\ dx - \frac{e^{-V_t(T_t(x))}}{Z_{V_t}} \det( dT_t(x))\right)\ dx \\
    &= - \int_{\R^n} f( T_t (x)) \int_0^t \frac{d}{ds}\left(\frac{e^{-V_s(T_s(x))}}{Z_{V_s}} \det( dT_s(x))\right) ds\ dx.
\end{align*}
Besides,
$$ \frac{d}{ds}\left(\frac{1}{Z_{V_s}} \right) = \frac{1}{Z_{V_s}} \int_{\R^n} \dot{V_s}(x) \frac{e^{-V_s(x)}}{Z_{V_s}}\ dx = \frac{1}{Z_{V_s}} \times \mu_{V_t}\big[\dot{V_t}\big], $$
$$ \frac{d}{ds}e^{-V_s(T_s(x))} = - e^{-V_s(T_s(x))}\times \left( \dot{V_s}(T_s(x)) + \langle\nabla V_t(T_s(x)), \dot{T_s}(x) \rangle_{\R^n} \right), $$
$$ \frac{d}{ds}\det( dT_s(x)) = \det( dT_s(x)) \times \Tr_n\left( \left( dT_s(x)\right)^{-1} \dot{dT_s}(x) \right). $$
Besides, thanks to Equation \eqref{floweq}, with $\hess(\psi_t)$ the Hessian of $\psi_t$, we have that
$$ \dot{dT_s}(x) = \left[\hess(\psi_s)\left( T_s(x)\right)\right] dT_s(x),$$
Thus by using Equation \eqref{floweq} again as well as the fact that the trace of the Hessian is the Laplacian,
\begin{align*}
    &\int_{\R^n} f( T_t (x)) \frac{e^{-V_0(x)}}{Z_{V_0}}\ dx - \int_{\R^n} f(x) \frac{e^{-V_t(x)}}{Z_{V_t}}\ dx \\
    &= - \int_{\R^n} f( T_t (x)) \int_0^t \Big( \mu_{V_t}\big[\dot{V_t}\big] - \dot{V_s}(T_s(x)) - \langle\nabla V_t(T_s(x)), \nabla \psi_s\left( T_s(x)\right)\rangle_{\R^n} \\
    &\quad\quad\quad\quad\quad\quad\quad\quad\quad\quad\quad\quad\quad
    + \Delta\psi_s(T_s(x))\Big)\frac{e^{-V_s(T_s(x))}}{Z_{V_s}} \det( dT_s(x)) ds\ dx\\
    &=0,
\end{align*}
where we used Equation \eqref{eqduifpsi} in the last line.
\end{proof}

Thus, we need to find a solution to Equation \eqref{eqduifpsi}. To do so one can use stochastic calculus.

\begin{proposition}
\label{pot2}
    We assume that $V_0,V_1\in\cC^4(\R^n)$ and that its first two differentials grow at most polynomially at infinity. We also assume that there exists a constant $c>0$ such that for all $t\in [0,1]$, $x\in\R^n$, $\hess(V_t)(x)\geq cI_n$. Then there exists a solution $(\cX_s^{x,t})_{s\geq 0}$ to the stochastic differential equation
    \begin{equation}
        \label{kldsjvns}
        \cX_s^{x,t} = x - \frac{1}{2} \int_0^s \nabla V_t\left(\cX_u^{x,t}\right)\ du + B_s,
    \end{equation}
    where $B$ is a vector of $n$ independent Brownian motions. Moreover, if we define for all $x\in\R^n$, 
    $$\Xi_s(x) := \E\left[ \dot{V_t}\left(\cX_s^{x,t}\right) - \mu_{V_t}\big[\dot{V_t}\big]\right],$$
    then $\Xi_s\in\cC^2(R^n)$ for all $s\geq 0$, and for all $x\in\R^n$, the functions $s\mapsto\Xi_s(x)$, $s\mapsto\nabla\Xi_s(x)$ and $s\mapsto \hess(\Xi_s)(x)$ are integrable on $\R_+$. Furthermore,
    $$ \psi_t: x\mapsto -\frac{1}{2}\int_0^{\infty} \E\left[ \dot{V_t}\left(\cX_s^{x,t}\right) - \mu_{V_t}\big[\dot{V_t}\big] \right] ds$$
    is a solution to Equation \eqref{eqduifpsi}, and $\nabla\psi_t = -\frac{1}{2}\int_0^{\infty} \nabla\Xi_s ds $.
\end{proposition}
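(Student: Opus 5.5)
The plan is to follow the classical Bakry--Émery / Kolmogorov route, with uniform convexity supplying exponential contraction of the Langevin flow. I proceed in the order below.

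\textbf{Existence and differentiability of the flow.} Since $\nabla V_t$ is $\cC^3$ and $\hess(V_t)\geq cI_n$, the drift $-\frac12\nabla V_t$ is one-sided Lipschitz:
\[
\langle \nabla V_t(x)-\nabla V_t(y),x-y\rangle\geq c|x-y|^2 .
\]
It is also locally Lipschitz with polynomial growth. Hence \eqref{kldsjvns} has a unique strong global solution; non-explosion follows from Itô's formula applied to $|\cX_s|^2$. Because the noise is additive, pathwise the equation is an ODE in $x$. I therefore differentiate it in $x$ directly. The first variation $J_s=\partial_x\cX_s^{x,t}$ solves
\[
\dot J_s=-\tfrac12\hess(V_t)(\cX_s)J_s,\qquad J_0=I .
\]
This gives $|J_s|\leq e^{-cs/2}$ deterministically. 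The second variation $K_s$ solves the same linear equation with a forcing term $-\frac12 D^3V_t(\cX_s)[J_s,J_s]$. Duhamel's formula then yields
\[
|K_s|\leq C\, s\, e^{-cs/2}\sup_{u\leq s}|D^3V_t(\cX_u)| .
\]
The third derivative of $V_t$ grows polynomially; that is where $\cC^4$ is used, since differentiating $D^3V_t$ once more in $x$ is what justifies differentiability of $K_s$. Using standard moment bounds $\sup_s\E|\cX_s^{x,t}|^p\leq C_p(1+|x|^p)$ from the convexity, dominated convergence gives $\Xi_s\in\cC^2$, with $\nabla\Xi_s(x)=\E[J_s^{\top}\nabla\dot V_t(\cX_s)]$ and an analogous formula for the Hessian. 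Both decay like $(1+s)e^{-cs/2}$ times a polynomial in $|x|$, so they are integrable on $\R_+$, locally uniformly in $x$.

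\textbf{Integrability of $\Xi_s$ itself.} I couple two copies started at $x$ and at $Y\sim\mu_{V_t}$ with the same Brownian motion. The measure $\mu_{V_t}$ is stationary for \eqref{kldsjvns}, since the generator is $L=\frac12(\Delta-\nabla V_t\cdot\nabla)$. Contraction gives $|\cX_s^x-\cX_s^Y|\leq e^{-cs/2}|x-Y|$, and therefore
\[
|\Xi_s(x)|\leq \E\,\bigl|\dot V_t(\cX^x_s)-\dot V_t(\cX^Y_s)\bigr|\lesssim e^{-cs/2}\,\mathrm{poly}(|x|),
\]
using polynomial growth of $\nabla\dot V_t$ and moments of $\mu_{V_t}$. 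Consequently $\psi_t$ is well defined, and it is $\cC^2$ with $\nabla\psi_t=-\frac12\int_0^\infty\nabla\Xi_s\,ds$.

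\textbf{Verification of \eqref{eqduifpsi}.} Write $P_sF(x)=\E[F(\cX_s^x)]$. By Itô's formula and the $\cC^2$ regularity above, Kolmogorov's equation $\partial_sP_sF=LP_sF$ holds, where $L$ is the generator from the previous step. Applying it to $F=\dot V_t-\mu_{V_t}[\dot V_t]$, and using the integrability just established to interchange $L$ with $\int_0^\infty$, I get
\[
L\psi_t=-\tfrac12\int_0^\infty\partial_s\Xi_s\,ds=-\tfrac12\bigl(\lim_{s\to\infty}\Xi_s-\Xi_0\bigr)=\tfrac12 F .
\]
Multiplying by $2$ gives exactly $\Delta\psi_t-\langle\nabla V_t,\nabla\psi_t\rangle=F$, which is \eqref{eqduifpsi}.

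\textbf{Main obstacle.} The step I expect to be hardest is justifying $\partial_s\Xi_s=L\Xi_s$ pointwise together with the exchange of $L$ and the $s$-integral. That requires uniform-in-$s$ domination of $\hess\Xi_s$, which is precisely what the second-variation bound is designed to supply. I would first try to obtain the Kolmogorov equation via the Markov property, $\Xi_{s+h}=P_h\Xi_s$. Then Itô's formula applied to $\Xi_s\in\cC^2$ with polynomial growth gives $\frac{d}{dh}\big|_{h=0}P_h\Xi_s=L\Xi_s$.
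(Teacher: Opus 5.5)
Your proposal follows essentially the same route as the paper's proof. The steps match: contraction of synchronously coupled trajectories from the lower bound $cI_n$ on the Hessian, the linearized flow with $\|J_s\|\le e^{-cs/2}$, comparison with a stationary start $Y\sim\mu_{V_t}$ to get decay of $\Xi_s$, and the Kolmogorov equation $\partial_s\Xi_s=L\Xi_s$ (via the Markov property) integrated over $s\in[0,\infty)$. Your Duhamel bound on the second variation makes explicit what the paper only calls ``similar''.

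One caveat: that Hessian bound assumes polynomial growth of $D^3V_t$, but the hypotheses only give this for the first two differentials. The paper's argument tacitly needs the same control, and it does hold in the matrix application, where all derivatives of $W$ are bounded.
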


\begin{proof}
    Thanks to our assumption on the Hessian, the Equation \eqref{kldsjvns} has a solution thanks to \cite[Lemma 4.4.20]{AGZ2009}. Besides, with $\norm{x}_2^2 = \langle x,x \rangle_{\R^n}$, thanks to It\^o formula, we have that
    \begin{align*}
        &\norm{\cX_s^{x,t}-\cX_s^{y,t}}_2^2 \\
        &= \norm{ x-y}_2^2 - \int_0^s \langle \cX_u^{x,t}-\cX_u^{y,t}, \nabla V_t(\cX_u^{x,t})- \nabla V_t(\cX_u^{y,t}) \rangle_{\R^n} du \\
        &= \norm{ x-y}_2^2 - \int_0^s \int_0^1 \langle \cX_u^{x,t}-\cX_u^{y,t}, \left[\hess(V_t)(h\cX_u^{x,t} +(1-h)\cX_u^{y,t})\right] (\cX_u^{x,t}- \cX_u^{y,t}) \rangle_{\R^n} dh du \\
        &\leq \norm{ x-y}_2^2 -c \int_0^s \norm{\cX_u^{x,t}-\cX_u^{y,t}}_2^2 du \\
    \end{align*}
    Consequently,
    \begin{equation}
        \label{normboundscs}
        \norm{\cX_s^{x,t}-\cX_s^{y,t}}_2^2 \leq \norm{ x-y}_2^2 e^{-cs}.
    \end{equation}
    Note that, since $s\mapsto \cX_s^{x,t}$ is continuous, the supremum of $\cX_s^{x,t}$ for $t$ and $x$ in bounded sets is finite. Let us now show that $\Xi_s$ can be differentiated. Since the expectation of any polynomials in $\cX_s^{x,t}$ is finite (see \cite[Lemma 4.4.20]{AGZ2009}), and that we assumed that $V_0,V_1$ and its first two differential grows at most polynomially, to do so it is sufficient to show that $x\mapsto \cX_s^{x,t}$ is differentiable. However since we assumed that $V_t\in\cC^4(\R^n)$,
    \begin{align*}
        \cX_s^{x,t}-\cX_s^{y,t} &= x-y - \frac{1}{2} \int_0^s \nabla V_t(\cX_u^{x,t})- \nabla V_t(\cX_u^{y,t})\ du \\
        &= x-y - \frac{1}{2} \int_0^s \left[\hess(V_t)(\cX_u^{x,t})\right](\cX_u^{x,t})-\cX_u^{y,t}) du + \mathcal{O}\left( \norm{\cX_u^{x,t})-\cX_u^{y,t}}_2^2\right)  \\
        &= x-y - \frac{1}{2} \int_0^s \left[\hess(V_t)(\cX_u^{x,t})\right](\cX_u^{x,t})-\cX_u^{y,t}) du + \mathcal{O}\left( \norm{x-y}_2^2\right).
    \end{align*}
    Thus, by a Picard iteration argument, we have that
    \begin{align*}
        \cX_s^{x,t}-\cX_s^{y,t} &= \sum_{l\geq 0} \frac{1}{(-2)^n} \int_{0\leq s_1\leq \dots\leq s_n\leq s} \left[\hess(V_t)(\cX_{s_n}^{x,t})\right]\dots \left[\hess(V_t)(\cX_{s_1}^{x,t})\right] (x-y) + \mathcal{O}\left( \norm{x-y}_2^2\right).
    \end{align*}
    Hence $x\mapsto \cX_s^{x,t}$ is differentiable and
    $$ d\cX_s^{x,t} = \sum_{l\geq 0} \frac{1}{(-2)^n} \int_{0\leq s_1\leq \dots\leq s_n\leq s} \left[\hess(V_t)(\cX_{s_n}^{x,t})\right]\dots \left[\hess(V_t)(\cX_{s_1}^{x,t})\right]. $$
    In particular,
    \begin{align*}
        d\cX_s^{x,t} &= \id_{\R^n} - \frac{1}{2} \int_0^s \left[\hess(V_t)(\cX_u^{x,t})\right]d\cX_u^{x,t} du.
    \end{align*}
    Hence, thanks to our assumption on the Hessian, 
    $$\frac{d}{ds} (d\cX_s^{x,t})^* d\cX_s^{x,t} = - (d\cX_s^{x,t})^* \left[\hess(V_t)(\cX_s^{x,t})\right] d\cX_s^{x,t} \leq -c (d\cX_s^{x,t})^* d\cX_s^{x,t}. $$
    Hence, with $\norm{\cdot}$ the operator norm, $\norm{d\cX_s^{x,t}} \leq e^{-cs/2}$. Hence, not only is $\Xi_s$ differentiable, but we also have that $s\mapsto\nabla\Xi_s(x)$ is integrable. Similarly, we get that $\Xi_s\in\cC^2(\R^n)$ and that $s\mapsto \hess(\Xi_s)(x)$ are integrable on $\R_+$. 
    
    Finally, for any functions $f\in\cC^2(\R^n)$, thanks to It\^o formula,
    \begin{equation}
        \label{sdvjnsok}
    \end{equation}
    $$ \frac{d}{dt}\E\left[ f\left(\cX_s^{x,t}\right) \right] = \frac{1}{2} \E\left[ \Delta f\left(\cX_s^{x,t}\right) - \langle\nabla V_t\left(\cX_s^{x,t}\right),\nabla f\left(\cX_s^{x,t}\right)\rangle_{\R^n} \right]. $$
    And since by \cite[Lemma 4.4.20]{AGZ2009}, $(\cX^{t,x})_{s\geq 0}$ is a Markov process, with $P_tf = \E[f(\cX_s^{x,t})]$, we have that
    $$ \frac{d}{dt}\E\left[ f\left(\cX_s^{x,t}\right) \right] = \frac{1}{2} \left( \Delta P_tf - \langle\nabla V_t\left(x\right),\nabla P_tf\rangle_{\R^n}\right). $$
    Consequently,
    $$ \frac{d}{dt} \mu_{V_t}\left( P_s(f)\right) = \frac{1}{2} \mu_{V_t}\left( \Delta P_sf - \langle\nabla V_t\left(x\right),\nabla P_sf\rangle_{\R^n}\right) = 0,$$
    where we used integration by part in the last equality. Thus if the law of $\mathbf{x}$ is $\mu_{V_t}$, then $\E[f(\cX_s^{t,\mathbf{x}})] = \E[f(\cX_0^{t,\mathbf{x}})] = \E[f(\mathbf{x})]= \mu_{V_t}(f)$. Hence thanks to Equation \eqref{normboundscs} we have that
    \begin{align*}
        |\Xi_s(x)| = |\E[\dot{V_t}(\cX_s^{t,x})-\dot{V_t}(\cX_s^{t,\mathbf{x}})]| = \left| \int_0^1 \left\langle\nabla V_t\left(\cX_s^{t,ux+(1-u)\mathbf{x}}\right),\cX_s^{t,x}-\cX_s^{t,\mathbf{x}}\right\rangle_{\R^n} du \right|.
    \end{align*}
    Thus thanks to Equation \eqref{normboundscs}, Cauchy-Schwarz, and the fact that $\nabla V_t$ grows at most polynomially at infinity, we have that $\int_0^{\infty}|\Xi_s(x)| ds$ can be upper bounded by the expectation of moments of $\cX_s^{t,ux+(1-u)\mathbf{x}}$. Besides, thanks to \cite[Lemma 4.4.22]{AGZ2009}, if $f$ is a polynomial, then so is $P_tf$, and since $\hess(V_t)(x)\geq cI_n$, the expectation of every polynomial in $\mathbf{x}$ is finite.
     
    Consequently, the only thing that remains to prove is that $\psi_t$ is a solution of Equation \eqref{eqduifpsi}. Since we have shown that the differentials of $x\mapsto\Xi_s(x)$ are integrable, by dominated convergence we have that
    \begin{align*}
        \Delta \psi_t(x) - \langle\nabla V_t(x),\nabla \psi_t(x)\rangle_{\R^n} &= -\frac{1}{2} \int_0^{\infty} 2\frac{d}{ds}\E\left[ \dot{V_t}\left(\cX_s^{x,t}\right) - \mu_{V_t}\big[\dot{V_t}\big] \right] ds \\
        &= \dot{V_t}(x) - \mu_{V_t},
    \end{align*}
    where we used Equation \eqref{sdvjnsok} and the fact that $X_0^{x,t}=x$ and $\lim_{s\to\infty} \E\left[\dot{V_t}\left(\cX_s^{x,t}\right)\right] = \mu_{V_t}\big[\dot{V_t}\big]$ for all $x$ since $s\mapsto\Xi_s(x)$ must converge to $0$ at infinity.  
\end{proof}

\subsection{The case of random matrices}

By viewing $(\M_N(\C)_{sa})^d$ as isomorphic to $\R^{dN^2}$, the foregoing construction of transport yields the following proposition.

\begin{proposition}
\label{prop:transportmap}
    Let $V_0, V_1 \in \cC^4(X_1,\dots,X_d)$ be self-adjoint, i.e., such that for all $(\cA,\tau) \in\W$, $x_1,\dots,x_d\in\cA$ self-adjoint, $V_0(x)$ and $V_1(x)$ are self-adjoint. Let $(\bX_t^{s,H})_{t\geq 0}$ be the solution of
    \begin{equation}
    \label{eqmatdiff}
        \bX_{t,p}^s(H) = H - \frac{1}{2} \int_0^t \bD_p V_s\left( \bX_u^s(H) \right) du + S^N_p(t),\quad 1\leq p\leq d,
    \end{equation}
    where $V_s\coloneqq s V_0 + (1-s) V_1$, $\bD_p \coloneqq \cD_p + \id\otimes\tr \circ \widetilde{\partial}_p$, and $S^N_p(t) \coloneqq S_{t,p}(X^N)$ where $X^N$ is a family of independent GUE random matrices and $S_{t,p}$ is as in Equation \eqref{skgjnslvms}. We set 
    \begin{equation}
        \label{masterop}
        \Psi_s:H\in\M_N(\C)_{sa}^d \mapsto \left( -\frac{1}{2} \int_0^{\infty}\E\left[ \bD_p\left( \dot{V}_s\circ \bX_t^s\right)(H)  \right]  dt \right)_{1\leq p\leq d},
    \end{equation}
    where $\dot{V}_s = V_1-V_0$.
    Then, if $(T_s^N)_{0\leq s\leq 1}$ is solution to the linear differential equation
    $$ \dot{T}_s^N = \Psi_s \circ T_s^N,\quad T_0^N=\id_{\M_N(\C)_{sa}^d},$$
    then it is a transport map between $V_0$ and $V_1$. Concretely, for all measurable function $f$,
    $$ \int_{\M_N(\C)_{sa}^d} f(H) \frac{e^{-N\Tr_N(V_s(H))}}{Z_{V_s}}\ dH = \int_{\M_N(\C)_{sa}^d} f\circ T_s^N(H) \frac{e^{-N\Tr_N(V_0(H))}}{Z_{V_0}}\ dH. $$
\end{proposition}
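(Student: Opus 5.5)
This is Proposition \ref{pot1} combined with Proposition \ref{pot2}, applied on the real Euclidean space $\M_N(\C)_{sa}^d\cong\R^{dN^2}$. I would carry the argument out in the following steps.

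\textbf{Step 1: identify the ingredients.} Equip $\M_N(\C)_{sa}^d$ with the real inner product $\langle A,B\rangle=\sum_p \Tr_N(A_pB_p)$. Under this identification, the classical potential is $\mathcal V_s(H)=N\Tr_N(V_s(H))$. The first task is to compute its gradient. By Proposition \ref{skjodncskncv}, with $\tau=\tr_N$,
\[
D[\tr_N V_s](H)[Y]=\sum_p \tr_N\big(\cD_pV_s(H)Y_p\big)+\tr_N\big((\id\otimes\tr\circ\widetilde\partial_p V_s)(H)\,Y_p\big).
\]
This identifies $\nabla_H \tr_N(V_s)$ with $\frac{1}{N}\bD V_s(H)$, where $\bD=(\bD_1,\dots,\bD_d)$, taken with respect to the unnormalized trace pairing. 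The Brownian motion $S^N$ has entries with variance $1/N$. After the rescaling $x\mapsto \sqrt N x$ into standard coordinates, equation \eqref{eqmatdiff} therefore becomes exactly the classical Langevin SDE \eqref{kldsjvns} for $\mathcal V_s$.

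The constants must be checked at this point. The factor $N^2$ in the density, the $1/N$ in the gradient and the $1/N$ in the variance combine to give $dX=dS-\frac12\nabla(\ldots)dt$, as in \eqref{kldsjvns}. I would do this bookkeeping once, explicitly.

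\textbf{Step 2: solve the Poisson equation.} Set $\psi_s(H)=-\frac12\int_0^\infty \E[\dot V_s(\bX^s_t(H))-\mu(\dot V_s)]\,dt$, in the matrix version of Proposition \ref{pot2}, with the trace of $\dot V_s$ as the function being averaged. Proposition \ref{pot2} then says that $\psi_s$ solves \eqref{eqduifpsi}.

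\textbf{Step 3: identify the gradient with $\Psi_s$.} Proposition \ref{pot2} also gives $\nabla\psi_s=-\frac12\int_0^\infty\nabla \Xi_t\,dt$. Differentiation under the expectation is allowed because of the integrable derivative bounds proved there. Moreover, $\nabla_H\E[\tr(\dot V_s\circ\bX^s_t)(H)]$ equals $\E[\bD(\dot V_s\circ \bX^s_t)(H)]$ up to the same normalization as in Step 1, by the chain rule (Theorem \ref{chainrule}) together with the formula above. Hence $\nabla\psi_s=\Psi_s$.

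\textbf{Step 4: apply the transport construction.} Proposition \ref{pot1} with $\dot T_s=\nabla\psi_s\circ T_s$ then yields the transport identity. The normalizing constants $Z_{V_s}$ are handled inside that proof.

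\textbf{Main obstacle.} The difficulty is checking the hypotheses of Proposition \ref{pot2}: $\cC^4$ regularity, polynomial growth of the first two differentials, and uniform convexity $\hess\mathcal V_s\ge cI$.

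Regularity and growth follow from Proposition \ref{skjodncskncv}, since $V_0,V_1\in\cC^4$ gives Fréchet $C^4$ with bounds on balls. A polynomial-growth bound in $\|H\|$ requires the seminorms $\norm{\cdot}_{j,R}$ to grow at most polynomially in $R$. I would either assume this or note that it holds in the regular setting used later, where the derivatives of $W$ are bounded.

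Convexity is not stated explicitly in the proposition. I would take it from the intended setting $V=\frac12\sum X_i^2+W$ with $\kappa<1$. In that case the Hessian of $W$ is bounded in operator norm by $\kappa$, by the bound on $\partial\bD W$ and $\widetilde\partial\bD W$, so $\hess\mathcal V_s\ge(1-\kappa)I$. I would flag this as an implicit assumption in the statement, and note that the interpolation $V_s$ preserves it by convexity of the constraint.
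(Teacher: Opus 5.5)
Your proposal is essentially the paper's proof. Like the paper, you identify $\M_N(\C)_{sa}^d$ with $\R^{dN^2}$ through coordinates rescaled by $\sqrt{N}$, and apply Propositions \ref{pot1} and \ref{pot2} to the potential $N\Tr_N(V_s)$. You use Proposition \ref{skjodncskncv} to see that its gradient is $\bD_p V_s$, so that \eqref{eqmatdiff} becomes the classical Langevin equation, and you identify the conjugated gradient of $\psi_s$ with $\Psi_s$ through the chain rule. Your explicit check of the convexity and polynomial-growth hypotheses of Proposition \ref{pot2} is more careful than the paper, which only invokes Proposition \ref{skjodncskncv} for $\cC^4$ regularity. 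You correctly flag that these hypotheses are implicit in the statement and hold in the intended setting $V=\frac12\sum_i X_i^2+W$ with $\kappa<1$.
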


\begin{proof}
    
    For scaling reasons, we want to use Propositions \ref{pot1} and \ref{pot2} with the potential 
    $$W_s(x) = N\Tr_N(V_s(X(x)))$$
    where $x$ is the collection of $dN^2$ variables $(x_{i,j}^k)_{1\leq i\leq j\leq N, 1\leq k\leq d}$ and $(y_{i,j}^k)_{1\leq i< j\leq N, 1\leq k\leq d}$, and $X(x)=(X_1(x),\dots,X_d(x))$ is the $d$-tuple of matrices such that 
    $$(X_k(x))_{i,i} = \frac{x_{i,i}^k}{\sqrt{N}}, \quad (X_k(x))_{i,j} = \frac{x_{i,j}^k+\ii y_{i,j}}{\sqrt{2N}} \text{ if }i<j,\quad  \text{and } (X_k(x))_{i,j} = \frac{x_{i,j}^k-\ii y_{i,j}}{\sqrt{2N}} \text{ else}.$$
    We also define $X^{-1}(H)$ the collection of $dN^2$ variables such that if $H$ is a $d$-tuple of self-adjoint matrices then $X(X^{-1}(H)) = H$. Note that thanks to Proposition \ref{skjodncskncv}, we do have that $V_0,V_1\in \cC^4((\M_N(\C)_{\sa})^d)$, thus one can use Proposition \ref{pot2}. Besides, thanks to Proposition \ref{skjodncskncv}, we have that for $i<j$,
    $$ \partial_{x_{i,i}^p} W_s(x)= \sqrt{N}(\bD_p V_s(X(x)))_{i,i}, $$
    $$ \partial_{x_{i,j}^p} W_s(x)= \sqrt{\frac{N}{2}} \left( (\bD_p V_s(X(x)))_{j,i} + (\bD_p V_s(X(x)))_{i,j} \right) = \sqrt{2N} \cRe (\bD_p V_s(X(x)))_{i,j}, $$
    $$ \partial_{y_{i,j}^p} W_s(x)= \ii \sqrt{\frac{N}{2}} \left( (\bD_p V_s(X(x)))_{j,i} - (\bD_p V_s(X(x)))_{i,j} \right) =\sqrt{2N} \cIm (\bD_p V_s(X(x)))_{i,j}, $$
    
    \noindent Consequently, if $(X_t^{s,H})_{t\geq 0}$ is the solution of Equation \eqref{eqmatdiff}, then by taking $\cX_t^{x,s} = X^{-1}\left( X_t^s(X(x)) \right)$ we obtain a solution of Equation \eqref{kldsjvns}. Thus if we consider the map $T_s$ of Proposition \ref{pot2}, we know that it is a transport map if one views our random matrix law as law on $\R^{dN^2}$. Thus for any measurable functions,
    $$ \int_{\R^{dN^2}} f( X(T_s (x))) \frac{e^{-N\Tr_N(V_0(X(x)))}}{Z_{V_0}}\ dx = \int_{\R^{dN^2}} f(X(x)) \frac{e^{-N\Tr_N(V_s(X(x)))}}{Z_{V_s}}\ dx. $$
    Thus by a change of variable, since $X(X^{-1}(H)) = H$,
    $$ \int_{\M_N(\C)_{sa}^d} f( X(T_s (X^{-1}(H)))) \frac{e^{-N\Tr_N(V_0(H))}}{Z_{V_0}}\ dH = \int_{\M_N(\C)_{sa}^d} f(H) \frac{e^{-N\Tr_N(V_s(H))}}{Z_{V_s}}\ dH. $$

    \noindent Thus the transport map that we are looking for is simply $T_s^N: H\mapsto X(T_s(X^{-1}(H)))$ which satisfies the differential equation
    $$ \dot{T}_s^N = X\circ\nabla\psi_s\circ X^{-1}\circ T_s^N,\quad f_0^N = \id_{\M_N(\C)_{sa}^d}.$$
    Besides, we have that $\dot{W_s}\left( \cX_t^{x,s} \right) = N\Tr_N\left( \dot{V_s}\left( \bX_t^{s,X(x)}\right) \right)$, thus, thanks to Proposition \ref{skjodncskncv}, as well as Theorem \ref{chainrule} and \ref{thm.keySDE} which guarantees that $\dot{V}_s\circ \bX_t^s\in\cC^3(\cB)$,  for $i<j$,
    $$ \partial_{x_{i,i}^p} \dot{W_t}\left( \cX_s^{x,t} \right) = \sqrt{N}\left( \bD_p\left( \dot{V}_s\circ \bX_t^s\right)(X(x)) \right)_{i,i}, $$
    $$ \partial_{x_{i,j}^p} \dot{W_t}\left( \cX_s^{x,t} \right) = \sqrt{2N} \cRe \left( \bD_p\left( \dot{V}_s\circ \bX_t^s\right)(X(x)) \right)_{i,j}, $$
    $$ \partial_{y_{i,j}^p} \dot{W_t}\left( \cX_s^{x,t} \right) =\sqrt{2N} \cIm \left( \bD_p\left( \dot{V}_s\circ \bX_t^s\right)(X(x)) \right)_{i,j}. $$
    Consequently, 
    $$X\circ\nabla \left( x\mapsto \dot{W_t}\left( \cX_s^{x,t} \right)\right) \circ X^{-1}(H) = \left( \bD_p\left( \dot{V}_s\circ \bX_t^s\right)(H) \right)_{1\leq p\leq d}.$$
    This implies that
    $$ X\circ\nabla\psi_t\circ X^{-1}(H) = \left( -\frac{1}{2} \int_0^{\infty} \E\left[ \bD_p\left( \dot{V}_s\circ \bX_t^s\right)(H)  \right] ds \right)_{1\leq p\leq d}. $$
    Hence the conclusion.
\end{proof}

The following theorem is a self-contained version of Theorem \ref{mainthm} and in particular, immediately implies it.

\begin{theorem} \label{thm:main}
    Let $T_s^N$ be the transport map defined in Proposition \ref{prop:transportmap}, with $V_0=\frac{1}{2}\sum_{i=1}^d X_i^2$, and $V_1 = \frac{1}{2}\sum_{i=1}^d X_i^2 + W $ where $W\in\cC^{4k+7}(X_1,\dots,X_d)$. We assume that
    $$\kappa \coloneqq \sup_{1\leq p\leq d}\ \left\{ \sum_{1\leq i\leq d} \norm{\partial_i\bD_pW}_{0,\infty} + \norm{\widetilde{\partial}_i \bD_p W }_{0,\infty} \right\} < \frac{1}{4k+5},$$
    $$ \sup_{1\leq j\leq 4k+7} \norm{W}_{j,\infty} <\infty. $$    
    
    \noindent Then there exists $T_s^0,\in (\cC^k(X_1,\dots,X_d))^d$, as well as $T_s^m\in (\cC^{k+1-m}(X_1,\dots,X_d))^d$ for $m\in [1,k]$, continuous with repect to $s$, such that for all $s\in [0,1]$, $N\geq 1$,
    \begin{equation}
        \label{eq:main}
        \sup_{H\in\M_N(\C)_{sa}^d}\ \norm{T_s^N(H) - \sum_{i=0}^k \frac{T_s^i(H)}{N^{2i}} } \leq \frac{C_{W,k}}{N^{2k+2}}.
    \end{equation}
    Besides, with  $\norm{\cf}_{\cC^k,\infty} = \sum_{p=1}^d \norm{\cf_i}_{\cC^k,\infty} $ if $\cf\in (\cC^k(X_1,\dots,X_d))^d$, there exists a constant $C_{W,k}$ such that
    \begin{align}
        \label{eqkvjnso}
        \norm{T_s^0 - \id }_{\cC^k,\infty}\leq C_{W,k}, \quad\quad \norm{T_s^m}_{\cC^{k+1-m},\infty}\leq C_{W,k},\quad 1\leq m\leq k.
    \end{align}
\end{theorem}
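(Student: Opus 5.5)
The plan is to expand the vector field $\Psi_s$ from Proposition \ref{prop:transportmap} in powers of $1/N^2$ with coefficients in $\cC^{\,\cdot}(X_1,\dots,X_d)^d$, and then feed that expansion into the transport ODE $\dot T^N_s=\Psi_s\circ T^N_s$.

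First, for fixed $s$, I apply Theorem \ref{thm.keySDE} to \eqref{eqmatdiff} with $\bJ_p=\bD_p V_s=X_p+s\bD_pW$. Since $W\in\cC^{4k+7}$, the solution $\bX^s_t$ lies in $\cC^{4k+6}(\cB)^d$. It is a smooth function of the initial point $H$ (the variables indexed by $\cV$) and of the Brownian variables (indexed by $\cU$). By the chain rule (Theorem \ref{chainrule}) and Proposition \ref{proppourvoila}, the map $F_{s,t}:=\bD_p(\dot V_s\circ\bX^s_t)=\bD_p(W\circ\bX^s_t)$ is then in $\cC^{4k+5}(\cB)$. Because only the $H$-derivative survives in $\bD_p$, it is expressed through $\la e_i|$-contracted derivatives of $\bX^s_t$. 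Theorem \ref{thm:fastdecay}, with $s\kappa\le\kappa$, gives $\norm{\la e_i|\partial\bX_{t}}\lesssim e^{-(1-\kappa)t/2}$. More generally, for the derivatives of order $j\le 4k+5$ entering the $\cC^{4k+5}$-norm of $F_{s,t}$, it gives a bound $(1+t)^{2j}e^{-\frac{1-j\kappa}{2}t}$. All of these are uniform in $R$ because $\norm{W}_{j,\infty}<\infty$. The hypothesis $\kappa<1/(4k+5)$ is exactly what makes every such rate integrable in $t$.

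Second, I apply Theorem \ref{3lessopti} iteratively, $k+1$ times, to the conditional expectation over the GUE variables $X^N$ (with $H$ playing the role of the deterministic matrices $Z^N$). Each application replaces GUE by free semicirculars, produces a term $N^{-2}\,\E[\text{new function}]$, and costs four derivatives. This yields
\[
\E[F_{s,t}(H,X^N)]=\sum_{i=0}^k N^{-2i}\,\tau\bigl(g^i_{s,t}(H,x)\,\big|\,\M_N(\C)\bigr)+N^{-2k-2}\E[g^{k+1}_{s,t}(\cdot)],
\]
with $g^i_{s,t}\in\cC^{4k+5-4i}$ and norms controlled by \eqref{eq:normupperexp}. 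Proposition \ref{prop:condexp} then turns each conditional expectation onto $H$ into an element of $\cC^{(4k+5-4i)/3}(X_1,\dots,X_d)$, which has at least $k+1-i$ derivatives. I set $\Psi^i_s:=-\tfrac12\int_0^\infty\tau(g^i_{s,t}\mid\cdot)\,dt$. For the remainder, the bound is uniform in $H$ because the $H$-variables enter with weight $c=1$ and all seminorms are sup over $R$; Proposition \ref{sodncks} with $K$ small controls the expectation over the GUE part. This gives $\sup_H\norm{\Psi_s-\sum_i N^{-2i}\Psi^i_s}\le C N^{-2k-2}$.

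The main obstacle is keeping every derivative of $g^i_{s,t}$ time-integrable, uniformly in $R$ and $s$. The decay of $\la e_i|$-derivatives must survive the permuted multiplications and $\eta$-contractions of Theorem \ref{3lessopti}. Derivatives in the Brownian directions can grow like $e^{j\kappa t/2}$ by \eqref{eq:fastdecay2}, and each term still carries at least one $\la e_i|$ factor with decay $e^{-t/2}$. Tracking that each product of factors has total exponent $\le-\frac{1-(4k+5)\kappa}{2}t$ is where I expect the real work to lie.

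Finally, I expand $T^N_s=\sum_m N^{-2m}T^m_s$ with $T^0_0=\id$ and $T^m_0=0$, solving $\dot T^0_s=\Psi^0_s\circ T^0_s$ and, for $m\ge1$, linear inhomogeneous equations whose inhomogeneous part involves the Taylor expansion (Corollaries \ref{cor:taylorineq} and \ref{cor:multicompfin}) of the $\Psi^i_s$ around $T^0_s$. Existence, the bounds \eqref{eqkvjnso} and the regularity drop $k+1-m$ follow from Theorem \ref{thm.keySDE} and a Grönwall argument using Corollary \ref{cor:complip}; the bounds are uniform because $\Psi^i_s$ has globally bounded derivatives. The error $T^N_s-\sum_m N^{-2m}T^m_s$ satisfies a linear Grönwall inequality with forcing term $O(N^{-2k-2})$, which gives \eqref{eq:main}.
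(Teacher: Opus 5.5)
Your proposal follows the paper's proof essentially step for step: solve the Langevin equation in $\cC^{\cdot}(\cB)$, get integrable decay of $\bD_p(W\circ\bX^s_t)$ from Theorem~\ref{thm:fastdecay}, expand via Theorem~\ref{3lessopti} and Proposition~\ref{prop:condexp}, then solve the Taylor hierarchy for $T^m_s$ and close with Gr\"onwall. The one difference is the order of steps. The paper first integrates in $t$ to form $F_s=-\frac12\int_0^\infty \bD_p(W\circ\bX^s_t)\,dt$ and then applies Theorem~\ref{3lessopti} to $F_s$. Together with \eqref{eq:normupperexp}, this shows that your ``main obstacle'' is already settled by the decay of the full $\cC^{4k+4}$-norm of $F_{s,t}$, so no extra tracking through the $\eta$-contractions is needed.

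Two small corrections. First, Theorem~\ref{thm.keySDE} only gives $\bX^s_t\in\cC^{4k+5}(\cB)$, hence $F_{s,t}\in\cC^{4k+4}$; this is exactly what $k+1$ applications of Theorem~\ref{3lessopti} consume. Second, you should verify continuity in $s$ of the coefficients $\Psi^i_s$, which both the statement and Theorem~\ref{thm.keySDE} require; the paper does this via a Lipschitz bound on $s\mapsto\bX^s_t$.
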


\begin{proof}

In order to shorten notations, if $\cf\in (\cC^k(X_1,\dots,X_d))^d$, then we will simply denote $\cf\in \cC^k(X_1,\dots,X_d)$.
To begin with, with the notations of Proposition \ref{prop:transportmap}, we have $\dot{V}_s=W$ for all $s\in [0,1]$, and $V_s = \frac{1}{2}\sum_{i=1}^d X_i^2 + s W$. Therefore, thanks to Theorem \ref{thm.keySDE}, we can find a solution $\bX_{t,p}^s\in\cC^{4k+5}(\cB)$ to
\begin{equation}
    \bX_{t,p}^s(x,X) = x_p - \frac{1}{2} \int_0^t (\bD_p V_s\circ \bX_u^s)(x,X) du + S_{t,p}(X),\quad 1\leq k\leq d,
\end{equation}
and since
$$ \bX_{t,p}^{s_1} - \bX_{t,p}^{s_2} = - \frac{s_1-s_2}{2} \int_0^t \bD_p W\circ \bX_u^{s_1} - \bD_p W\circ \bX_u^{s_2} du, $$
by combining Theorem \ref{thm:fastdecay} and Corollary \ref{cor:complip}, thanks to our assumptions there exists a constant $C_{W,k}$ such that 
\begin{equation}
    \label{lipsovdns}
    \norm{\bX_{t,p}^{s_1} - \bX_{t,p}^{s_2}}_{\cC^{4k+5},\infty} \leq C_{W,k} |s_1-s_2|
\end{equation}
In particular, the process $s\in\R\mapsto X_{t,p}^{s} \in\cC^{4k+5}(\cB)$ is continuous. 

Besides, thanks to Theorem \ref{chainrule} and our assumptions on $W$, if $j \coloneqq j_0+\dots+j_n\leq 4k+4$, one can find a constant $C_{W,k}$ such that,
\begin{align*}
    &\norm{ \widetilde{\partial}^{(j_n)}\circ\cdots\circ\widetilde{\partial}^{(j_1)}\circ\partial^{(j_0)} \left(\bD_p\left( \dot{V}_s\circ \bX_t^s\right) \right)}_{0,c,R} \leq C_{W,k} \times \widetilde{\Lambda}^j_R(\bX_t^s),
\end{align*}
where
$$ \widetilde{\Lambda}^j_R(\cf) = \sup_{\substack{1\leq i_1,\dots,i_m\leq d \\ s_1+\dots+s_m = j+1 \\ s_l\geq 1,\ m\geq 1 }}\ \left( \sup_{a_0+\dots+a_q=s_1}\ \norm{\la e_i| \widetilde{\partial}^{(a_q)}\circ\cdots\circ\widetilde{\partial}^{(a_1)}\circ\partial^{(j_0)} \cf_{i_l}}_{0,c,R} \right) \prod_{l=2}^m\ \norm{\cf_{i_l}}_{s_l,c,R}, $$

\noindent Thus, thanks to Theorem \ref{thm:fastdecay} and the assumptions we made on the potential $W$, there exists a constant $C_{W,k}$ such that for all $s\in[0,1]$,
\begin{align*}
    &\norm{ \widetilde{\partial}^{(j_n)}\circ\cdots\circ\widetilde{\partial}^{(j_1)}\circ\partial^{(j_0)} \left(\bD_p\left( \dot{V}_s\circ \bX_t^s\right) \right)}_{0,c,R} \leq C_{W,k} \times (1+t)^{8k+10} e^{-\frac{1-\kappa(4k+5)}{2}t}.
\end{align*}

\noindent Thus, one can find a constant $C_{W,k}$ such that
\begin{align*}
    &\norm{ \bD_p\left( \dot{V}_s\circ X_t^s\right)}_{\cC^{4k+4},R} \leq C_{W,k} \times (1+t)^{8k+10} e^{-\frac{1-\kappa(4k+5)}{2}t}.
\end{align*}
This implies that the function
$$ F_s := -\frac{1}{2} \int_0^{\infty} \bD_p\left( \dot{V}_s\circ \bX_t^s\right) dt$$
also belongs to $\cC^{4k+4}(\cB)$ and $\norm{F_s}_{\cC^{4k+4},\infty} \leq C_{W,k}$ for some constant $C_{W,k}$. Besides, $ s\in[0,1]\mapsto F_s\in\cC^{4k+4}(\cB)$ is continuous since, thanks to Equation \eqref{lipsovdns},
\begin{align*}
    \norm{F_{s_1}-F_{s_2}}_{\cC^{4k+4},\infty} &\leq \frac{1}{2} \int_0^{(s_1-s_2)^{-1/2}} \norm{\bD_p\left( \dot{V}_s\circ \bX_t^s\right) - \bD_p\left( \dot{V}_s\circ \bX_t^s\right)}_{\cC^{4k+4},\infty} dt \\
    &\quad + \int_{(s_1-s_2)^{-1/2}}^{\infty} (1+t)^{8k+10} e^{-\frac{1-\kappa(4k+5)}{2}t} dt.
\end{align*}
Then, by combining Corollary \ref{cor:complip} with Equation \eqref{lipsovdns}, we get that for some constant $C_{W,k}$,
\begin{equation}
    \norm{F_{s_1}-F_{s_2}}_{\cC^{4k+4},\infty} \leq C_{W,k} (s_1-s_2)^{1/2}.
\end{equation}
Besides, we can use Theorem \ref{3lessopti} to deduce that 
$$ \E\left[ F_s(H,X^N) \right] = \sum_{i=0}^k \frac{\tau_N\left( g_{s,i}(H,x^i) \middle| \M_N(\C) \right)}{N^{2i}} + \frac{1}{N^{2k+2}} \E\left[ \tau_N\left( g_{s,k+1}(H,X^N,x^{k+1}) \middle| \M_N(\C) \right) \right], $$
where $x^i$ are families of free semicircular variables and $X^N$ a family of independent GUE random matrices. Furthermore, thanks to Equation \eqref{eq:normupperexp},
\begin{equation}
    \label{ldifnvsdn}
    \norm{g_{s,i}}_{\cC^{4(k-i+1)},\infty} \leq C_i \times \norm{F_s}_{\cC^{4k+4},\infty} \leq C_iC_{W,k}.
\end{equation}
By applying Theorem \ref{3lessopti} to $F_{s_1}-F_{s_2}$, and making use of Equation \eqref{eq:normupperexp} one more time, we also get that $s\in[0,1]\mapsto g_{s,i}\in\cC^{4(k-i+1)}(\cB)$ is continuous. Hence, thanks to Proposition \ref{prop:condexp}, we can find functions $\phi^i_s\in\cC^{k-i+1}(X_1,\dots,X_d)$, still continuous with respect to $s$, such that for some constant $C_{W,k}$,
\begin{equation}
    \label{ldifnvsdn2}
    \norm{\phi_s^i}_{\cC^{k-i+1},\infty} \leq C_{W,k}.
\end{equation}
and
\begin{equation}
\label{sjklvdnslvm}
    \sup_{N\geq 1}\ \sup_{H\in\M_N(\C)_{sa}^d}\ \norm{\Psi_s(H) -  \sum_{i=0}^k \frac{\phi^i_s(H)}{N^{2i}} } \leq  \frac{C_{W,k}}{N^{2k+2}}.
\end{equation}
Note that since $\phi^i_s\in\cC^{k-i+1}(X_1,\dots,X_{d})$, thanks to Corollary \ref{cor:taylorineq}, one can find 
$$\phi^{i,j}_s\in\cC^{k-i-j+1}(X_1,\dots,X_{d(j+1)}), \quad 0\leq j\leq k-i,$$ 
linear in the last $d\times j$ variables, still continuous with respect to $s$, such that for any $N\geq 1$,
\begin{equation}
\label{kdjvnsln}
    \sup_{A\in\M_N(\C)_{sa}^d}\ \norm{\phi^i_s(A+H) - \sum_{j=0}^{k-i} \phi^{i,j}_{s}(A,H,\dots,H)} \leq C_{W,k} \norm{H}^{j+1}.
\end{equation}

\noindent Thus we define $(T_s^m)_{0\leq s\leq 1}$ as the solution of 
\begin{equation}
    \label{firstorder}
    T_s^0 = \id + \int_0^s \phi_r^0\circ T_r^0\ dr, 
\end{equation}
\begin{equation}
    \label{higherorder}
    \forall m\geq 1,\quad T_s^m = \int_0^s \phi^{m,0}_r\circ T_r^0 + \sum_{n=0}^{m-1} \sum_{j=1}^{m-n} \sum_{\substack{a_1,\dots,a_j\geq 1 \\ a_1+\dots+a_j = m-n}} \phi_r^{n,j}\circ (T^0_r,T^{a_1}_r,\dots,T^{a_j}_r ) dr.
\end{equation}
Note that, thanks to Equation \eqref{ldifnvsdn2}, we can immediately use Theorem \ref{thm.keySDE} to deduce that Equation \eqref{firstorder} has a solution $(T^0_s)_{0\leq s\leq 1}$ continuous in $\cC^k(X_1,\dots,X_d)$. Moreover, Gr\"onwall's inequality coupled with Corollary \ref{cor:complip} immediately yields by induction that for some constant $C_{W,k}$,
\begin{equation}
\label{dlfbnlsmvspvm}
    \forall s\in [0,1],\quad \norm{T^0_s-\id}_{\cC^k,\infty} \leq C_{W,k}.
\end{equation}
Let us now prove by induction that for $m\geq 1$, Equation \eqref{higherorder} has a solution $(T_s^m)_{0\leq s\leq 1}$ continuous in $\cC^{k+1-m}(X_1,\dots,X_d)$. First, in Equation \eqref{higherorder}, we separate out the term $\varphi_r^{0,1}(T_r^0,T_r^m)$ with $n = 0$, $j = 1$ and $j_1 = m$, and let $G_s$ denote the sum of the remaining terms, so that
\[
T_s^m = \int_0^s \phi^{0,m}_r\circ(T_r^0, T_r^m) dr + G_s.
\]
Then by Corollary \ref{cor:multicompfin}, $G_s$ belongs to $\cC^{k+1-m}(X_1,\dots,X_d)$. Moreover, if
$$ \cK\coloneqq \left\{ f\in\cC^{k+1-m}(X_1,\dots,X_d)\ \middle|\ \norm{f}_{\cC^{k+1-m},\infty}<\infty \right\}, $$
then $\cK$ is a Banach space for the norm $\norm{\cdot}_{\cC^{k+1-m},\infty}$, and $G\in\cC^0([0,1],\cK)$. Since by iterating sufficiently many times the linear map $L:\cK\to\cK$,
$$ L: h\mapsto \left( \int_0^s \phi^{0,m}_r\circ(T_r^0, h_r)\ dr + G_s \right)_{0\leq s\leq 1} $$
we get a contraction, the Banach fixed point theorem yields a unique solution to Equation \eqref{higherorder}. Besides, once again by Gr\"onwall inequality coupled with Theorem \ref{chainrule}, we get by induction that for some constant $C_{W,k}$,
\begin{equation}
    \label{dovjnsdn}
    \sup_{0\leq s\leq 1}\ \norm{T_s^m}_{\cC^{k+1-m},\infty} \leq C_{W,k}.
\end{equation}

Therefore, with the notations of Proposition \ref{prop:transportmap}, we set
$$ h_s \coloneqq T_s^N - \sum_{m=0}^k \frac{T_s^m}{N^{2m}}, $$
and we have that
\begin{align*}
    h_s &= \int_0^s \Psi_r\circ f_r^N - \sum_{0\leq n\leq m\leq k } \sum_{j=0}^{m-n} \sum_{\substack{a_1,\dots,a_j\geq 1 \\ a_1+\dots+a_j = m-n}} \left(\frac{\phi_r^{n,j}}{N^{2n}}\circ f^0_r\right)\sh \left( \frac{f^{a_1}_r}{N^{2a_1}},\dots, \frac{f^{a_j}_r}{N^{2a_j}} \right) dr \\
    &= \int_0^s \Psi_r\circ f_r^N - \Psi_r\circ \left(\sum_{m=0}^k \frac{f_s^m}{N^{2m}}\right) + \left(\Psi_r - \sum_{i=0}^k \frac{\phi^i_s}{N^{2i}} \right)\circ \left(\sum_{m=0}^k \frac{f_s^m}{N^{2m}}\right) \\
    &\quad + \sum_{n=0}^k \frac{1}{N^{2n}} \left(\phi^n_s\circ \left(\sum_{m=0}^k \frac{f_s^m}{N^{2m}}\right) - \sum_{m=n}^k \sum_{j=0}^{m-n} \sum_{\substack{a_1,\dots,a_j\geq 1 \\ a_1+\dots+a_j = m-n}} \left(\phi_r^{n,j}\circ f^0_r\right)\sh \left( \frac{f^{a_1}_r}{N^{2a_1}},\dots, \frac{f^{a_j}_r}{N^{2a_j}} \right)\right) dr.
\end{align*}
Note that
\begin{align*}
    &\sum_{m=n}^k \sum_{j=0}^{m-n} \sum_{\substack{a_1,\dots,a_j\geq 1 \\ a_1+\dots+a_j = m-n}} \left(\phi_r^{n,j}\circ f^0_r\right)\sh \left( \frac{f^{a_1}_r}{N^{2a_1}},\dots, \frac{f^{a_j}_r}{N^{2a_j}} \right) \\
    &= \sum_{j=0}^{k-n} \sum_{m=j+n}^k \sum_{\substack{a_1,\dots,a_j\geq 1 \\ a_1+\dots+a_j = m-n}} \left(\phi_r^{n,j}\circ f^0_r\right)\sh \left( \frac{f^{a_1}_r}{N^{2a_1}},\dots, \frac{f^{a_j}_r}{N^{2a_j}} \right) \\
    &= \sum_{j=0}^{k-n} \sum_{j \leq a_1+\dots+a_j \leq k-n} \left(\phi_r^{n,j}\circ f^0_r\right)\sh \left( \frac{f^{a_1}_r}{N^{2a_1}},\dots, \frac{f^{a_j}_r}{N^{2a_j}} \right).
\end{align*}
Therefore, using Equations \eqref{kdjvnsln} and \eqref{dlfbnlsmvspvm}, for all $N\geq 1$,
\begin{align*}
    &\sup_{H\in\M_N(\C)_{sa}^d}\ \norm{\phi^n_s\circ \left(\sum_{m=0}^k \frac{f_s^m(H)}{N^{2m}}\right) - \sum_{j=0}^{k-n} \sum_{j \leq a_1+\dots+a_j \leq k-n} \phi_r^{n,j}\left(f^0_r(H), \frac{f^{a_1}_r(H)}{N^{2a_1}},\dots, \frac{f^{a_j}_r(H)}{N^{2a_j}} \right)} \\
    &\leq \sum_{j=0}^{k-n}\  \sup_{H\in\M_N(\C)_{sa}^d}\  \Bigg\| \phi^{n,j}_{s}\circ \left(f_s^0(H),\sum_{m=1}^k \frac{f_s^m(H)}{N^{2m}},\dots,\sum_{m=1}^k \frac{f_s^m(H)}{N^{2m}}\right) \\
    &\quad\quad\quad\quad\quad\quad\quad\quad\quad - \sum_{j \leq a_1+\dots+a_j \leq k-n} \phi_r^{n,j}\left( f^0_r(H),\frac{f^{a_1}_r(H)}{N^{2a_1}},\dots, \frac{f^{a_j}_r(H)}{N^{2a_j}} \right) \Bigg\| + \frac{C_{W,k}}{N^{2k+2}}\\
    &= \sum_{j=0}^{k-n}\ \sup_{H\in\M_N(\C)_{sa}^d}\  \norm{\sum_{ \substack{a_1+\dots+a_j > k-n \\ a_1,\dots a_j \leq k}} \left(\phi_r^{n,j}\circ f^0_r(H)\right)\sh \left( \frac{f^{a_1}_r(H)}{N^{2a_1}},\dots, \frac{f^{a_j}_r(H)}{N^{2a_j}} \right)} + \frac{C_{W,k}}{N^{2k+2}} \\
    &\leq \frac{2C_{W,k}}{N^{2k+2}} \\
\end{align*}

\noindent Then using Equation \eqref{sjklvdnslvm}, we have, for all $N\geq 1$,
$$ \sup_{H\in\M_N(\C)_{sa}^d}\ \norm{\left(\Psi_r - \sum_{i=0}^k \frac{\phi^i_s}{N^{2i}} \right)\circ \left(\sum_{m=0}^k \frac{f_s^m(H)}{N^{2m}}\right)} \leq \frac{C_{W,k}}{N^{2k+2}}. $$

\noindent Therefore, for all $N\geq 1$,
$$ \sup_{H\in\M_N(\C)_{sa}^d}\ \norm{g_s(H)} \leq \frac{C_{W,k}}{N^{2k+2}} + \int_0^s  \sup_{H\in\M_N(\C)_{sa}^d}\ \norm{\Psi_r\circ f_r^N(H) - \Psi_r\circ \left(\sum_{m=0}^k \frac{f_s^m(H)}{N^{2m}}\right)}\ dr. $$

\noindent However, we have
\begin{align*}
    (\Psi_s(H)-\Psi_s(K))_p &= \frac{1}{2} \int_0^{\infty}\E\left[ \bD_p\left( W\circ \bX_t^s\right)(K,X^N) - \bD_p\left( W\circ \bX_t^s\right)(H,X^N) \right]  dt.
\end{align*}
Besides, with $\norm{\cdot}$ the operator norm in $\M_N(\C)$, for some constant $C$,
\begin{align*}
    &\norm{\bD_p\left( W\circ \bX_t^s\right)(K,X^N) - \bD_p\left( W\circ \bX_t^s\right)(H,X^N)} \\
    &\leq C \sup_i \Bigg( \norm{ \partial \bX_{t,i}^s(H,X^N)-\partial \bX_{t,i}^s(K,X^N)} + \norm{ \widetilde{\partial} \bX_{t,i}^s(H,X^N)- \widetilde{\partial} \bX_{t,i}^s(K,X^N)} \\
    &\quad\quad\quad\quad + \norm{\bX_{t,i}^s(H,X^N)-\bX_{t,i}^s(K,X^N)} \times \sup_l \left( \norm{\la e_p | \partial \bX_{t,l}^s }_{0,\infty} + \norm{\la e_p | \widetilde{\partial} \bX_{t,l}^s }_{0,\infty}\right) \Bigg) \\
    &\quad\quad \times \left( \norm{W}_{1,\infty} + \norm{W}_{2,\infty} \right).
\end{align*}
Furthermore,
$$  \bX_t^s(H,X^N)- \bX_t^s(K,X^N) = H-K -\frac{1}{2} \int_0^t \bD_p V_s(\bX_u^s(H,X^N)) du - \bD_p V_s(\bX_u^s(H,X^N)) \ du, $$
$$  \partial \bX_t^s(H,X^N) - \partial \bX_t^s(K,X^N) = -\frac{1}{2} \int_0^t \partial(\bD_p V_s \circ \bX_u^s)(H,X^N) du - \partial(\bD_p V_s \circ \bX_u^s)(K,X^N) \ du, $$
Thus by following the proof of Theorem \ref{thm:fastdecay}, we get that
$$  \sup_p\ \norm{\bX_{t,p}^s(H)- \bX_{t,p}^s(K)} \leq C_W \norm{ H-K } \times e^{-\frac{1-\kappa}{2}t}, $$
$$  \sup_p\ \norm{\partial \bX_{t,p}^s(H) - \partial \bX_{t,p}^s(K)} \leq C_W \norm{H-K} \times (1+t)^2 e^{-\frac{1-2\kappa}{2}t}. $$
Thus one can find a constant $C_W$ such that 
$$ \norm{(\Psi_s(H)-\Psi_s(K))_p} \leq C_W \norm{H-K}.$$
This implies that, for all $N\geq 1$,
$$ \sup_{H\in\M_N(\C)_{sa}^d}\ \norm{G_s(H)} \leq \frac{C_{W,k}}{N^{2k+2}} + C_W \int_0^s \sup_{H\in\M_N(\C)_{sa}^d}\ \norm{G_r(H)}\ dr. $$
And so by Gr\"onwall's inequality, we have that for some constant $C_{W,k}$, for all $N\geq 1$,
$$ \sup_{H\in\M_N(\C)_{sa}^d}\ \norm{G_s(H)} \leq \frac{C_{W,k}}{N^{2k+2}}. $$
Hence the conclusion.
\end{proof}

\section{Proofs of corollaries} \label{sec.otherproofs}

\subsection{Proof of the asymptotic expansion of expectations (Theorem \ref{cor1})} 

We begin by applying Theorem \ref{thm:main}, which states that 
$$ \E\left[\frac{1}{N}\Tr\left( f(Y^N) \right) \right] = \E\left[\frac{1}{N}\Tr\left( f\circ T^N(X^N) \right) \right], $$
where $X^N$ is a $d$-tuple of GUE random matrices, and $T^N\coloneqq T_1^N$ the transport map defined in Theorem \ref{thm:main}. Then, thanks to Equation \eqref{eq:main} and Corollary \ref{cor:taylorineq}, one can find constants $C_{W,k}$ and $R_{W,k}$ such that
$$ \left| \frac{1}{N}\Tr\left( f\circ T^N(X^N) \right) - \frac{1}{N}\Tr\left( f\circ \left(\sum_{i=0}^k \frac{T^i(H)}{N^{2i}}\right)(X^N) \right) \right| \leq \frac{C_{W,k}}{N^{2k+2}} \times \norm{f}_{1,c,R_N+R_{W,k}}, $$
where $X^N$ is a $d$-tuple of independent GUE random matrices, $R_N\coloneqq \max_i \norm{X_i^N}$, and $T^i\coloneqq T_1^i$ are the maps defined in Theorem \ref{thm:main}. In particular, $T^0\in\cC^{4k+4}(X_1,\dots,X_d)$ and  $T^m\in\cC^{4k+5-m}(X_1,\dots,X_d)$ for $m\geq [1,k]$.

Consequently, by using Corollary \ref{cor:taylorineq} once again, in combination with Corollary \ref{cor:multicompfin} and Equation \eqref{eqkvjnso}, there exists functions $g^i\in\cC^{4(k-i)+4}(X_1,\dots,X_d)$ for $i\in [1,k]$ and constants $C_{W,k}$ and $R_{W,k}$ such that
$$\norm{f\circ \left(\sum_{i=0}^k \frac{T^i}{N^{2i}}\right) - f\circ T^0 - \frac{g^1}{N^2} \dots - \frac{g^{k}}{N^{2k}}}_{0,R} \leq  \frac{C_{W,k}}{N^{2k+2}} \times \norm{f}_{\cC^k,R+R_{W,k}}, $$

$$\norm{g^i}_{\cC^{4(k-i)+4},R} \leq C_{W,k} \norm{f}_{\cC^{4k+4},R+R_{W,k}},\quad 1\leq i\leq k. $$

\noindent Consequently, thanks to Proposition \ref{sodncks}, we have that for some constant $C_{W,k,K}$
\begin{align*}
    \Bigg| \E\Bigg[ \frac{1}{N}\Tr\left( f\circ T^N(X^N) \right) - \frac{1}{N}\Tr\left( f\circ T^0(X^N) \right) - &\frac{1}{N^3}\Tr\left( g^1(X^N) \right) - \dots \\
    \dots& - \frac{1}{N^{2k+1}}\Tr\left( g^k(X^N) \right)  \Bigg] \Bigg| \leq \frac{C_{W,k,K}}{N^{2k+2}} \times \norm{f}_{\cL^k,K}. 
\end{align*}
Thus we conclude by applying Theorem \ref{3lessopti} and Proposition \ref{prop:condexp} repeatedly.\qed

\subsection{Proof of the strong convergence of multimatrix models (Theorem \ref{cor2})}

The statement of Theorem \ref{cor2} refers to Theorem \ref{mainthm} for the map $T^0$.
Note that $T^0 = T_1^0$ is the map defined in Theorem \ref{thm:main}.

Given $X^N$ a $d$-tuple of independent GUE random matrices, we have thanks to Proposition \ref{prop:transportmap},
\begin{align*}
    \P\left( \lim_{N\to\infty} \norm{f(Y^N)} = \norm{f\circ T^0(x)} \right) &= \P\left( \lim_{N\to\infty} \norm{f\circ T^N_1(X^N)} = \norm{f\circ T^0(x)} \right) \\
    &= \P\left( \lim_{N\to\infty} \norm{f\circ T^0(X^N)} = \norm{f\circ T^0(x)} \right),
\end{align*}
where in the last line we used Equation \eqref{eq:main} that states for some constant $C_W$, for all $N\geq 1$,
\begin{equation*}
    \sup_{H\in\M_N(\C)_{sa}^d}\ \norm{T_1^N(H) - T^0(H) } \leq \frac{C_W}{N^{2}}.
\end{equation*}
Thanks to well-known concentration estimate, see for example Equation \eqref{lvndlsvosndn}, there exists a constant $L$ such that almost surely, for $N$ large enough,
$$ \sup_{1\leq i\leq d} \norm{X_i^N} \leq L.$$
Since $f\circ T^0\in\cC^0(X_1,\dots,X_d)$ thanks to Theorem \ref{chainrule}, let $Q\in\Tr^1(X_1,\dots,X_d)$ be such that
$$ \norm{f\circ T^0-Q}_{\cC^0,L} \leq \varepsilon. $$
Thus almost surely,
$$ \limsup_{N\to\infty} \left| \norm{f\circ T^0(X^N)} - \norm{Q(X^N)} \right| \leq \varepsilon,$$
and since we can assume that $L\geq 2$,
$$ \left| \norm{f\circ T^0(x)} - \norm{Q(x)} \right| \leq \norm{f\circ T^0-Q}_{\cC^0,L} \leq \varepsilon,$$
all that remains to prove is that for any trace polynomials $Q\in\Tr^1(X_1,\dots,X_d)$, almost surely,
$$ \lim_{N\to\infty} \norm{Q(X^N)} = \norm{Q(x)}. $$
To prove the equality above, it is enough to show that for any polynomial $P$,
\begin{align*}
    &\lim_{N\to\infty} \tr(P(X^N)) = \tau(P(x)) \\
    &\lim_{N\to\infty} \norm{P(X^N)} = \norm{P(x)}.
\end{align*}
Hence, the claim is reduced to strong convergence in distribution for GUE matrices, which is a well-known result in random matrix theory; see e.g., \cite{haagerup2005new,collins2022operator,bandeira2023matrix,CGVTvT2026new}. \qed

\phantomsection
\addcontentsline{toc}{section}{References}
\bibliographystyle{amsplain}
\bibliography{DV.bib}
\end{document}